\newtheorem{thm}{Theorem}
\newtheorem{prop}[thm]{Proposition}
\newtheorem{lem}[thm]{Lemma}
\newtheorem{cor}[thm]{Corollary}
\theoremstyle{remark}
\newtheorem{remark}[thm]{Remark}
\newtheorem{claim}[thm]{Claim}
\newtheorem{example}[thm]{Example}
\newtheorem{exercise}[thm]{Exercise}
\theoremstyle{definition}
\newtheorem{conjecture}[thm]{Conjecture}
\newtheorem{defn}[thm]{Definition}
\newtheorem{question}[thm]{Question}
\numberwithin{thm}{subsection}
\numberwithin{figure}{subsection}
\let\nc\newcommand
\let\renc\renewcommand
\newcommand{\Lmod}[1]{#1\text{-}{\mathsf{mod}}}
\newcommand{\idot}{{\:\raisebox{2pt}{\text{\circle*{1.5}}}}}
\DeclareMathOperator{\Ext}{\mathrm{Ext}}
\renewcommand{\Im}{\mathrm{Im}}
\DeclareMathOperator{\Ker}{\mathrm{Ker}}
\DeclareMathOperator{\End}{\mathrm{End}}
\DeclareMathOperator{\gr}{\mathrm{gr}}
\DeclareMathOperator{\rad}{\mathrm{rad}}
\DeclareMathOperator{\ann}{\mathtt{Ann}}
\newcommand{\beq}{\begin{equation}\label}
\newcommand{\eeq}{\end{equation}}
\DeclareMathOperator{\Spec}{\mathrm{Spec}}
\DeclareMathOperator{\Hom}{\mathrm{Hom}}
\DeclareMathOperator{\Loc}{\mathrm{Loc}}
\nc{\Z}{\mathbb{Z}}
\newcommand{\N}{\mathbb{N}}
\newcommand{\Q}{\mathbb{Q}}
\newcommand{\R}{\mathbb{R}}
\newcommand{\C}{\mathbb{C}}
\newcommand{\h}{\mathfrak{h}}
\newcommand{\eu}{\mathbf{eu}}
\nc{\rank}{\textrm{rank} \,}
\nc{\ds}{\dots}
\let\mc\mathcal
\let\mf\mathfrak
\nc{\mbf}{\mathbf}
\nc{\LK}{\textsf{Irr}(K)}
\nc{\LW}{\textsf{Irr}(W)}
\nc{\Res}{\mathsf{Res} \, }
\nc{\Ind}{\mathsf{Ind} \, }
\nc{\cont}{\textrm{cont}}
\renewcommand{\mod}{\textrm{mod}}
\nc{\eWb}{\mathbf{e}_{W_b}}
\nc{\sing}{\mathrm{sing}}
\nc{\msf}{\mathsf}
\nc{\Ui}{\mc{U}_{i,+}}
\nc{\Uone}{\mc{U}_{1,+}}
\nc{\Utwo}{\mc{U}_{2,+}}
\nc{\minusone}{-1}
\nc{\minustwo}{-2}
\nc{\minusthree}{-3}
\nc{\Mod}{\mathrm{Mod} \,}
\nc{\ms}{\mathscr}
\nc{\Frac}{\mathrm{Frac} \,}
\nc{\ra}{\rightarrow}
\nc{\hra}{\hookrightarrow}
\nc{\lab}{\label}
\renc{\O}{\mc{O}}
\nc{\Tan}{\Theta}
\nc{\ul}{\underline}
\nc{\s}{\mathfrak{S}}
\nc{\g}{\mf{g}}
\nc{\pa}{\partial}
\nc{\tit}{\textit}
\nc{\Maxspec}{\mathrm{Maxspec} \, }
\nc{\gldim}{\mathrm{gl.dim}}
\nc{\rkm}{\mathrm{rk} \, (\mf{m})}
\nc{\sm}{\mathrm{sm}}
\nc{\PD}{\mathbb{PD}}
\nc{\Hilb}{\textrm{Hilb}}
\nc{\T}{\mathbb{T}}
\nc{\X}{\mathbb{X}}
\nc{\W}{\mathscr{W}}
\nc{\kt}{\mbf{k}}
\nc{\ko}{\mbf{k}(0)}
\nc{\Ok}{\mc{O}_G \boxtimes \kt_X}
\nc{\Oko}{\mc{O}_G \boxtimes \ko_X}
\nc{\OYk}{\mc{O}_Y \boxtimes \kt_X}
\nc{\id}{\msf{id}}
\nc{\A}{\mathbb{A}}
\nc{\Grel}{\mc{G}^{\mathrm{rel}}}
\nc{\Grat}{\mc{G}^{\mathrm{rat}}}
\nc{\Squo}[1]{\A^{(#1)}}
\nc{\twist}{\mathrm{twist}}
\nc{\Cd}{\mc{C}}
\nc{\Span}{\mathrm{Span}}
\nc{\Grass}{\mathrm{Gr}}
\nc{\Grad}{\mathrm{Gr}^{ad}}
\nc{\V}{\mc{V}}
\nc{\Y}{\mathbb{Y}}
\nc{\rightsim}{\stackrel{\sim}{\longrightarrow}}
\nc{\Psq}{\Ps^2_q}
\nc{\Ps}{\mathbb{P}}
\nc{\prim}{\mathrm{prim}}
\renc{\o}{\otimes}
\renc{\H}{\mathsf{H}}
\newcommand{\dd}{{\mathscr{D}}}
\newcommand{\ol}{\overline}
\newcommand{\Irr}{\mathrm{Irr}}
\newcommand{\Cs}{\C^{\times}}
\newcommand{\ch}{\mathrm{ch}}
\newcommand{\reg}{\mathrm{reg}}
\newcommand{\tor}{\mathrm{tor}}
\newcommand{\Sol}{\mathrm{Sol}}
\newcommand{\KZ}{\mathsf{KZ}}
\newcommand{\ZH}{\mathsf{Z}}
\numberwithin{equation}{subsection}
\begin{document}

\pagenumbering{roman}

\title{Symplectic reflection algebras}
\author{Gwyn Bellamy}
\address{School of Mathematics and Statistics, University of Glasgow, Univeristy Gardens, Glasgow, G12 8QW.}
\email{gwyn.bellamy@glasgow.ac.uk}
\urladdr{http://www.maths.gla.ac.uk/~gbellamy/}

\subjclass[2010]{16S38; 16Rxx, 14E15}
\keywords{Symplectic reflection algebras, Rational Cherednik algebras, Poisson geometry}

\date{June 25, 2012}

\begin{abstract}
These lecture notes are based on an introductory course given by the author at the summer school ``Noncommutative Algebraic Geometry'' at MSRI in June 2012. The emphasis throughout is on examples to illustrate the many different facets of symplectic reflection algebras. Exercises are included at the end of each lecture in order for the student to get a better feel for these algebras.  
\end{abstract}

\maketitle

\tableofcontents

\section*{Introduction}
The purpose of these notes is to give the reader a flavor of, and basic grounding in, the theory of symplectic reflection algebras. These algebras, which were introduced by Etingof and Ginzburg in \cite{EG}, are related to an astonishingly large number of apparently disparate areas of mathematics such as combinatorics, integrable systems, real algebraic geometry, quiver varieties, resolutions of symplectic singularities and, of course, representation theory. As such, their exploration entails a journey through a beautiful and exciting landscape of mathematical constructions. In particular, as we hope to
illustrate throughout these notes, studying symplectic reflection algebras involves a deep interplay between geometry and representation theory.\\

A brief outline of the content of each lecture is as follows. In the first lecture we motivate the definition of symplectic reflection algebras by considering deformations of certain quotient singularities. Once the definition is given, we state the Poincar\'e-Birkhoff-Witt theorem, which is of fundamental importance in the theory of symplectic reflection algebras. This is the first of many analogies between Lie theory and symplectic reflection algebras. We also introduce a special class of symplectic reflection algebras, the \textit{rational Cherednik algebras}. This class of algebras gives us many interesting examples of symplectic reflection algebras that we can begin to play with. We end the lecture by describing the double centralizer theorem, which allows us to relate the symplectic reflection algebra with its spherical subalgebra, and also by describing the centre of these algebras. 

In the second lecture, we consider symplectic reflection algebras at $t = 1$. We focus mainly on rational Cherednik algebras and, in particular, on category $\mc{O}$ for these algebras. This category of finitely generated $\H_{\mbf{c}}(W)$-modules has a rich, combinatorial representation theory and good homological properties. We show that it is a highest weight category with finitely many simple objects. 

Our understanding of category $\mc{O}$ is most complete when the corresponding complex reflection group is the symmetric group. In the third chapter we study this case in greater detail. It is explained how results of Rouquier, Vasserot-Varangolo and Leclerc-Thibon allow us to express the multiplicities of simple modules inside standard modules in terms of the canonical basis of the ``level one'' Fock space for the quantum affine Lie algebra of type $A$. A corollary of this result is a character formula for the simple modules in category $\mc{O}$. We end the lecture by stating Yvonne's conjecture which explains how the above mentioned result should be extended to the case where $W$ is the wreath product $\s_n \wr \Z_m$. 

The fourth lecture deals with the Knizhnik-Zamolodchikov ($\KZ$) functor. This remarkable functor allows one to relate category $\mc{O}$ to modules over the corresponding cyclotomic Hecke algebra. In fact, it is an example of a quasi-hereditary cover, as introduced by Rouquier. The basic properties of the functor are described and we illustrate these properties by calculating explicitly what happens in the rank one case. 

The final lecture deals with symplectic reflection algebras at $t = 0$. For these parameters, the algebras are finite modules over their centres. We explain how the geometry of the centre is related to the representation theory of the algebras. We also describe the Poisson structure on the centre and explain its relevance to representation theory. For rational Cherednik algebras, we briefly explain how one can use the notion of Calogero-Moser partitions, as introduced by Gordon and Martino, in order to decide when the centre of these algebras is regular. \\

There are several very good lecture notes and survey papers on symplectic reflection algebras and rational Cherednik algebras. For instance \cite{EtingofCherednikNotes}, \cite{EtingofCalogeroMoser}, \cite{IainSurvey}, \cite{IainIMC} and \cite{RouquierSurvey}. I would also strongly suggest to anyone interested in learning about symplectic reflection algebras to read the orginal paper \cite{EG} by P. Etingof and V. Ginzburg, where symplectic reflection algebras were first defined\footnote{A few years after the publication of \cite{EG} it transpired  that the definition of symplectic reflection algebras had already appeared in a short paper \cite{Drinfeld} written by V. Drinfeld in the eighties.}. It makes a great introduction to the subject and is jam packed with ideas and clever arguments. As noted briefly above, there are strong connections between symplectic reflection algebras and several other areas of mathematics. Due to lack of time and energy, we haven't touched upon those connections here. The interested reader should consult one of the surveys mentioned above. A final remark: to make the lectures as readable as possible, there is only a light sprinkling of references in the body of the text. Detailed references can be found at the end of each lecture.  

\subsection*{Acknowledgments}

I would like to express my sincerest thanks to my fellow lectures Dan Rogalski, Michael Wemyss and Travis Schedler all the work they put into organizing the summer school at MSRI and for their support over the two weeks. Thanks also to all the students who so enthusiastically attended the course and worked so hard. I hope that it was as productive and enjoyable for them as it was for me.

\pagenumbering{arabic}

\newpage

\section{Symplectic reflection algebras}\label{sec:one}

The action of groups on spaces has been studied for centuries, going back at least to Sophus Lie's fundamental work on transformation groups. In such a situation, one can also study the \textit{orbit space} i.e. the set of all orbits. This space encodes a lot of the information about the action of a group on a space and provides an effective tool for constructing new spaces out of old ones. The underlying motivation for symplectic reflection algebras is to try and use representation theory to understand a large class of examples of orbit spaces that arise naturally in algebraic geometry. 

\subsection{Motivation}

Let $V$ be a finite dimensional vector space over $\C$ and $G \subset GL(V)$ a finite group. Fix $\dim V = m$. It is a classical problem in algebraic geometry to try and understand the orbit space $V / G = \Spec \C[V]^G$, see Wemyss' lectures \cite{Wemysslectures}. At the most basic level, we would like to try and answer the questions 

\begin{question}
Is the space $V / G$ singular?
\end{question}

Or, more generally

\begin{question}\label{q:2}
How singular is $V / G$?
\end{question}

The answer to the first question is a classical theorem due to Chevalley and Shephard-Todd. Before I can state their theorem, we need the notion of a complex reflection, which generalizes the classical definition of reflection encountered in Euclidean geometry.   

\begin{defn}
An element $s \in G$ is said to be a \textit{complex reflection} if $\mathrm{rk} (1 - s) = 1$. Then $G$ is said to be a \textit{complex reflection group} if $G$ is generated by $S$, the set of all complex reflections contained in $G$. 
\end{defn}

Combining the results of Chevalley and Shephard-Todd, we have:

\begin{thm}\label{thm:CSTthm}
The space $V/G$ is smooth if and only if $G$ is a complex reflection group. If $V/G$ is smooth then it is isomorphic to $\mathbb{A}^m$, where $\dim V = m$. 
\end{thm}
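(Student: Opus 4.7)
The plan is to reduce both directions to the single assertion that $\C[V]^G$ is a polynomial algebra on $m = \dim V$ generators if and only if $G$ is a complex reflection group. Indeed, $V/G = \Spec \C[V]^G$ is an affine variety of Krull dimension $m$ carrying a contracting $\C^\times$-action, so by graded Nakayama it is smooth at the origin (hence everywhere, by translating along the $\C^\times$-action) if and only if its maximal graded ideal is generated by $m$ algebraically independent homogeneous elements, i.e.\ $\C[V]^G \cong \C[f_1,\ldots,f_m]$. The isomorphism $V/G \cong \A^m$ then comes for free.

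For the direction ``complex reflection group $\Rightarrow$ polynomial invariants'' I would follow Chevalley's classical argument. The Reynolds operator $\tfrac{1}{|G|}\sum_{g\in G} g$ ensures that $\C[V]^G$ is finitely generated; pick a minimal homogeneous generating set $f_1,\ldots,f_r$ with degrees $d_1 \le \cdots \le d_r$. Since $\C[V]$ is integral over $\C[V]^G$ of transcendence degree $m$, one has $r \ge m$ automatically. For the reverse inequality and algebraic independence, I would take an algebraic relation $P(f_1,\ldots,f_r) = 0$ of minimal total degree, set $h_i := (\partial P/\partial y_i)(f_1,\ldots,f_r) \in \C[V]^G$, and differentiate to obtain $\sum_i h_i\, df_i = 0$ in $\Omega^1_{\C[V]/\C}$. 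The reflection hypothesis enters through the fact that for each $s \in S$ with reflecting hyperplane $\{\alpha_s = 0\}$, the operator $(s-1)$ on $\C[V]$ factors through multiplication by $\alpha_s$; an inductive averaging argument then forces every $h_i$ into the ideal $(f_1,\ldots,f_r)$, contradicting the minimality of $P$. This yields $r = m$ and algebraic independence, so $\C[V]^G$ is polynomial.

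For the converse, suppose $\C[V]^G$ is polynomial, and let $K \le G$ be the subgroup generated by all complex reflections of $G$; by the direction already proved, $\C[V]^K$ is itself polynomial. The quotient $\Gamma := G/K$ acts on $\C[V]^K$ with invariant ring $(\C[V]^K)^\Gamma = \C[V]^G$, again polynomial. Applying the direction just established in reverse (Serre's pseudo-reflection lemma), any nontrivial such $\Gamma$ would have to contain a pseudo-reflection in its action on $\C[V]^K$.

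The main obstacle is the final translation step: promoting a pseudo-reflection $\gamma \in \Gamma$ on $\C[V]^K \cong \C[f_1,\ldots,f_m]$ to an honest complex reflection of $G$ acting on $V$, which by construction cannot exist outside $K$. Concretely, such a $\gamma$ would have fixed locus of pure codimension one in $\Spec \C[V]^K \cong \A^m$; pulling back along the finite quotient morphism $V \twoheadrightarrow V/K \cong \A^m$ and invoking purity of the branch locus for finite morphisms between smooth varieties (Zariski--Nagata), some lift $g \in G \setminus K$ must then have a codimension-one fixed locus in $V$, hence be a complex reflection, contradicting the definition of $K$. This forces $\Gamma = 1$, i.e.\ $G = K$, completing the proof.
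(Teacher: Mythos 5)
The paper does not supply a proof of this theorem; it is stated as a classical result, with references to Chevalley and to Shephard--Todd given later in the ``Additional remarks''. So the comparison can only be with the standard literature.

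Your opening reduction (smoothness of $\Spec \C[V]^G$ at the cone point is, via graded Nakayama and the contracting $\C^\times$-action, equivalent to $\C[V]^G$ being a polynomial ring on $m$ generators) is correct, and your sketch of Chevalley's argument for the forward implication is the standard one. The gap is in the converse. You set $K$ to be the subgroup generated by the complex reflections, note $\C[V]^K$ is polynomial, and then assert that since $(\C[V]^K)^\Gamma = \C[V]^G$ is also polynomial, ``applying the direction just established in reverse (Serre's pseudo-reflection lemma), any nontrivial $\Gamma$ would have to contain a pseudo-reflection.'' But ``the direction just established in reverse'' is precisely the implication you are trying to prove, so this is circular as stated; and even read as an induction on $|G|$, it does not close, both because the base case $K=1$ (a $G$ with no reflections at all) gives no reduction, and because $\Gamma = G/K$ acts on $\Spec \C[V]^K \cong \A^m$ by \emph{graded} but \emph{non-linear} automorphisms, so Chevalley--Shephard--Todd in the form you proved does not apply to $(\Gamma, \A^m)$.

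The fix is that Zariski--Nagata purity is needed exactly where you invoked the unjustified ``Serre's lemma,'' not in the lifting step. Both $V/K$ and $V/G$ are smooth (the former by the forward direction, the latter by hypothesis), and the finite map $\psi : V/K \to V/G$ has degree $|\Gamma|$. If $\Gamma \neq 1$, then $\psi$ cannot be \'etale, since $V/K \cong \A^m$ is simply connected; by purity its branch locus is therefore a nonempty divisor, so some $\gamma \in \Gamma \setminus \{1\}$ fixes a codimension-one subvariety of $V/K$ pointwise. That is the existence of a codimension-one fixed locus for some nontrivial $\gamma$ -- obtained from purity, not from a converse to CST. The lifting step, by contrast, is elementary and does not need purity: for $v$ ranging over the (codimension-one) preimage of $(V/K)^\gamma$ in $V$, each $v$ is fixed by some lift of $\gamma$ in the finite coset $gK$, so by pigeonhole a single lift $g$ fixes a Zariski-dense subset of some codimension-one component; then $V^g$ is a hyperplane and $g \in G \setminus K$ is a reflection, contradicting the definition of $K$. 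With this rearrangement -- purity to produce $\gamma$, pigeonhole to lift it -- your proof goes through. (It is also worth noting the alternative and perhaps more standard proof of the converse via Molien's formula, comparing $|G| = \prod d_i$ and $\sum(d_i - 1) = |S|$ for $G$ and $K$; that route avoids purity entirely.)
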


A complex reflection group $G$ is said to be \textit{irreducible} if the reflection representation $V$ is an irreducible $G$-module. It is an easy exercise (try it!) to show that if $V = V_1 \oplus \cdots \oplus V_k$ is the decomposition of $V$ into irreducible $G$-modules, then $G = G_1 \times \cdots \times G_k$, where $G_i$ acts trivially on $V_j$ for all $j \neq i$ and $(G_i,V_i)$ is an irreducible complex reflection group. The irreducible complex reflection groups have been classified by Shephard and Todd, \cite{ST}.

\begin{example}
Let $\s_n$ the symmetric group act on $\C^n$ by permuting the coordinates. Then, the reflections in $\s_n$ are exactly the transpositions $(i,j)$, which clearly generate the group. Hence $\s_n$ is a complex reflection group. If $\C^n = \Spec \C[x_1, \ds, x_n]$, then the ring of invariants $\C[x_1, \ds, x_n]^{\s_n}$ is a polynomial ring with generators $e_1, \ds, e_n$, where 
$$
e_k = \sum_{1 \le i_1 < \ds < i_k \le n} x_{i_1} \cdots x_{i_k}
$$
is the $i$th elementary symmetric polynomial. Notice however that $(\s_n,\C^n)$ is not an irreducible complex reflection group.  
\end{example}

\begin{example}[Non-example]
Take $m = 2$ i.e. $V = \C^2$ and $G$ a finite subgroup of $SL_2(\C)$. Then it is easy to see that $S = \emptyset$ so $G$ cannot be a complex reflection group. The singular space $\C^2 / G$ is called a Kleinian (or Du Val) singularity. The groups $G$ are classified by simply laced Dynkin diagrams i.e. those diagrams of type ADE, and the singularity $\C^2 / G$ is an isolated hypersurface singularity in $\C^3$. 
\end{example}

The previous non-example is part of a large class of groups called symplectic reflection groups. This is the class of groups for which one can try to understand the space $V/G$ using symplectic reflection algebras. In particular, we can try to give a reasonable answer to Question \ref{q:2} for these groups. Let $(V,\omega)$ be a symplectic vector space i.e. $\omega$ is a non-degenerate, skew symmetric bilinear form on $V$, and $Sp(V) = \{ g \in GL(V) \ | \ \omega(g u, g v) = \omega(u,v), \ \forall \ u,v \in V \}$, the symplectic linear group. If $G \subset Sp(V)$ is a finite subgroup then $G$ cannot contain any reflections since the determinant of every element $g$ in $Sp(V)$ is equal to one, thus it is never a complex reflection group. However, one can define $s \in G$ to be a \textit{symplectic reflection} if $\mathrm{rk}(1 - s) = 2$. The idea here being that a symplectic reflection is the nearest thing to a genuine complex reflection that one can hope for in a subgroup of $Sp(V)$.  

\begin{defn}
The triple $(V,\omega,G)$ is a \textit{symplectic reflection group} if $(V,\omega)$ is a symplectic vector space and $G \subset Sp(V)$ is a finite group that is generated by $\mc{S}$, the set of all symplectic reflections in $G$. 
\end{defn}

Since a symplectic reflection group $(V,\omega,G)$ is not ``too far'' from being a complex reflection group, one might expect $V/G$ to be ``not too singular''. A measure of the severity of the singularities in $V/G$ is given by much effort is required to remove them (to "resolve" the singularities). One way to make this precise is to ask whether $V/G$ admits what's called a crepant resolution (the actual definition of a crepant resolution won't be important to us in this course). This is indeed the case for many (but not all!) symplectic reflection groups\footnote{Skip to the end of the final lecture for a precise statement.}. In order to classify those groups $G$ for which the space $V/G$ admits a crepant resolution, the first key idea is to try to understand $V/G$ by looking at deformations of the space i.e. some affine variety $\pi : X \rightarrow \C^k$ such that $\pi^{-1}(0) \simeq V/G$ and the map $\pi$ is \textit{flat}. Intuitively, this is asking that the dimension of the fibers of $\pi$ don't change. Then it is reasonable to hope that a generic fiber of $\pi$ is easier to describe, but still tells us something about the geometry of $V/G$.

\begin{figure}\label{fig:resdef}
\begin{tikzpicture}
\draw [dotted,violet,thick] (-3,5) to [out=0,in=90] (-2.5,4) to [out=-90,in=0] (-3,3);
\draw [violet,thick] (-3,3) to [out=180,in=-90] (-3.5,4) to [out=90,in=180] (-3,5); 
\draw[dotted] (-5,5) to [out=0,in=90] (-4.5,4) to [out=-90,in=0] (-5,3);
\draw (-5,3) to [out=180,in=-90] (-5.5,4) to [out=90,in=180] (-5,5);
\draw (-1,5) to [out=0,in=90] (-0.5,4) to [out=-90,in=0] (-1,3) to [out=180,in=-90] (-1.5,4) to [out=90,in=180] (-1,5);
\draw (-5,5) -- (-1,5);
\draw (-5,3) -- (-1,3);

\node at (0.5,4) {$T^* \mathbb{P}^1$}; 
\draw [->,thick] (-3,2.5) -- (-3,0.5);

\draw[dotted] (-5,1) to [out=0,in=90] (-4.5,0) to [out=-90,in=0] (-5,-1);
\draw (-5,-1) to [out=180,in=-90] (-5.5,0) to [out=90,in=180] (-5,1);
\draw (-1,1) to [out=0,in=90] (-0.5,0) to [out=-90,in=0] (-1,-1) to [out=180,in=-90] (-1.5,0) to [out=90,in=180] (-1,1);
\draw (-4.85,0.98) -- (-1.15,-0.98);
\draw (-4.85,-0.98) -- (-1.15,0.98);
\draw (-5,1) to [out=0,in=-20] (-4.85,0.98);
\draw (-4.85,-0.98) to [out=20,in=0] (-5,-1);

\draw [fill,red] (-3,0) circle [radius=0.05];
\node at (-3,-0.8) {$\C^2 / \Z_2$};

\draw [<-,thick,dotted] (-0.1,0) to [out=30,in=150] (0.3,0) to [out=-30,in=-150] (0.7,0) to  [out=30,in=150] (1.1,0);

\draw[dotted] (2,1) to [out=0,in=90] (2.5,0) to [out=-90,in=0] (2,-1);
\draw (2,-1) to [out=180,in=-90] (1.5,0) to [out=90,in=180] (2,1);
\draw (6,1) to [out=0,in=90] (6.5,0) to [out=-90,in=0] (6,-1) to [out=180,in=-90] (5.5,0) to [out=90,in=180] (6,1);
\draw (2,1) to [out=0,in=180] (4,0.5) to [out=0,in=-180] (6,1);
\draw (2,-1) to [out=0,in=180] (4,-0.5) to [out=0,in=-180] (6,-1);
\node at (4,1.2) {$X_1(\Z_2)$};

\node at (0.5,2.5) {$\mathbb{P}^1$};
\draw [->] (0.2,2.5) to [out=180,in=-45] (-3.4,4);

\end{tikzpicture}
\caption{The resolution and the deformation of the $\Z_2$ quotient singularity.}
\end{figure}
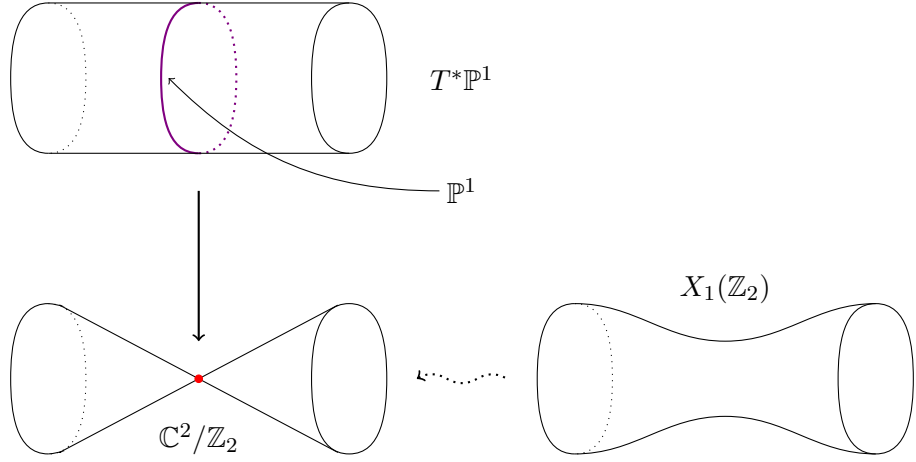

However there is a fundamental problem with this idea. We cannot hope to be able to write down generators and relations for the ring $\C[V]^G$ in general. So it seems like a hopeless task to try and write down deformations of the ring. The second key idea is to try and overcome this problem by introduce non-commutative geometry into the picture. In our case, the relevant non-commutative algebra is the skew group ring.  

\begin{defn}
The \textit{skew group ring} $\C[V] \rtimes G$ is, as a vector space, equal to $\C[V] \o \C G$ and the multiplication is given by 
$$
g \cdot f = {}^g f \cdot g, \quad \forall \ f \in \C[V], \ g \in G,
$$
where ${}^g f(v) = f(g^{-1} v)$ for $v \in V$. 
\end{defn}

\begin{exercise}\label{ex:centre}
Show that the centre $Z(\C[V] \rtimes G)$ of $\C[V] \rtimes G$ equals $\C[V]^G$. 
\end{exercise}

The above exercise shows that the information of the ring $\C[V]^G$ is encoded in the definition of the skew group ring. On the other hand, the skew group ring has a very explicit, simple presentation. Therefore, we can try to deform $\C[V] \rtimes G$ instead, in the hope that the centre of the deformed algebra is itself a deformation of $\C[V]^G$. We refer the reader to Schedler's lectures \cite{Schedlerlecture} for information on the theory of deformations of algebras. 

\subsection{Symplectic reflection algebras} 

Thus, symplectic reflection algebras are a particular family of deformations of the skew group ring $\C[V] \rtimes G$, when $G$ is a symplectic reflection group. Fix $(V,\omega,G)$, a symplectic reflection group. Let $\mc{S}$ be the set of symplectic reflections in $G$. For each $s \in \mc{S}$, the spaces $\Im ( 1 - s)$ and $\Ker (1-s)$ are symplectic subspaces of $V$ with $V = \Im ( 1 - s) \oplus \Ker (1-s)$ and $\dim \Im ( 1 - s) = 2$. We denote by $\omega_s$ the $2$-form on $V$ whose restriction to $\Im ( 1 - s)$ is $\omega$ and whose restriction to $\Ker (1-s)$ is zero. Let $\mbf{c} \, : \, \mc{S} \rightarrow \C$ be a conjugate invariant function i.e. 
$$
\mbf{c}(g s g^{-1}) = \mbf{c}(s), \quad \forall \ s \in \mc{S}, \ g \in G.
$$
The space of all such functions equals $\C[\mc{S}]^G$. Let $T V^* = \C \oplus V^* \oplus (V^* \o V^*) \oplus \cdots$ be the tensor algebra on $V^*$. 

\begin{defn}
Let $t \in \C$. The \textit{symplectic reflection algebra} $\H_{t,\mbf{c}}(G)$ is define to be 
\begin{equation}\label{eq:srarel}
\H_{t,\mbf{c}}(G) = T V^* \rtimes G \big/ \ \left\langle u \o v - v \o u = t \omega(u,v) - 2 \sum_{s \in \mc{S}} \mbf{c}(s) \omega_s(u,v) \cdot s \ | \ u,v \in V^* \right\rangle.
\end{equation}
\end{defn}

Notice that the defining relations of the symplectic reflection algebra are trying to tell you how to commute two vectors in $V^*$. The expression on the right hand side of (\ref{eq:srarel}) belongs to the group algebra $\C G$, with $t \omega(u,v) = t \omega(u,v) 1_G$, so the price for commuting $u$ and $v$ is that one gets an extra term living in $\C G$. 

\begin{example}\label{example:s2}
The simplest non-trivial example is $\Z_2 = \langle s \rangle$ acting on $\C^2$. Let $(\C^2)^* = \mathrm{Span} (x,y)$, where $s \cdot x = -x$, $s \cdot y = -y$ and $\omega(y,x) = 1$. Then $\H_{t,\mbf{c}}(\s_2)$ is the algebra
$$
\C \langle x,y,s \rangle / \langle s^2 = 1, s x = -xs, s y = - y s, [y,x] = t - 2 \mbf{c} s \rangle.
$$
This example, our ``favorite example'', will reappear throughout the course.  
\end{example}  

When $t$ and $\mbf{c}$ are both zero, we have $\H_{0,0}(G) = \C[V] \rtimes G$, so that $\H_{t,\mbf{c}}(G)$ really is a deformation of the skew group ring. If $\lambda \in \Cs$ then $\H_{\lambda t, \lambda \mbf{c}}(G) \simeq \H_{t,\mbf{c}}(G)$ so we normally only consider the cases $t = 0,1$. The Weyl algebra associated to the symplectic vector space $(V,\omega)$ is the non-commutative algebra 
$$
\mathsf{Weyl}(V,\omega) = T V^* / \left\langle u \o v - v \o u = \omega(u,v) \right\rangle.
$$
If $\h$ is a subspace of $V$, with $\dim \h = \frac{1}{2} \dim V$ and $\omega(\h,\h) = 0$, then $\mathsf{Weyl}(V,\omega) = \dd(\h)$, the ring of differential operators on $\h$. When $t = 1$ but $\mbf{c} = 0$, we have $\H_{1,0}(G) = \mathsf{Weyl}(V,\omega) \rtimes G$, the skew group ring associated to the Weyl algebra and $\H_{1,\mbf{c}}(G)$ is a deformation of this ring.  

\begin{example}
Again take $V = \C^2$, then $Sp(V) = SL_2(\C)$ so we can take $G$ to be any finite subgroup of $SL_2(\C)$. Every $g \neq 1$ in $G$ is a symplectic reflection and $\omega_g = \omega$. Let $x,y$ be a basis of $(\C^2)^*$ such that $\omega(y,x) = 1$. Then  
$$
\H_{t,\mbf{c}}(G) = \C \langle x,y \rangle \rtimes G \big/ \ \left\langle [y,x] = t  - 2 \sum_{g \in G \backslash \{ 1 \} } \mbf{c}(g) g \right\rangle.
$$
Since $\mbf{c}$ is $G$-equivariant, the element $z := t - 2 \sum_{g \in G \backslash \{ 1 \} } \mbf{c}(g) g$ belongs to the centre $Z(G)$ of the group algebra of $G$. Conversely, any element $z \in Z(G)$ can be expressed as $t - 2 \sum_{g \in G \backslash \{ 1 \} } \mbf{c}(g) g$ for some unique $t$ and $\mbf{c}$. Hence the main relation can simply be expressed as $[y,x] = z$ for some (fixed) $z \in Z(G)$. For (many) more properties of the algebras $\H_{t,\mbf{c}}(G)$, see \cite{CrawleyBoeveyHolland} where these algebras were first defined and studied. 
\end{example}

\subsection{Quantization} In this section we'll see how symplectic reflection algebras provide examples of quantization as described in Travis' lectures. Consider $\mbf{t}$ as a variable and $\H_{\mbf{t},\mbf{c}}(G)$ a $\C[\mbf{t}]$-algebra. Similarly, we consider $\mbf{e} \H_{\mbf{t},\mbf{c}}(G) \mbf{e}$ as a $\C[\mbf{t}]$-algebra. Then we may complete $\H_{\mbf{t},\mbf{c}}(G)$ and $\mbf{e} \H_{\mbf{t},\mbf{c}}(G) \mbf{e}$ respectively with respect to the two-sided ideals generated by the powers of $\mbf{t}$:
$$
\widehat{\H}_{\mbf{t},\mbf{c}}(G) = \lim_{\infty \leftarrow n} \H_{\mbf{t},\mbf{c}}(G) / (\mbf{t}^n), \quad \mbf{e} \widehat{\H}_{\mbf{t},\mbf{c}}(G) \mbf{e} = \lim_{\infty \leftarrow n} \mbf{e} \H_{\mbf{t},\mbf{c}}(G) \mbf{e} / (\mbf{t}^n) .
$$
The PBW Theorem implies that $\H_{\mbf{t},\mbf{c}}(G)$ and $\mbf{e} \H_{\mbf{t},\mbf{c}}(G) \mbf{e}$ are free $\C[\mbf{t}]$-modules. Therefore, $\widehat{\H}_{\mbf{t},\mbf{c}}(G)$ and $\mbf{e} \widehat{\H}_{\mbf{t},\mbf{c}}(G) \mbf{e}$ are flat, complete $\C[[\mbf{t}]]$-modules. Hence:

\begin{prop}
The algebra $\widehat{\H}_{\mbf{t},\mbf{c}}(G)$ is a formal deformation of $\H_{0,\mbf{c}}(G)$ and $\mbf{e} \widehat{\H}_{\mbf{t},\mbf{c}}(G) \mbf{e}$ is a formal quantization of $\mbf{e} \H_{0,\mbf{c}}(G) \mbf{e}$. 
\end{prop}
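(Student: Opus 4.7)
The plan is to verify three items: flatness of the completions over $\C[[\mbf{t}]]$, identification of the reduction modulo $\mbf{t}$, and (for the quantization half) commutativity of the reduction together with a well-defined Poisson bracket. Flatness and reduction are essentially formal once PBW is in hand. PBW supplies a $\C[\mbf{t}]$-basis of $\H_{\mbf{t},\mbf{c}}(G)$ by ordered monomials in a basis of $V^*$ tensored with elements of $G$, so $\H_{\mbf{t},\mbf{c}}(G)$ is a free (in particular flat) $\C[\mbf{t}]$-module, and its $\mbf{t}$-adic completion $\widehat{\H}_{\mbf{t},\mbf{c}}(G)$ is a free $\C[[\mbf{t}]]$-module, whose elements are precisely the formal power series in $\mbf{t}$ with coefficients on the PBW basis. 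The same argument handles $\mbf{e}\H_{\mbf{t},\mbf{c}}\mbf{e}$, which is a direct $\C[\mbf{t}]$-module summand of $\H_{\mbf{t},\mbf{c}}$ cut out by the idempotent $\mbf{e}$. The reduction modulo $\mbf{t}$ is read directly off the $n=1$ term of the inverse system defining the completion, giving $\H_{0,\mbf{c}}$ and $\mbf{e}\H_{0,\mbf{c}}\mbf{e}$ respectively, and this already settles the formal deformation claim.

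For the quantization claim, the nontrivial ingredient is commutativity of $\mbf{e}\H_{0,\mbf{c}}\mbf{e}$. I would first use the filtration giving $V^*$ degree $1$ and $G$ degree $0$: by PBW, $\gr\H_{0,\mbf{c}}(G) = \C[V] \rtimes G$, so that $\gr(\mbf{e}\H_{0,\mbf{c}}\mbf{e}) = \mbf{e}(\C[V]\rtimes G)\mbf{e} = \C[V]^G$ is commutative. This alone only shows that commutators in $\mbf{e}\H_{0,\mbf{c}}\mbf{e}$ drop in filtration degree; to force them to vanish one invokes the Satake-type identification of $\mbf{e}\H_{0,\mbf{c}}\mbf{e}$ with the centre of $\H_{0,\mbf{c}}(G)$ promised at the end of this lecture. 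This is where the real content of the proposition sits, and I expect it to be the main obstacle, since the graded comparison does not close the argument by itself and genuine structural information about $\H_{0,\mbf{c}}(G)$ must be brought in.

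Granted commutativity, the Poisson structure is automatic. For $a, b \in \mbf{e}\widehat{\H}_{\mbf{t},\mbf{c}}\mbf{e}$ the commutator $[a,b]$ vanishes modulo $\mbf{t}$, and flatness over $\C[[\mbf{t}]]$ forces $[a,b] \in \mbf{t}\cdot\mbf{e}\widehat{\H}_{\mbf{t},\mbf{c}}\mbf{e}$. The formula $\{\bar a,\bar b\} := \mbf{t}^{-1}[a,b] \bmod \mbf{t}$ therefore defines a bilinear operation on $\mbf{e}\H_{0,\mbf{c}}\mbf{e}$, inheriting skew-symmetry, the Leibniz rule and the Jacobi identity from the corresponding commutator identities in $\mbf{e}\widehat{\H}_{\mbf{t},\mbf{c}}\mbf{e}$. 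Together with the preceding two items, this exhibits $\mbf{e}\widehat{\H}_{\mbf{t},\mbf{c}}\mbf{e}$ as a formal quantization of $\mbf{e}\H_{0,\mbf{c}}\mbf{e}$, completing the proof.
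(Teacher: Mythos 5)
Your overall skeleton matches the paper's: PBW gives a $\C[\mbf{t}]$-basis, hence $\H_{\mbf{t},\mbf{c}}(G)$ and $\mbf{e}\H_{\mbf{t},\mbf{c}}(G)\mbf{e}$ are free $\C[\mbf{t}]$-modules, so their $\mbf{t}$-adic completions are flat and complete over $\C[[\mbf{t}]]$ with the expected reductions modulo $\mbf{t}$; this settles the formal-deformation half. You are also right to flag commutativity of $\mbf{e}\H_{0,\mbf{c}}(G)\mbf{e}$ as the real nontrivial input, and your Hayashi-style construction of the Poisson bracket from $\mbf{t}^{-1}[\,\cdot\,,\cdot\,]$ is exactly what the paper does in Section~\ref{sec:Poisson}.

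The one step I would push back on is your proposed route to commutativity ``via the Satake-type identification''. The Satake isomorphism as established in Theorem~\ref{thm:Satake} only identifies $Z(\H_{t,\mbf{c}}(G))$ with $Z(\mbf{e}\H_{t,\mbf{c}}(G)\mbf{e})$; to pass from there to the full spherical subalgebra at $t=0$ one needs precisely $\mbf{e}\H_{0,\mbf{c}}(G)\mbf{e} = Z(\mbf{e}\H_{0,\mbf{c}}(G)\mbf{e})$, which is just another way of saying that the spherical subalgebra is commutative. So invoking Satake here is circular. In the paper this commutativity is a separate, genuinely difficult theorem of Etingof--Ginzburg (part (1) of Theorem~\ref{thm:centrespherical}), proved by arguments involving the Poisson structure on $\C[V]^G$ rather than by the double-centralizer machinery behind Satake; the filtered argument you note (associated graded is $\C[V]^G$) indeed does not suffice, since it only shows commutators drop in filtration degree. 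The proposition itself is stated before Theorem~\ref{thm:centrespherical} precisely because its verification is deferred, and it is cleaner to cite that theorem directly rather than run it through Satake.
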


By formal quantization of $\mbf{e} \H_{0,\mbf{c}}(G) \mbf{e}$, we mean that there is a Poisson bracket on the commutative algebra $\mbf{e} \H_{0,\mbf{c}}(G) \mbf{e}$ such that the first order term of the quantization $\mbf{e} \widehat{\H}_{\mbf{t},\mbf{c}}(G) \mbf{e}$ is this bracket. This Poisson bracket is described in section \ref{sec:Poisson}. The algebras $\H_{\mbf{t},\mbf{c}}(G)$ are "better" than $\widehat{\H}_{\mbf{t},\mbf{c}}(G)$ in the sense that one can specialize, in the former, $\mbf{t}$ to any complex number, however in the latter only the specialization $\mbf{t} \rightarrow 0$ is well-defined.

\subsection{Filtrations}\label{sec:Filt} To try and study algebras, such as symplectic reflection algebras, that are given in terms of generators and relations, one would like to approximate the algebra by a simpler one, perhaps given by simpler relations, and hope that many properties of the algebra are invariant under this approximation process. An effective way of doing this is by defining a filtration on the algebra and passing to the associated graded algebra, which plays the role of the approximation. Let $A$ be a ring. A \textit{filtration} on $A$ is a nested sequence of abelian subgroups $0 = \mc{F}_{-1} A \subset \mc{F}_0 A \subset \mc{F}_{1} A \subset \cdots $ such that $(\mc{F}_i A)(\mc{F}_j A) \subseteq \mc{F}_{i+j} A$ for all $i,j$ and $A = \bigcup_{i \in \N} \mc{F}_i A$. The \textit{associated graded} of $A$ with respect to $\mc{F}_{\idot}$ is $\gr_{\mc{F}} A = \bigoplus_{i \in \N} \mc{F}_i A / \mc{F}_{i-1} A$. For each $0 \neq a \in A$ there is a unique $i \in \N$ such that $a \in \mc{F}_i A$ and $ a \notin \mc{F}_{i-1} A$. We say that $a$ lies in degree $i$; $\deg(a) = i$. We define $\sigma(a)$ to be the image of $a$ in $\mc{F}_i A  / \mc{F}_{i-1} A$; $\sigma(a)$ is called the \textit{symbol} of $a$. This defines a map $\sigma : A \rightarrow \gr_{\mc{F}} A$. It's important to note that the map $\sigma : A \rightarrow \gr_{\mc{F}} A$ is \textit{not} a morphism of abelian groups even though both domain and target are abelian groups. Let $\overline{a} \in \mc{F}_i A / \mc{F}_{i-1} A$ and $\overline{b} \in \mc{F}_j  A / \mc{F}_{j-1} A$. One can check that the rule $\overline{a} \cdot \overline{b} := \overline{ab}$, where $\overline{ab}$ denotes the image of $ab$ in $\mc{F}_{i+j} A / \mc{F}_{i+j-1} A$ extends by additivity to give a well-defined multiplication on $\gr_{\mc{F}} A$, making it into a ring. This multiplication is a "shadow" of that on $A$ and we're reducing the complexity of the situation by forgetting terms of lower order. 

There is a natural filtration $\mc{F}$ on $\H_{t,\mbf{c}}(G)$, given by putting $V^*$ in degree one and $G$ in degree zero. The crucial result by Etingof and Ginzburg, on which the whole of the theory of symplectic reflection algebras is built, is the Poincar\'e-Birkhoff-Witt (PBW) Theorem (the name comes from the fact that each of Poincar\'e, Birkhoff and Witt gave proofs of the analogous result for the enveloping algebra of a Lie algebra).

\begin{thm}\label{eq:PBW}
The map $\sigma (v) \mapsto v, \sigma(g) \mapsto g$ defines an isomorphism of algebras 
$$
\gr_{\mc{F}}(\H_{t,\mbf{c}}(G)) \simeq \C[V] \rtimes G,
$$
where $\sigma(D)$ denotes the symbol, or leading term, of $D \in \H_{t,\mbf{c}}(G)$ in $\gr_{\mc{F}}(\H_{t,\mbf{c}}(G))$. 
\end{thm}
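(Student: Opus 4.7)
The plan is to first construct the obvious surjection $\phi : \C[V] \rtimes G \twoheadrightarrow \gr_{\mc{F}} \H_{t,\mbf{c}}(G)$ and then prove it is injective. For surjectivity, observe that in the filtration $\mc{F}$, the defining relation
$$u \o v - v \o u - t\omega(u,v) + 2 \sum_{s \in \mc{S}} \mbf{c}(s) \omega_s(u,v)\, s$$
has top term $u \o v - v \o u$ of degree $2$, while the correction term lies in $\C G$, which sits in degree $0$. Passing to the associated graded kills the correction, so $\sigma(u)\sigma(v) = \sigma(v)\sigma(u)$ in $\gr_{\mc{F}} \H_{t,\mbf{c}}(G)$, and the assignment $v \mapsto \sigma(v)$ for $v \in V^*$, $g \mapsto g$ for $g \in G$ extends to a well-defined surjective graded algebra homomorphism $\phi$. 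All of the work is in showing that $\phi$ is injective.

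For injectivity I would invoke the Braverman--Gaitsgory criterion for PBW deformations of Koszul algebras. The algebra $\C[V] \rtimes G$ is Koszul, being a smash product of a symmetric (hence Koszul) algebra with a finite group algebra. Writing the bracket as a linear map $\kappa : \wedge^2 V^* \to \C G$ with $\kappa(u,v) = t\omega(u,v) - 2\sum_s \mbf{c}(s)\omega_s(u,v)\, s$, the Braverman--Gaitsgory theorem reduces flatness of $\H_{t,\mbf{c}}(G)$ as a deformation of $\C[V]\rtimes G$ to two conditions: (i) $\kappa$ is $G$-equivariant, and (ii) a Jacobi-type identity $\sum_{\mathrm{cyc}(u,v,w)} [\kappa(u,v), w] = 0$ holds in $TV^* \rtimes G$ for all $u,v,w \in V^*$, with commutators computed via the skew-product rule. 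Condition (i) is immediate from the $G$-invariance of $\omega$, the identity $\omega_{gsg^{-1}}(gu,gv) = \omega_s(u,v)$, and the class-function hypothesis on $\mbf{c}$.

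To verify (ii), using $g w = g(w) g$ in $TV^* \rtimes G$ one computes $[\kappa(u,v), w] = \sum_g \kappa_g(u,v)(g(w)-w)\, g$, so the $g=1$ summand vanishes and the entire $t\omega$-contribution drops out; the condition therefore reduces to
$$\sum_{s \in \mc{S}} \mbf{c}(s)\bigl[\omega_s(u,v)(1-s)w + \omega_s(v,w)(1-s)u + \omega_s(w,u)(1-s)v\bigr]\cdot s \;=\; 0$$
in $V^* \o \C G$. This can be checked termwise in $s$. Decomposing $V^* = \Im(1-s) \oplus \Ker(1-s)$, the form $\omega_s$ vanishes on the kernel summand and $(1-s)$ annihilates it, so without loss of generality $u,v,w \in \Im(1-s)$, a two-dimensional space. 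Then $u,v,w$ are linearly dependent; writing $w = \alpha u + \beta v$ and using bilinearity and antisymmetry of $\omega_s$, the bracketed expression collapses to zero by direct computation. The main obstacle is foundational rather than computational: one must justify that the Braverman--Gaitsgory framework applies (in particular the Koszulity of $\C[V]\rtimes G$ and the classification of its quadratic-constant PBW deformations via (i)--(ii)); alternatively, one can sidestep this by running Bergman's Diamond Lemma on the presentation, whose only nontrivial ambiguity is the triple overlap $u \o v \o w$ and whose resolution is precisely the identity above.
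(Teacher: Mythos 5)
Your proposal takes the same route as the paper: the text explicitly says the proof ``is an application of a general result by Braverman and Gaitsgory,'' citing \cite{PBW} and deferring to \cite[Theorem 1.3]{EG} for the details. You additionally carry out the verification of the two Braverman--Gaitsgory conditions ($G$-equivariance of $\kappa$ and the cyclic Jacobi identity, reduced by the decomposition $V^* = \Im(1-s)\oplus\Ker(1-s)$ to a two-dimensional computation), which the paper leaves to the references; this is correct and fills in exactly the omitted step.
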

 
One of the key point of the PBW theorem is that it gives us an explicit basis of the symplectic reflection algebra. Namely, if one fixes an \textit{ordered} basis of $V$, then the PBW theorem implies that there is an isomorphism of vector spaces $\H_{t,\mbf{c}}(G) \simeq \C[V] \o \C G$. One can also think of the PBW theorem as saying that no information is lost in deforming $\C[V] \rtimes G$ to $\H_{t,\mbf{c}}(G)$, since we can recover $\C[V] \rtimes G$ from $\H_{t,\mbf{c}}(G)$. 

The proof of this theorem is an application of a general result by Braverman and Gaitsgory \cite{PBW}. If $I$ is a two-sided ideal of $T V^* \rtimes G$ generated by a space $U$ of (not necessarily homogeneous) elements of degree at most two then \cite[Theorem 0.5]{PBW} gives necessary and sufficient conditions on $U$ so that the quotient $TV^* \rtimes G / I$ has the PBW property. The PBW property immediately implies that $\H_{t,\mbf{c}}(G)$ enjoys some good ring-theoretic properties, for instance: 

\begin{cor}\label{cor:finiteglobal}
\begin{enumerate}
\item The algebra $\H_{t,\mbf{c}}(G)$ is a prime, (left and right) Noetherian ring. 
\item The algebra $\mbf{e} \H_{t,\mbf{c}}(G) \mbf{e}$ is left and right Noetherian integral domain. 
\item $\H_{t,\mbf{c}}(G)$ has finite global dimension (in fact, $\mathrm{gl.dim} \ \H_{t,\mbf{c}}(G) \le \dim V$). 
\end{enumerate}
\end{cor}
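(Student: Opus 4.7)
The plan is to derive all three statements from Theorem \ref{eq:PBW} by transferring ring-theoretic properties from $\gr_{\mc{F}} \H_{t,\mbf{c}}(G) \simeq \C[V] \rtimes G$ up to $\H_{t,\mbf{c}}(G)$ itself, using the standard toolkit for non-negatively filtered algebras with Noetherian associated graded.

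For part (1), I would begin by checking both properties for $\C[V] \rtimes G$. It is finitely generated as a module over its centre $\C[V]^G$ (by Exercise \ref{ex:centre}), and $\C[V]^G$ is a finitely generated commutative $\C$-algebra by Hilbert's theorem, hence Noetherian; so $\C[V] \rtimes G$ is Noetherian. Primeness is the standard fact that a skew group ring of a finite group acting faithfully on a commutative domain is prime. Both properties then transfer to $\H_{t,\mbf{c}}(G)$ via the usual leading-symbol arguments: an ascending chain of left ideals in $\H_{t,\mbf{c}}(G)$ yields, by taking symbols level by level, an ascending chain of graded left ideals in $\gr \H_{t,\mbf{c}}(G)$ which must stabilise; and if $a \H_{t,\mbf{c}}(G) b = 0$ with $a, b \neq 0$, one picks a homogeneous $\bar c \in \gr \H_{t,\mbf{c}}(G)$ so that $\sigma(a)\, \bar c\, \sigma(b) \neq 0$ (possible by primeness downstairs), lifts $\bar c$ to some $c \in \H_{t,\mbf{c}}(G)$, and observes that $\sigma(acb) = \sigma(a)\sigma(c)\sigma(b) \neq 0$, contradicting $acb = 0$.

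For part (2), the symmetrising idempotent $\mbf{e} = \frac{1}{|G|}\sum_{g \in G} g$ lies in filtration degree zero, so $\mc{F}$ restricts to a non-negative filtration on $\mbf{e}\H_{t,\mbf{c}}(G)\mbf{e}$ whose associated graded is $\mbf{e}(\C[V] \rtimes G)\mbf{e} \simeq \C[V]^G$, a commutative domain. The same leading-symbol principle then shows that $\mbf{e}\H_{t,\mbf{c}}(G)\mbf{e}$ is itself a domain: if $ab = 0$ with $a, b \neq 0$, then $\sigma(a)\sigma(b) = 0$ in $\C[V]^G$, which is impossible. Noetherian-ness is inherited from part (1), since the corner ring $\mbf{e}A\mbf{e}$ of a Noetherian ring $A$ is always Noetherian.

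For part (3), I would appeal to the standard filtered-to-graded global dimension bound: for a non-negatively and exhaustively filtered algebra $A$ whose associated graded is Noetherian, one has $\gldim A \leq \gldim \gr A$. Since $\C[V]$ has global dimension $\dim V$ and $|G|$ is invertible in $\C$, the induction-restriction adjunction between $\C[V]$-modules and $(\C[V]\rtimes G)$-modules preserves exactness and projectivity, yielding $\gldim(\C[V] \rtimes G) = \gldim \C[V] = \dim V$. The main obstacle is really the clean invocation of these lifting principles rather than any intricate calculation: each requires verifying that the filtration $\mc{F}$ is exhaustive and separated with Noetherian associated graded, all of which are immediate from the PBW theorem together with the positivity of the grading on $\C[V] \rtimes G$.
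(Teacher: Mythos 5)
Your proposal is correct and takes essentially the same route as the paper: all three properties are transferred from $\gr_{\mc{F}} \H_{t,\mbf{c}}(G) \simeq \C[V] \rtimes G$ to $\H_{t,\mbf{c}}(G)$ via the standard leading-symbol arguments for non-negatively filtered rings with Noetherian associated graded. The only small variation is in part (2), where you deduce Noetherianity of $\mbf{e}\H_{t,\mbf{c}}(G)\mbf{e}$ from the general corner-ring lemma (a corner $\mbf{e}A\mbf{e}$ of a Noetherian ring is Noetherian, via the retraction $I \mapsto \mbf{e}AI = I$), whereas the paper applies the same filtered-to-graded transfer directly to $\mbf{e}\H_{t,\mbf{c}}(G)\mbf{e}$ using $\gr_{\mc{F}}(\mbf{e}\H_{t,\mbf{c}}(G)\mbf{e}) \simeq \C[V]^G$; both arguments are valid.
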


We'll sketch a proof of the corollary, to illustrate the use of filtrations. 

\begin{proof}
All three of the results follow from the fact that corresponding statement hold for the skew-group ring $\C[V] \rtimes G$. We'll leave it to the reader to show these statements for $\C[V] \rtimes G$ - in particular, in part (3) the claim is that the global dimension of $\C[V] \rtimes G$ equals the global dimension of $\C[V]$, which is well-known to equal $\dim V$. So we may assume that $(A,\mc{F}_{\idot})$ is a filtered ring such that the above statements hold for $\gr_{\mc{F}} A$. Let $I_1 \subset I_2 \subset \cdots$ be a chain of left ideals in $A$. Then it follows from the definition of multiplication in $\gr_{\mc{F}} A$ that $\sigma(I_1) \subset \sigma(I_2) \subset \cdots$ is a chain of left ideals in $\gr_{\mc{F}} A$. Therefore, there is some $N$ such that $\sigma(I_{N+i}) = \sigma(I_{N})$ for all $i \ge 0$. This implies that $I_{N+i} = I_{N}$ for all $i \ge 0$, and hence $A$ is left Noetherian. The argument for right Noetherian is identical. Now assume that $I,J$ are ideals of $A$ such that $I \cdot J = 0$. Then certainly $\sigma(I) \sigma(J) = 0$. Since $\gr_{\mc{F}} A$ is assumed to be prime, this implies that either $\sigma(I) = 0$ or $\sigma(J) = 0$. But this can only happen if $I = 0$ or $J = 0$. Hence $A$ is prime.  

For part (2), we note that the algebra $\mbf{e} (\C[V] \rtimes W) \mbf{e}$ is isomorphic to $\C[V]^G \subset \C[V]$ and hence is a Noetherian integral domain. So we may assume that $(A,\mc{F}_{\idot})$ is a filtered ring such that $\gr_{\mc{F}} A$ is an integral domain. If $a , b \in A$ such that $a \cdot b = 0$ then certainly $\sigma(a) \cdot \sigma(b) = 0$ in $\gr_{\mc{F}} A$. Hence, without loss of generality $\sigma(a) = 0$. But this implies that $a = 0$, as required. 

To prove that $\mathrm{gl.dim} \ A \le \mathrm{gl.dim} \  \gr_{\mc{F}} A$ is a bit more involved (but not too difficult) because it involves the notion of filtration on $A$-modules, compatible with the filtration on $A$. The proof is given in \cite[Section 7.6]{MR}. 
\end{proof}

\subsection{The rational Cherednik algebra}\label{exam:RCAdefinition}

There is a standard way to construct a large number of symplectic reflection groups - by creating them out of complex reflection groups. This class of symplectic reflection algebras is by far the most important and, thus, have been most intensively studied out of all symplectic reflection algebras. So let $W$ be a complex reflection group, acting on its reflection representation $\h$. Then $W$ acts diagonally on $\h \times \h^*$. To be explicit, $W$ acts on $\h^*$ by $(w \cdot x)(y) = x( w^{-1} y)$, where $x \in \h^*$ and $y \in \h$. Then, $w \cdot (y,x) = (w \cdot y, w \cdot x)$. The space $\h \times \h^*$ has a natural pairing $( \cdot, \cdot ) : \mathfrak{h} \times \mathfrak{h}^* \rightarrow \C$ defined by $(y,x) = x(y)$, and 
$$
\omega ((y_1,x_1),(y_2,x_2)) := (y_1,x_2) - (y_2,x_1)
$$
defines a $W$-equivariant symplectic form on $\h \times \h^*$. One can easily check that the set of symplectic reflection $\mc{S}$ in $W$, consider as a symplectic reflection group $(\h \times \h^*,\omega,W)$, is the same as the set of complex reflections $S$ in $W$, considered as a complex reflection group $(W,\h)$. Therefore, $W$ acts on the symplectic vector space $\h \times \h^*$ as a symplectic reflection group if and only if it acts on $\h$ as a complex reflection group.  

The \textit{rational Cherednik algebra}, as introduced by Etingof and Ginzburg \cite[page 250]{EG}, is the symplectic reflection algebra associated to the triple $(\h \times \h^*, \omega,W)$. In this situation, one can simplify somewhat the defining relation (\ref{eq:srarel}). For each $s \in \mathcal{S}$, fix $\alpha_s \in \mathfrak{h}^*$ to be a basis of the one dimensional space $\Im (s - 1)|_{\mathfrak{h}^*}$ and $\alpha_s^{\vee} \in \mathfrak{h}$ a basis of the one dimensional space $\Im (s - 1)|_{\mathfrak{h}}$, normalized so that $\alpha_s(\alpha_s^\vee) = 2$. Then the relation (\ref{eq:srarel}) can be expressed as: 
\begin{equation}\label{eq:rel}
[x_1,x_2] = 0, \qquad [y_1,y_2] = 0, \qquad [y_1,x_1] = t (y_1,x_1) - \sum_{s \in \mathcal{S}} \mathbf{c}(s) (y_1,\alpha_s)(\alpha_s^\vee,x_1) s, 
\end{equation}
for all $x_1,x_2 \in \mathfrak{h}^* \textrm{ and } y_1,y_2 \in \mathfrak{h}$. Notice that these relations imply that $\C[\h]$ and $\C[\h^*]$ are polynomial subalgebras of $\H_{t,\mbf{c}}(W)$. 

\begin{example}\label{example:symmetric}
In the previous example we can take $W = \s_n$, the symmetric group. Choose a basis $x_1, \ds, x_n$ of $\mf{h}^*$ and dual basis $y_1, \ds, y_n$ of $\mf{h}$ so that 
$$
\sigma x_i = x_{\sigma(i)} \sigma, \quad \sigma y_i = y_{\sigma(i)} \sigma, \quad \forall \ \sigma \in \s_n. 
$$
Then $\mc{S} = \{ s_{i,j} \ | \ 1 \le i < j \le n \}$ is the set of all transpositions in $\s_n$. This is a single conjugacy class, so $\mbf{c} \in \C$. Fix
$$
\alpha_{i,j} = x_i - x_j, \quad \alpha_{i,j}^{\vee} = y_i - y_j, \quad \forall \ 1 \le i < j \le n.
$$
Then the relations for $\H_{t,\mbf{c}}(\s_n)$ become $[x_i,x_j] = [y_i,y_j] = 0$ and 
$$
[y_i,x_j] = \mbf{c} s_{i,j}, \quad \forall \ 1 \le i < j \le n,
$$
$$
[y_i,x_i] = t - \mbf{c} \sum_{j \neq i} s_{i,j}, \quad \forall \ 1 \le i \le n.
$$
\end{example}

\begin{exercise}\label{ex:PBW}
To see why the PBW theorem is quite a subtle statement, consider the algebra $\mathsf{L}(\s_2)$ defined to be 
$$
\C \langle x,y,s \rangle / \langle s^2 = 1, s x = -xs, s y = - y s, [y,x] = 1, (y - s) x = xy + s \rangle.
$$
Show that $\mathsf{L}(\s_2) = 0$. 
\end{exercise}

\subsection{Double Centralizer property}

Let $\mbf{e} = \frac{1}{|G|} \sum_{g \in G} g$ denote the trivial idempotent in $\C G$. The subalgebra $\mbf{e} \H_{t,\mbf{c}}(G) \mbf{e} \subset \H_{t,\mbf{c}}(G)$ is called the \textit{spherical subalgebra} of $\H_{t,\mbf{c}}(G)$. Being a subalgebra, it inherits a filtration from $\H_{t,\mbf{c}}(G)$. It is a consequence of the PBW theorem that $\gr_{\mc{F}}(\mbf{e} \H_{t,\mbf{c}}(G) \mbf{e}) \simeq \C[V]^G$. Thus, the spherical subalgebra of $\H_{t,\mbf{c}}(G)$ is a (not necessarily commutative!) flat deformation of the coordinate ring of $V/G$; almost exactly what we've been looking for! 

The space $\H_{t,\mbf{c}}(G) \mbf{e}$ is a $(\H_{t,\mbf{c}}(G), \mbf{e} \H_{t,\mbf{c}}(G) \mbf{e})$-bimodule, it is called the Etingof-Ginzburg sheaf. The following result shows that one can recover $\H_{t,\mbf{c}}(G)$ from knowing $\mbf{e} \H_{t,\mbf{c}}(G) \mbf{e}$ and $\H_{t,\mbf{c}}(G) \mbf{e}$.

\begin{thm}\label{thm:endomorphismring}
\begin{enumerate}
\item The map $\mbf{e} h \mapsto (\phi_{\mbf{e} h} : f \mbf{e} \mapsto \mbf{e} h f \mbf{e})$ is an isomorphism of \textit{left} $\mbf{e} \H_{t,\mbf{c}}(G) \mbf{e}$-modules 
$$
\mbf{e} \H_{t,\mbf{c}}(G)\stackrel{\sim}{\longrightarrow} \Hom_{\mbf{e} \H_{t,\mbf{c}}(G) \mbf{e}}(\H_{t,\mbf{c}}(G)\mbf{e}, \mbf{e} \H_{t,\mbf{c}}(G)\mbf{e}).
$$  
\item $\End_{\H_{t,\mbf{c}}(G)}(\H_{t,\mbf{c}}(G) \mbf{e})^{op} \simeq \mbf{e} \H_{t,\mbf{c}}(G) \mbf{e}$. 
\item $\End_{(\mbf{e} \H_{t,\mbf{c}} \mbf{e})^{op}}( \H_{t,\mbf{c}}(G) \mbf{e}) \simeq \H_{t,\mbf{c}}(G)$. 
\end{enumerate}
\end{thm}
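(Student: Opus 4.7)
My plan is to prove the three statements in the order $(2)$, $(3)$, $(1)$, since $(2)$ is formal, $(3)$ carries the main content, and $(1)$ is extracted from $(3)$.

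Part $(2)$ holds for any ring $A$ and any idempotent $\mbf{e}$: left $A$-linearity of $\phi \in \End_A(A\mbf{e})$ forces $\phi(a\mbf{e}) = a\phi(\mbf{e})$, so $\phi$ is determined by $\phi(\mbf{e}) = \phi(\mbf{e}^2) = \mbf{e}\phi(\mbf{e}) \in \mbf{e}A\mbf{e}$; conversely, right multiplication by any $\mbf{e}a\mbf{e}$ realises such a $\phi$. Composition corresponds to reversed multiplication in $\mbf{e}A\mbf{e}$, giving the stated anti-isomorphism.

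For part $(3)$, write $A = \H_{t,\mbf{c}}(G)$ and consider $\Phi : A \to \End_{(\mbf{e}A\mbf{e})^{op}}(A\mbf{e})$, the left multiplication map. Injectivity follows from primeness of $A$ (Corollary \ref{cor:finiteglobal}): if $a \cdot A\mbf{e}=0$ then $aA\mbf{e}A=0$, and $\mbf{e}\in\mbf{e}A\mbf{e}\neq 0$ forces $a=0$. For surjectivity I would pass to the PBW associated graded. Since $g\mbf{e}=\mbf{e}$ for all $g\in G$, Theorem \ref{eq:PBW} identifies $\gr(A\mbf{e}) \cong \C[V]$ as a $(\C[V]\rtimes G,\,\C[V]^G)$-bimodule and $\gr(\mbf{e}A\mbf{e}) \cong \C[V]^G$. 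The PBW filtration induces an order filtration on $\End_{(\mbf{e}A\mbf{e})^{op}}(A\mbf{e})$, and the standard symbol argument (an order-$n$ endomorphism whose symbol vanishes on $\gr(A\mbf{e})$ in fact has order $\leq n-1$) produces a chain of injections
$$
\gr A \;\hra\; \gr\End_{(\mbf{e}A\mbf{e})^{op}}(A\mbf{e}) \;\hra\; \End_{\C[V]^G}(\C[V])
$$
whose composition is the natural map $f \otimes g \mapsto (h \mapsto f \cdot {}^g h)$. That this composition is an isomorphism is Auslander's theorem, applicable because $G\subset Sp(V)$ contains no pseudo-reflections of $V$: every finite-order $g\in Sp(V)$ is diagonalizable with $\det g=1$, ruling out $\rk(1-g)=1$. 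Hence $\gr\Phi$ is an isomorphism, and since the filtration is exhaustive and non-negative, so is $\Phi$.

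Part $(1)$ is then extracted from $(3)$ by applying $\mbf{e}\cdot(-)$ on both sides. The subspace $\mbf{e}\cdot\End_{(\mbf{e}A\mbf{e})^{op}}(A\mbf{e})$ consists of those $\psi$ whose image lies in $\mbf{e}\cdot A\mbf{e}=\mbf{e}A\mbf{e}$, so it coincides with $\Hom_{(\mbf{e}A\mbf{e})^{op}}(A\mbf{e},\mbf{e}A\mbf{e})$; conversely any element of the latter, regarded as a self-map of $A\mbf{e}$ via the inclusion $\mbf{e}A\mbf{e}\hra A\mbf{e}$, is fixed by left multiplication by $\mbf{e}$. Under the isomorphism of $(3)$, $\mbf{e}\cdot A=\mbf{e}A$ matches $\mbf{e}\cdot\End$, and unwinding recovers the map $\mbf{e}h\mapsto(f\mbf{e}\mapsto\mbf{e}hf\mbf{e})$ of the statement. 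The one piece of real content in the whole argument is Auslander's theorem for $(\C[V]\rtimes G, \C[V]^G)$, which I would either cite or prove via faithfulness of the $G$-action and Galois descent from $\C(V)/\C(V)^G$, extended from the generic point by a reflexivity argument using that no pseudo-reflections occur.
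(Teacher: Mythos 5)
The paper does not prove Theorem \ref{thm:endomorphismring}; it is cited as Theorem 1.5 of Etingof--Ginzburg \cite{EG}. Your argument is essentially the Etingof--Ginzburg proof: part (2) is the formal idempotent isomorphism, injectivity of the bimodule map in (3) comes from primeness, and surjectivity is obtained by passing to the PBW associated graded and invoking Auslander's theorem for $\C[V]^G \subset \C[V]$, which applies precisely because a nontrivial finite-order symplectic transformation has $\det = 1$ and hence cannot satisfy $\mathrm{rk}(1-g)=1$. Your extraction of (1) from (3) via multiplication by $\mbf{e}$ is clean and correct. The one point you slide past is the claim that the order filtration on $\End_{(\mbf{e}A\mbf{e})^{\op}}(A\mbf{e})$ is exhaustive and bounded below, which is what lets you pass from ``$\gr\Phi$ is an isomorphism'' to ``$\Phi$ is an isomorphism''. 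This requires knowing that $A\mbf{e}$ is a finitely generated right $\mbf{e}A\mbf{e}$-module with a good filtration (equivalently, that $\gr(A\mbf{e}) = \C[V]$ is module-finite over $\gr(\mbf{e}A\mbf{e}) = \C[V]^G$ and that the induced filtration on $A\mbf{e}$ is, up to bounded shift, the good one coming from lifting a finite generating set). This is true and standard, but it is the only nontrivial hypothesis underpinning your ``symbol argument'' and deserves a sentence. With that added, your write-up is a complete and correct version of the proof the paper defers to \cite{EG}.
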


\begin{remark}
As in (1), the natural map of \textit{right} $\mbf{e} \H_{t,\mbf{c}}(G) \mbf{e}$-modules 
$$
\H_{t,\mbf{c}}(G) \mbf{e} \rightarrow \Hom_{\mbf{e} \H_{t,\mbf{c}}(G) \mbf{e}}(\mbf{e}\H_{t,\mbf{c}}(G),\mbf{e} \H_{t,\mbf{c}}(G)\mbf{e})
$$
is an isomorphism. This, together with (1) imply that $\mbf{e} \H_{t,\mbf{c}}(G)$ and $\H_{t,\mbf{c}}(G) \mbf{e}$ are \textit{reflexive} left and right $\mbf{e} \H_{t,\mbf{c}}(G) \mbf{e}$-modules respectively  (see \cite[Section 5.1.7]{MR}).
\end{remark}

The above result is extremely useful because, unlike the spherical subalgebra, we have an explicit presentation of $\H_{t,\mbf{c}}(G)$. Therefore, we can try to implicitly study $\mbf{e} \H_{t,\mbf{c}}(G) \mbf{e}$ by studying instead the algebra $\H_{t,\mbf{c}}(G)$. Though the rings $\mbf{e} \H_{t,\mbf{c}}(G) \mbf{e}$ and $\H_{t,\mbf{c}}(G)$ are never isomorphic, very often the next best thing is true, namely that they are Mortia equivalent. This means that the categories of left $\mbf{e} \H_{t,\mbf{c}}(G) \mbf{e}$-modules and of left $\H_{t,\mbf{c}}(G)$-modules are equivalent. Let $A$ be an algebra. We'll denote by $\Lmod{A}$ the category of finitely generated left $A$-modules. If $A$ is Noetherian (which will always be the case for us) then $\Lmod{A}$ is abelian. 

\begin{cor}\label{cor:Morita}
The algebras $\H_{t,\mbf{c}}(G)$ and $\mbf{e} \H_{t,\mbf{c}}(G) \mbf{e}$ are Morita equivalent if and only if $\mbf{e} \cdot M = 0$ implies $M = 0$ for all $M \in \Lmod{\H_{t,\mbf{c}}(G)}$.
\end{cor}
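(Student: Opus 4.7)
The plan is to recognize this as an instance of the classical fact: for a ring $A$ and an idempotent $e \in A$, the natural adjoint pair
\[
Ae \otimes_{eAe} - \,:\, \Lmod{eAe} \rightleftarrows \Lmod{A} \,:\, e(-)
\]
is a Morita equivalence if and only if $e$ is \emph{full}, meaning $AeA = A$. All of the bimodule identifications underlying this comparison have already been supplied by Theorem \ref{thm:endomorphismring}, so the corollary will follow once one checks that the stated faithfulness condition is equivalent to $\H\mbf{e}\H = \H$, where I abbreviate $\H := \H_{t,\mbf{c}}(G)$.

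For the easy direction $(\Leftarrow)$, apply the hypothesis to the cyclic (hence finitely generated) module $M := \H/\H\mbf{e}\H$. Since $\mbf{e} = \mbf{e}\cdot 1 \in \H\mbf{e}\H$, one has $\mbf{e}\cdot \H \subseteq \H\mbf{e}\H$ and therefore $\mbf{e}M = 0$. The hypothesis then forces $M = 0$, i.e.\ $\H\mbf{e}\H = \H$. From here one extracts Morita equivalence in the usual way: $\H\mbf{e}$ is a direct summand of $\H = \H\mbf{e}\oplus \H(1-\mbf{e})$, hence a finitely generated projective left $\H$-module, and its trace ideal is
\[
\sum_{\phi\in\Hom_\H(\H\mbf{e},\H)}\phi(\H\mbf{e}) \;=\; \H\mbf{e}\H \;=\; \H,
\]
using the canonical identification $\Hom_\H(\H\mbf{e},\H)\simeq \mbf{e}\H$, $\phi\mapsto\phi(\mbf{e})$. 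Thus $\H\mbf{e}$ is a progenerator, and the classical Morita theorem delivers the required equivalence, with the endomorphism ring correctly identified as $\mbf{e}\H\mbf{e}$ thanks to Theorem \ref{thm:endomorphismring}(2).

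The converse direction $(\Rightarrow)$ is simply the opposite implication in the same dictionary: if $\H$ and $\mbf{e}\H\mbf{e}$ are Morita equivalent through the canonical bimodule $\H\mbf{e}$ (which is the natural choice in view of Theorem \ref{thm:endomorphismring}), then $M\mapsto \mbf{e}M$ is an equivalence of abelian categories, hence faithful on objects. Therefore $\mbf{e}M = 0$ already implies $M\simeq 0$.

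Since the argument rests entirely on standard Morita theory together with the endomorphism-ring identifications of Theorem \ref{thm:endomorphismring}, there is no substantive obstacle; the only subtlety is bookkeeping. One must check that $\H/\H\mbf{e}\H$ is a bona fide object of $\Lmod{\H}$ — immediate from its being cyclic — and one must take care that the Morita equivalence under consideration is implemented by the canonical bimodule $\H\mbf{e}$ rather than some unrelated one, which is precisely what the identifications in Theorem \ref{thm:endomorphismring}(2),(3) guarantee.
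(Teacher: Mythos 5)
Your proof is correct and follows essentially the same route as the paper's: both reduce the statement to the classical fact that the canonical functors induced by the bimodule $\H\mbf{e}$ give a Morita equivalence precisely when the idempotent $\mbf{e}$ is full, i.e.\ $\H\mbf{e}\H = \H$, and both identify failure of fullness with the existence of a nonzero module killed by $\mbf{e}$. Your version is slightly more explicit about the trace-ideal computation and about separating the two implications, but there is no substantive difference from the paper's argument.
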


\begin{proof}
Theorem \ref{thm:endomorphismring}, together with a basic result in Morita theory e.g. \cite[Section 3.5]{MR}, says that the bimodule $\H_{t,\mbf{c}}(G) \mbf{e}$ will induce an equivalence of categories $\mbf{e} \cdot - : \Lmod{\H_{t,\mbf{c}}(G)} \stackrel{\sim}{\longrightarrow} \Lmod{\mbf{e} \H_{t,\mbf{c}}(G) \mbf{e}}$ if and only if $\H_{t,\mbf{c}}(G) \mbf{e}$ is both a generator of the category $\Lmod{\H_{t,\mbf{c}}(G)}$ and a projective $\H_{t,\mbf{c}}(G)$-module. Since $\H_{t,\mbf{c}}(G) \mbf{e}$ is a direct summand of $\H_{t,\mbf{c}}(G)$ it is projective. Therefore we just need to show that it generates the category $\Lmod{\H_{t,\mbf{c}}(G)}$. This condition can be expressed as saying that
$$
\H_{t,\mbf{c}}(G) \mbf{e} \o_{\mbf{e} \H_{t,\mbf{c}}(G) \mbf{e}} \mbf{e} M \simeq M, \quad \forall \ M \in \Lmod{\H_{t,\mbf{c}}(G)}. 
$$
Equivalently, we require that $\H_{t,\mbf{c}}(G) \cdot \mbf{e} \cdot \H_{t,\mbf{c}}(G) = \H_{t,\mbf{c}}(G)$. If this is not the case then 
$$
I := \H_{t,\mbf{c}}(G) \cdot \mbf{e} \cdot \H_{t,\mbf{c}}(G)
$$
is a proper two-sided ideal of $\H_{t,\mbf{c}}(G)$. Hence there exists some module $M$ such that $I \cdot M = 0$. But this is equivalent to $\mbf{e} \cdot M = 0$. 
\end{proof}

The following notion is very important in the study of rational Cherednik algebas at $t = 1$. 

\begin{defn}
The parameter $(t,\mbf{c})$ is said to be \textit{aspherical} for $G$ if there exists a non-zero $\H_{t,\mbf{c}}(G)$-module $M$ such that $\mbf{e} \cdot M = 0$. 
\end{defn}

The value $(t,\mbf{c}) = (0,0)$ is an example of an aspherical value for $G$. 

\subsection{The centre of $\H_{t,\mbf{c}}(G)$}

One may think of the parameter $t$ as a ``quantum parameter''. When $t = 0$, we are in the ``quasi-classical\footnote{The words "quasi-classical" appear often in deformation theory. Why are things "quasi-classical" as opposed to "classical"? Well, roughly speaking, quantization is the process of making a commutative algebra (or a space) into a non-commutative algebra (or "non-commutative space"). The word quasi refers to the fact that when it is possible to quantize an algebra, this algebra (or space) has some additional structure. Namely, the fact that an algebra is quantizable implies that it is a Poisson algebra, and not just any old commutative algebra.}  situation'' and when $t = 1$ we are in the ``quantum situation'' - illustrated by the fact that $\H_{0,0}(G) = \C[V] \rtimes G$ and $\H_{1,0}(G) = \mathsf{Weyl}(V,\omega) \rtimes G$. The following result gives meaning to such a vague statement. It also shows that the symplectic reflection algebra produces a genuine commutative deformation of the space $V/G$ when $t = 0$. 

\begin{thm}\label{thm:centrespherical}
\begin{enumerate}
\item If $t = 0$ then the spherical subalgebra $\mbf{e} \H_{t,\mbf{c}}(G) \mbf{e}$ is commutative.   
\item If $t \neq 0$ then the centre of $\mbf{e} \H_{t,\mbf{c}}(G) \mbf{e}$ is $\C$. 
\end{enumerate}
\end{thm}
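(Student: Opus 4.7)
The plan is to treat both parts together via the filtration $\mc{F}_\bullet$ from Section \ref{sec:Filt} with $V^*$ in degree $1$ and $G$ in degree $0$, under which $\gr_\mc{F} \H_{t,\mbf{c}}(G) \simeq \C[V] \rtimes G$ by the PBW theorem. The key quantitative fact is that the defining relation forces $[u,v]\in \mc{F}_0$ for $u,v\in \mc{F}_1$, so the commutator drops filtration degree by $2$ rather than just $1$; iterating Leibniz gives $[\mc{F}_i,\mc{F}_j]\subseteq \mc{F}_{i+j-2}$. Consequently $\C[V]\rtimes G$ inherits a Poisson bracket of degree $-2$, given on linear generators by
\[
\{u,v\} \;=\; t\,\omega(u,v)\cdot 1_G \;-\; 2\sum_{s\in\mc{S}} \mbf{c}(s)\,\omega_s(u,v)\,s \ \in \ \C G.
\]

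Every element of $\mbf{e}\H_{t,\mbf{c}}(G)\mbf{e}$ can be written uniquely as $f\mbf{e}$ with $f \in \C[V]^G$: applying $\mbf{e}\cdot\mbf{e}$ to a PBW expansion $\xi = \sum_h p_h h$ and using $h\mbf{e}=\mbf{e}$ yields $\mbf{e}\xi\mbf{e} = \bar p\,\mbf{e}$, where $\bar p$ is the $G$-average of $\sum_h p_h$. This identifies $\gr_\mc{F}(\mbf{e}\H_{t,\mbf{c}}(G)\mbf{e}) \simeq \C[V]^G$. Furthermore, because $hf = (h\cdot f)h = fh$ for $f \in \C[V]^G$, one has the simplification $[f\mbf{e},\,g\mbf{e}] = [f,g]_{\H}\,\mbf{e}$ for $f,g \in \C[V]^G$, and its top symbol in $\C[V]^G$ coincides with the induced Poisson bracket restricted to invariants.

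For part (1) (the case $t=0$), I aim to establish the stronger identity $[f,g]_{\H} = 0$ in $\H_{0,\mbf{c}}(G)$ for all $f,g \in \C[V]^G$. I would reduce by Leibniz to a set of algebra generators of $\C[V]^G$ and then verify vanishing by a direct PBW calculation: expanding $[f,g]_{\H}$ using $[u,v]=-2\sum_s\mbf{c}(s)\omega_s(u,v)s$ and pushing reflections past variables via $su = (s\cdot u)s$ produces sums indexed by the reflections, weighted by character sums over the isotropy action on monomials of $f$ and $g$. Since $f,g$ lie in the trivial isotypic component, these character sums cancel. The principal technical obstacle is organising this combinatorial identity cleanly; the baseline consistency check is that the degree-$-2$ Poisson bracket restricted to $\C[V]^G \subset \C[V]\rtimes G$ must itself vanish at $t=0$, which can be verified directly from the noncommutative Leibniz rule together with $sf=fs$ and the antisymmetry of $\omega_s$.

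For part (2), take $z \in Z(\mbf{e}\H_{t,\mbf{c}}(G)\mbf{e})$ with symbol $\bar z \in \C[V]^G$. Centrality forces $\{\bar z, \C[V]^G\} = 0$ in the induced Poisson bracket. By the same cancellation as in part (1), the $\mbf{c}$-dependent piece of the bracket vanishes on $\C[V]^G$, so on invariants it reduces to $t\cdot\{-,-\}_\omega$, the standard symplectic Poisson bracket scaled by $t$. Since $\omega$ is non-degenerate, the Poisson centre of $(\C[V],\{-,-\}_\omega)$ is $\C$, and the same holds after restricting to $\C[V]^G$. Hence $\bar z \in \C$, forcing $\deg z = 0$ and $z \in \mc{F}_0 \cap \mbf{e}\H_{t,\mbf{c}}(G)\mbf{e} = \mbf{e}\C G\mbf{e} = \C\,\mbf{e}$.
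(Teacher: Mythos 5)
The central difficulty here is part (1), and your proposal does not actually overcome it. After correctly setting up the filtration, identifying $\gr_{\mc{F}}(\mbf{e}\H_{t,\mbf{c}}(G)\mbf{e}) \simeq \C[V]^G$, and reducing to the claim $[f,g]_{\H}=0$ for $f,g\in\C[V]^G$ (which is simply a reformulation of the commutativity statement, since $[f\mbf{e},g\mbf{e}]=[f,g]\mbf{e}$), you defer the crux to an unspecified ``direct PBW calculation'' and then explicitly concede that ``the principal technical obstacle is organising this combinatorial identity cleanly.'' That obstacle \emph{is} the theorem. The consistency check you offer --- that the degree-$(-2)$ Poisson bracket on $\C[V]^G$ vanishes at $t=0$ --- is a strictly weaker, first-order statement: it says the commutator drops filtration degree by at least $3$, not that it is zero. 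Commutativity of $\mbf{e}\H_{0,\mbf{c}}(G)\mbf{e}$ requires the vanishing of all the higher brackets as well, and nothing in your sketch controls them. This is precisely why the paper defers part (1) to Etingof--Ginzburg \cite[Theorem 1.6]{EG} and elsewhere describes their argument as ``very clever but difficult''; it is a genuinely deformation-theoretic argument, not an identity that falls out of pushing reflections through monomials. (For rational Cherednik algebras there is an elementary proof via the Dunkl embedding, as in Exercise \ref{ex:ex1.3}, but that route is unavailable for a general symplectic reflection group.)

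This gap propagates into part (2). Your argument there is sound in outline --- if the degree-$(-2)$ Poisson bracket on $\C[V]^G$ equals $t\{-,-\}_{\omega}$, then for $t\neq 0$ the symbol $\bar z$ of any central element lies in the Poisson centre of $(\C[V]^G,\{-,-\}_{\omega})$, which is $\C$ by the \'etale-descent argument on $V_{\reg}$, forcing $z\in\mbf{e}\,\C G\,\mbf{e}=\C\mbf{e}$. But the identification of the bracket as $t\{-,-\}_{\omega}$, i.e.\ the assertion that the $\mbf{c}$-dependent part vanishes, is invoked ``by the same cancellation as in part (1)''. Since the degree-$(-2)$ bracket is linear in $(t,\mbf{c})$ and agrees with $\{-,-\}_{\omega}$ at $\mbf{c}=0$, you do need the vanishing at $t=0$ (i.e.\ a consequence of part (1)) to pin it down; without it you only know the bracket is $t\{-,-\}_{\omega}$ plus an undetermined $\mbf{c}$-linear term, which could a priori degenerate for specific $(t,\mbf{c})$ with $t\neq 0$. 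In short: your high-level strategy --- exploiting the Poisson structure on $\C[V]^G$ --- is the same thread the paper points to via the Etingof--Ginzburg and Brown--Gordon references, but both parts of your argument depend on a cancellation you have not established, and that cancellation is where the real content of the theorem lives.
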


One can now use the double centralizer property, Theorem \ref{thm:endomorphismring}, to lift Theorem \ref{thm:centrespherical} to a result about the centre of $\H_{t,\mbf{c}}(G)$. 

\begin{thm}[The Satake isomorphism]\label{thm:Satake}
The map $z \mapsto z \cdot \mbf{e}$ defines an algebra isomorphism $Z(\H_{t,\mbf{c}}(G)) \stackrel{\sim}{\longrightarrow} Z(\mbf{e} \H_{t,\mbf{c}}(G) \mbf{e})$ for all parameters $(t,\mbf{c})$. 
\end{thm}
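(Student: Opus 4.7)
The plan is to exploit the double centralizer property of Theorem \ref{thm:endomorphismring}(3), which identifies $\H_{t,\mbf{c}}(G)$ with $\End_{(\mbf{e}\H_{t,\mbf{c}}(G)\mbf{e})^{op}}(\H_{t,\mbf{c}}(G)\mbf{e})$ via the action of left multiplication. Abbreviating $\H := \H_{t,\mbf{c}}(G)$, this identification immediately says that the Etingof--Ginzburg sheaf $\H\mbf{e}$ is a \emph{faithful} left $\H$-module, which will be the key tool for both injectivity and for checking centrality of the preimage.

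First I would verify that $\Phi\colon z\mapsto z\mbf{e}$ is a well-defined algebra homomorphism $Z(\H)\to Z(\mbf{e}\H\mbf{e})$. If $z\in Z(\H)$ then $z\mbf{e} = \mbf{e} z\mbf{e}\in\mbf{e}\H\mbf{e}$, and $z\mbf{e}$ commutes with every $\mbf{e} h\mbf{e}\in\mbf{e}\H\mbf{e}$ because $z$ commutes with $h$; hence $\Phi(z)\in Z(\mbf{e}\H\mbf{e})$. Multiplicativity is immediate from $\mbf{e}^2 = \mbf{e}$. Injectivity is equally short: if $z\mbf{e} = 0$, then for every $h\in\H$ one has $z\cdot(h\mbf{e}) = hz\mbf{e} = 0$, so $z$ annihilates the faithful module $\H\mbf{e}$ and therefore $z = 0$.

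The heart of the proof is surjectivity. Given $\zeta\in Z(\mbf{e}\H\mbf{e})$, I would consider the map $R_\zeta\colon \H\mbf{e}\to\H\mbf{e}$, $x\mapsto x\zeta$, of right multiplication by $\zeta$. For any $\alpha\in\mbf{e}\H\mbf{e}$, centrality of $\zeta$ inside $\mbf{e}\H\mbf{e}$ yields $R_\zeta(x\cdot\alpha) = x\alpha\zeta = x\zeta\alpha = R_\zeta(x)\cdot\alpha$, so $R_\zeta$ is an endomorphism of the right $\mbf{e}\H\mbf{e}$-module $\H\mbf{e}$. By Theorem \ref{thm:endomorphismring}(3) there is a unique $z\in\H$ such that $R_\zeta$ coincides with left multiplication by $z$; evaluating both at $\mbf{e}\in\H\mbf{e}$ gives $z\mbf{e} = \mbf{e}\zeta = \zeta$, so $\Phi(z) = \zeta$. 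To see $z\in Z(\H)$, observe that left multiplication by any $h\in\H$ manifestly commutes with the right multiplication $R_\zeta$; translating this equality back through the isomorphism of Theorem \ref{thm:endomorphismring}(3) shows that $hz$ and $zh$ act identically on $\H\mbf{e}$, and faithfulness of $\H\mbf{e}$ forces $hz = zh$.

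The main conceptual point, rather than a technical obstacle, is to view $\H\mbf{e}$ as an $(\H,\mbf{e}\H\mbf{e})$-bimodule and use Theorem \ref{thm:endomorphismring}(3) to transport the central element $\zeta$, acting on the right, into a central element $z$, acting on the left. With this picture in place the argument is uniform in $(t,\mbf{c})$ and requires no case analysis; in particular the Satake isomorphism is a purely formal consequence of the double centralizer property and the faithfulness of $\H\mbf{e}$, both of which hold at all parameter values.
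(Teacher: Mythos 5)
Your proposal is correct and follows essentially the same approach as the paper: both build the inverse to $z\mapsto z\mbf{e}$ by using Theorem \ref{thm:endomorphismring}(3) to convert right multiplication by a central element of $\mbf{e}\H_{t,\mbf{c}}(G)\mbf{e}$ on $\H_{t,\mbf{c}}(G)\mbf{e}$ into left multiplication by an element of $\H_{t,\mbf{c}}(G)$, which is then checked to be central. You are slightly more thorough than the paper in explicitly verifying injectivity and the identity $z\mbf{e}=\zeta$ via faithfulness of $\H_{t,\mbf{c}}(G)\mbf{e}$, which the paper leaves implicit.
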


\begin{proof}
Clearly $z \mapsto z \cdot \mbf{e}$ is a morphism $Z(\H_{t,\mbf{c}}(G)) \rightarrow Z(\mbf{e} \H_{t,\mbf{c}}(G) \mbf{e})$. Right multiplication on $\H_{t,\mbf{c}}(G) \cdot \mbf{e}$ by an element $a$ in $Z(\mbf{e} \H_{t,\mbf{c}}(G) \mbf{e})$ defines a right $\mbf{e} \H_{t,\mbf{c}}(G) \mbf{e}$-linear endomorphism of $H_{t,\mbf{c}}(G) \cdot \mbf{e}$. Therefore Theorem \ref{thm:endomorphismring} says that there exists some $\zeta(a) \in \H_{t,\mbf{c}}(G)$ such that right multiplication by $a$ equals left multiplication on $\H_{t,\mbf{c}}(G) \cdot \mbf{e}$ by $\zeta(a)$. The action of $a$ on the right commutes with left multiplication by any element of $\H_{t,\mbf{c}}(G)$ hence $\zeta(a) \in Z(\H_{t,\mbf{c}}(G))$. The homomorphism $\zeta \, : \, Z(\mbf{e} \H_{t,\mbf{c}}(G) \mbf{e}) \rightarrow Z(\H_{t,\mbf{c}}(G))$ is the inverse to the Satake isomorphism.   
\end{proof}

When $t = 0$, the Satake isomorphism becomes an isomorphism $Z(\H_{0,\mbf{c}}(G)) \stackrel{\sim}{\longrightarrow} \mbf{e} \H_{0,\mbf{c}}(G) \mbf{e}$, and is in fact an isomorphism of Poisson algebras. Theorems \ref{thm:centrespherical} and \ref{thm:Satake} also imply that $\H_{0,\mbf{c}}(G)$ is a finite module over its centre. As one might guess, the behavior of symplectic reflection algebras is very different depending on whether $t = 0$ or $1$. It is also a very interesting problem to try and relate the representation theory of the algebras $\H_{0,\mbf{c}}(G)$ and $\H_{1,\mbf{c}}(G)$ in some meaningful way. 

\subsection{The Dunkl embedding}

In this section, parts of which are designed to be an exercise for the reader, we show how one can use the Dunkl embedding to give easy proofs for rational Cherednik algebras of many of the important theorems described in the lecture. In particular, one can give elementary proofs of both the PBW theorem and of the fact that the spherical subalgebra is commutative when $t = 0$. Therefore, we let $(W,\h)$ be a complex reflection group and $\H_{t,\mbf{c}}(W)$ the associated rational Cherednik algebra. Let $\dd_{t}(\h)$ be the algebra generated by $\h$ and $\h^*$, satisfying the relations
$$
[x,x'] = [y,y'] = 0, \quad \forall \ x,x' \in \h^*, \ y , y' \in \h
$$
and
$$
[y,x] = t (y,x), \quad \forall \ x \in \h^*, \ y \in \h. 
$$
When $t \neq 0$, $\dd_{t}(\h)$ is isomorphic to $\dd(\h)$, the ring of differential operators on $\h$. But when $t = 0$, the algebra $\dd_{t}(\h) = \C[\h \times \h^*]$ is commutative. Let $\h_{\reg}$ be the \textit{affine} (that $\h_{\reg}$ is affine is a consequence of the fact that $W$ is a complex reflection group, it is not true in general) open subset of $\h$ on which $W$ acts freely. We can localize $\dd_{t}(\h)$ to $\dd_{t}(\h_{\reg})$. For each $s \in \mc{S}$ define $\lambda_s \in \Cs$ by $s (\alpha_s) = \lambda_s \alpha_s$. For each $y \in \h$, the Dunkl operator 
$$
D_y = y - \sum_{s \in \mc{S}} \frac{2 \mbf{c}(s)}{1 - \lambda_s} \frac{(y,\alpha_s)}{\alpha_s} (1 - s)
$$
is an element in $\dd_{\mbf{t}}(\h_{\reg}) \rtimes W$ since $\alpha_s$ is invertible on $\h_{\reg}$. 

\begin{exercise}\label{ex:ex1.1}
\begin{enumerate}
\item Show that the Dunkl operators act on $\C[\h]$. Hint: it's strongly recommended that you do the example $W = \Z_2$ first, where 
$$
D_y = y - \frac{\mbf{c}}{x} (1 - s). 
$$ 
\item Using the fact that 
$$
s(x) = x - \frac{(\alpha_s^{\vee},x)}{2} (1 - \lambda_s) \alpha_s, \quad \forall \ x \in \h^*,
$$
show that $x \mapsto x$, $w \mapsto w$ and $y \mapsto D_y$ defines a morphism $\phi : \H_{t,\mbf{c}}(W) \rightarrow \dd_{t}(\h_{\reg}) \rtimes W$ i.e. show that the commutation relation 
$$
[D_y,x] = t (y,x) - \sum_{s \in \mathcal{S}} \mathbf{c}(s) (y,\alpha_s)(\alpha_s^\vee,x) s 
$$
holds for all $x \in \h^*$ and $y \in \h$. Hint: as above, try the case $\Z_2$ first.
\end{enumerate}
\end{exercise}

Now we have everything needed to prove the Poincar\'e-Birkhoff-Witt theorem for rational Cherednik algebras. The algebra $\dd_{\mbf{t}}(\h_{\reg}) \rtimes W$ has a natural filtration given by putting $\C[\h_{\reg}] \rtimes W$ in degree zero and $\h \subset \C[\h^*]$ in degree one. Similarly, we define a filtration on the rational Cherednik algebra by putting the generators $\h^*$ and $W$ in degree zero and $\h$ in degree one. 

\begin{lem}
The associated graded of $\H_{t,\mbf{c}}(W)$ with respect to the above filtration is isomorphic to $\C[\h \times \h^*] \rtimes W$. 
\end{lem}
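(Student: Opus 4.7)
The plan is to combine two easy maps: a natural surjection from $\C[\h\times\h^*]\rtimes W$ onto $\gr_\mc{F}\H_{t,\mbf{c}}(W)$, and the graded map induced by the Dunkl embedding of Exercise \ref{ex:ex1.1}. Composing these two will give an injective map, which forces the surjection to be an isomorphism.

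\textbf{Step 1: Construct the surjection.} In $\H_{t,\mbf{c}}(W)$ the generators $x\in\h^*$ and $w\in W$ have degree $0$ while $y\in\h$ has degree $1$. The defining relations $[x,x']=0$, $[y,y']=0$, and $[y,x]=t(y,x)-\sum_s\mbf{c}(s)(y,\alpha_s)(\alpha_s^\vee,x)s$ involve, on the right-hand side, only elements of $\C W\subset\mc{F}_0\H_{t,\mbf{c}}(W)$. Since $yx$ and $xy$ both lie in $\mc{F}_1\H_{t,\mbf{c}}(W)$ and the right-hand side lies in $\mc{F}_0\H_{t,\mbf{c}}(W)$, passing to the associated graded kills all three relations, giving $[\sigma(x),\sigma(x')]=[\sigma(y),\sigma(y')]=[\sigma(y),\sigma(x)]=0$ in $\gr_\mc{F}\H_{t,\mbf{c}}(W)$. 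Thus there is a well-defined surjective algebra homomorphism
\[
\pi:\C[\h\times\h^*]\rtimes W\twoheadrightarrow \gr_\mc{F}\H_{t,\mbf{c}}(W),\qquad x\mapsto\sigma(x),\ y\mapsto\sigma(y),\ w\mapsto\sigma(w).
\]

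\textbf{Step 2: Use the Dunkl embedding to prove injectivity.} The Dunkl embedding $\phi:\H_{t,\mbf{c}}(W)\to\dd_t(\h_{\reg})\rtimes W$ sends $x\mapsto x$, $w\mapsto w$, and $y\mapsto D_y$, where $D_y=y-\sum_{s\in\mc{S}}\frac{2\mbf{c}(s)}{1-\lambda_s}\frac{(y,\alpha_s)}{\alpha_s}(1-s)$. With the filtration on $\dd_t(\h_{\reg})\rtimes W$ putting $\C[\h_{\reg}]\rtimes W$ in degree $0$ and $\h$ in degree $1$, the correction term $\sum_s\frac{2\mbf{c}(s)}{1-\lambda_s}\frac{(y,\alpha_s)}{\alpha_s}(1-s)$ lies in $\C[\h_{\reg}]\rtimes W$, hence has degree $0$. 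Consequently $\phi$ is filtered and its associated graded $\gr\phi:\gr_\mc{F}\H_{t,\mbf{c}}(W)\to\gr(\dd_t(\h_{\reg})\rtimes W)=\C[\h_{\reg}\times\h^*]\rtimes W$ satisfies $\sigma(y)\mapsto y$, $\sigma(x)\mapsto x$, $\sigma(w)\mapsto w$.

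\textbf{Step 3: Conclude.} The composition $(\gr\phi)\circ\pi:\C[\h\times\h^*]\rtimes W\to\C[\h_{\reg}\times\h^*]\rtimes W$ is the canonical inclusion induced by the localization $\C[\h]\hookrightarrow\C[\h_{\reg}]$, which is injective because $\h_{\reg}$ is a (nonempty) open subvariety of $\h$. Injectivity of the composition forces $\pi$ to be injective, so $\pi$ is an isomorphism.

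The main obstacle is really just verifying that the correction terms in the Dunkl operator live in the degree-zero part of the filtration on $\dd_t(\h_{\reg})\rtimes W$, and that the natural map from the polynomial ring into the associated graded is well defined; both reduce to unwinding how the filtrations interact with the relations (\ref{eq:rel}). Everything else is formal once the Dunkl embedding is in hand, which is why this argument gives an elementary, bypass-free proof of PBW for rational Cherednik algebras, avoiding the general Braverman--Gaitsgory machinery.
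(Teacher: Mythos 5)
Your argument is correct and follows the same strategy as the paper: construct the obvious surjection from $\C[\h\times\h^*]\rtimes W$ onto $\gr_\mc{F}\H_{t,\mbf{c}}(W)$, show the Dunkl map is filtered, and observe that composing with $\gr\phi$ recovers the injective localization map $\C[\h\times\h^*]\rtimes W\hookrightarrow\C[\h_{\reg}\times\h^*]\rtimes W$, forcing the surjection to be an isomorphism. The only difference is that you spell out the surjectivity in Step 1 where the paper simply asserts it.
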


\begin{proof}
We begin by showing that the Dunkl embedding (which we have yet to prove is actually an embedding) preserves filtrations i.e. $\phi(\mc{F}_i \H_{t,\mbf{c}}(W)) \subset \mc{F}_i (\dd_{\mbf{t}}(\h_{\reg}) \rtimes W)$ for all $i$. It is clear that $\phi$ maps $\h^*$ and $W$ into $\mc{F}_0 (\dd_{\mbf{t}}(\h_{\reg}) \rtimes W)$ and that $\phi(\h) \subset \mc{F}_1 (\dd_{\mbf{t}}(\h_{\reg}) \rtimes W)$. Since the filtration on $\H_{t,\mbf{c}}(W)$ is defined in terms of the generators $\h^*$, $W$ and $\h$, the inclusion $\phi(\mc{F}_i \H_{t,\mbf{c}}(W)) \subset \mc{F}_i (\dd_{\mbf{t}}(\h_{\reg}) \rtimes W)$ follows. Therefore the map $\phi$ induces a morphism 
\begin{equation}\label{eq:compiso}
\C[\h \times \h^* ] \rtimes W \rightarrow \gr_{\mc{F}} \H_{t,\mbf{c}}(W) \stackrel{\gr \phi }{\longrightarrow} \gr_{\mc{F}} (\dd_{\mbf{t}}(\h_{\reg}) \rtimes W) \rightarrow \C[\h_{\reg} \times \h^* ] \rtimes W,
\end{equation}
where the right-hand morphism is the inverse to the map $\C[\h_{\reg} \times \h^* ] \rtimes W \rightarrow \gr_{\mc{F}} (\dd_{\mbf{t}}(\h_{\reg}) \rtimes W)$, which is an isomorphism. The morphism (\ref{eq:compiso}) maps $\h$ to $\h$, $\h^*$ to $\h^*$ and $W$ to $W$. Therefore, it is the natural embedding $\C[\h \times \h^* ] \rtimes W \hookrightarrow \C[\h_{\reg} \times \h^* ] \rtimes W$. Thus, since the map $\C[\h \times \h^* ] \rtimes W \rightarrow \gr_{\mc{F}} \H_{t,\mbf{c}}(W)$ is surjective, it must be an isomorphism as required. 

Notice that we have also shown that $\gr_{\mc{F}} \H_{t,\mbf{c}}(W) \stackrel{\gr \phi }{\longrightarrow} \gr_{\mc{F}} (\dd_{\mbf{t}}(\h_{\reg}) \rtimes W)$ is an embedding. This implies that the Dunkl embedding is actually an embedding. As a consequence, $\C[\h]$ is a faithful $\H_{t,\mbf{c}}(W)$-module. 
\end{proof}

The fact that $\mbf{e} \H_{t,\mbf{c}}(G) \mbf{e}$ is commutative when $t = 0$ for an arbitrary symplectic reflection group relies on a very clever but difficult argument by Etingof and Ginzburg. However, for rational Cherednik algebras we have: 

\begin{exercise}\label{ex:ex1.3}
By considering its image under the Dunkl embedding, show that the spherical subalgebra $\mbf{e} \H_{t,\mbf{c}}(W) \mbf{e}$ is commutative when $t = 0$.
\end{exercise}

A function $f \in \C[\h]$ is called a $W$-\textit{semi-invariant} if, for each $w \in W$, $w \cdot f = \chi(w) f$ for some linear character $\chi : W \rightarrow \C^{\times}$. The element $\delta := \prod_{s \in \mc{S}} \alpha_s \in \C[\h]$ is a $W$-semi-invariant. To show this, let $w \in W$ and $s \in \mc{S}$. Then $w s w^{-1} \in \mc{S}$ is again a reflection. This implies that there is some non-zero scalar $\beta$ such that $w (\alpha_s) = \beta \alpha_{w s w^{-1}}$. Thus, $w (\delta) = \gamma_w \delta$ for some non-zero scalar $\gamma_w$. One can check that $\gamma_{w_1 w_2} = \gamma_{w_1} \gamma_{w_2}$, which implies that $\delta$ is a semi-invariant. Therefore, there exists some $r>0$ such that $\delta^r \in \C[\h]^W$. The powers of $\delta^r$ form an Ore set in $\H_{t,\mbf{c}}(W)$ and we may localize $\H_{t,\mbf{c}}(W)$ at $\delta^r$. Since the terms
$$
\sum_{s \in \mc{S}} \frac{2 \mbf{c}(s)}{1 - \lambda_s} \frac{(y,\alpha_s)}{\alpha_s} (1 - s)
$$
from the definition of $D_y$ belong to $\H_{t,\mbf{c}}(W)[\delta^{-r}]$, this implies that each $y \in \h \subset \dd_{t}(\h_{\reg}) \rtimes W$ belongs to $\H_{t,\mbf{c}}(W)[\delta^{-r}]$ too. Hence the Dunkl embedding becomes an isomorphism 
$$
\H_{t,\mbf{c}}(W)[\delta^{-r}] \stackrel{\sim}{\longrightarrow} \dd_{t}(\h_{\reg}) \rtimes W.
$$ 

Here is another application of the Dunkl embedding. Recall that a ring is said to be simple if it contains no proper two-sided ideals. 

\begin{prop}
The rings $\dd(\h)$, $\dd(\h)^W$ and $\dd(\h_{\reg})^W$ are simple. 
\end{prop}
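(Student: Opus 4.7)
The plan is to handle the three algebras in order of increasing delicacy: a direct commutator argument for the Weyl algebra $\dd(\h)$, a geometric reduction for $\dd(\h_{\reg})^W$, and an argument via the skew group ring $\dd(\h)\rtimes W$ for $\dd(\h)^W$.

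For $\dd(\h)$ I would use the classical filtration proof of simplicity of the Weyl algebra: fix dual bases $x_1,\dots,x_n$ and $\partial_1,\dots,\partial_n$ of $\h^*$ and $\h$ and order $\dd(\h)$ by degree in the $\partial_i$. Given a nonzero two-sided ideal $I$, take $0\neq D\in I$ of minimal order; each $[x_i,D]\in I$ has strictly smaller order, reducing to $D\in\C[\h]$; then each $[\partial_j,D]\in I$ is an actual partial derivative of $D$, of strictly smaller polynomial degree. Iterating produces a nonzero scalar in $I$, so $I=\dd(\h)$. For $\dd(\h_{\reg})^W$ I would appeal to Theorem~\ref{thm:CSTthm}: since $\h/W$ is smooth and $W$ acts freely on the affine open $\h_{\reg}$, the quotient map $\h_{\reg}\to\h_{\reg}/W$ is an \'etale Galois cover with smooth affine target, giving $\dd(\h_{\reg})^W\simeq\dd(\h_{\reg}/W)$. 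Simplicity then follows from the general fact that $\dd(X)$ is simple for any smooth irreducible affine variety $X$, proved by the same bracket recipe carried out locally in an \'etale coordinate frame supplied by smoothness.

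For $\dd(\h)^W$ I would note that $\H_{1,0}(W)=\dd(\h)\rtimes W$, so that $\dd(\h)^W\simeq\mbf{e}\,\H_{1,0}(W)\,\mbf{e}$. I would first show that $\dd(\h)\rtimes W$ is simple. Given a nonzero ideal $J\subset\dd(\h)\rtimes W$, pick $\xi=\sum_{w} a_w w\in J$ of minimal support; use the simplicity of $\dd(\h)$ (multiplying $\xi$ on the left and right by suitable elements of $\dd(\h)$) to rescale so that $a_e=1$; then for any $b\in\C[\h]$ the commutator $[b,\xi]\in J$ has vanishing identity coefficient, while the coefficient at $w\neq e$ equals $ba_w-a_w\,{}^wb$. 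Since $W$ acts faithfully on $\C[\h]$, one can pick $b\in\C[\h]$ separating the non-identity elements in $\mathrm{supp}(\xi)$ to produce a nonzero element of $J$ of strictly smaller support, contradicting minimality unless $\xi\in\dd(\h)$; then $J\cap\dd(\h)$ is a nonzero ideal of the simple ring $\dd(\h)$, hence equals it, so $J=\dd(\h)\rtimes W$. Simplicity of $\dd(\h)^W$ then follows from the elementary fact that the corner ring $\mbf{e}B\mbf{e}$ of a simple ring $B$ at a nonzero idempotent $\mbf{e}$ is again simple. The main obstacle is precisely this bracket-reduction step for $\dd(\h)\rtimes W$: it requires balancing the simplicity of $\dd(\h)$ (to pivot $a_e$ to $1$) against the $X$-outerness of the $W$-action---which, because $\dd(\h)$ has only scalar units, reduces to faithfulness of $W$ acting on $\C[\h]$---so that the resulting commutator genuinely shortens $\mathrm{supp}(\xi)$ rather than annihilating it entirely.
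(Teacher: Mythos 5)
Your argument is correct, and for $\dd(\h)$ it coincides with the paper's commutator argument. For the other two algebras you take genuinely different routes. For $\dd(\h)^W$ the paper first invokes the Morita equivalence between $\dd(\h)\rtimes W$ and $\mbf{e}(\dd(\h)\rtimes W)\mbf{e}$ (deduced from simplicity of $\dd(\h)$, as in the proof of Corollary~\ref{cor:Morita}), cites \cite[Proposition 7.8.12]{MR} for simplicity of $\dd(\h)\rtimes W$, and lets Morita invariance of simplicity finish. You bypass Morita theory entirely: you give a direct minimal-support argument for simplicity of $\dd(\h)\rtimes W$ (essentially the proof behind the citation -- the key input being that $W$ is X-outer on $\dd(\h)$, which as you and the paper both note reduces to faithfulness on $\C[\h]$ since the units of $\dd(\h)$ are scalars), and then use the elementary fact that the corner ring $\mbf{e}B\mbf{e}$ of a simple ring $B$ at a nonzero idempotent is simple, since any nonzero ideal $J\subseteq\mbf{e}B\mbf{e}$ satisfies $\mbf{e}B\mbf{e}=\mbf{e}BJB\mbf{e}=(\mbf{e}B\mbf{e})J(\mbf{e}B\mbf{e})\subseteq J$. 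This is shorter and more self-contained than invoking Morita theory. For $\dd(\h_{\reg})^W$ the paper passes simplicity through the two-sided Ore localization at $\delta^r$, noting that a proper nonzero two-sided ideal of the localization contracts to one of $\dd(\h)^W$. You instead identify $\dd(\h_{\reg})^W\simeq\dd(\h_{\reg}/W)$ via the \'etale free quotient and invoke simplicity of $\dd(X)$ for $X$ smooth irreducible affine; note this isomorphism is proved later in the paper (Proposition~\ref{prop:diffiso}), so within the paper's narrative the localization argument is the more economical choice, but your geometric route is standard and equally valid. Two small points you should make explicit in a write-up: when normalizing $a_e=1$ in the crossed-product argument, first multiply on the right by some $w_0^{-1}$ to ensure $e$ lies in the support; and the reduction from ``no $b$ separates'' to a contradiction really needs the leading-symbol/domain argument in $\gr\dd(\h)\simeq\C[\h\times\h^*]$, not just faithfulness in isolation.
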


\begin{proof}
To show that $\dd(\h)$ is simple, it suffices to show that $\C \cap I \neq 0$ for any two-sided ideal. This can be show by taking the commutator of a non-zero element $h \in I$ with suitable elements in $\dd(\h)$. The fact that this implies that $\dd(\h)^W$ is simple is standard, but maybe difficult to find in the literature. Firstly one notes that the fact that $\dd(\h)$ is simple implies that $\dd(\h)^W \simeq \mbf{e} (\dd(\h) \rtimes W) \mbf{e}$ is Morita equivalent to $\mbf{e} (\dd(\h) \rtimes W) \mbf{e}$; we've seen the argument already in the proof of Corollary \ref{cor:Morita}. This implies that $\dd(\h)^W$ is simple if and only if $\dd(\h) \rtimes W$ is simple; see \cite[Theorem 3.5.9]{MR}. Finally, one can show directly that $\dd(\h) \rtimes W$ is simple: see \cite[Proposition 7.8.12]{MR} ($W$ acts by outer automorphisms on $\dd(\h)$ since the only invertible elements in $\dd(\h)$ are the non-zero scalars). 

Finally, since $\dd(\h_{\reg})^W$ is the localization of $\dd(\h)^W$ at the two-sided Ore set generated by $\delta^r$, a two-sided ideal $J$ in $\dd(\h_{\reg})^W$ is proper if and only if $J \cap \dd(\h)^W$ is a proper two-sided ideal. But we have already shown that $\dd(\h)^W$ is simple. 
\end{proof}

\begin{cor}
The centre of $\mbf{e} \H_{1,\mbf{c}}(W) \mbf{e}$ equals $\C$. 
\end{cor}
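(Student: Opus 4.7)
The plan is to leverage the fact that the localized Dunkl embedding is an isomorphism, which identifies a localization of the spherical subalgebra with $\dd(\h_{\reg})^W$, and then use the simplicity statement from the preceding proposition.

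First I would observe that the isomorphism $\H_{1,\mbf{c}}(W)[\delta^{-r}] \iso \dd(\h_{\reg}) \rtimes W$ established just above, when we multiply on both sides by the idempotent $\mbf{e}$, descends to an isomorphism
\[
\mbf{e}\,\H_{1,\mbf{c}}(W)\,\mbf{e}\,[\delta^{-r}] \;\iso\; \mbf{e}\,(\dd(\h_{\reg})\rtimes W)\,\mbf{e} \;\cong\; \dd(\h_{\reg})^W,
\]
the last identification being the standard one between the spherical subalgebra of a skew group ring and the ring of invariants. Next, by Corollary \ref{cor:finiteglobal}(2), $\mbf{e}\,\H_{1,\mbf{c}}(W)\,\mbf{e}$ is a domain, so powers of $\delta^r$ are non-zero-divisors and the localization map is injective. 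Hence I obtain an injective ring homomorphism $\mbf{e}\,\H_{1,\mbf{c}}(W)\,\mbf{e} \hookrightarrow \dd(\h_{\reg})^W$, and since this sends the centre to the centraliser of the image (which contains the centre of the target), it restricts to an inclusion $Z(\mbf{e}\,\H_{1,\mbf{c}}(W)\,\mbf{e}) \hookrightarrow Z(\dd(\h_{\reg})^W)$.

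The problem thus reduces to showing $Z(\dd(\h_{\reg})^W) = \C$. The preceding proposition tells us that $\dd(\h_{\reg})^W$ is simple, and its proof observes that $\dd(\h_{\reg})^W \cong \mbf{e}(\dd(\h_{\reg}) \rtimes W)\mbf{e}$ is Morita equivalent to $\dd(\h_{\reg})\rtimes W$. Since Morita equivalent rings have canonically isomorphic centres, it suffices to compute $Z(\dd(\h_{\reg}) \rtimes W)$. Writing a central element as $z = \sum_{w \in W} d_w\, w$ and imposing commutation with every $f \in \dd(\h_{\reg})$ gives $d_w(w\cdot f) = f d_w$ for all $w, f$; for $w=1$ this places $d_1 \in Z(\dd(\h_{\reg})) = \C$, while for $w \neq 1$ the outerness of the $W$-action on $\dd(\h_{\reg})$ (simple, with all units degree zero, so any inner automorphism acts trivially on $\C[\h_{\reg}]$, whereas $w$ does not) forces $d_w = 0$. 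Hence $Z(\dd(\h_{\reg}) \rtimes W) = \C^W = \C$, completing the argument.

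The main obstacle is the last step, namely justifying that $W$ acts by outer automorphisms on $\dd(\h_{\reg})$: the preceding proposition proves this for $\dd(\h)$ by noting that the only units are scalars, but in the localization $\dd(\h_{\reg})$ there are additional units from inverting $\delta$. One must argue instead via a symbol/order argument: a unit $u \in \dd(\h_{\reg})$ has a unit symbol and a dimension count on the associated graded forces $u$ to have order zero, hence $u \in \C[\h_{\reg}]^\times$, so conjugation by $u$ acts trivially on $\C[\h_{\reg}] \subset \dd(\h_{\reg})$ and cannot equal the non-trivial action of a non-identity element of $W$. Once this is in hand the rest is formal.
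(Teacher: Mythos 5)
Your proof is correct in substance and takes a genuinely different route from the paper's, so it is worth comparing. The paper uses only that $\dd(\h_{\reg})^W$ is simple, hence that the image of any central element of $\mbf{e}\H_{1,\mbf{c}}(W)\mbf{e}$ under localization is zero or a unit; it then carries out a careful unit analysis back in $\mbf{e}\H_{1,\mbf{c}}(W)\mbf{e}$ (via the associated graded $\C[\h\times\h^*]^W$, where the only units are scalars and $\sigma(\mbf{e}\delta^r)$ is not a unit) to pin $z$ down to $\C$. You instead compute $Z(\dd(\h_{\reg})^W)$ outright: by Morita invariance of the centre it equals $Z(\dd(\h_{\reg})\rtimes W)$, which you show is $\C$ by the standard outer-action computation. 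Once you know $Z(\mbf{e}\H_{1,\mbf{c}}(W)\mbf{e})$ embeds into $\C$ the corollary is immediate, with no unit analysis in $\mbf{e}\H_{1,\mbf{c}}(W)\mbf{e}$ required. The price is that you must establish outerness of $W$ on $\dd(\h_{\reg})$, which is genuinely more delicate than for $\dd(\h)$ because $\dd(\h_{\reg})$ has non-scalar units; your order-and-symbol argument showing that every unit lies in $\C[\h_{\reg}]^\times$, hence that inner automorphisms fix $\C[\h_{\reg}]$ pointwise while any $w\neq 1$ does not, is the right way to close that gap. You should also spell out, in the step $d_w(w\cdot f)=fd_w$, that $\dd(\h_{\reg})d_w$ is a two-sided ideal (so simplicity forces $d_w$ to be a unit if nonzero) before invoking outerness.

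One point of logic to tighten: the claim that the centre maps into the centre is correct, but your justification --- ``sends the centre to the centraliser of the image, which contains the centre of the target'' --- does not actually yield the desired inclusion ($\phi(Z(A))\subseteq C_B(\phi(A))$ and $Z(B)\subseteq C_B(\phi(A))$ together say nothing about $\phi(Z(A))$ versus $Z(B)$). What you should say is that the map is an Ore localization: if $z$ commutes with $\mbf{e}\delta^r$ then $z$ commutes with $(\mbf{e}\delta^r)^{-1}$, so $z$ commutes with everything in the localized ring. Also, your appeal to $Z(\dd(\h_{\reg}))=\C$ is correct but worth a word (a central element commutes with $\C[\h_{\reg}]$ so has order zero, then commutes with all derivations so is constant); the proposition only records this for $\dd(\h)$.
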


\begin{proof}
By Corollary \ref{cor:finiteglobal} (2), $\mbf{e} \H_{1,\mbf{c}}(W) \mbf{e}$ is an integral domain. Choose $z \in Z(\mbf{e} \H_{1,\mbf{c}}(W) \mbf{e} )$. The Dunkl embedding defines an isomorphism 
$$
\mbf{e} \H_{1,\mbf{c}}(W) \mbf{e} [(\mbf{e} \delta^r)^{-1}] \stackrel{\sim}{\longrightarrow} \mbf{e} (\dd(\h_{\reg}) \rtimes W) \mbf{e} \simeq \dd(\h_{\reg})^W. 
$$
Since $\dd(\h_{\reg})^W$ is simple, every non-zero central element is either a unit or zero (otherwise it would generate a proper two-sided ideal). Therefore the image of $z$ in $\mbf{e} \H_{1,\mbf{c}}(W) \mbf{e} [(\mbf{e} \delta^r)^{-1}]$ is either a unit or zero. The fact that the only units in $\gr_{\mc{F}} \mbf{e} \H_{1,\mbf{c}}(W) \mbf{e} = \C[\h \times \h^*]^W$ are the scalars implies that the scalars are the only units in $\mbf{e} \H_{1,\mbf{c}}(W) \mbf{e}$. If $z$ is a unit then $\alpha (\mbf{e} \delta^r)^a \cdot z = 1$ in $\mbf{e} \H_{1,\mbf{c}}(W) \mbf{e}$, for some $\alpha \in \Cs$ and $a \in \N$. But $\mbf{e} \delta^r$ is not a unit in $\mbf{e} \H_{1,\mbf{c}}(W) \mbf{e}$ (since the symbol of $\mbf{e} \delta^r$ in $\C[\h \times \h^*]^W$ is not a unit). Therefore $a = 0$ and $z \in \Cs$. On the other hand if $(\mbf{e} \delta^r)^a \cdot z = 0$ for some $a$ then the fact that $\mbf{e} \H_{1,\mbf{c}}(W) \mbf{e}$ is an integral domain implies that $z = 0$.  
\end{proof}

The complex reflection group is said to be \textit{real} if there exists a real vector subspace $\h^{\mathrm{re}}$ of $\h$ such that $(W,\h^{\mathrm{re}}) $ is a real reflection group and $\h = \h^{\mathrm{re}} \o_{\R} \C$. In this case there exists a $W$-invariant inner product $( - , - )_{\mathrm{re}}$ on $\h^{\mathrm{re}}$ i.e. an inner product $( - , - )_{\mathrm{re}}$ such that $( wu , wv )_{\mathrm{re}} = ( u , v)_{\mathrm{re}}$ for all $w \in W$ and $u,v \in ( - , - )_{\mathrm{re}}$. We extend it by linearity to a $W$-invariant bilinear form $(- , - )$ on $\h$. The following fact is also very useful when studying rational Cherednik algebras at $t = 0$. 

\begin{exercise}\label{ex:ex1.6}
\begin{enumerate}
\item Assume now that $W$ is a real reflection group. Show that the rule $x \mapsto \tilde{x} = (x, - )$, $y \mapsto \tilde{y} = (y, - )$ and $w \mapsto w$ defines an automorphism of $\H_{t,\mbf{c}}(W)$, swapping $\C[\h]$ and $\C[\h^*]$. 

\item Show that $\C[\h]^W$ and $\C[\h^*]^W$ are central subalgebras of $\H_{0,\mbf{c}}(W)$ (hint: first use the Dunkl embedding to show that $\C[\h]^W$ is central, then use the automorphism defined in (7)). 
\end{enumerate}
\end{exercise}

\subsection{Additional remark}

\begin{itemize}
\item In his origin paper, \cite{Chevalley}, Chevalley showed that if $(W,\h)$ is a complex reflection group then $\C[\h]^W$ is a polynomial ring. The converse was shown by Shephard and Todd in \cite{ST}. . 

\item The definition of symplectic reflection algebras first appear in \cite{EG}. 

\item The PBW theorem, Theorem \ref{eq:PBW}, and its proof are Theorem 1.3 of \cite{EG}. 

\item Theorems \ref{thm:endomorphismring} and \ref{thm:Satake} are also contained in \cite{EG}, as Theorem 1.5 and Theorem 3.1 respectively. 

\item The first part of Theorem \ref{thm:centrespherical} is due to Etingof and Ginzburg, \cite[Theorem 1.6]{EG}. The second part is due to Brown and Gordon, \cite[Proposition 7.2]{PoissonOrders}. Both proof rely in a crucial way on the Poisson structure of $\C[V]^G$. 

\end{itemize}

\newpage

\section{Rational Cherednik algebras at $t = 1$}\label{sec:two}

For the remainder of lectures $2$ to $4$, we will only consider $t = 1$ and omit it from the notation. We will also only be considering rational Cherednik algebras because relatively little is know about general symplectic reflection algebras at $t = 1$. Therefore, we let $(W, \h)$ be a complex reflection group and $\H_{\mbf{c}}(W)$ the associated rational Cherednik algebra.\\

As noted in the previous lecture, the centre of $\H_{\mbf{c}}(W)$ equals $\C$. Therefore, its behavior is very different from the case $t = 0$. If we take $\mbf{c} = 0$ then $\H_{0}(W) = \dd(\h) \rtimes W$ and the category of modules for $\dd(\h) \rtimes W$ is precisely the category of $W$-equivariant $\dd$-modules on $\h$. In particular, there are \textit{no} finite dimensional representations of this algebra. In general, the algebra $\H_{\mbf{c}}(W)$ has very few finite dimensional representations.

\subsection{}  For rational Cherednik algebras, the PBW theorem implies that, as a vector space, $\H_{\mbf{c}}(W) \simeq \C[\mf{h}] \o \C W \o \C[\mf{h}^*]$; there is no need to choose an ordered basis of $\h$ and $\h^*$ for this to hold. Since $\C[\h]$ is in some sense opposite to $\C[\h^*]$, this is an example of a \textit{triangular decomposition}, just like the triangular decomposition $U(\mf{g}) = U(\mf{n}_-) \o U(\mf{h}) \o U(\mf{n}_+)$ encountered in Lie theory, where $\mf{g}$ is a finite dimensional, semi-simple Lie algebra over $\C$, $\mf{g} = \mf{n}_- \oplus \mf{h} \oplus \mf{n}_+$ is a decomposition into a Cartan subalgebra $\mf{h}$, the nilpotent radical $\mf{n}_+$ of the Borel $\mf{b} = \mf{h} \oplus \mf{n}_+$, and the opposite $\mf{n}_-$ of the nilpotent radical $\mf{n}_+$. This suggests that it might be fruitful to try and mimic some of the common constructions in Lie theory. In the representation theory of $\mf{g}$, one of the categories of modules most intensely studied, and best understood, is category $\mc{O}$, the abelian category of finitely generated $\g$-modules that are semi-simple as $\h$-modules and $\mf{n}_+$-locally nilpotent. Therefore, it is natural to try and study an analogue of category $\mc{O}$ for rational Cherednik algebras. This is what we will do in this lecture. 

\subsection{Category $\mc{O}$}

Let $\Lmod{\H_{\mbf{c}}(W)}$ be the category of all finitely generated left $\H_{\mbf{c}}(W)$-modules. It is a hopeless task to try and understand in any detail the whole category $\Lmod{\H_{\mbf{c}}(W)}$. Therefore, one would like to try and understand certain interesting, but manageable, subcategories. The PBW theorem suggests the following very natural definition.  

\begin{defn}
\textit{Category} $\mc{O}$ is defined to be the full\footnote{Recall that a subcategory $\mc{B}$ of a category $\mc{A}$ is called \textit{full} if $\Hom_{\mc{B}}(M,N) = \Hom_{\mc{A}}(M,N)$ for all $M,N \in \mathrm{Obj} \mc{B}$.} subcategory of $\Lmod{\H_{\mbf{c}}(W)}$ consisting of all modules $M$ such that the action of $\h \subset \C[\h^*]$ is locally nilpotent. 
\end{defn}

\begin{remark}
A module $M$ is said to be locally nilpotent for $\h$ if, for each $m \in M$ there exists some $N \gg 0$ such that $\h^N \cdot m = 0$. 
\end{remark}

\begin{exercise}
Show that every module in category $\mc{O}$ is finitely generated as a $\C[\h]$-module. 
\end{exercise}

We will give the proofs of the fundamental properties of category $\mc{O}$, since they do not require any sophisticated machinery. However, this does make the lecture rather formal, so we first outline they key features of category $\mc{O}$ so that the reader can get their bearings. Recall that an abelian category is called finite length if every object satisfies the ascending chain condition and descending chain condition on subobjects. It is Krull-Schmit if every module has a unique decomposition (up to permuting summands) into a direct sum of indecomposable modules. 
 
\begin{itemize}
\item There are only finitely many simple modules in category $\mc{O}$. 
\item Category $\mc{O}$ is a finite length, Krull-Schmit category. 
\item Every simple module admits a projective cover, hence category $\mc{O}$ contains enough projectives. 
\item Category $\mc{O}$ contains ``standard modules'', making it a highest weight category. 
\end{itemize}

\subsection{Standard objects}

One can use induction to construct certain ``standard objects'' in category $\mc{O}$. The skew-group ring $\C[\h^*] \rtimes W$ is a subalgebra of $\H_{\mbf{c}}(W)$. Therefore, we can induce to category $\mc{O}$ those representations of $\C[\h^*] \rtimes W$ that are locally nilpotent for $\h$. Let $\mf{m} = \C[\h^*]_+$ be the augmentation ideal. Then, for $\lambda \in \Irr (W)$, define the $\C[\h^*] \rtimes W$-module $\widetilde{\lambda} = \C[\h^*] \rtimes W \o_{W} \lambda$. For each $r \in \N$, the subspace $\mf{m}^r \cdot \widetilde{\lambda}$ is a proper $\C[\h^*] \rtimes W$-submodule of $\widetilde{\lambda}$ and we set $\lambda_r := \widetilde{\lambda} / \mf{m}^r \cdot \widetilde{\lambda}$. This is a $\h$-locally nilpotent $\C[\h^*]\rtimes W$-module. We set 
$$
\Delta_r(\lambda) = \H_{\mbf{c}}(W) \o_{\C[\h^*] \rtimes W} \lambda_r. 
$$
It follows from Lemma \ref{lem:standardO} that $\Delta_r(\lambda)$ is a module in category $\mc{O}$. The module $\Delta(\lambda) := \Delta_1(\lambda)$ is called a \textit{standard module} (or, often, a Verma module) of category $\mc{O}$. The PBW theorem implies that $\Delta(\lambda) = \C[\h] \o_{\C} \lambda$ as a $\C[\h]$-module. 

\begin{example}
For our favorite example, $\Z_2$ acting on $\mf{h} = \C \cdot y$ and $\mf{h}^* = \C \cdot x$, we have $\Irr (\Z_2) = \{ \rho_0, \rho_1 \}$, where $\rho_0$ is the trivial representation and $\rho_1$ is the sign representation. Then, 
$$
\Delta(\rho_0) = \C[x] \o \rho_0, \quad \Delta(\rho_1) = \C[x] \o \rho_1.
$$
The subalgebra $\C[x] \rtimes \Z_2$ acts in the obvious way. The action of $y$ is given as follows
$$
y \cdot f(x) \o \rho_i = [y,f(x)] \o \rho_i + f(x) \o y \rho_i = [y,f(x)] \o \rho_i, \quad i = 0,1.
$$
Here $[y,f(x)] \in \C[x] \rtimes \Z_2$ is calculated in $\H_{\mbf{c}}(\Z_2)$. 
\end{example}

\subsection{The Euler element}\label{sec:euler}

In Lie theory, the fact that every module $M$ in category $\mc{O}$ is semi-simple as a $\h$-module is very important, since $M$ decomposes as a direct sum of weight spaces. We don't have a Cartan subalgebra in $\H_{\mbf{c}}(W)$, but the Euler element is a good substitute (equivalently one can think that the Cartan subalgebras of $\H_{\mbf{c}}(W)$ are one-dimensional). 

Let $x_1, \ds, x_n$ be a basis of $\h^*$ and $y_1, \ds, y_n \in \h$ the dual basis. Define the \textit{Euler element} in $\H_{\mbf{c}}(W)$ to be
$$
\eu = \sum_{i = 1}^n x_i y_i - \sum_{s \in S} \frac{2\mbf{c}(s)}{1 - \zeta_s} s ,
$$
where $\zeta_s$ is the non-trivial eigenvalue of $s$ acting on $\h$ i.e. $s \cdot \alpha_s^{\vee} = \zeta_s \alpha_s^{\vee}$. The relevance of the element $\eu$ is given by:

\begin{exercise}\label{ex:eu}
Show that $[\eu,x] = x$, $[\eu,y] = -y$ and $[\eu,w] = 0$ for all $x \in \h^*$, $y \in \h$ and $w \in W$. 
\end{exercise}

Therefore conjugation by $\eu$ defines a $\Z$-grading on $\H_{\mbf{c}}(W)$, where $\deg (x) = 1$, $\deg (y) = -1$ and $\deg (w) = 0$. The sum $- \sum_{s \in S} \frac{2 \mbf{c} (s)}{1 - \zeta_s} s$ belongs to $Z(W)$, the centre of the group algebra. Therefore, if $\lambda$ is an irreducible $W$-module, this central element will act by a scalar on $\lambda$. This scalar will be denoted $\mbf{c}_{\lambda}$.  

\begin{lem}\label{lem:standardO}
The modules $\Delta_r(\lambda)$ belong to category $\mc{O}$. 
\end{lem}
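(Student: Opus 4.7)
The plan is to verify the two conditions required for membership in category $\mc{O}$: finite generation as an $\H_{\mbf{c}}(W)$-module, and local nilpotency of the action of $\h \subset \C[\h^*]$.

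Finite generation is immediate from the construction. The $\C[\h^*] \rtimes W$-module $\lambda_r = \widetilde{\lambda}/\mf{m}^r \widetilde{\lambda}$ is the quotient of $\widetilde{\lambda} \cong \C[\h^*] \otimes_{\C} \lambda$ by the finite-codimension submodule $\mf{m}^r \widetilde{\lambda}$, so $\lambda_r$ is finite-dimensional over $\C$. Since $\Delta_r(\lambda)$ is by definition generated over $\H_{\mbf{c}}(W)$ by the image of $1 \otimes \lambda_r$, it is finitely generated; this also shows finite generation as a $\C[\h]$-module via PBW.

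For local nilpotency, the main tool is the $\Z$-grading on $\H_{\mbf{c}}(W)$ produced by the conjugation action of the Euler element (Exercise \ref{ex:eu}): $\deg(x) = 1$ for $x \in \h^*$, $\deg(y) = -1$ for $y \in \h$, and $\deg(w) = 0$ for $w \in W$. I would place $\lambda$ in degree zero; then $\widetilde{\lambda} \cong \C[\h^*] \otimes_{\C} \lambda$ becomes a graded $\C[\h^*] \rtimes W$-module concentrated in non-positive degrees, since $\h \subset \C[\h^*]$ lives in degree $-1$. The augmentation ideal $\mf{m}$ is homogeneous, so $\mf{m}^r \widetilde{\lambda}$ is a graded submodule, and $\lambda_r$ inherits a grading supported in degrees $\{0, -1, \dots, -(r-1)\}$. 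Consequently, $\Delta_r(\lambda) = \H_{\mbf{c}}(W) \otimes_{\C[\h^*] \rtimes W} \lambda_r$ inherits a $\Z$-grading.

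By the PBW theorem (Theorem \ref{eq:PBW}), $\Delta_r(\lambda) \cong \C[\h] \otimes \lambda_r$ as graded vector spaces, with the $n$-th homogeneous piece given by $\bigoplus_{i=0}^{r-1} \C[\h]_{n+i} \otimes (\lambda_r)_{-i}$ (with the convention $\C[\h]_d = 0$ for $d<0$). In particular this vanishes for $n < -(r-1)$, so the grading is bounded below. Because each element of $\h$ acts as an operator of degree $-1$, a homogeneous element of degree $n$ is annihilated by $\h^k$ for every $k > n + r - 1$, and every element of $\Delta_r(\lambda)$ is a finite sum of such homogeneous pieces; the $\h$-action is therefore locally nilpotent. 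The only subtlety is the bookkeeping required to set up the grading correctly on the induced module and to check that $\mf{m}^r \widetilde{\lambda}$ really is a graded submodule; no serious obstacle is anticipated.
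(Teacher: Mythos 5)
Your proof is correct, but it takes a genuinely different route from the paper's. The paper's argument reduces to the case $r=1$: it first observes that $\mc{O}$ is closed under extensions, then invokes PBW (Theorem \ref{eq:PBW}) to see that $\H_{\mbf{c}}(W)$ is free over $\C[\h^*]\rtimes W$, so that a short exact sequence of $\C[\h^*]\rtimes W$-modules relating $\lambda_{r-1}$, $\lambda_r$ and $\lambda_1$ induces the short exact sequence (\ref{eq:sesDelta}) of $\H_{\mbf{c}}(W)$-modules; induction on $r$ then reduces everything to $\Delta(\lambda)$, where the paper runs the same grading argument you give. You instead argue directly on $\Delta_r(\lambda)$: grade $\lambda_r$ in degrees $0,\dots,-(r-1)$, transport the grading through induction, observe it is bounded below, and note that $\h$ strictly lowers degree. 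Both approaches work, and in fact your direct version is a touch more robust: the sequence $0\to\lambda_{r-1}\to\lambda_r\to\lambda_1\to 0$ as literally written in the paper is not quite right once $\dim\h>1$ (the kernel of $\lambda_r\twoheadrightarrow\lambda_1$ is $\mf{m}\widetilde{\lambda}/\mf{m}^r\widetilde{\lambda}$, whose dimension is $(\dim_{\C}\mf{m}/\mf{m}^r)\dim\lambda$, not $(\dim_{\C}\C[\h^*]/\mf{m}^{r-1})\dim\lambda$); one should instead filter $\lambda_r$ by the $\mf{m}^k\widetilde{\lambda}/\mf{m}^r\widetilde{\lambda}$, whose subquotients $\sym^k(\h)\o\lambda$ have trivial $\h$-action. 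Your argument sidesteps the bookkeeping. The only thing the paper's route buys is the exact sequence (\ref{eq:sesDelta}) itself, which is reused later (e.g.\ in the proof of Lemma \ref{lem:directsum}), so it must be established somewhere in any case.
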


\begin{proof}
We begin by noting that category $\mc{O}$ is closed under extensions i.e. if we have a short exact sequence 
$$
0 \rightarrow M_1 \rightarrow M_2 \rightarrow M_3 \rightarrow 0
$$
of $\H_{\mbf{c}}(W)$-modules, where $M_1$ and $M_3$ belong to $\mc{O}$, then $M_2$ also belongs to $\mc{O}$. The PBW theorem implies that $\H_{\mbf{c}}(W)$ is a free right $\C[\h^*] \rtimes W$-module. Therefore the short exact sequence $0 \rightarrow \lambda_{r-1} \rightarrow \lambda_r \rightarrow \lambda_1 \rightarrow 0$ of $\C[\h^*] \rtimes W$-modules defines a short exact sequence 
\beq{eq:sesDelta}
0 \rightarrow \Delta_{r-1}(\lambda) \rightarrow \Delta_r(\lambda) \rightarrow \Delta(\lambda) \rightarrow 0
\eeq
of $\H_{\mbf{c}}(W)$-modules. Hence, but induction on $r$, it suffices to show that $\Delta(\lambda)$ belongs to $\mc{O}$. We can make $\Delta(\lambda)$ into a $\Z$-graded $\H_{\mbf{c}}(W)$-module by putting $1 \o \lambda$ in degree zero. Then, $\Delta(\lambda)$ is actually positively graded with each graded piece finite dimensional. Since $y \in \h$ maps $\Delta(\lambda)_i$ into $\Delta(\lambda)_{i-1}$, we have $\h^{i+1} \cdot \Delta(\lambda)_i = 0$. 
\end{proof}

As for weight modules in Lie theory, we have:

\begin{lem}\label{lem:directsum}
Each $M \in \mc{O}$ is the direct sum of its generalized $\eu$-eigenspaces 
$$
M = \bigoplus_{a \in \C} M_a,
$$
and $\dim M_a < \infty$ for all $a \in \C$. 
\end{lem}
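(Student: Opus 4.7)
The plan is a two-step reduction: first to the standard modules $\Delta_r(\lambda)$, and then to the Vermas $\Delta(\lambda) = \Delta_1(\lambda)$ where one can just calculate. For $\Delta(\lambda)$: since $\h$ annihilates $1\otimes\lambda\subset\Delta(\lambda)$, one has $\eu\cdot v = \mbf{c}_\lambda v$ for $v\in 1\otimes\lambda$; combined with $[\eu,x]=x$ and the Leibniz rule, this gives $[\eu,p]=kp$ for $p\in\C[\h]_k$, and hence $\eu\cdot(p\otimes v) = (\mbf{c}_\lambda+k)(p\otimes v)$. So $\Delta(\lambda)$ is the direct sum of its honest $\eu$-eigenspaces $\C[\h]_k\otimes\lambda$, each finite-dimensional.

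To pass from $\Delta(\lambda)$ to $\Delta_r(\lambda)$, I would induct on $r$ using the short exact sequence (\ref{eq:sesDelta}) and the observation that the property \textit{``$M$ is a direct sum of finite-dimensional generalized $\eu$-eigenspaces''} is stable under extensions of $\H_{\mbf{c}}(W)$-modules. Given $0\to A\to B\to C\to 0$ and $b\in B$, choose a polynomial $p\neq 0$ with $p(\eu)\bar b = 0$ in $C$; then $p(\eu)b\in A$, so $\C[\eu]\cdot p(\eu)b$ is finite-dimensional by hypothesis, and hence so is $\C[\eu]b$. This gives local $\eu$-finiteness of $B$, and the generalized $a$-eigenspaces fit into a short exact sequence $0\to A_a\to B_a\to C_a\to 0$, forcing $\dim B_a<\infty$.

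For arbitrary $M\in\mc{O}$, I would exhibit $M$ as a quotient of a finite direct sum of standard modules. Pick $\H_{\mbf{c}}(W)$-generators $m_1,\ldots,m_k$ of $M$; by local nilpotence of $\h$ each is killed by some $\mf{m}^{r_i}$. Decompose the finite-dimensional $W$-subrepresentation $\C W\cdot m_i$ into irreducibles $\lambda_{ij}$; since $W$ preserves $\mf{m}$, each inclusion $\lambda_{ij}\hookrightarrow M$ is still killed by $\mf{m}^{r_i}$, giving by the universal property a map $\Delta_{r_i}(\lambda_{ij})\to M$, and summing yields a surjection $\bigoplus_{i,j}\Delta_{r_i}(\lambda_{ij})\twoheadrightarrow M$. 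Any $\H_{\mbf{c}}(W)$-module surjection is automatically $\eu$-equivariant, so sends generalized $a$-eigenspaces onto generalized $a$-eigenspaces, and both the decomposition and the finite-dimensionality descend from the source to $M$. The one non-routine step is the extension argument in the second paragraph; everything else is bookkeeping.
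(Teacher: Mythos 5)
Your proof is correct and follows the same route as the paper: compute the honest $\eu$-eigenspace decomposition of $\Delta(\lambda)$ directly, bootstrap to $\Delta_r(\lambda)$ via the short exact sequence \eqref{eq:sesDelta} and stability of the generalized-eigenspace property under extensions, exhibit $M$ as a quotient of a finite sum of $\Delta_r(\lambda_i)$'s, and pass the property along the surjection. The only cosmetic difference is that the paper picks a single finite-dimensional $\C[\h^*]\rtimes W$-submodule $M'$ generating $M$ rather than separately analyzing each $\C W\cdot m_i$, and it delegates the extension step to the argument already used in Lemma \ref{lem:standardO}; you spell out both, which is fine.
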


\begin{proof}
Since $M$ is in category $\mc{O}$, we can choose some finite dimensional $\C[\h^*] \rtimes W$-submodule $M'$ of $M$ that generates $M$ as a $\H_{\mbf{c}}(W)$-module. Since $M'$ is finite dimensional, there exists some $r \gg 0$ such that $\h^r \cdot M' = 0$. Thus we may find $\lambda_1,\ds, \lambda_k \in \Irr (W)$ such that the sequence 
$$
\bigoplus_{i=1}^k \Delta_r(\lambda_i) \rightarrow M \rightarrow 0
$$
is exact. Each $\Delta(\lambda)_i$ is a generalized $\eu$-eigenspace with eigenvalue $i + \mbf{c}_{\lambda}$. Hence $\Delta(\lambda)$ is a direct sum of its $\eu$-eigenspaces. As in the proof of Lemma \ref{lem:standardO}, one can use this fact together with the short exact sequence (\ref{eq:sesDelta}) to conclude that each $\Delta_r(\lambda)$ is a direct sum of its generalized $\eu$-eigenspaces, with each eigenspace finite dimensional. This implies that $M$ has this property too. 
\end{proof}

\begin{exercise}\label{ex:example1}
\begin{enumerate}
\item Give an example of a module $M \in \Lmod{\H_{\mbf{c}}(W)}$ that is not the direct sum of its generalized $\eu$-eigenspaces. 
\item Using $\eu$, show that every finite dimensional $\H_{\mbf{c}}(W)$-module is in category $\mc{O}$ (hint: how does $y \in \h$ act on a generalized eigenspace for $\eu$?). 
\end{enumerate}
\end{exercise}

\subsection{Characters} 

Using the Euler operator $\eu$ we can define the character of a module $M \in \mc{O}$ by 
$$
\ch (M) = \sum_{a \in \C} (\dim M_a) t^a.
$$
The Euler element acts via the scaler $\mbf{c}_{\lambda}$ on $1 \o \lambda \subset \Delta(\lambda)$. This implies that 
$$
\ch(\Delta(\lambda)) = \frac{ \dim (\lambda) t^{\mbf{c}_{\lambda}}}{(1 - t)^n}.
$$ 

\begin{exercise}\label{ex:charpoly}
Show that $\ch (M) \in \bigoplus_{a \in \C} t^a \Z[[t]]$ for all $M \in \mc{O}$. (Harder) 
\end{exercise}

In fact one can do better than the above result. As shown in exercise \ref{ex:Grothbasis} below, the standard modules $\Delta(\lambda)$ are a $\Z$-basis of the Grothendieck group $K_0(\mc{O})$. The character of $M$ only depends on its image in $K_0(\mc{O})$. Therefore if 
$$
[M] = \sum_{\lambda \in \Irr (W)} n_{\lambda} [\Delta(\lambda)] \in K_0(\mc{O}),
$$
for some $n_{\lambda} \in \Z$, then the fact that $\ch(\Delta(\lambda)) = \frac{\dim (\lambda) t^{\mbf{c}_{\lambda}}}{(1 - t)^n}$ implies that $\ch(M) = \frac{1}{(1 - t)^n} \cdot f(t)$, where 
$$
f(t) = \sum_{\lambda \in \Irr (W)} n_{\lambda} \dim (\lambda) t^{\mbf{c}_{\lambda}} \in \Z[x^a \ | \ a \in \C].
$$
Hence, the rule $M \mapsto (1 - t)^n \cdot \ch (M)$ is a morphism of abelian groups $K_0(\mc{O}) \rightarrow \Z[(\C,+)]$. It is not in general an embedding.

\subsection{Simple modules}

Two of the basic problems motivating research in the theory of rational Cherednik algebras are: 
\begin{enumerate}
\item Classify the simple modules in $\mc{O}$. 
\item Calculate $\ch(L)$ for all simple modules $L \in \mc{O}$. 
\end{enumerate}

The first problem is easy, but the second is very difficult (and still open in general). 

\begin{lem}\label{lem:deltasurject}
Let $M$ be a non-zero module in category $\mc{O}$. Then, there exists some $\lambda \in \Irr (W)$ and non-zero homomorphism $\Delta(\lambda) \rightarrow M$. 
\end{lem}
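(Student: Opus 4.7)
The plan is to construct a non-zero vector $v \in M$ that is both annihilated by $\h$ and sits inside an irreducible $W$-submodule; once we have that, the universal property of induction will hand us the required homomorphism $\Delta(\lambda) \to M$ for free.

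First I would pick any non-zero $m \in M$ and consider the $\C[\h^*]\rtimes W$-submodule it generates, namely $N = (\C[\h^*]\rtimes W)\cdot m$. Because $M$ lies in category $\mc{O}$, the augmentation ideal $\mf{m}=\C[\h^*]_+$ acts locally nilpotently on $m$, so $\C[\h^*]\cdot m$ is finite dimensional; averaging over the finite group $W$ shows that $N$ itself is finite dimensional. On a finite-dimensional module, local nilpotence of $\mf{m}$ upgrades to honest nilpotence: there is a least $k\geq 1$ with $\mf{m}^k N=0$, so $\mf{m}^{k-1}N$ is a non-zero subspace annihilated by $\h$.

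Next I would take $N^{\h}=\{n\in N:\h\cdot n=0\}$, which we have just shown to be non-zero and which is finite dimensional. The key observation is that $N^{\h}$ is automatically $W$-stable, because $\h$ is $W$-stable inside $\H_{\mbf{c}}(W)$: if $n\in N^{\h}$ and $w\in W$, then $\h\cdot(w n)=w\cdot(w^{-1}\h w)\cdot n=w\cdot \h\cdot n=0$. So $N^{\h}$ is a non-zero finite-dimensional $\C W$-module, and I can pick an irreducible $W$-submodule $\lambda\subseteq N^{\h}$.

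Finally, the inclusion $\lambda\hookrightarrow M$ is a morphism of $\C[\h^*]\rtimes W$-modules, where $\lambda$ is regarded as $\lambda_1=\widetilde{\lambda}/\mf{m}\widetilde{\lambda}$ (the $\mf{m}$-action on $\lambda$ is zero by construction). By the adjunction between induction and restriction along $\C[\h^*]\rtimes W \hookrightarrow \H_{\mbf{c}}(W)$, this extends uniquely to a homomorphism $\Delta(\lambda)=\H_{\mbf{c}}(W)\otimes_{\C[\h^*]\rtimes W}\lambda\longrightarrow M$, which is non-zero because it is non-zero on $1\otimes\lambda$. The only real step that needs care is the first one, producing a vector killed by all of $\h$; everything after that is a standard induction/adjunction argument, and the $W$-invariance of $N^{\h}$ is the little observation that makes the whole thing run.
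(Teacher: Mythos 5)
Your proof is correct, and it takes a genuinely different route from the one in the paper. The paper's argument is a weight-space argument via the Euler element: it invokes Lemma \ref{lem:directsum} to decompose $M$ into generalized $\eu$-eigenspaces with real parts bounded below, picks a minimal weight $a$ so that $M_{a-1}=0$, observes that this forces every vector in $M_a$ to be singular, and then extracts an irreducible $W$-constituent of $M_a$ (which is $W$-stable since $[\eu,w]=0$). Your argument bypasses $\eu$ and the weight decomposition entirely: you work directly from the definitional local nilpotence of $\h$, showing that the $\C[\h^*]\rtimes W$-submodule generated by any $m\neq 0$ is finite dimensional, so $\mf{m}$ acts nilpotently on it and the last nonzero power $\mf{m}^{k-1}N$ produces singular vectors; $W$-stability of $N^{\h}$ then follows from $W$ normalizing $\h$, and adjunction does the rest. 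This is more elementary and arguably more robust, since it uses nothing beyond the axioms of the category and applies in any ``triangular'' setting with a locally nilpotent positive part, with no grading element needed. In fact you are essentially surfacing the argument that is buried inside the paper's proof of Lemma \ref{lem:directsum} (which also begins by choosing a finite-dimensional $\C[\h^*]\rtimes W$-submodule) and applying it directly, rather than going through the Euler-weight machinery it is used to build. One small wording quibble: ``averaging over $W$'' is an odd way to describe the step $N=\sum_{w\in W}\C[\h^*]\cdot(wm)$, which is a sum rather than an average, and its finite-dimensionality silently uses that $\h^{N}\cdot(wm)=w\cdot\h^{N}\cdot m=0$ because $\h$ is $W$-stable --- the same observation you make explicitly for $N^{\h}$ a few lines later, so it is worth saying once at the point where it is first used.
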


\begin{proof}
Note that the real part of the weights of $M$ are bounded from below i.e. there exists some $K \in \R$ such that $M_a \neq 0$ implies $\mathrm{Re} (a) \ge K$. Therefore we may choose some $a \in \C$ such that $M_a \neq 0$ and $M_b = 0$ for all $b \in \C$ such that $a - b \in \R_{> 0}$. An element $m \in M$ is said to be \textit{singular} if $\h \cdot m = 0$ i.e. it is annihilated by all $y$'s. Our assumption implies that all elements in $M_a$ are singular. If $\lambda$ occurs in $M_a$ with non-zero multiplicity then there is a well defined homomorphism $\Delta(\lambda) \rightarrow M$, whose restriction to $1 \o \lambda$ injects into $M_a$. 
\end{proof}

\begin{lem}
Each standard module $\Delta(\lambda)$ has a simple head $L(\lambda)$ and the set 
$$
\{ L(\lambda) \ | \ \lambda \in \Irr (W) \}
$$
is a complete set of non-isomorphic simple modules of category $\mc{O}$. 
\end{lem}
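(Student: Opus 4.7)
The plan is to establish three things in order: existence of a simple head for each $\Delta(\lambda)$, surjection from some standard onto every simple, and non-isomorphism of the heads for distinct $\lambda$.

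First I would produce the simple head using the $\eu$-grading. By Lemma \ref{lem:directsum} and the proof of Lemma \ref{lem:standardO}, $\Delta(\lambda)$ decomposes as $\bigoplus_{k\geq 0}\Delta(\lambda)_{\mathbf{c}_\lambda+k}$, where the lowest piece $\Delta(\lambda)_{\mathbf{c}_\lambda}=1\otimes\lambda$ is irreducible as a $W$-module and generates $\Delta(\lambda)$ as an $\H_{\mbf{c}}(W)$-module. Any proper submodule $N$ is a sum of its generalized $\eu$-eigenspaces, and $N\cap(1\otimes\lambda)$ is a $W$-submodule of $\lambda$; if it were nonzero it would equal $1\otimes\lambda$ by irreducibility of $\lambda$, forcing $N=\Delta(\lambda)$. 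So every proper submodule sits inside $\Delta(\lambda)^+:=\bigoplus_{k\geq 1}\Delta(\lambda)_{\mathbf{c}_\lambda+k}$. The sum $J(\lambda)$ of all proper submodules is therefore still contained in $\Delta(\lambda)^+$, hence proper, so it is the unique maximal submodule and $L(\lambda):=\Delta(\lambda)/J(\lambda)$ is simple.

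Next I would show that every simple $L\in\mc{O}$ is some $L(\lambda)$. By Lemma \ref{lem:deltasurject}, there is a nonzero map $\phi:\Delta(\lambda)\to L$, which is surjective since $L$ is simple. Its kernel is a proper submodule of $\Delta(\lambda)$, hence contained in $J(\lambda)$; but $\Delta(\lambda)/\ker\phi\cong L$ is simple, forcing $\ker\phi=J(\lambda)$. Thus $L\cong L(\lambda)$.

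Finally I would verify non-isomorphism. The key invariant is the space of \emph{singular vectors} $L^{\sing}:=\{v\in L:\h\cdot v=0\}$, viewed as a $W$-module. Because $[\eu,y]=-y$, singular vectors are $\eu$-stable, so they split into generalized eigenspaces. In $\Delta(\lambda)$, any homogeneous singular vector $v$ of $\eu$-weight strictly greater than $\mathbf{c}_\lambda$ generates a submodule supported only in strictly higher degrees than $\mathbf{c}_\lambda$, hence a proper submodule sitting inside $J(\lambda)$. Therefore the image of $L(\lambda)^{\sing}$ in $L(\lambda)$ is exactly the image of $1\otimes\lambda$, which is $\lambda$ as a $W$-module. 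An isomorphism $L(\lambda)\cong L(\mu)$ thus gives $\lambda\cong\mu$ as $W$-representations, so $\lambda=\mu$.

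The main obstacle is the last step: the $\eu$-eigenvalues $\mathbf{c}_\lambda$ and $\mathbf{c}_\mu$ may coincide for distinct $\lambda,\mu$, so distinguishing $L(\lambda)$ from $L(\mu)$ by Euler weights alone is not possible; one genuinely needs the $W$-module structure of the singular subspace. The slightly delicate point there is verifying that every singular vector of $\Delta(\lambda)$ lying above the bottom degree lies in $J(\lambda)$, which is why I isolated the $\eu$-stability of singular vectors and the generation argument.
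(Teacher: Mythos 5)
Your overall structure matches the paper's proof: show the bottom graded piece $1\otimes\lambda$ cannot lie in the sum of proper submodules (hence $\Delta(\lambda)$ has a simple head), use Lemma \ref{lem:deltasurject} to see every simple of $\mc{O}$ is a quotient of some $\Delta(\lambda)$, and distinguish the $L(\lambda)$ via the $W$-module of singular vectors. Steps one and two are fine and are the paper's own argument.

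There is, however, a logical gap in the last step, precisely at the point you flagged as delicate. You prove that any homogeneous singular vector of $\Delta(\lambda)$ in positive degree generates a proper submodule, hence lies in $J(\lambda)$, and then conclude ``therefore'' $L(\lambda)^{\sing}$ coincides with the image of $1\otimes\lambda$. This inference does not follow: a singular vector $\bar v\in L(\lambda)$ lifts to some homogeneous $v\in\Delta(\lambda)$ satisfying only $\h\cdot v\subset J(\lambda)$, not $\h\cdot v=0$, so there is no reason $v$ is singular in $\Delta(\lambda)$. In other words, your argument controls $\Delta(\lambda)^{\sing}\cap J(\lambda)$, but says nothing \emph{a priori} about $L(\lambda)^{\sing}$. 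The fix is to run exactly your generation argument inside $L(\lambda)$ itself: if $0\neq\bar v\in L(\lambda)^{\sing}$ has $\eu$-weight strictly above $\mbf{c}_\lambda$, then $\H_{\mbf{c}}(W)\cdot\bar v=\C[\h]\cdot\C W\cdot\bar v$ lives in degrees strictly above $\mbf{c}_\lambda$, hence is a nonzero proper submodule of $L(\lambda)$ --- contradicting simplicity of $L(\lambda)$. Thus $L(\lambda)^{\sing}\subseteq L(\lambda)_{\mbf{c}_\lambda}=\lambda$, and equality holds since the lowest degree piece is automatically singular. With that substitution, the rest of your step three (using $L(\lambda)^{\sing}$ as the invariant, rather than the possibly-coinciding Euler weights) is exactly right, and in fact fills in a detail the paper only asserts.
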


\begin{proof}
Let $R$ be the sum of all proper submodules of $\Delta(\lambda)$. It suffices to show that $R \neq \Delta(\lambda)$. The weight subspace $\Delta(\lambda)_{\mbf{c}_{\lambda}} = 1 \o \lambda$ is irreducible as a $W$-module and generates $\Delta(\lambda)$. If $R_{\mbf{c}_{\lambda}} \neq 0$ then there exists some proper submodule $N$ of $\Delta(\lambda)$ such that $N_{\mbf{c}_{\lambda}} \neq 0$. But then $N = \Delta(\lambda)$. Thus, $R_{\mbf{c}_{\lambda}} = 0$, implying that $R$ itself is a proper submodule of $\Delta(\lambda)$. Now let $L$ be a simple module in category $\mc{O}$. By Lemma \ref{lem:deltasurject}, there exists a non-zero homomorphism $\Delta(\lambda) \rightarrow L$ for some $\lambda \in \Irr (W)$. Hence $L \simeq L(\lambda)$. The fact that $L(\lambda) \simeq L(\mu)$ implies $\lambda \simeq \mu$ follows from the fact that $L_{\sing}$, the space of singular vectors in $L$, is irreducible as a $W$-module. 
\end{proof}

\begin{example}
Let's consider $\H_{\mbf{c}}(\Z_2)$ at $\mbf{c} = \frac{-3}{2}$. Then, one can check that 
$$
\Delta(\rho_1) = \C[x] \o \rho_1 \twoheadrightarrow L(\rho_1) = (\C[x] \o \rho_1) / (x^3 \C[x] \o \rho_1).
$$
On the other hand, $\Delta(\rho_0) = L(\rho_0)$ is simple. The composition series of $\Delta(\rho_1)$ is $\begin{array}{c} L(\rho_1) \\ L(\rho_0) \end{array}$.  
\end{example}

\begin{cor}
Every module in category $\mc{O}$ has finite length. 
\end{cor}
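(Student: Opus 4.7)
The plan is to first reduce to the case of standard modules, and then bound the length of each $\Delta(\lambda)$ by a weight-space counting argument.

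From the proof of Lemma \ref{lem:directsum}, every $M \in \mc{O}$ fits into a surjection
$$
\bigoplus_{i=1}^{k} \Delta_{r_i}(\lambda_i) \twoheadrightarrow M
$$
for some $\lambda_i \in \Irr(W)$ and $r_i \in \N$. Since finite length is preserved under quotients and finite direct sums, it suffices to show that each $\Delta_r(\lambda)$ has finite length. Iterating the short exact sequence (\ref{eq:sesDelta}) exhibits $\Delta_r(\lambda)$ as an $r$-step extension whose successive quotients are all isomorphic to $\Delta(\lambda)$, so the problem reduces further to showing each standard module has finite length.

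For this, I would bound the length $n$ of an arbitrary strictly ascending chain
$$
0 = N_0 \subsetneq N_1 \subsetneq \cdots \subsetneq N_n = \Delta(\lambda).
$$
By Lemma \ref{lem:deltasurject}, each nonzero quotient $N_i / N_{i-1}$ receives a nonzero map from some $\Delta(\mu_i)$, and hence contains a copy of $L(\mu_i)$ as a submodule. Comparing dimensions of the generalized $\eu$-eigenspaces, which are finite dimensional by Lemma \ref{lem:directsum}, then yields
$$
\sum_{i=1}^{n} \dim L(\mu_i)_{a} \;\leq\; \dim \Delta(\lambda)_{a} \qquad \text{for every } a \in \C.
$$
Since $L(\mu)$ is a quotient of $\Delta(\mu)$ whose maximal proper submodule vanishes in weight $\mbf{c}_\mu$, the space $L(\mu)_{\mbf{c}_\mu}$ has dimension $\dim \mu \geq 1$. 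Specialising to $a = \mbf{c}_\mu$ gives $\#\{i : \mu_i = \mu\} \leq \dim \Delta(\lambda)_{\mbf{c}_\mu}$, and summing over the finite set $\Irr(W)$ produces the uniform bound
$$
n \;\leq\; \sum_{\mu \in \Irr(W)} \dim \Delta(\lambda)_{\mbf{c}_\mu},
$$
which is finite. Hence $\Delta(\lambda)$ has finite length, and tracing back through the reductions above gives the same conclusion for every $M \in \mc{O}$.

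The main technical point to verify carefully is the input that the lowest $\eu$-weight of $L(\mu)$ is exactly $\mbf{c}_\mu$, with $L(\mu)_{\mbf{c}_\mu} \cong \mu$ as a $W$-module; this follows from the construction of $L(\mu)$ as the head of $\Delta(\mu)$ together with the vanishing of the radical of $\Delta(\mu)$ in weight $\mbf{c}_\mu$ established in the proof of the previous lemma. Once this is granted, the rest is straightforward finite-dimensional bookkeeping and no deeper ring-theoretic input is required.
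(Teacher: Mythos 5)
Your proof is essentially correct and rests on the same two ingredients as the paper's — Lemma \ref{lem:deltasurject} and the finite-dimensionality of the generalized $\eu$-eigenspaces from Lemma \ref{lem:directsum} — though it contains one wrong intermediate sentence and an unnecessary reduction. The wrong sentence is that $N_i/N_{i-1}$ ``contains a copy of $L(\mu_i)$ as a submodule'': Lemma \ref{lem:deltasurject} gives a nonzero map $\Delta(\mu_i) \to N_i/N_{i-1}$ whose image $Q_i$ is a \emph{quotient} of $\Delta(\mu_i)$, and while every nonzero quotient of $\Delta(\mu_i)$ surjects onto $L(\mu_i)$ (its kernel lies in the radical), the simple head of $\Delta(\mu_i)$ need not occur in its socle, so in general $L(\mu_i)$ is only a \emph{subquotient} of $N_i/N_{i-1}$, not a submodule. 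The inequality you want survives this correction: $\dim(N_i/N_{i-1})_a \ge \dim(Q_i)_a \ge \dim L(\mu_i)_a$, where the first inequality holds since $Q_i \subseteq N_i/N_{i-1}$ and the second since $Q_i$ surjects onto $L(\mu_i)$ and taking the weight-$a$ piece is exact.

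The reduction to standard modules is also not needed: the weight-counting argument applies verbatim to an arbitrary nonzero $M \in \mc{O}$, replacing $\dim\Delta(\lambda)_{\mbf{c}_\mu}$ by $\dim M_{\mbf{c}_\mu}$, which is still finite by Lemma \ref{lem:directsum}. That is precisely how the paper argues, but packaged more compactly: choose $K$ with $\mathrm{Re}(\mbf{c}_\lambda) < K$ for all $\lambda \in \Irr(W)$, observe that the truncation $M^{\le K}$ (the sum of all $M_a$ with $\mathrm{Re}(a) \le K$) is finite-dimensional, and use Lemma \ref{lem:deltasurject} to show that a strictly monotone chain of submodules of $M$ induces a strictly monotone chain of subspaces of $M^{\le K}$, which must therefore terminate.
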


\begin{proof}
Let $M$ be a non-zero object of category $\mc{O}$. Choose some real number $K \gg 0$ such that $\mathrm{Re} (\mbf{c}_{\lambda}) < K$ for all $\lambda \in \Irr (W)$. We write $M^{\le K}$ for the sum of all weight spaces $M_a$ such that $\mathrm{Re} (a) \le K$. It is a finite dimensional subspace. Lemma \ref{lem:deltasurject} implies that $N^{\le K} \neq 0$ for all non-zero submodules $N$ of $M$. Therefore, if $N_0 \supsetneq N_1 \supsetneq N_2 \supsetneq \cdots$ is a proper descending chain of submodule of $M$ then $N_0^{\le K} \supsetneq N_1^{\le K} \supsetneq N_2^{\le K} \supsetneq \cdots$ is a proper descending chain of subspaces of $M^{\le K}$. Hence the chain must have finite length.  
\end{proof}

\subsection{Projective modules}

A module $P \in \mc{O}$ is said to be projective if the functor $\Hom_{\H_{\mbf{c}}(W)}(P, - ) : \mc{O} \rightarrow \mathrm{Vect}(\C)$ is exact. It is important to note that a projective module $P \in \mc{O}$ is \textit{not} projective when considered as a module in $\Lmod{\H_{\mbf{c}}(W)}$ i.e. being projective is a relative concept. 

\begin{defn}
An object $Q$ in $\mc{O}$ is said to have a \textit{$\Delta$-filtration} if it has a finite filtration $0 = Q_0 \subset Q_1 \subset \cdots \subset Q_r = Q$ such that $Q_i / Q_{i-1} \simeq \Delta(\lambda_i)$ for some $\lambda_i \in \Irr (W)$ and all $1 \le i \le r$. 
\end{defn}

Let $L \in \mc{O}$ be simple. A \textit{projective cover} $P(L)$ of $L$ is a projective module in $\mc{O}$ together with a surjection $p : P(L) \rightarrow L$ such that any morphism $f : M \rightarrow P(L)$ is a surjection whenever $p \circ f : M \rightarrow L$ is a surjection. Equivalently, the head of $P(L)$ equals $L$. Projective covers, when they exist, are unique up to isomorphism. The following theorem, first shown in \cite{GuayO}, is of key importance in the study of category $\mc{O}$. We follow the proof given in \cite{ArikiSurvey}.

\begin{thm}\label{thm:projectives}
Every simple module $L(\lambda)$ in category $\mc{O}$ has a projective cover $P(\lambda)$. Moreover, each $P(\lambda)$ has a finite $\Delta$-filtration. 
\end{thm}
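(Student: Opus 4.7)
The plan is to build each projective cover as an indecomposable direct summand of $\Delta_r(\lambda)$ for $r$ sufficiently large, using a weight-space truncation of $\mc{O}$, and then to exhibit the $\Delta$-filtration by descending the obvious one on $\Delta_r(\lambda)$ via Ext-vanishing in the highest-weight-category style.

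First I would introduce truncated subcategories: for each $K \in \R$, let $\mc{O}^K$ denote the full subcategory of $\mc{O}$ consisting of modules whose generalized $\eu$-eigenvalues all have real part $\ge K$. Since every $M \in \mc{O}$ is finitely generated over $\C[\h]$, its $\eu$-weights are bounded below, so Lemma \ref{lem:directsum} gives $\mc{O} = \bigcup_K \mc{O}^K$. Each $\mc{O}^K$ is a Serre subcategory containing only finitely many simple objects, namely those $L(\mu)$ with $\Re(\mbf{c}_\mu) \ge K$. Frobenius reciprocity yields the key identification
$$\Hom_{\H_{\mbf{c}}(W)}(\Delta_r(\lambda), M) \simeq \Hom_W\bigl(\lambda,\, \{m \in M : \mf{m}^r m = 0\}\bigr),$$
and since $\mf{m} = \C[\h^*]_+$ is generated by $\h$ with each $y \in \h$ of Euler degree $-1$, $\mf{m}^r$ lowers $\eu$-weights by at least $r$.

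The central technical claim is that, once $r$ is taken large enough relative to $K$, the module $\Delta_r(\lambda)$ is projective in the appropriate truncation of $\mc{O}^K$. Indeed, for $M$ of bounded top Euler weight, every vector $m$ in the $\lambda$-isotypic component automatically satisfies $\mf{m}^r m = 0$, since otherwise $\mf{m}^r m$ would populate $\eu$-weights below the cutoff $K$ where $M$ has no vectors. Consequently the right-hand side of the reciprocity becomes exact on these truncated pieces, and passing to the direct limit supplies $\mc{O}$ with enough projectives. Krull--Schmidt, valid because Hom-spaces in $\mc{O}$ are finite-dimensional thanks to the weight decomposition of Lemma \ref{lem:directsum} and every object has finite length, then extracts a projective cover $P(\lambda)$ of $L(\lambda)$ as an indecomposable summand of $\Delta_r(\lambda)$ for suitable $r$.

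The main obstacle is the $\Delta$-filtration. The parent $\Delta_r(\lambda)$ itself carries an obvious one, built by iterating the short exact sequence
$$0 \to \Delta_{r-1}(\lambda) \to \Delta_r(\lambda) \to \Delta(\lambda) \to 0$$
from the proof of Lemma \ref{lem:standardO}, with every subquotient isomorphic to $\Delta(\lambda)$. Since $P(\lambda)$ is only a direct summand and $\Delta$-filtrations need not descend to summands in general, the remedy is to establish the Ext-ordering $\Ext^1_{\mc{O}}(\Delta(\mu), \Delta(\nu)) = 0$ whenever $\mbf{c}_\mu - \mbf{c}_\nu \notin \Z_{> 0}$. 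This is proved via a singular-vector argument using the $\eu$-grading: any nontrivial extension of $\Delta(\nu)$ by $\Delta(\mu)$ forces a type-$\mu$ singular vector at Euler weight $\mbf{c}_\mu$ inside a module generated in weight $\mbf{c}_\nu$, yielding the required weight inequality. Combined with an induction on the partial order on $\{\Re(\mbf{c}_\lambda) : \lambda \in \Irr(W)\}$, this Ext-vanishing guarantees that every direct summand of a $\Delta$-filtered module again admits a $\Delta$-filtration; applying this to $P(\lambda) \subset \Delta_r(\lambda)$ completes the proof. This last step, the Ext-ordering together with the descent of filtrations to summands, is where I expect the bulk of the technical work to sit.
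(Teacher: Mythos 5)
Your outline matches the paper's strategy closely---truncate $\mc{O}$ by Euler weight, induct along the truncations, lean on $\Delta_r(\lambda)$, and obtain the $\Delta$-filtration by passing to a direct summand of a $\Delta$-filtered object---but the central technical claim fails. You assert that $\Delta_r(\lambda)$ becomes projective in $\mc{O}^K$ once $r$ is large relative to $K$, and the justification invokes ``$M$ of bounded top Euler weight.'' However, $\mc{O}^K$ bounds $\eu$-weights from \emph{below}, not above, and no nontrivial subcategory of $\mc{O}$ can impose an upper bound: standard modules, and any infinite-dimensional object of $\mc{O}$, have $\eu$-weights stretching upward without bound. The reciprocity $\Hom(\Delta_r(\lambda),M) \cong \Hom_W(\lambda, \{m : \mf{m}^r m = 0\})$ is correct, but $\{m : \mf{m}^r m = 0\}$ contains everything of weight with real part below $K+r$ \emph{and whatever else} happens to be annihilated by $\mf{m}^r$ at higher weights; on that unbounded high-weight part the functor is not exact under surjections, so $\Delta_r(\lambda)$ is not projective in $\mc{O}^K$, and you cannot invoke Krull--Schmidt to extract $P(\lambda)$ as one of its summands.

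The paper repairs exactly this point by cutting off the unbounded part. It passes to the quotient $R(\lambda) = \Delta_r(\lambda)/\H_{\mbf{c}}(W)\cdot(\eu - \mbf{c}_\lambda)^N(1 \o 1 \o \lambda)$, with $N$ chosen so that $(\eu-\mbf{c}_\lambda)^N$ annihilates $M_{\mbf{c}_\lambda}$ uniformly over the truncation. Killing $(\eu-\mbf{c}_\lambda)^N$ on the generator pins the image of the generating copy of $\lambda$ into the \emph{single} weight space $M_{\mbf{c}_\lambda}$; only then does your weight-lowering argument close, giving $\Hom(R(\lambda),M) \cong (M_{\mbf{c}_\lambda})_\lambda$, visibly an exact functor of $M$. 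One further quotient forces the result into the truncation. Notice that this realizes each projective cover in the truncation as a \emph{quotient} of $\Delta_r(\lambda)$, not a priori a direct summand; only once projectivity in all of $\mc{O}$ is secured (i.e.\ at the bottom of the induction) does it become a direct summand of a $\Delta$-filtered object, which is the input your filtration argument needs. That last step---descent of the $\Delta$-filtration to summands via indecomposability of the $\Delta(\lambda)$, together with the Ext-vanishing of Lemma \ref{lem:trivext}---is the one you sketch, and that part of your plan is sound; the gap is in the construction of the projective itself.
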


Unfortunately the proof of Theorem \ref{thm:projectives} is rather long and technical. We suggest the reader skips it on first reading. For $a \in \C$ we denote by $\bar{a}$ its image in $\C / \Z$. We write $\mc{O}^{\bar{a}}$ for the full subcategory of $\mc{O}$ consisting of all $M$ such that $M_b = 0$ for all $b \notin a + \Z$. 

\begin{exercise}\label{ex:decompO}
Using the fact that all weights of $\H_{\mbf{c}}(W)$ under the adjoint action of $\eu$ are in $\Z$, show that 
$$
\mc{O} = \bigoplus_{\bar{a} \in \C / \Z} \mc{O}^{\bar{a}}.
$$
\end{exercise}

\begin{proof}[Proof of Theorem \ref{thm:projectives}]
The above exercise shows that it suffices to construct a projective cover $P(\lambda)$ for $L(\lambda)$ in $\mc{O}^{\bar{a}}$. Fix a representative $a \in \C$ of $\bar{a}$. For each $k \in \Z$, let $\mc{O}^{\ge k}$ denote the full subcategory of $\mc{O}^{\bar{a}}$ consisting of modules $M$ such that $M_b \neq 0$ implies that $b - a \in \Z_{\ge k}$. Then, for $k \gg 0$, we have $\mc{O}^{\ge k} = 0$ and for $k \ll 0$ we have $\mc{O}^{\ge k} = \mc{O}^{\bar{a}}$. To see this, it suffices to show that such $k$ exist for the finitely many simple modules $L(\lambda)$ in $\mc{O}^{\bar{a}}$ - then an arbitrary module in $\mc{O}^{\bar{a}}$ has a finite composition series with factors the $L(\lambda)$, which implies that the corresponding statement holds for them too. Our proof of Theorem \ref{thm:projectives} will be by induction on $k$. Namely, for each $k$ and all $\lambda \in \Irr (W)$ such that $L(\lambda) \in \mc{O}^{\ge k}$, we will construct a projective cover $P_{k}(\lambda)$ of $L(\lambda)$ in $\mc{O}^{\ge k}$ such that $P_{k}(\lambda)$ is a \textit{quotient} of $\Delta_r(\lambda)$ for $r \gg 0$. At the end we'll deduce that $P_{k}(\lambda)$ itself has a $\Delta$-filtration. The idea is to try and lift each $P_k(\lambda)$ in $\mc{O}^{\ge k}$ to a corresponding $P_{k-1}(\lambda)$ in $\mc{O}^{\ge k-1}$. 

Let $k_0$ be the largest integer such that $\mc{O}^{\ge k_0} \neq 0$. We begin with: 

\begin{claim}\label{claim:bottomsimple}
The category $\mc{O}^{\ge k_0}$ is semi-simple with $P_{k_0}(\lambda) = \Delta(\lambda) = L(\lambda)$ for all $\lambda$ such that $L(\lambda) \in \mc{O}^{\ge k_0}$. 
\end{claim}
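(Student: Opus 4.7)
The plan is to exploit the defining property of $k_0$: since $\mc{O}^{\ge k_0+1} = 0$, any nonzero module $M \in \mc{O}^{\ge k_0}$ must carry a nonzero weight space in weight $a+k_0$ (otherwise it would actually lie in $\mc{O}^{\ge k_0+1}$). The same dichotomy applies to every nonzero submodule $N \subseteq M$: either $N_{a+k_0} \ne 0$, or $N = 0$. This single observation will drive the entire argument.

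First I would show that the lowest weight space $M_{a+k_0}$ consists of singular vectors. This is immediate: for $y \in \h$, the commutation $[\eu,y] = -y$ (Exercise~\ref{ex:eu}) shows $y \cdot M_{a+k_0} \subseteq M_{a+k_0-1} = 0$. Next, decomposing $M_{a+k_0}$ as a $W$-module and using that the central element $-\sum_s \tfrac{2\mbf{c}(s)}{1-\zeta_s}s$ acts by the scalar $\mbf{c}_\lambda$ on the $\lambda$-isotypic component while $\sum_i x_i y_i$ kills singular vectors, I conclude that only those $\lambda \in \Irr(W)$ with $\mbf{c}_\lambda = a+k_0$ can appear, and $M_{a+k_0} \simeq \bigoplus_{\lambda} \lambda^{\oplus n_\lambda(M)}$ as $W$-modules.

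With this in hand, I would prove $\Delta(\lambda) = L(\lambda)$ whenever $\mbf{c}_\lambda = a + k_0$. Any nonzero submodule $N \subseteq \Delta(\lambda)$ lies in $\mc{O}^{\ge k_0}$ and so, by the key observation above, has $N_{a+k_0} \ne 0$. Since $\Delta(\lambda)_{a+k_0} = 1\o\lambda$ is irreducible as a $W$-module, $N_{a+k_0}$ must equal all of $1 \o \lambda$; this generates $\Delta(\lambda)$ as an $\H_{\mbf{c}}(W)$-module, forcing $N = \Delta(\lambda)$. Thus $\Delta(\lambda)$ is simple.

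Finally, for general $M \in \mc{O}^{\ge k_0}$, pick a singular copy of each $\lambda$ appearing in $M_{a+k_0}$ to build a homomorphism
$$
\Phi \, : \, \bigoplus_{\lambda} \Delta(\lambda)^{\oplus n_\lambda(M)} \longrightarrow M.
$$
The cokernel lies in $\mc{O}^{\ge k_0}$ and has zero weight space at $a+k_0$ by construction, hence lies in $\mc{O}^{\ge k_0+1} = 0$, so $\Phi$ is surjective; the kernel is in $\mc{O}^{\ge k_0}$ and, since $\Phi$ is bijective on the $(a+k_0)$-weight space, also has zero weight space at $a+k_0$, so it too vanishes. Therefore $M \simeq \bigoplus_\lambda L(\lambda)^{\oplus n_\lambda(M)}$, proving $\mc{O}^{\ge k_0}$ is semisimple. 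Each simple $L(\lambda) = \Delta(\lambda)$ is automatically projective in a semisimple category and is its own projective cover, giving $P_{k_0}(\lambda) = \Delta(\lambda) = L(\lambda)$. The main subtlety is really just the setup step — recognizing that the vanishing of $\mc{O}^{\ge k_0+1}$ is precisely what prevents proper submodules and nontrivial extensions from existing in the bottom layer — after which the claim unwinds mechanically.
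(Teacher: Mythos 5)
Your proof is correct, and it arrives at the claim by a somewhat different route than the paper.

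The paper's proof proceeds in two steps. First it shows $\Delta(\lambda) = L(\lambda)$ by assuming the quotient $\Delta(\lambda) \twoheadrightarrow L(\lambda)$ has a nonzero kernel $K$, choosing a simple submodule $L(\mu) \subset K$ (using finite length), and observing that $\mbf{c}_\mu - \mbf{c}_\lambda \in \Z_{>0}$ would push $L(\mu)$ into $\mc{O}^{\ge k_0 + 1} = 0$. Second, it establishes semisimplicity by showing that $\Hom_{\mc{O}^{\ge k_0}}(\Delta(\lambda), -)$ is exact: the chain of adjunctions $\Hom_{\mc{O}^{\ge k_0}}(\Delta(\lambda), M) = \Hom_{\C[\h^*]\rtimes W}(\lambda, M) = \Hom_W(\lambda, M_{\mbf{c}_\lambda})$ reduces exactness to the fact that $M \mapsto M_{\mbf{c}_\lambda}$ is exact (since modules in $\mc{O}$ decompose into generalized $\eu$-eigenspaces), so every $\Delta(\lambda)$ is projective and the category is semisimple.

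You instead isolate the single dichotomy ``nonzero in $\mc{O}^{\ge k_0}$ $\Longrightarrow$ nonzero $(a+k_0)$-weight space'' and run with it uniformly: it forces $\Delta(\lambda)$ to have no proper submodules (any nonzero submodule must contain $1 \o \lambda$, which generates), and it forces the explicitly-constructed map $\Phi : \bigoplus_\lambda \Delta(\lambda)^{\oplus n_\lambda(M)} \to M$ to be an isomorphism (both kernel and cokernel are killed by the same weight-vanishing argument). This is a constructive decomposition rather than a projectivity-via-adjunction argument. Both are sound. Yours is marginally more elementary — it avoids invoking exactness of the $\Hom$-functor through the adjunction chain, and it directly exhibits the semisimple decomposition instead of reducing to ``simple objects are projective, hence semisimple.'' The paper's route, on the other hand, sets up the adjunction computation $\Hom_{\mc{O}^{\ge k_0}}(\Delta(\lambda), M) = \Hom_W(\lambda, M_{\mbf{c}_\lambda})$, which the author reuses in spirit in the subsequent inductive step of the theorem. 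Either proof serves the claim; your key observation that ``$\mc{O}^{\ge k_0+1} = 0$ prevents both proper submodules and nontrivial cokernels'' is exactly the right insight.

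One small point to be careful about in your step 3: the identification $M_{a+k_0} = \bigoplus_\lambda \lambda^{\oplus n_\lambda(M)}$ implicitly uses that $\eu$ acts \emph{semisimply} (not just locally nilpotently minus a scalar) on $M_{a+k_0}$, which follows because $M_{a+k_0}$ consists of singular vectors so $\sum_i x_i y_i$ acts as zero there and $\eu$ reduces to the central group-algebra element. You state the ingredients, so this is fine, but it is worth flagging as the place where the semisimplicity of the $W$-action genuinely enters.
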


\begin{proof}[Proof of the claim]
Note that $\Delta(\lambda)_{\mbf{c}_{\lambda}} = 1 \o \lambda$ and $\Delta(\lambda)_b \neq 0$ implies that $b - \mbf{c}_{\lambda} \in \Z_{\ge 0}$. If the quotient map $\Delta(\lambda) \rightarrow L(\lambda)$ has a non-zero kernel $K$ then choose $L(\mu) \subset K$ a simple submodule. We have $\mbf{c}_{\mu} - \mbf{c}_{\lambda} \in \Z_{>0}$, contradicting the minimality of $k_0$. Thus $K = 0$. Since $\Delta(\lambda)$ is an induced module, adjunction implies that  
$$
\Hom_{\mc{O}^{\ge k_0}}(\Delta(\lambda),M) = \Hom_{\C[\h^*] \rtimes W}(\lambda,M) 
$$
for all $M \in \mc{O}^{\ge k_0}$. Again, since all the weights of $M$ are at least $\mbf{c}_{\lambda}$, this implies that 
$$
\Hom_{\C[\h^*] \rtimes W}(\lambda,M)  = \Hom_{W}(\lambda, M_{\mbf{c}_{\lambda}}).
$$
Since $M$ is a direct sum of its generalized $\eu$-eigenspaces, this implies that $\Hom_{\mc{O}^{\ge k_0}}(\Delta(\lambda),-)$ is an exact functor i.e. $\Delta(\lambda)$ is projective. 
\end{proof}

Now take $k < k_0$ and assume that we have constructed, for all $L(\lambda) \in \mc{O}^{\ge k+1}$, a projective cover $P_{k+1}(\lambda)$ of $L(\lambda)$ in $\mc{O}^{\ge k+1}$ with the desired properties. If $\mc{O}^{\ge k+1} = \mc{O}^{\ge k}$ then there is nothing to do so we may assume that there exist $\mu_1,\ds, \mu_r \in \Irr(W)$ such that $L(\mu_i)$ belongs to $\mc{O}^{\ge k}$, but not to $\mc{O}^{\ge k+1}$. Note that $\mbf{c}_{\mu_i} = a + k$ for all $i$. For all $M \in \mc{O}^{\ge k}$, either $M_{a+k} = 0$ (in which case $M \in \mc{O}^{\ge k+1}$) or $M_{a+k}$ consists of singular vectors i.e. $\h \cdot M_{a + k} = 0$. Therefore, as in the proof of Claim \ref{claim:bottomsimple}, we have $P_{k}(\mu_i) = \Delta(\mu_i)$ for $1 \le i \le r$. Notice that $P_{k}(\mu_i)$ is obviously a quotient of $\Delta(\mu_i)$. Thus, we are left with constructing the lifts $P_k(\lambda)$ of $P_{k+1}(\lambda)$ for all those $\lambda$ such that $L(\lambda) \in \mc{O}^{\ge k+1}$. Fix one such $\lambda$.

\begin{claim}
There exists some integer $N \gg 0$ such that $(\eu - \mbf{c}_{\lambda})^N \cdot m = 0$ for all $M \in \mc{O}^{\ge k}$ and all $m \in M_{\mbf{c}_{\lambda}}$. 
\end{claim}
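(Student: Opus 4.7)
The plan is to prove, by downward induction on $k$, the slightly sharper statement that for every $\nu \in \Irr(W)$ with $\mbf{c}_\nu \in a + \Z_{\ge k}$ there exists an integer $N(k,\nu)$ with $(\eu - \mbf{c}_\nu)^{N(k,\nu)} M_{\mbf{c}_\nu} = 0$ for every $M \in \mc{O}^{\ge k}$; the claim is the special case with $\nu = \lambda$ and $\mbf{c}_\lambda \ge a+k+1$. The base case is $\mbf{c}_\nu = a+k$: then $M_{\mbf{c}_\nu} = M_{a+k}$ consists entirely of $\h$-singular vectors because $\h$ lowers the $\eu$-weight by one and $M_{a+k-1} = 0$. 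On singular vectors the formula $\eu = \sum_i x_i y_i - \sum_s \tfrac{2\mbf{c}(s)}{1-\zeta_s} s$ collapses to the central element $Z := -\sum_s \tfrac{2\mbf{c}(s)}{1-\zeta_s} s \in Z(\C W)$, which acts as the scalar $\mbf{c}_\tau$ on each $\tau$-isotypic summand. Since $M_{\mbf{c}_\nu}$ is a generalized $\eu$-eigenspace, only the isotypics with $\mbf{c}_\tau = \mbf{c}_\nu$ can actually lie in it, and on those $\eu$ acts as the true scalar $\mbf{c}_\nu$, so $N(k,\nu) = 1$ suffices.

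For the inductive step I suppose the statement at level $k+1$, fix $M \in \mc{O}^{\ge k}$ and $\nu$ with $\mbf{c}_\nu \ge a+k+1$, and let $M' := \H_{\mbf{c}}(W) \cdot M_{a+k}$ be the submodule generated by the lowest weight space. Since $M_{a+k} \subseteq M'$, the quotient $M/M'$ has no weight-$(a+k)$ part, so $M/M' \in \mc{O}^{\ge k+1}$; combined with the exactness of the generalized-weight-space functor on $\mc{O}$, this gives the short exact sequence
\[
0 \longrightarrow M'_{\mbf{c}_\nu} \longrightarrow M_{\mbf{c}_\nu} \longrightarrow (M/M')_{\mbf{c}_\nu} \longrightarrow 0,
\]
and the inductive hypothesis applied to $M/M'$ yields $(\eu - \mbf{c}_\nu)^{N(k+1,\nu)} M_{\mbf{c}_\nu} \subseteq M'_{\mbf{c}_\nu}$.

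The main step is then to show that $(\eu - \mbf{c}_\nu)$ already annihilates $M'_{\mbf{c}_\nu}$. As in the base case, $M_{a+k}$ is $\h$-singular, and only $\tau$-isotypics with $\mbf{c}_\tau = a+k$ can appear, giving a $W$-module decomposition $M_{a+k} = \bigoplus_{\tau \,:\, \mbf{c}_\tau = a+k} \tau \otimes V_\tau$ for some (possibly infinite-dimensional) multiplicity spaces $V_\tau$. Each embedding $\tau \otimes V_\tau \hookrightarrow M$ is a $\C[\h^*]\rtimes W$-module map on which $\h$ acts trivially, and by the universal property of $\Delta(\tau)$ it extends to an $\H_{\mbf{c}}(W)$-module map $\Delta(\tau) \otimes V_\tau \to M$ landing in $M'$; summing over $\tau$ produces a surjection $\bigoplus_\tau \Delta(\tau) \otimes V_\tau \twoheadrightarrow M'$. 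Because $\eu$ acts on each weight space $\Delta(\tau)_b$ by the true scalar $b$, the operator $(\eu - \mbf{c}_\nu)$ vanishes identically on $(\Delta(\tau) \otimes V_\tau)_{\mbf{c}_\nu} = \Delta(\tau)_{\mbf{c}_\nu} \otimes V_\tau$ and hence on the quotient $M'_{\mbf{c}_\nu}$. Combining with the previous paragraph yields $N(k,\nu) \le N(k+1,\nu)+1$, closing the induction with the uniform bound $N(k,\nu) \le 1 + (\mbf{c}_\nu - a - k)$. The only delicate point I expect is the possible infinite-dimensionality of the $V_\tau$, making the source of the surjection an infinite direct sum; however, each $\Delta(\tau)_{\mbf{c}_\nu}$ is genuinely (not merely in a generalized sense) an $\eu$-eigenspace, so the scalar-action argument still forces $(\eu - \mbf{c}_\nu)$ to vanish on $M'_{\mbf{c}_\nu}$.
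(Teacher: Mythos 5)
Your proof is correct, and it takes a genuinely different route from the paper's. The paper exploits the ambient induction in Theorem \ref{thm:projectives}: using $\dim M_{a+k} < \infty$, it builds $\phi : \bigoplus_i \Delta(\mu_i)^{\oplus n_i} \to M$ with cokernel in $\mc{O}^{\ge k+1}$, covers that cokernel by a direct sum of the already-constructed projective covers $P_{k+1}(\eta)$, and extracts a uniform $N$ from the fact that each $P_{k+1}(\eta)$ is a quotient of the fixed finite-dimensional module $\Delta_r(\eta)$. Your argument instead runs its own downward induction on $k$, nowhere invokes the projective covers, and yields the explicit bound $N(k,\nu) \le 1 + (\mbf{c}_\nu - a - k)$. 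The ``delicate point'' you flag is not actually a problem: $\bigoplus_\tau \Delta(\tau) \otimes V_\tau \twoheadrightarrow M'$ is a morphism of $\Z$-graded $\H_{\mbf{c}}(W)$-modules, so it carries $\bigoplus_\tau \Delta(\tau)_{\mbf{c}_\nu} \otimes V_\tau$ onto $M'_{\mbf{c}_\nu}$ (the target $M'$ is $\eu$-stable inside $M = \bigoplus_b M_b$, hence is itself graded), and $\eu$ acts on $\Delta(\tau)_{\mbf{c}_\nu}$ by the genuine scalar $\mbf{c}_\nu$; the source need not lie in $\mc{O}$ for any of this to work, so infinite $V_\tau$ is harmless. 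What the paper's approach buys is economy within its ambient proof (the projectives $P_{k+1}(\eta)$ are already available and immediately supply a single bounded module independent of $M$); what yours buys is self-containedness and an explicit constant. Both ultimately rest on the same observation at the bottom: on the lowest nonzero weight space $M_{a+k}$ one has $\h \cdot M_{a+k} = 0$, so there $\eu$ coincides with the diagonalizable central element of $\C W$, forcing the generalized $(a+k)$-eigenspace to be a true eigenspace.
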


By definition, for a given $M \in \mc{O}^{\ge k}$ and $m \in M_{\mbf{c}_{\lambda}}$, there exists some $N \gg 0$ such that $(\eu - \mbf{c}_{\lambda})^N \cdot m = 0$. The claim is stating that one can find a particular $N$ that works simultaneously for all $M \in \mc{O}^{\ge k}$ and all $m \in M_{\mbf{c}_{\lambda}}$.

\begin{proof}[Proof of the claim]
Since $\dim (M_{a + k}) < \infty$ there exist $n_i \in \N$ and a morphism 
$$
\phi : \bigoplus_{i = 1}^r \Delta(\mu_i)^{\oplus n_i} \longrightarrow M
$$
such that the cokernel $M'$ of $\phi$ is in $\mc{O}^{\ge k+1}$. Therefore we may construct a surjection
$$
\psi : \bigoplus_{\eta} P_{k+1}(\eta)^{s_{\eta}} \twoheadrightarrow M'
$$
where the sum is over all $\eta \in \Irr (W)$ such that $L(\eta) \in \mc{O}^{\ge k+1}$. Since each $\Delta_r(\eta)$ is in category $\mc{O}$, Lemma \ref{lem:directsum} implies that there is some $N \gg 0$ such that $(\eu - \mbf{c}_{\lambda})^{N-1} \cdot q = 0$ for all $q \in \Delta_r(\eta)_{\mbf{c}_{\lambda}}$. Since $P_{k+1}(\eta)$ is a quotient of $\Delta_r(\eta)$, $(\eu - \mbf{c}_{\lambda})^{N-1} \cdot p = 0$ for all $p \in P_{k+1}(\eta)_{\mbf{c}_{\lambda}}$ too. Therefore $(\eu - \mbf{c}_{\lambda})^{N-1} \cdot m$ lies in the image of $\phi$ for all $m \in M_{\mbf{c}_{\lambda}}$ and hence $(\eu - \mbf{c}_{\lambda})^{N} \cdot m = 0$.
\end{proof}

Now choose some new integer $r$ such that $r + \mbf{c}_{\lambda} \gg a + k$ and define
$$
R(\lambda) = \frac{\Delta_r(\lambda)}{\H_{\mbf{c}}(W) \cdot (\eu - \mbf{c}_{\lambda})^N (1 \o 1 \o \lambda)}.
$$
Then, for $M \in \mc{O}^{\ge k}$, 
$$
\Hom_{\H_{\mbf{c}}(W)}(R(\lambda), M) = \{ m \in M_{\mbf{c}_{\lambda}} \ | \ \h^r \cdot m = 0 \}_{\lambda},
$$
where the subscript $\{ - \}_{\lambda}$ refers to the $\lambda$-isotypic component. Since $\mbf{c}_{\lambda} - r \ll a + k$ and $M \in \mc{O}^{\ge k}$, we have $M_{\mbf{c}_{\lambda} - r} = 0$. But $\h^r \cdot m \in M_{\mbf{c}_{\lambda} - r}$ for all $m \in M_{\mbf{c}_{\lambda}}$ which means that $\Hom_{\H_{\mbf{c}}(W)}(R(\lambda), M)$ equals the $\lambda$-isotypic component of $M_{\mbf{c}_{\lambda}} $. Thus, $\Hom_{\H_{\mbf{c}}(W)}(R(\lambda), - )$ is exact on $\mc{O}^{\ge k}$ (the functor $M$ maps to the $\lambda$-isotypic component of $M_{\mbf{c}_{\lambda}}$ being exact). It is non-zero because it surjects onto $\Delta(\lambda)$. The only problem is that it does not necessarily belong to $\mc{O}^{\ge k}$. So we let $\tilde{R}(\lambda)$ be the $\H_{\mbf{c}}(W)$-submodule generated by all weight spaces $R(\lambda)_b$ with $b - a \notin \Z_{\ge k}$ and define $P_{k}(\lambda) := R(\lambda) / \tilde{R}(\lambda)$. By construction, it belongs to $\mc{O}^{\ge k}$ and if $f : R(\lambda) \rightarrow M$ is any morphism with $M \in \mc{O}^{\ge k}$ then $\tilde{R}(\lambda) \subset \Ker f$. Therefore it is the projective cover of $L(\lambda)$ in $\mc{O}^{\ge k}$. We have constructed $P_{k}(\lambda)$ as a quotient of $\Delta_r(\lambda)$, an object equipped with a $\Delta$-filtration. 

The only thing left to show is that if $k \ll 0$ such that $\mc{O}^{\ge k} = \mc{O}^{\bar{a}}$ then $P_k(\lambda)$ has a $\Delta$-filtration. By construction, it is a quotient of an object $M \in \mc{O}$ that is equipped with a $\Delta$-filtration. But our assumption on $k$ means that $P_k(\lambda)$ is projective in $\mc{O}$. Thus, it is a direct summand of $M$. Therefore, it suffices to note that if $M = M_1 \oplus M_2$ is an object of $\mc{O}$ equipped with a $\Delta$-filtration, then each $M_i$ also has a $\Delta$-filtration (this follows by induction on the length of $M$ from the fact that the modules $\Delta(\lambda)$ are \textit{indecomposable}). This completes the proof of \ref{thm:projectives}. 
\end{proof}

In general, it is very difficult to explicitly construct the projective covers $P(\lambda)$. The object $P = \bigoplus_{\lambda \in \Irr (W)} P(\lambda)$ is a projective generator of $\mc{O}$ i.e for each $M \in \mc{O}$ there exists some $N \gg 0$ and surjection $P^N \rightarrow M$. Therefore, we have an equivalence of abelian categories
$$
\mc{O} \simeq \Lmod{A},
$$
where $A = \End_{\H_{\mbf{c}}(W)}(P)$ is a finite dimensional $\C$-algebra. See \cite[Chapter 2]{ARS}, and in particular \cite[Corollary 2.6]{ARS}, for details. 

\subsection{Highest weight categories}

Just as for category $\mc{O}$ of a semi-simple Lie algebra $\g$ over $\C$, the existence of standard modules in category $\mc{O}$ implies that this category has a lot of additional structure. In particular, it is an example of a highest weight (or quasi-hereditary) category. The abstract notion of a highest weight category was introduced in \cite{CPS}. 

\begin{defn}\label{defn:hw}
Let $\mc{A}$ be an abelian, $\C$-linear and finite length category, and $\Lambda$ a poset. We say that $(\mc{A},\Lambda)$ is a \textit{highest weight category} if  
\begin{enumerate}
\item There is a complete set $\{ L(\lambda) \ | \ \lambda \in \Lambda \}$ of non-isomorphic simple objects labeled by $\Lambda$.
\item There is a collection of \textit{standard} objects $\{ \Delta(\lambda) \ | \ \lambda \in \Lambda \}$ of $\mc{A}$, with surjections $\phi_{\lambda} : \Delta(\lambda) \twoheadrightarrow L(\lambda)$ such that all composition factors $L(\mu)$ of $\Ker \phi_{\lambda}$ satisfy $\mu < \lambda$. 
\item Each $L(\lambda)$ has a projective cover $P(\lambda)$ in $\mc{A}$ and the projective cover $P(\lambda)$ admits a $\Delta$-filtration $0 = F_{0} P(\lambda) \subset F_1 P(\lambda) \subset \cdots \subset F_m P(\lambda) = P(\lambda)$ such that
\begin{itemize}
\item $F_m P(\lambda) / F_{m-1} P(\lambda) \simeq \Delta(\lambda)$. 
\item For $0 < i < m$, $F_i P(\lambda) / F_{i-1} P(\lambda) \simeq \Delta(\mu)$ for some $\mu > \lambda$. 
\end{itemize}  
\end{enumerate}
\end{defn}

Define a partial ordering on $\Irr (W)$ by setting 
$$
\lambda \le_{\mbf{c}} \mu \quad \Longleftrightarrow \quad \mbf{c}_{\mu} - \mbf{c}_{\lambda} \in \Z_{\ge 0}.
$$

\begin{lem}\label{lem:trivext}
Let $\lambda, \mu \in \Irr (W)$ such that $\lambda \not<_{\mbf{c}} \mu$. Then $\Ext^1_{\H_{\mbf{c}}(W)}(\Delta(\lambda),\Delta(\mu)) = 0$. 
\end{lem}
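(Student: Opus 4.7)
The plan is to show that every extension
$$0 \to \Delta(\mu) \to E \to \Delta(\lambda) \to 0$$
splits. Since $\mc{O}$ is closed under extensions, $E$ lies in $\mc{O}$; by Lemma \ref{lem:directsum} it decomposes as $E = \bigoplus_a E_a$ into finite-dimensional generalized $\eu$-eigenspaces, and the functor $M \mapsto M_a$ is exact and $W$-equivariant (since $W$ commutes with $\eu$). Taking $a = \mbf{c}_\lambda$ yields a short exact sequence of finite-dimensional $W$-modules
$$0 \to \Delta(\mu)_{\mbf{c}_\lambda} \to E_{\mbf{c}_\lambda} \to \Delta(\lambda)_{\mbf{c}_\lambda} \to 0,$$
where $\Delta(\lambda)_{\mbf{c}_\lambda} = 1 \otimes \lambda \cong \lambda$. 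Because $W$ is finite, Maschke's theorem splits this over $W$, yielding a $W$-submodule $V \subset E_{\mbf{c}_\lambda}$ with $V \cong \lambda$ mapping isomorphically onto $\Delta(\lambda)_{\mbf{c}_\lambda}$.

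The crucial step is to show that $V$ is annihilated by $\mf{h}$. For $y \in \mf{h}$ and $v \in V$, the element $yv$ lies in $E_{\mbf{c}_\lambda - 1}$, and its image in $\Delta(\lambda)$ vanishes because the image of $v$ is singular; thus $yv \in \Delta(\mu) \cap E_{\mbf{c}_\lambda - 1} = \Delta(\mu)_{\mbf{c}_\lambda - 1}$. From the PBW identification $\Delta(\mu) \cong \C[\mf{h}] \otimes \mu$ with generator at $\eu$-weight $\mbf{c}_\mu$, every weight of $\Delta(\mu)$ lies in $\mbf{c}_\mu + \Z_{\ge 0}$; so $\Delta(\mu)_{\mbf{c}_\lambda - 1}$ is nonzero only when $\mbf{c}_\lambda - \mbf{c}_\mu$ is a strictly positive integer --- precisely the configuration ruled out by the hypothesis $\lambda \not<_\mbf{c} \mu$. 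Hence $yv = 0$ and $V$ consists of singular vectors.

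Finally, because $V \subset E$ is a copy of $\lambda$ as a $W$-module killed by $\mf{h} \subset \C[\mf{h}^*]$, the universal property of $\Delta(\lambda) = \H_\mbf{c}(W) \otimes_{\C[\mf{h}^*] \rtimes W} \lambda$ (Frobenius reciprocity for induction) provides a unique $\H_\mbf{c}(W)$-linear map $s \colon \Delta(\lambda) \to E$ sending $1 \otimes \lambda$ isomorphically onto $V$. The composition $\Delta(\lambda) \xrightarrow{s} E \twoheadrightarrow \Delta(\lambda)$ is the identity on the generator $1 \otimes \lambda$ and hence, by generation, on all of $\Delta(\lambda)$; so $s$ is a splitting. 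The main obstacle is the singularity check, which reduces the whole problem to the weight-space vanishing $\Delta(\mu)_{\mbf{c}_\lambda - 1} = 0$; everything else is formal from Maschke's theorem and the defining adjunction for standard modules.
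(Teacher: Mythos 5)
Your proposal takes essentially the same route as the paper: lift the singular generator of $\Delta(\lambda)$ to a vector of $\eu$-weight $\mbf{c}_{\lambda}$ in the extension, and use the weight grading to show the lift is automatically annihilated by $\h$. Your write-up is in fact more careful than the paper's (the explicit $W$-equivariant splitting via Maschke, the identification of the obstruction with $\Delta(\mu)_{\mbf{c}_\lambda - 1}$, the appeal to Frobenius reciprocity to build the section) and correctly isolates the whole argument as resting on the single vanishing $\Delta(\mu)_{\mbf{c}_\lambda - 1} = 0$.

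One caveat on the final step, which you should make explicit. You translate $\Delta(\mu)_{\mbf{c}_\lambda-1}\ne 0$ into $\mbf{c}_\lambda - \mbf{c}_\mu \in \Z_{>0}$ (correct), and then assert this is exactly what $\lambda \not<_{\mbf{c}} \mu$ forbids. With the paper's displayed definition, $\lambda \le_{\mbf{c}} \mu \iff \mbf{c}_\mu - \mbf{c}_\lambda \in \Z_{\ge 0}$, so $\lambda \not<_{\mbf{c}} \mu$ means $\mbf{c}_\mu - \mbf{c}_\lambda \notin \Z_{>0}$, which does \emph{not} exclude $\mbf{c}_\lambda - \mbf{c}_\mu \in \Z_{>0}$. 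Your inference is only valid if the ordering is read with the opposite sign, $\lambda \le_{\mbf{c}} \mu \iff \mbf{c}_\lambda - \mbf{c}_\mu \in \Z_{\ge 0}$. That is in fact the reading the paper needs throughout (it is forced by Definition \ref{defn:hw}, by BGG reciprocity, and by the way the lemma is invoked in the proof of Theorem \ref{thm:hwc}), so the displayed inequality appears to be a typo rather than a genuine obstacle; but as written, your step ``precisely the configuration ruled out by the hypothesis'' is silently using the corrected ordering, and a referee would ask you to unpack exactly what $\lambda \not<_{\mbf{c}} \mu$ means in terms of $\mbf{c}_\lambda$ and $\mbf{c}_\mu$ rather than assert the match.
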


\begin{proof}
Recall that $\Ext^1_{\H_{\mbf{c}}(W)}(\Delta(\lambda),\Delta(\mu))$ can be identified with isomorphism classes of short exact sequences $0 \rightarrow \Delta(\mu) \rightarrow M \rightarrow \Delta (\lambda) \rightarrow 0$, such that $\Ext^1_{\H_{\mbf{c}}(W)}(\Delta(\lambda),\Delta(\mu)) = 0$ if and only if all such short exact sequences split. Assume that we are given a short exact sequence 
$$
0 \rightarrow \Delta(\mu) \rightarrow M \rightarrow \Delta (\lambda) \rightarrow 0
$$
for some $M \in \Lmod{\H_{\mbf{c}}(W)}$. Then $M \in \mc{O}$. Take $0 \neq v \in \Delta(\lambda)_{\mbf{c}_{\lambda}} = 1 \o \lambda$ and $m \in M_{\mbf{c}_{\lambda}}$ that maps onto $v$. Since $\lambda \not<_{\mbf{c}} \mu$, there is no $a \in \C$ such that $a - \mbf{c}_{\lambda} \in \Z_{> 0}$ and $M_a \neq 0$. Hence $\h \cdot m = 0$. Then $v \mapsto m$ defines a morphism $ \Delta (\lambda)  \rightarrow M$ which splits the above sequence. 
\end{proof}

\begin{thm}\label{thm:hwc}
Category $\mc{O}$ is a highest weight category under the ordering $\le_{\mbf{c}}$.
\end{thm}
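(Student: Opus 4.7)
I would verify the three conditions of Definition \ref{defn:hw} for the pair $(\mc{O}, \Irr(W))$ equipped with the partial order $\leq_{\mbf{c}}$. Condition (1) is given by the lemma immediately preceding the theorem, which exhibits $\{L(\lambda) \mid \lambda \in \Irr(W)\}$ as a complete set of pairwise non-isomorphic simples; there is nothing further to do here.

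For condition (2), take $\phi_{\lambda} : \Delta(\lambda) \twoheadrightarrow L(\lambda)$ to be the canonical surjection onto the simple head. The earlier argument that $L(\lambda)$ is simple shows that $K := \Ker \phi_{\lambda}$ is the sum of all proper submodules of $\Delta(\lambda)$, and because $\Delta(\lambda)_{\mbf{c}_{\lambda}} = 1 \otimes \lambda$ generates $\Delta(\lambda)$ and is irreducible as a $W$-module, $K$ is supported entirely on Euler-weights $\mbf{c}_{\lambda} + k$ with $k \in \Z_{>0}$. Any composition factor $L(\mu)$ of $K$ has its minimal Euler-weight $\mbf{c}_{\mu}$ occurring in the weight support of $K$ (applying the ``bottom-weight'' argument of Lemma \ref{lem:deltasurject} to a suitable subquotient), so $\mbf{c}_{\mu} - \mbf{c}_{\lambda} \in \Z_{>0}$, which is the required strict comparability in $\leq_{\mbf{c}}$.

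For condition (3), Theorem \ref{thm:projectives} already constructs the projective cover $P(\lambda)$ of $L(\lambda)$ together with a finite $\Delta$-filtration, so I only need to arrange the filtration factors correctly. First, in any $\Delta$-filtration $0 = F_0 \subset F_1 \subset \cdots \subset F_m = P(\lambda)$ with $F_i/F_{i-1} \simeq \Delta(\mu_i)$, the top factor is forced to be $\Delta(\lambda)$: the composition $P(\lambda) \twoheadrightarrow P(\lambda)/F_{m-1} \simeq \Delta(\mu_m) \twoheadrightarrow L(\mu_m)$ is a surjection onto a simple module, hence factors through the head $L(\lambda)$ of the projective cover, giving $\mu_m = \lambda$. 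For the lower factors, I would apply Lemma \ref{lem:trivext}: whenever two adjacent $\Delta(\mu_i), \Delta(\mu_{i+1})$ sit in the ``wrong'' relative position, the relevant group $\Ext^1(\Delta(\mu_{i+1}), \Delta(\mu_i))$ vanishes, the sub-extension $0 \to \Delta(\mu_i) \to F_{i+1}/F_{i-1} \to \Delta(\mu_{i+1}) \to 0$ splits, and the two factors may be swapped. A bubble-sort of the filtration then yields a $\Delta$-filtration in which the sequence $(\mu_1, \ldots, \mu_m = \lambda)$ is compatible with $\leq_{\mbf{c}}$, with each $\mu_i$ strictly comparable to $\lambda$ for $i < m$.

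The principal obstacle is the detail-work in condition (3). One has to check carefully that the local splitting in each subquotient $F_{i+1}/F_{i-1}$ can be promoted to an actual new filtration of $P(\lambda)$ (by lifting the splitting along the inclusion $F_{i+1} \hookrightarrow P(\lambda)$), and that the iteration terminates at a filtration satisfying Definition \ref{defn:hw}(3). A conceptually cleaner alternative is to invoke BGG-type reciprocity, $[P(\lambda) : \Delta(\mu)] = [\Delta(\mu) : L(\lambda)]$, which combined with condition (2) immediately forces $[P(\lambda) : \Delta(\mu)] \neq 0$ only when $\mu = \lambda$ or $\mu >_{\mbf{c}} \lambda$, bypassing the rearrangement altogether; but establishing reciprocity in this generality requires introducing costandard or tilting objects, which is extra machinery not yet developed in the text.
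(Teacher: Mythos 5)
Your proposal follows the same route as the paper: the content of the theorem reduces to Definition~\ref{defn:hw}(3), and both you and the paper rearrange the $\Delta$-filtration of $P(\lambda)$ from Theorem~\ref{thm:projectives} by swapping adjacent layers via the $\Ext^1$-vanishing in Lemma~\ref{lem:trivext}. Your explicit treatment of condition (2) via the $\eu$-weight argument is a useful addition (the paper silently takes (1) and (2) as already established), and your observation that the top subquotient is forced to be $\Delta(\lambda)$ by the projective-cover property matches the paper exactly. Two small points you pass over which the paper does address: first, the bubble-sort only produces a totally ordered label sequence because $P(\lambda)$ is indecomposable, hence lies in a single block $\mc{O}^{\bar a}$, which forces all the $\mu_i$ to be $\le_{\mbf{c}}$-comparable; second, and more substantively, the bubble-sort alone only delivers $\mu_i \ge_{\mbf{c}} \lambda$ for $i<m$, not the required \emph{strict} inequality. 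You assert ``each $\mu_i$ strictly comparable to $\lambda$'' without argument. The paper closes this gap by noting that if $\mu_{m-1}$ and $\lambda$ had the same $\eu$-weight, Lemma~\ref{lem:trivext} would split $P(\lambda)/F_{m-2}$ into $\Delta(\mu_{m-1})\oplus\Delta(\lambda)$, forcing the head of $P(\lambda)$ to be non-simple and contradicting that $P(\lambda)$ is a projective cover. That last observation is needed to finish the argument and should be included. The BGG-reciprocity shortcut you mention is correct as an alternative but, as you note, is circular here since the paper derives BGG-reciprocity (Corollary~\ref{cor:BGG}) from Theorem~\ref{thm:hwc}.
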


\begin{proof}
The only thing left to check is that we can choose a $\Delta$-filtration on the projective covers $P(\lambda)$ such that the conditions of Definition \ref{defn:hw} (3) are satisfied. By Theorem \ref{thm:projectives}, we can always choose some $\Delta$-filtration $0 = F_0 \subset F_1 \subset \cdots \subset F_m = P(\lambda)$ with $F_i / F_{i-1} \simeq \Delta(\mu_i)$. Since $P(\lambda)$ surjects onto $L(\lambda)$, we must have $\mu_m = \lambda$. I claim that one can always choose the $\mu_i$ such that $\mu_i \ge_{\mbf{c}} \mu_{i+1}$ for all $0 < i < m$. The proof is by induction on $i$. But first we remark that the fact that $P(\lambda)$ is indecomposable implies that all $\mu_i$ are comparable under $<_{\mbf{c}}$. Assume that $\mu_j \ge_{\mbf{c}} \mu_{j+1}$ for all $j < i$. If $\mu_{i} <_{\mbf{c}} \mu_{i+1}$ then it suffices to show that there is another $\Delta$-filtration of $P(\lambda)$ with composition factors $\mu_j'$ such that $\mu_j' = \mu_j$ for all $j \neq i,i+1$ and $\mu_i' =\mu_{i+1}$, $\mu_{i+1}' = \mu_i$. We have
$$
0 \rightarrow F_{i} \rightarrow F_{i+1} \rightarrow \Delta(\mu_{i+1}) \rightarrow 0
$$
which quotienting out by $F_{i-1}$ gives
$$
0 \rightarrow \Delta(\mu_{i}) \rightarrow F_{i+1} / F_{i-1} \rightarrow \Delta(\mu_{i+1}) \rightarrow 0.
$$
Lemma \ref{lem:trivext} implies that the above sequence splits. Hence $ F_{i+1} / F_{i-1} \simeq \Delta(\mu_{i}) \oplus \Delta(\mu_{i+1})$. Thus, we may choose $F_{i-1} \subset F_i' \subset F_i$ such that $F_i' / F_{i-1} \simeq \Delta(\mu_{i+1})$ and $F_{i+1} / F'_i \simeq \Delta(\mu_{i})$ as required. Hence the claim is proved. This means that $\mu_{m-1} \ge_{\mbf{c}} \lambda$. If $\mu_{m-1} =_{\mbf{c}} \lambda$ then Lemma \ref{lem:trivext} implies that $P(\lambda) / F_{m-2} \simeq \Delta(\mu_{m-1}) \oplus \Delta(\lambda)$ and hence the head of $P(\lambda)$ is not simple. This contradicts the fact that $P(\lambda)$ is a projective cover.  
\end{proof}

We now describe some consequences of the fact that $\mc{O}$ is a highest weight category. 

\begin{cor}\label{ex:Grothbasis}
The standard modules $\Delta(\lambda)$ are a $\Z$-basis of the Grothendieck group $K_0(\mc{O})$.
\end{cor}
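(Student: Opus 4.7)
The plan is to exploit the highest weight structure of $\mc{O}$ established in Theorem \ref{thm:hwc}, combined with the finite length property proved above. First I would observe that since every module in category $\mc{O}$ has finite length and $\{L(\lambda) \mid \lambda \in \Irr(W)\}$ is a complete (and finite, since $W$ is finite) set of non-isomorphic simple objects, the classes $[L(\lambda)]$ automatically form a $\Z$-basis of $K_0(\mc{O})$: the Jordan--H\"older theorem guarantees that the map $M \mapsto \sum_{\lambda} [M : L(\lambda)] [L(\lambda)]$ is a well-defined inverse to the obvious surjection from the free abelian group on $\Irr(W)$. It therefore suffices to show that passing from $\{[L(\lambda)]\}$ to $\{[\Delta(\lambda)]\}$ is an invertible change of basis over $\Z$.

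Next, I would invoke Definition \ref{defn:hw}(2): the surjection $\phi_{\lambda} : \Delta(\lambda) \twoheadrightarrow L(\lambda)$ has kernel all of whose composition factors are of the form $L(\mu)$ with $\mu <_{\mbf{c}} \lambda$. Writing $[M : L(\mu)]$ for composition multiplicity, additivity of the class function on the short exact sequence $0 \to \Ker \phi_{\lambda} \to \Delta(\lambda) \to L(\lambda) \to 0$ gives, in $K_0(\mc{O})$,
\begin{equation*}
[\Delta(\lambda)] = [L(\lambda)] + \sum_{\mu <_{\mbf{c}} \lambda} [\Ker \phi_{\lambda} : L(\mu)] \cdot [L(\mu)].
\end{equation*}

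Finally, since $\Irr(W)$ is finite I would refine the partial ordering $\le_{\mbf{c}}$ to an arbitrary total ordering and arrange the basis vectors accordingly; the transition matrix from $([L(\lambda)])_{\lambda}$ to $([\Delta(\lambda)])_{\lambda}$ is then unitriangular with non-negative integer entries, and hence lies in $GL_{|\Irr(W)|}(\Z)$. Consequently $\{[\Delta(\lambda)] \mid \lambda \in \Irr(W)\}$ is also a $\Z$-basis of $K_0(\mc{O})$. There is no real obstacle to this argument: it is a purely formal consequence of the highest weight axioms, requiring only the finite length property of $\mc{O}$ and the unitriangular comparison of standards with simples.
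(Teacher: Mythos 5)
Your proposal is correct and takes essentially the same route as the paper: both first note that the $[L(\lambda)]$ are a $\Z$-basis because $\mc{O}$ is finite length with finitely many simples, and then use Definition~\ref{defn:hw}(2) to show that the transition matrix from standards to simples is unitriangular (after refining $\le_{\mbf{c}}$ to a total order), hence invertible over $\Z$.
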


\begin{proof}
Since $\mc{O}$ is a finite length, abelian category with finitely many simple modules, the image of those simple modules $L(\lambda)$ in $K_0(\mc{O})$ are a $\Z$-basis of $K_0(\mc{O})$. Let $k = | \Irr (W) |$ and define the $k$ by $k$ matrix $A = (a_{\lambda,\mu}) \in \mathbb{N}$ by
$$
[\Delta(\lambda)] = \sum_{\mu \in \Irr (W)} a_{\lambda,\mu} [L(\mu)].
$$
We order $\Irr (W) = \{ \lambda_1, \ds, \lambda_k \}$ so that $i > j$ implies that $\lambda_i \le_{\mbf{c}} \lambda_j$. Then property (2) of definition \ref{defn:hw} implies that $A$ is upper triangular with ones all along the diagonal. This implies that $A$ is invertible over $\Z$ and hence the $[\Delta(\lambda)]$ are a basis of $K_0(\mc{O})$. 
\end{proof}

A non-trivial corollary of Theorem \ref{thm:hwc} is that Bernstein-Gelfand-Gelfand (BGG) reciprocity holds in category $\mc{O}$. 

\begin{cor}[BGG-reciprocity]\label{cor:BGG}
For $\lambda, \mu \in \Irr W$, 
$$
(P(\lambda) : \Delta(\mu)) = [\Delta(\mu) : L(\lambda)].
$$
\end{cor}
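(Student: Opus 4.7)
The plan is to follow the classical BGG-reciprocity argument via costandard modules and Ext-orthogonality. The first step is to introduce costandard modules $\nabla(\mu) \in \mc{O}$ satisfying
$$
[\nabla(\mu) : L(\lambda)] = [\Delta(\mu) : L(\lambda)]
$$
together with the Ext-orthogonality
$$
\dim \Hom_{\mc{O}}(\Delta(\lambda), \nabla(\mu)) = \delta_{\lambda,\mu}, \qquad \Ext^1_{\mc{O}}(\Delta(\lambda), \nabla(\mu)) = 0.
$$
Concretely, I would construct a duality $\mathbb{D} : \mc{O} \to \mc{O}^{\op}$ using the restricted (weight-space) dual twisted by an anti-involution of $\H_{\mbf{c}}(W)$ that exchanges $\h$ and $\h^*$. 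Such an anti-involution is essentially provided by Exercise~\ref{ex:ex1.6} in the real case and by a mild variant in general. One checks that $\mathbb{D}$ is exact and involutive and that $\mathbb{D} L(\lambda) \simeq L(\lambda)$, the latter because the socle of $\mathbb{D} L(\lambda)$ recovers the $W$-isotype of the head of $L(\lambda)$ at weight $\mbf{c}_\lambda$. Setting $\nabla(\mu) := \mathbb{D}\Delta(\mu)$ then gives the composition-factor identity; the Hom-computation follows from tracking where a generator of $\Delta(\lambda)$ must go (it must land in the $\lambda$-isotypic part of the $\h$-invariants in $\nabla(\mu)$, which is nonzero only when $\mu = \lambda$, in which case it is the socle); and the Ext-vanishing is a direct adaptation of Lemma~\ref{lem:trivext} using that $\nabla(\mu)$ has at most one weight in common with $\Delta(\lambda)$ below the generating weight of $\Delta(\lambda)$.

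Given these ingredients, the key is to compute $\dim \Hom_{\mc{O}}(P(\lambda), \nabla(\mu))$ in two ways. Since $P(\lambda)$ is projective with simple head $L(\lambda)$ and $\End_{\mc{O}}(L(\lambda)) = \C$, applying $\Hom_{\mc{O}}(P(\lambda), -)$ to any composition series of $\nabla(\mu)$ is exact, yielding
$$
\dim \Hom_{\mc{O}}(P(\lambda), \nabla(\mu)) \;=\; [\nabla(\mu) : L(\lambda)] \;=\; [\Delta(\mu) : L(\lambda)].
$$
On the other hand, taking a $\Delta$-filtration $0 = F_0 \subset F_1 \subset \cdots \subset F_m = P(\lambda)$ with $F_i/F_{i-1} \simeq \Delta(\mu_i)$ provided by Theorem~\ref{thm:projectives}, and applying $\Hom_{\mc{O}}(-, \nabla(\mu))$ to each short exact sequence, the vanishing of $\Ext^1_{\mc{O}}(\Delta(\mu_i), \nabla(\mu))$ collapses the long exact sequences to
$$
\dim \Hom_{\mc{O}}(P(\lambda), \nabla(\mu)) \;=\; \sum_{i=1}^{m} \dim \Hom_{\mc{O}}(\Delta(\mu_i), \nabla(\mu)) \;=\; \#\{i : \mu_i = \mu\} \;=\; (P(\lambda) : \Delta(\mu)).
$$
Comparing the two formulas delivers the identity.

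The main obstacle is the construction of $\mathbb{D}$ and the verification that it fixes each simple $L(\lambda)$, thereby producing costandards with the correct composition factors; the Ext-orthogonality, though technical, is essentially a weight-space bookkeeping argument once $\nabla(\mu)$ is in hand. An alternative route that sidesteps these details is to appeal to the Cline--Parshall--Scott axiomatic framework: Theorem~\ref{thm:hwc} certifies that $(\mc{O}, \le_{\mbf{c}})$ is a highest weight category, and BGG reciprocity is a formal consequence in that setting \cite{CPS}.
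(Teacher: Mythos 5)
Your two routes are both viable, and the second (appeal to the Cline--Parshall--Scott framework) is exactly what the paper does: the text states the corollary as a formal consequence of Theorem~\ref{thm:hwc} and, in the end-of-lecture remarks, defers the deduction to \cite[Proposition~3.3]{GGOR}. Your primary route --- manufacture costandards $\nabla(\mu)$ via a duality and compute $\dim\Hom_{\mc{O}}(P(\lambda),\nabla(\mu))$ in two ways --- is the classical, more hands-on argument and the final bookkeeping (projectivity of $P(\lambda)$ together with $\End(L(\lambda))=\C$ on one side; $\Delta$-filtration plus $\Ext^1(\Delta,\nabla)=0$ on the other) is exactly right. The trade is explicitness for extra setup: the paper gets the reciprocity ``for free'' from the quasi-hereditary structure already established, whereas your approach forces you to actually produce $\nabla(\mu)$ and verify its properties, at which point you have essentially built the injective side of the theory yourself.

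That said, the sketch of the duality has real gaps you should not gloss over. Exercise~\ref{ex:ex1.6} produces an \emph{automorphism} of $\H_{t,\mbf{c}}(W)$ swapping $\C[\h]$ and $\C[\h^*]$, not an anti-involution; to dualize you need an anti-automorphism, which requires composing with $w\mapsto w^{-1}$ (and checking signs), and the exercise is moreover only stated for real $W$. For general complex reflection groups the naive restricted dual lands in modules over $\H_{t,\mbf{c}}(W)^{\op}$, and after identifying this with another rational Cherednik algebra one typically finds $\mathbb{D}L(\lambda)\simeq L(\lambda^*)$ rather than $L(\lambda)$; this does not kill the argument, but it does mean the index bookkeeping in $[\nabla(\mu):L(\lambda)]=[\Delta(\mu):L(\lambda)]$ needs a twist by $\lambda\mapsto\lambda^*$ that you have not tracked. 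Finally, describing $\Ext^1_{\mc{O}}(\Delta(\lambda),\nabla(\mu))=0$ as ``a direct adaptation of Lemma~\ref{lem:trivext}'' undersells the difficulty: that lemma gives $\Ext^1(\Delta(\lambda),\Delta(\mu))=0$ only under the hypothesis $\lambda\not<_{\mbf{c}}\mu$, whereas $\Ext^1(\Delta,\nabla)=0$ must hold for \emph{all} pairs, and the proof uses the $\Delta$-filtration of $P(\lambda)$ (resp.\ the $\nabla$-filtration of the injective hull) rather than a weight-space argument alone. None of this is fatal --- these are all standard verifications once the duality is set up correctly --- but as written the ``main obstacle'' you flag is genuinely more than a mild variant of the real case.
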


We also have the following:

\begin{cor}\label{cor:finiteglobal}
The global dimension of $\mc{O}$ is finite. 
\end{cor}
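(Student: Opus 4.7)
The plan is to reduce the claim to showing that every simple object $L(\lambda)$ has finite projective dimension, and then to establish this by a double induction on the poset $(\Irr(W), \le_{\mbf{c}})$ using the structure of a highest weight category from Theorem~\ref{thm:hwc}.

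First I would record that, by the paragraph preceding this corollary, $\mc{O} \simeq \Lmod{A}$ for the finite dimensional $\C$-algebra $A = \End_{\H_{\mbf{c}}(W)}(P)$, where $P = \bigoplus_\lambda P(\lambda)$. For a finite dimensional algebra the global dimension is the supremum of the projective dimensions of the simples, so it suffices to show that each $L(\lambda) \in \mc{O}$ has finite projective dimension, using the projective covers $P(\lambda)$ constructed in Theorem~\ref{thm:projectives}.

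Next I would show by descending induction on the finite poset $(\Irr(W), \le_{\mbf{c}})$ that each standard module $\Delta(\lambda)$ has finite projective dimension. For a maximal element $\lambda$, the $\Delta$-filtration of $P(\lambda)$ furnished by Definition~\ref{defn:hw}(3) has no terms $\Delta(\mu)$ with $\mu >_{\mbf{c}} \lambda$, so $P(\lambda) = \Delta(\lambda)$ and $\pd \Delta(\lambda) = 0$. For the inductive step, suppose $\pd \Delta(\mu) < \infty$ for all $\mu >_{\mbf{c}} \lambda$. The top quotient of $P(\lambda)$ is $\Delta(\lambda)$, so we have a short exact sequence
\begin{equation*}
0 \longrightarrow F_{m-1}P(\lambda) \longrightarrow P(\lambda) \longrightarrow \Delta(\lambda) \longrightarrow 0,
\end{equation*}
where $F_{m-1}P(\lambda)$ is filtered by $\Delta(\mu_i)$'s with $\mu_i >_{\mbf{c}} \lambda$. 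By induction and the standard fact that an extension of modules of finite projective dimension has finite projective dimension, $\pd F_{m-1}P(\lambda) < \infty$, hence $\pd \Delta(\lambda) < \infty$.

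Finally, I would show by ascending induction on $(\Irr(W), \le_{\mbf{c}})$ that $\pd L(\lambda) < \infty$. For minimal $\lambda$, Definition~\ref{defn:hw}(2) forces $\Delta(\lambda) = L(\lambda)$, and the previous step gives the result. For the inductive step, apply the short exact sequence
\begin{equation*}
0 \longrightarrow K(\lambda) \longrightarrow \Delta(\lambda) \longrightarrow L(\lambda) \longrightarrow 0,
\end{equation*}
where all composition factors of $K(\lambda)$ are of the form $L(\mu)$ with $\mu <_{\mbf{c}} \lambda$; these have finite projective dimension by the inductive hypothesis, so $\pd K(\lambda) < \infty$ by a further induction on composition length, and then $\pd L(\lambda) < \infty$. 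Since $\Irr(W)$ is finite, the maximum $\max_\lambda \pd L(\lambda)$ is finite, proving the corollary. The main obstacle is purely organizational: making sure the two inductions are carried out in compatible directions on the poset; the finiteness of $\Irr(W)$ is what ultimately ensures termination, and no further input beyond the highest weight structure of $\mc{O}$ is required.
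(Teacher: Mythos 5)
Your proof is correct and follows essentially the same two-stage induction as the paper's: first a descending induction showing each $\Delta(\lambda)$ has finite projective dimension (via the $\Delta$-filtration of $P(\lambda)$ and closure of finite projective dimension under extensions), then an ascending induction showing the same for each $L(\lambda)$ using the short exact sequence $0 \to K(\lambda) \to \Delta(\lambda) \to L(\lambda) \to 0$. The only cosmetic difference is that you organize the induction directly around the preorder $\le_{\mbf{c}}$ on the finite set $\Irr(W)$, whereas the paper restricts to a block $\mc{O}^{\bar{a}}$ and inducts on the explicit filtration levels $N(\lambda)$ coming from the chain $\mc{O}^{\ge k_0} \subset \cdots \subset \mc{O}^{\bar{a}}$, which lets it also record the quantitative bound that the global dimension of each block is at most $2n$.
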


\begin{proof}
As in the proof of Theorem \ref{thm:projectives}, it suffices to consider modules in the block $\mc{O}^{\bar{a}}$ for some $a \in \C$. Recall that we constructed a filtration of this category 
$$
\mc{O}^{\ge k_0} \subset \mc{O}^{\ge k_0 - i_1 } \subset \cdots \subset \mc{O}^{\ge k_0 - i_n} = \mc{O}^{\bar{a}},
$$
where $0 < i_1 < \ds < i_n$ are chosen so that each inclusion $\mc{O}^{\ge k_0 - i_r } \subset \mc{O}^{\ge k_0 - i_{r+1}}$ is proper. For all $\lambda \in \Irr (W)$, define $N(\lambda)$ to be the positive integer such that $L(\lambda) \in \mc{O}^{\ge k_0 - i_{N(\lambda)}}$ but $L(\lambda) \notin \mc{O}^{\ge k_0 - i_{N(\lambda)-1}}$. We claim that $\mathrm{p.d.}(\Delta(\lambda)) \le n - N(\lambda)$ for all $\lambda$. The proof of Theorem \ref{thm:projectives} showed that $\Delta(\lambda) = P(\lambda)$ for all $\lambda$ such that $N(\lambda) = n$. Therefore we may assume that the claim is true for all $\mu$ such that $N(\mu) > N_0$. Choose $\lambda$ such that $N(\lambda) = N_0$. Then, as shown in the proof of Theorem $3.18$, we have a short exact sequence 
$$
0 \rightarrow K \rightarrow P(\lambda) \rightarrow \Delta(\lambda) \rightarrow 0,
$$
where $K$ admits a filtration by $\Delta(\mu)$'s for $\lambda <_{\mbf{c}} \mu$. The inductive hypothesis, together with standard homological results e.g. Chapter $4$ of \cite{Weibel}, imply that $\mathrm{p.d.}(K) \le N_0 - 1$ and hence $\mathrm{p.d.}(\Delta(\lambda)) \le N_0$ as required. Now we show, again by induction, that $\mathrm{p.d.}(L(\lambda)) \le n + N(\lambda)$. It was shown in exercise $3.13$ that $\Delta(\lambda) = L(\lambda)$ for all $\lambda$ such that $N(\lambda) = 0$. Therefore, we may assume by induction that the claim holds for all $\mu$ such that $N(\mu) < N_0$. Assume $N(\lambda) = N_0$. We have a short exact sequence 
$$
0 \rightarrow R \rightarrow \Delta(\lambda) \rightarrow L(\lambda) \rightarrow 0,
$$  
where $R$ admits a filtration by simple modules $L(\mu)$ with $\mu <_{\mbf{c}} \lambda$. Hence, as for standard modules, we may conclude that $\mathrm{p.d.}(L(\lambda)) \le n + N(\lambda)$. Note that we have actually shown that the global dimension of $\mc{O}^{\bar{a}}$ is at most $2n$. 
\end{proof}

\begin{cor}
Category $\mc{O}$ is semi-simple if and only if $\Delta(\lambda) = L(\lambda)$ for all $\lambda \in \Irr (W)$. 
\end{cor}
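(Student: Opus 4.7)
The plan is to handle the two implications separately, exploiting the highest weight structure established in Theorem \ref{thm:hwc} together with BGG reciprocity (Corollary \ref{cor:BGG}).

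For the forward direction, assume $\mc{O}$ is semi-simple. Since each $\Delta(\lambda)$ belongs to $\mc{O}$ and admits a surjection onto the simple module $L(\lambda)$, the head of $\Delta(\lambda)$ is simple. I would first observe that any module with a simple head is indecomposable: a direct sum decomposition $\Delta(\lambda) = A \oplus B$ with both summands nonzero would yield at least two distinct simple quotients, contradicting the simplicity of the head. In a semi-simple, finite length category, every indecomposable object is simple, so $\Delta(\lambda)$ must equal $L(\lambda)$.

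For the converse, assume $\Delta(\lambda) = L(\lambda)$ for every $\lambda \in \Irr(W)$. By BGG reciprocity,
\[
(P(\lambda) : \Delta(\mu)) \;=\; [\Delta(\mu) : L(\lambda)] \;=\; [L(\mu) : L(\lambda)] \;=\; \delta_{\lambda,\mu}.
\]
Hence the $\Delta$-filtration of $P(\lambda)$ provided by Theorem \ref{thm:projectives} has a single factor, namely $\Delta(\lambda) = L(\lambda)$, so $P(\lambda) \cong L(\lambda)$. Every object $M$ of $\mc{O}$ is a quotient of a finite direct sum of projective covers $\bigoplus_i P(\lambda_i)^{\oplus n_i}$, which by the above is a semi-simple module. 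Since quotients of semi-simple modules are semi-simple, $M$ itself is semi-simple, and $\mc{O}$ is a semi-simple category.

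The only real subtlety I expect is the indecomposability step in the forward direction; everything else is a clean consequence of highest weight theory together with BGG reciprocity. Once that observation is in place, both directions are essentially a two-line argument, which suggests presenting them in the compact form above.
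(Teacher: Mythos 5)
Your argument is correct and follows essentially the same route as the paper: indecomposability of $\Delta(\lambda)$ for the forward direction, and BGG reciprocity forcing $P(\lambda) = L(\lambda)$ for the converse. You simply fill in a couple of steps the paper leaves implicit.
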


\begin{proof}
Since $\Delta(\lambda)$ is indecomposable and $L(\lambda)$ a quotient of $\Delta(\lambda)$, it is clear that $\mc{O}$ semi-simple implies that $\Delta(\lambda) = L(\lambda)$ for all $\lambda \in \Irr (W)$. Conversely, if $\Delta(\lambda) = L(\lambda)$ then BGG reciprocity implies that $P(\lambda) = L(\lambda)$ for all $\lambda$ which implies that $\mc{O}$ is semi-simple. 
\end{proof}

\begin{exercise}\label{ex:Oss2}
Show that if $\mbf{c}_{\lambda} - \mbf{c}_{\mu} \notin \Z_{> 0}$ for all $\lambda \neq \mu \in \Irr (W)$ then category $\mc{O}$ is semi-simple. Conclude that $\mc{O}$ is semi-simple for generic parameters $\mbf{c}$. 
\end{exercise}

\subsection{Category $\mc{O}$ for $\Z_2$}

The idea of this section, which consists mainly of exercises, is simply to try and better understand category $\mc{O}$ when $W = \Z_2$. Recall that, in this case, $W = \langle s \rangle$ with $s^2 = 1$ and the defining relations for $\H_{\mbf{c}}(\Z_2)$ are $s x = - x s$, $s y = - y s$ and 
$$
[y,x] = 1 - 2 \mbf{c} s.
$$

\begin{exercise}\label{ex:O21}
\begin{enumerate}
\item For each $\mbf{c}$, describe the simple modules $L(\lambda)$ as quotients of $\Delta(\lambda)$. For which values of $\mbf{c}$ is category $\mc{O}$ semi-simple?
\item For each $\mbf{c}$, describe the partial ordering on $\Irr (W)$ coming from the highest weight structure on $\mc{O}$.
\end{enumerate}
\end{exercise}

For $W = \Z_2$ one can also relate representations of $\H_{\mbf{c}}(W)$ to certain representations of $U(\mf{sl}_2)$, the enveloping algebra of $\mf{sl}_2$ using the spherical subalgebra $\mbf{e} \H_{\mbf{c}}(W) \mbf{e}$. Recall that $\mf{sl}_2 = \C \{ E, F, H \}$ with $[H,E] = 2 E$, $[H,F] = - 2 F$ and $[E,F] = H$. 

\begin{exercise}\label{ex:sl2}
\begin{enumerate}
\item Show that $E \mapsto \frac{1}{2} \mbf{e} x^2$, $F \mapsto - \frac{1}{2} \mbf{e} y^2$ and $H \mapsto \mbf{e} xy + \mbf{e} \mbf{c}$ is a morphism of Lie algebras $\mf{sl}_2 \rightarrow \mbf{e} \H_{\mbf{c}}(\Z_2) \mbf{e}$, where the right hand side is thought of as a Lie algebra under the commutator bracket. This extends to a morphism of algebras $\phi_{\mbf{c}} : U(\mf{sl}_2) \rightarrow \mbf{e} \H_{\mbf{c}}(\Z_2) \mbf{e}$. 
\item The centre of $U(\mf{sl}_2)$ is generated by the Casimir $\Omega = \frac{1}{2} H^2 + E F + FE$. Find $\alpha \in \C$ such that $\Omega - \alpha \in \Ker \phi_{\mbf{c}}$. 
\end{enumerate}   
\end{exercise}

Exercise \ref{ex:sl2} shows that $\phi_{\mbf{c}}$ descends to a morphism $\phi_{\mbf{c}}' : U(\mf{sl}_2) / (\Omega - \alpha) \rightarrow \mbf{e} \H_{\mbf{c}}(\Z_2) \mbf{e}$. 

\begin{lem}\label{lem:sl2iso}
The morphism $\phi_{\mbf{c}}' : U(\mf{sl}_2) / (\Omega - \alpha) \rightarrow \mbf{e} \H_{\mbf{c}}(\Z_2) \mbf{e}$ is an isomorphism. 
\end{lem}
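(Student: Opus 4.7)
My plan is to introduce compatible filtrations on both algebras and show that $\gr \phi_{\mbf{c}}'$ is an isomorphism; a standard lifting argument will then give the result. I will filter $U(\mf{sl}_2)$ by the PBW filtration in which each of $E, F, H$ has degree $2$, matching the filtration on $\mbf{e}\H_{\mbf{c}}(\Z_2)\mbf{e}$ inherited from $\H_{\mbf{c}}(\Z_2)$ (in which $x, y$ lie in degree $1$ and $\Z_2$ in degree $0$). With this convention all three generators map into $\mc{F}_2$, so $\phi_{\mbf{c}}'$ is a filtered map.

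First I would identify $\gr U(\mf{sl}_2) \cong \C[E,F,H]$ via PBW for the enveloping algebra and compute the symbol of $\Omega - \alpha$. Using $FE = EF - H$ one rewrites $\Omega = \tfrac{1}{2}H^2 + 2EF - H$, so in the degree-$4$ part the symbol is $\tfrac{1}{2}H^2 + 2EF$. Hence $\gr(U(\mf{sl}_2)/(\Omega - \alpha))$ is a quotient of $R := \C[E,F,H]/(\tfrac{1}{2}H^2 + 2EF)$. On the other side, Theorem \ref{eq:PBW} combined with the identification $\gr_{\mc{F}}(\mbf{e}\H_{t,\mbf{c}}(G)\mbf{e}) \cong \C[V]^G$ from Section \ref{sec:Filt} gives $\gr \mbf{e}\H_{\mbf{c}}(\Z_2)\mbf{e} \cong \C[x,y]^{\Z_2}$, presented by generators $x^2, xy, y^2$ with relation $(x^2)(y^2) = (xy)^2$.

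Next I would examine the composition
\[
R \twoheadrightarrow \gr(U(\mf{sl}_2)/(\Omega - \alpha)) \xrightarrow{\gr \phi_{\mbf{c}}'} \C[x,y]^{\Z_2},
\]
which sends $E \mapsto \tfrac{1}{2}x^2$, $F \mapsto -\tfrac{1}{2}y^2$, $H \mapsto xy$ (the constant $\mbf{c}$ term in $\phi_{\mbf{c}}(H) = \mbf{e}(xy + \mbf{c})$ sits in lower filtration degree). A direct check shows this composition is well-defined and identifies both $R$ and $\C[x,y]^{\Z_2}$ with the coordinate ring $\C[A,B,C]/(AB - C^2)$ of the Kleinian $A_1$-singularity (equivalently, the nilpotent cone in $\mf{sl}_2^*$), hence is an isomorphism. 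Since the first arrow is already surjective by construction, the composition being an isomorphism forces both arrows to be isomorphisms; in particular $\gr \phi_{\mbf{c}}'$ is an isomorphism.

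Finally, since both filtrations are exhaustive and bounded below by $\C$, the standard argument from Section \ref{sec:Filt} lifts an isomorphism on associated graded to an isomorphism of filtered algebras: injectivity because any hypothetical nonzero element of the kernel would have a nonzero leading symbol annihilated by $\gr \phi_{\mbf{c}}'$, and surjectivity by induction on filtration degree. The subtle point is that \textit{a priori} $\gr(U(\mf{sl}_2)/(\Omega - \alpha))$ is only known to be a quotient of $R$, since one might worry that the initial ideal of $(\Omega - \alpha)$ is strictly larger than $(\tfrac{1}{2}H^2 + 2EF)$; that concern is eliminated for free by the composition argument, because $R$ already maps isomorphically onto $\C[x,y]^{\Z_2}$, leaving no room for extra relations in between.
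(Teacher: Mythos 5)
Your proof is correct and follows essentially the same route as the paper: filter $U(\mf{sl}_2)$ with $E,F,H$ in degree two, note $\gr(U(\mf{sl}_2)/(\Omega-\alpha))$ is \emph{a priori} only a quotient of $\C[E,F,H]/(\tfrac{1}{2}H^2+2EF)$, and observe that the composite from this quotient down to $\C[x,y]^{\Z_2}$ is already an isomorphism, which forces both factors to be isomorphisms and lets the filtered lifting argument finish. Your explicit identification of both sides with $\C[A,B,C]/(AB-C^2)$ and your remark about the initial ideal possibly being larger are a useful unpacking of what the paper states more tersely, but the argument is the same.
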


\begin{proof}
This is a filtered morphism, where $\mbf{e} \H_{\mbf{c}}(\Z_2) \mbf{e}$ is given the filtration as in section \ref{sec:Filt} and the filtration on $U(\mf{sl}_2)$ is defined by putting $E,F$ and $H$ in degree two. The associated graded of $\mbf{e} \H_{\mbf{c}}(\Z_2) \mbf{e}$ equals $\C[x,y]^{\Z_2} = \C[x^2,y^2, xy]$ and the associated graded of $U(\mf{sl}_2) / (\Omega - \alpha)$ is a quotient of $\C[E,F,H] / (\frac{1}{2} H^2 + 2 EF)$. Since $\gr_{\mc{F}} \mbf{e} \H_{\mbf{c}}(\Z_2) \mbf{e}$ is generated by $x^2 = \sigma(\mbf{e} x^2)$, $y^2 = \sigma ( \mbf{e} y^2)$ and $xy = \sigma(\mbf{e} xy)$, we see that $ \mbf{e} \H_{\mbf{c}}(\Z_2) \mbf{e}$ is generated by $\mbf{e} x^2$, $\mbf{e} y^2$ and $\mbf{e} xy$. Hence $\phi_{\mbf{c}}'$ is surjective. On the other hand, the composite 
$$
\C[E,F,H] / \left( \frac{1}{2} H^2 + 2 EF \right) \rightarrow \gr_{\mc{F}} U(\mf{sl}_2) / (\Omega - \alpha) \stackrel{\gr_{\mc{F}} \phi_{\mbf{c}}'}{\longrightarrow} \C[x^2,y^2, xy]
$$
is given by $E \mapsto \frac{1}{2} x^2$, $F \mapsto - \frac{1}{2} y^2$ and $H \mapsto xy$ is an isomorphism. This implies that $\C[E,F,H] / (\frac{1}{2} H^2 + 2 EF) \rightarrow \gr_{\mc{F}} U(\mf{sl}_2) / (\Omega - \alpha)$ is an isomorphism and so too is $\gr_{\mc{F}} \phi_{\mbf{c}}'$. Hence $\phi_{\mbf{c}}'$ is an isomorphism. 
\end{proof}

Assume now that $W$ is any complex reflection group. Recall that $\mbf{c}$ is said to be \textit{aspherical} if there exists some non-zero $M \in \Lmod{\H_{\mbf{c}}(W)}$ such that $\mbf{e} \cdot M = 0$. As in Lie theory, we have a ``Generalized Duflo Theorem'':

\begin{thm}\label{thm:GDT}
Let $J$ be a primitive ideal in $\H_{\mbf{c}}(W)$. Then there exists some $\lambda \in \Irr (W)$ such that 
$$
J = \ann_{\H_{\mbf{c}}(W)} (L(\lambda)).
$$
\end{thm}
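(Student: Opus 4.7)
The plan is as follows. Let $J = \ann_{\H_{\mbf{c}}(W)}(S)$ for a simple $\H_{\mbf{c}}(W)$-module $S$. If $S$ itself lies in $\mc{O}$, then the classification of simple objects in $\mc{O}$ gives $S \simeq L(\lambda)$ for some $\lambda \in \Irr(W)$, and there is nothing to prove. The content of the theorem is therefore the case $S \notin \mc{O}$, where one must manufacture a simple object of $\mc{O}$ whose annihilator coincides with $J$.

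First I would attach geometry to $J$. Endow $\H_{\mbf{c}}(W)$ with the Bernstein filtration of Section~\ref{sec:Filt}, so that $\gr_{\mc{F}} \H_{\mbf{c}}(W) \simeq \C[V] \rtimes W$, and form $\gr J \subseteq \C[V] \rtimes W$. Because the commutator on $\H_{\mbf{c}}(W)$ drops filtration degree by one and specializes at $t=0$ to a Poisson bracket on $\C[V] \rtimes W$, the ideal $\gr J$ is automatically a $W$-stable Poisson ideal. Its zero locus $X(J) \subseteq V$ is thus a $W$-invariant closed subvariety whose image in $V/W$ is a finite union of closures of symplectic leaves; this is the \emph{associated variety} of $J$.

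Next I would exploit the recursive structure of rational Cherednik algebras under parabolic subgroups. For a generic point $b$ on the open stratum of $X(J)$, the stabilizer $W_b = \Stab_W(b)$ is again a complex reflection group acting on a normal slice $\h_b$, and one has an associated rational Cherednik algebra $\H_{\mbf{c}'}(W_b)$. The Bezrukavnikov--Etingof parabolic restriction functor $\Res : \mc{O}(\H_{\mbf{c}}(W)) \to \mc{O}(\H_{\mbf{c}'}(W_b))$ is exact and interacts well with annihilators, in the sense that primitive ideals of $\H_{\mbf{c}'}(W_b)$ with full associated variety correspond to primitive ideals of $\H_{\mbf{c}}(W)$ whose associated variety has the same open leaf as $X(J)$. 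By induction on $\dim V$, the reduced primitive ideal over $W_b$ is the annihilator of some $L(\lambda')$, and reversing the correspondence singles out a candidate $\lambda \in \Irr(W)$.

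The main obstacle is verifying that $\ann L(\lambda)$ is precisely $J$ rather than a proper overideal. The containment $\ann L(\lambda) \supseteq J$ is built into the construction, since $L(\lambda)$ is obtained from $S$ (or from its associated-graded geometry) by operations that can only enlarge the annihilator. The reverse containment rests on the nontrivial finiteness statement that the map $\lambda \mapsto \ann L(\lambda)$ surjects onto the set of primitive ideals of $\H_{\mbf{c}}(W)$ with a prescribed associated variety. Establishing this surjection requires a careful analysis of Harish--Chandra bimodules over $\H_{\mbf{c}}(W)$ and is the hardest part; it is where the substantive input from Ginzburg's work on primitive ideals of symplectic reflection algebras enters.
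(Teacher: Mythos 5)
The paper does not give a proof of this theorem: the ``Additional remarks'' at the end of Section~\ref{sec:two} simply cite Ginzburg's \emph{On primitive ideals}~\cite{Primitive}, so there is no in-paper argument to compare against. Judged against that reference, your proposal is a genuinely different route. Ginzburg's proof is an axiomatic one for filtered algebras carrying an \emph{interior grading} --- here supplied by the Euler element $\eu$ of Section~\ref{sec:euler} --- and it proceeds by showing, via a Harish--Chandra bimodule and degeneration argument, that every primitive ideal of $\H_{\mbf{c}}(W)$ is $\Z$-graded, hence the annihilator of a graded simple module, hence of some $L(\lambda)$. Your sketch, resting on associated varieties, generic stabilizers $W_b$, and parabolic restriction in the Bezrukavnikov--Etingof style, is a legitimate alternative strategy, but it has essentially no overlap with the cited proof and is not the route the paper intends.

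There are also concrete gaps in the sketch as written. With the filtration of Section~\ref{sec:Filt} the commutator drops the degree by two, not one; and $\C[V]\rtimes W$ is not commutative, so $\gr J$ cannot literally be ``a Poisson ideal'' in it --- one must pass to $\gr J \cap \C[V]^W$, or phrase things in terms of Poisson $W$-orders, to make the claim meaningful. The assertion that a generic point of $X(J)$ has a complex reflection group as stabilizer is not automatic: it requires first establishing that $X(J)$ is conical for the dilation action on $\h\times\h^*$ (again the role of $\eu$), so that one can reduce to points of $\h$ where Steinberg's theorem on parabolic subgroups applies. The claimed containment $\ann L(\lambda)\supseteq J$ is asserted, not proved --- nothing in the restriction/induction formalism as described forces the annihilator to grow. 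And, as you acknowledge yourself, the decisive surjectivity statement is simply delegated to Ginzburg's Harish--Chandra machinery. So what you have is a plausible skeleton of a proof along a different route, with the central lemma left entirely to the literature.
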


\begin{exercise}\label{ex:GDT}
\begin{enumerate}
\item Using the Generalized Duflo Theorem, and arguments as in the proof of Corollary $1.17$ of lecture one, show that $\mbf{c}$ is aspherical if and only if there exists a simple module $L(\lambda)$ in category $\mc{O}$ such that $\mbf{e} \cdot L(\lambda) = 0$. 
\item Calculate the aspherical values for $W = \Z_2$. 
\end{enumerate} 
\end{exercise}

\subsection{Quivers with relations}\label{sec:quiver}

As noted previously, there exists a finite dimensional algebra $A$ such that category $\mc{O}$ is equivalent to $\Lmod{A}$. In this section, we'll try to construct $A$ in terms of quivers with relations when $W = \Z_2$. This section is included for those who know about quivers and can be skipped if you are not familiar with these things. When $W = \Z_2$ one can explicitly describe what the projective covers $P(\lambda)$ of the simple modules in category $\mc{O}$ are, though, as the reader will see, this is a tricky calculation.

The idea is to first use BGG reciprocity to calculate the rank of $P(\lambda)$ as a (free) $\C[x]$-module. We will only consider the case $\mbf{c} = \frac{1}{2} + m$ for some $m \in \Z_{\ge 0}$. The situation $\mbf{c} = - \frac{1}{2} - m$ is completely analogous. We have already seen in exercise \ref{ex:O21} that $[\Delta(\rho_1) : L(\rho_0) ] = 0$, $[\Delta(\rho_1) : L(\rho_1) ] = 1$, $[\Delta(\rho_0) : L(\rho_1) ] = 1$ and $[\Delta(\rho_0) : L(\rho_0) ] = 1$. Therefore, BGG reciprocity implies that $\Delta(\rho_0) = P(\rho_0)$ and $P(\rho_1)$ is free of rank two over $\C[x]$. Moreover, we have a short exact sequence
\begin{equation}\label{eq:ses}
0 \rightarrow \Delta(\rho_0) \rightarrow P(\rho_1) \rightarrow \Delta(\rho_1) = L(\rho_1) \rightarrow 0.
\end{equation}
As graded $\Z_2$-modules, we write $P(\rho_1) = \C[x] \o \rho_0 \oplus \C[x] \o \rho_1$, where $\C[x] \o \rho_0$ is identified with $\Delta(\rho_0)$. Then the structure of $P(\rho_1)$ is completely determined by the action of $x$ and $y$ on $\rho_1$:
$$
y \cdot (1 \o \rho_1) = f_1(x) \o \rho_0, \quad x \cdot (1 \o \rho_1) = x \o \rho_1 + f_0(x) \o \rho_0
$$
for some $f_0,f_1 \in \C[x]$. For the action to be well-defined we must check the relation $[y,x] = 1 - 2 \mbf{c} s$, which reduces to the equation
$$
y \cdot (f_0(x) \o \rho_0) = 0.
$$
Also $s(f_i) = f_i$ for $i = 0,1$. This implies that $f_0(x) = 1$. The second condition we require is that $P(\rho_1)$ is indecomposable (this will uniquely characterize $P(\rho_1)$ up to isomorphism). This is equivalent to asking that the short exact sequence (\ref{eq:ses}) does not split. Choosing a splitting means choosing a vector $\rho_1 + f_2(x) \o \rho_0 \in P(\rho_1)$ such that $y \cdot (\rho_1 + f_2(x) \o \rho_0) = 0$. One can check that this is always possible, except when $f_1(x) = x^{2 m}$. Thus we must take $f_1(x) = x^{2 m}$. This completely describes $P(\rho_1)$ up to isomorphism. 

Recall that a finite dimensional $\C$-algebra $A$ is said to be \textit{basic} if the dimension of all simple $A$-modules is one. Every basic algebra can be described as a quiver with relations. One way to reconstruct $A$ from $\Lmod{A}$ is via the isomorphism
$$
A = \End_A \left( \bigoplus_{\lambda \in \Irr (A)} P(\lambda) \right),
$$
where $\Irr (A)$ is the set of isomorphism classes of simple $A$-modules and $P(\lambda)$ is the projective cover of $\lambda$. Next we will construct a basic $A$ in terms of a quiver with relations such that $\Lmod{A} \simeq \mc{O}$. The first step in doing this is to use BGG-reciprocity to calculate the dimension of $A$. Again, we will assume that $\mbf{c} = \frac{1}{2} + m$ for some $m \in \Z_{\ge 0}$. The case $\mbf{c} = -\frac{1}{2} - m$ is similar, and all other cases are trivial. We need to describe $A = \End_{\H_{\mbf{c}}(\Z_2)}(P(\rho_0) \oplus P(\rho_1))$. Using the general formula $\dim \Hom_{\H_{\mbf{c}}(W)}(P(\lambda),M) = [M : L(\lambda)]$, and BGG reciprocity, we see that 
\begin{equation}\label{eq:dim1}
\dim \End_{\H_{\mbf{c}}(\Z_2)}(P(\rho_0)) = 1, \quad  \dim \End_{\H_{\mbf{c}}(\Z_2)}(P(\rho_1)) = 2
\end{equation}
\begin{equation}\label{eq:dim2}
\dim \Hom_{\H_{\mbf{c}}(\Z_2)}(P(\rho_0),P(\rho_1)) = 1, \quad \dim \Hom_{\H_{\mbf{c}}(\Z_2)}(P(\rho_1),P(\rho_0)) = 1.
\end{equation}
Hence $\dim A = 5$. The algebra $A$ will equal $\C Q / I$, where $Q$ is some quiver and $I$ an admissible ideal\footnote{Recall that an ideal $I$ in a finite dimensional algebra $B$ is said to be \textit{admissible} if there exists some $m \ge 2$ such that $\mathrm{rad}(B)^m \subset I \subset \mathrm{rad}(B)^2$}. The vertices of $Q$ are labeled by the simple modules in $\mc{O}$, hence there are two: $e_0$ and $e_1$ (corresponding to $L(\rho_0)$ and $L(\rho_1)$ respectively). The number of arrows from $e_0$ to $e_1$ equals $\dim \Ext_{\H_{\mbf{c}}(\Z_2)}^1 (L(\rho_0),L(\rho_1))$ and the number of arrows from $e_1$ to $e_0$ equals $\dim \Ext_{\H_{\mbf{c}}(\Z_2)}^1 (L(\rho_1),L(\rho_0))$. Hence there is one arrow $e_1 \leftarrow e_0 : a$ and one arrow $e_0 \leftarrow e_1 : b$. The projective module $P(\rho_0)$ will be a quotient of 
$$
\C Q e_0 = \C \{ e_0, a e_0 = a, ba, aba, \ds \},
$$
and similarly for $P(\rho_1)$. Equations (\ref{eq:dim1}) imply that $ba = (ab)^2 = 0$ in $A$ (note that we cannot have $e_0 - \alpha ba = 0$ etc. because the endomorphism ring of an indecomposable is a local ring). Hence $A$ is a quotient of $\C Q / I$, where $I = \langle ba \rangle$. But $\C Q / I$ has a basis given by $\{ e_0, e_1, a,b,ab \}$. Hence $\dim \C Q / I = 5$ and the natural map $\C Q / I \rightarrow A$ is an isomorphism. Thus,

\begin{lem}
Let $Q$ be the quiver with vertices $\{e_0, e_1 \}$ and arrows $\{ e_1 \stackrel{a}{\longleftarrow} e_0, \ e_0 \stackrel{b}{\longleftarrow} e_1 \}$. Let $I$ be the admissible ideal $\langle ba \rangle$. Then, $\mc{O} \simeq \Lmod{\C Q / I }$. 
\end{lem}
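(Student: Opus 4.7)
The plan is to identify the path algebra $\C Q/I$ with the endomorphism algebra $A = \End_{\H_{\mbf c}(\Z_2)}(P(\rho_0)\oplus P(\rho_1))^{\op}$, since by the general projective generator result quoted earlier in the section we already have $\mc{O}\simeq \Lmod{A}$. So the task reduces to identifying $A$ with $\C Q/I$ as algebras.

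First I would set up the comparison on the level of vectors spaces. The idempotents $e_0,e_1$ in $\C Q/I$ should correspond to the projections onto the two summands $P(\rho_0)$ and $P(\rho_1)$ in $A$. Using the explicit description of $P(\rho_1)$ constructed above the lemma (obtained by solving the non-split extension condition for $(\ref{eq:ses})$, forcing $f_1(x)=x^{2m}$ and $f_0(x)=1$), I would exhibit explicit morphisms: an element $\alpha\in\Hom(P(\rho_0),P(\rho_1))$ which is the inclusion of $\Delta(\rho_0)$ into $P(\rho_1)$ coming from $(\ref{eq:ses})$, and an element $\beta\in\Hom(P(\rho_1),P(\rho_0))$ defined by $1\otimes\rho_1\mapsto x^{2m}\otimes\rho_0$ (and zero on the $\rho_0$ summand, where one checks well-definedness using the $\H_{\mbf c}(\Z_2)$-action determined by $f_0,f_1$). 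These will play the roles of the arrows $a$ and $b$.

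Next I would check the relation: $\beta\circ\alpha=0$ because $\alpha$ lands in the kernel of $\beta$ by construction (its image is $\C[x]\otimes\rho_0$, on which $\beta$ vanishes). This shows that the obvious assignment $e_i\mapsto\id_{P(\rho_i)}$, $a\mapsto\alpha$, $b\mapsto\beta$ descends to a well-defined algebra homomorphism $\Phi:\C Q/I\to A$. A quick count using a path basis for $\C Q/I$ gives $\{e_0,e_1,a,b,ab\}$, so $\dim \C Q/I=5$.

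The final step is to match this with the dimension computation $\dim A=5$ obtained via BGG reciprocity in $(\ref{eq:dim1})$–$(\ref{eq:dim2})$, and to check that $\Phi$ is surjective. Surjectivity follows because $\alpha,\beta$ are nonzero and the endomorphism rings $\End(P(\rho_i))$ are local (indecomposables), so the images of $e_0,e_1,\alpha,\beta,\alpha\beta$ are linearly independent in $A$; being $5$ linearly independent elements in a $5$-dimensional target, $\Phi$ is an isomorphism. The main subtlety I expect is verifying that $\beta$ as written is indeed a well-defined $\H_{\mbf c}(\Z_2)$-module homomorphism — this requires chasing the explicit formulas for the $y$- and $s$-actions on $P(\rho_1)$ from the construction above the lemma and confirming compatibility, but it is a direct calculation rather than anything conceptually difficult.
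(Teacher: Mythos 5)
Your overall strategy is sound and runs closely parallel to the paper's: both proofs identify $\mc{O}\simeq\Lmod{A}$ with $A=\End_{\H_{\mbf{c}}(\Z_2)}(P(\rho_0)\oplus P(\rho_1))$, compute $\dim A=5$ from BGG reciprocity via $(\ref{eq:dim1})$--$(\ref{eq:dim2})$, and conclude by matching against $\dim \C Q/\langle ba\rangle = 5$. Where the paper argues top-down --- $A$ is automatically a quotient of $\C Q$ by an admissible ideal, and one-dimensionality and locality of $\End(P(\rho_0))$ force $ba$ into that ideal --- you build up by writing the arrows $\alpha,\beta$ explicitly and checking the single relation. The two routes are dual and use the same inputs; yours has the modest advantage of producing the isomorphism on the nose rather than only proving one exists.

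However, the formula you write for $\beta$ is wrong, and the verification you flag as ``a subtlety'' is exactly where it fails. You set $\beta(1\otimes\rho_1)=x^{2m}\otimes\rho_0$. But $1\otimes\rho_1\in P(\rho_1)$ is $\rho_1$-isotypic ($s$ acts by $-1$), whereas $x^{2m}\otimes\rho_0\in P(\rho_0)=\C[x]\otimes\rho_0$ is $\rho_0$-isotypic (an even power of $x$), so the assignment is not even $\Z_2$-equivariant, let alone $\H_{\mbf{c}}(\Z_2)$-linear. The generator of the one-dimensional space $\Hom(P(\rho_1),P(\rho_0))$ is instead $\beta(1\otimes\rho_1)=x^{2m+1}\otimes\rho_0$: this is $\rho_1$-isotypic, and by the commutation $[y,x^{2m+1}]=(2m+1)x^{2m}-2\mbf{c}\,x^{2m}s$ computed in the solution to Exercise~\ref{ex:O21} one gets $y\cdot(x^{2m+1}\otimes\rho_0)=(2m+1-2\mbf{c})\,x^{2m}\otimes\rho_0=0$ at $\mbf{c}=\tfrac12+m$. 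That matches the required $\beta(y\cdot(1\otimes\rho_1))=\beta(x^{2m}\otimes\rho_0)=0$ and, since $\End(\Delta(\rho_0))=\C$, forces $\beta$ to vanish on the whole copy of $\Delta(\rho_0)$ inside $P(\rho_1)$. With that correction the rest of your argument --- $\beta\circ\alpha=0$, $\alpha\circ\beta\neq 0$ because $\alpha$ is injective, and linear independence of $e_0,e_1,\alpha,\beta,\alpha\circ\beta$ --- goes through.
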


\begin{remark}
The above lemma shows that category $\mc{O}$ at $\mbf{c} = \frac{1}{2} + m$ is equivalent to the regular block of category $\mc{O}$ for the Lie algebra $\mathfrak{sl}_2$; see \cite[Section 5.1.1 ]{StroppelQuivers}. One can prove directly that the two category $\mc{O}$'s are equivalent using Lemma \ref{lem:sl2iso}. 
\end{remark}

\subsection{Additional remark}

\begin{itemize}
\item The results of this lecture all come from (at least) one of the papers \cite{DunklOpdam}, \cite{GuayO} or \cite{GGOR}.

\item Theorem \ref{thm:hwc} is shown in \cite{GGOR}. 

\item The fact that BGG reciprocity, Corollary \ref{cor:BGG}, follows from Theorem \ref{thm:hwc} is shown in \cite[Proposition 3.3]{GGOR}.

\item The definition given in \cite[Definition 3.1]{CPS} is dual to the one give in Definition \ref{defn:hw}. It is also given in much greater generality.

\item The generalized Duflo theorem, Theorem \ref{thm:GDT}, is given in \cite{Primitive}. 
\end{itemize}

\newpage

\section{The symmetric group}\label{sec:three}

In this lecture we concentrate on category $\mc{O}$ for $W = \s_n$, the symmetric group. The reason for this is that category $\mc{O}$ is much better understood for this group than for other complex reflection groups, though there are still several open problems. For instance, it is known for which parameters $\mbf{c}$ the algebra $\H_{\mbf{c}}(\s_n)$ admits finite dimensional representations, and for any such parameter, exactly how many non-isomorphic simple modules there are, see \cite{Finitedimreps}. It turns out that $\H_{\mbf{c}}(\s_n)$ admits at most one finite dimensional, simple module. In fact, we now have a good understanding, \cite{Wilcox}, of the ``size'' (i.e. Gelfand-Kirillov dimension) of all simple modules in category $\mc{O}$. 

But, in this lecture, we will concentrate on the main problem mentioned in lecture two, that of calculating the character $\mathrm{ch}(L(\lambda))$ of the simple modules.

\subsection{Outline of the lecture} The first thing to notice is that, since we can easily write down the character of the standard modules $\Delta(\lambda)$, this problem is equivalent to the problem of calculating the multiplicities of simple modules in a composition series for standard modules; the "multiplicities problem". That is, we wish to find a combinatorial algorithm for calculating the numbers
$$
[\Delta(\lambda) : L(\mu)], \quad \forall \ \lambda, \mu \in \Irr (\s_n).
$$
For the symmetric group, we now also have a complete answer to this question: the multiplicities are given by evaluating at one the transition matrices between standard and canonical basis of a certain Fock space. In order to prove this remarkable result, one needs to introduce a whole host of new mathematical objects, including several new algebras. This can make the journey long and difficult. So we begin by outlining the whole story, so that the reader doesn't get lost along the way. The result relies up on work of several people, namely Rouquier, Varagnolo-Vasserot and Leclerc-Thibon. 

Motivated via quantum Schur-Weyl duality, we begin by defining the $\nu$-Schur algebra. This is a finite dimensional algebra. The category of finite dimensional modules over the $\nu$-Schur algebra is a highest weight category. Rouquier's equivalence says that category $\mc{O}$ for the rational Cherednik algebra of type $A$ is equivalent to the category of modules over the $\nu$-Schur algebra. Thus, we transfer the multiplicity problem for category $\mc{O}$ to the corresponding problem for the $\nu$-Schur algebra. 

The answer to this problem is known by a result of Vasserot and Varagnolo. However, their answer comes from a completely unexpected place. We forget about $\nu$-Schur algebras for a second and consider instead the Fock space $\mc{F}_q$ (a vector space over $\Q(q)$). This is an infinite dimensional representation of $\mc{U}_{q}(\widehat{\mf{sl}}_r)$, the quantum group associated to the \textit{affine} Lie algebra $\widehat{\mf{sl}}_r$. On the face of it, $\mc{F}_q$ has nothing to do with the $\nu$-Schur algebra, but bare with me! 

The Fock space $\mc{F}_q$ has a standard basis labeled by partitions. It was shown by Leclerc and Thibon that it also admits a \textit{canonical basis}, in the sense of Lusztig, also labeled by partitions. Hence there is a "change of basis" matrix that relates these two basis. The entries $d_{\lambda,\mu}(q)$ of this matrix are elements of $\Q(q)$. Remarkably, it turns out that they actually belong to $\Z[q]$. 

What Varagnolo and Vasserot showed was that the multiplicity of the simple module (for the $\nu$-Schur algebra) labeled by $\lambda$ in the standard module labeled by $\mu$ is given by $d_{\lambda',\mu'}(1)$. Thus, to calculate the numbers $[\Delta(\lambda) : L(\mu)]$, and hence the character of $L(\lambda)$, what we really need to do is calculate the change of basis matrix for the Fock space $\mc{F}_q$.   

\subsection{The rational Cherednik algebra associated to the symmetric group}

Recall from example \ref{example:symmetric} that the rational Cherednik algebra associated to the symmetric group $\s_n$ is the quotient of 
$$
T(\C^{2n}) \rtimes \s_n = \C \langle x_1,\ds, x_n , y_1, \ds, y_n \rangle \rtimes \s_n
$$
by the relations
$$
[x_i,x_j] = 0, \quad [y_i,y_j] = 0, \quad \forall \, i,j,
$$
$$
[y_i, x_j ] = \mbf{c} s_{ij}, \quad \forall \, i \neq j, 
$$
and
$$
[y_i,x_i] = 1 - \mbf{c} \sum_{j \neq i} s_{ij}. 
$$
Since the standard and simple modules in category $\mc{O}$ are labeled by the irreducible representations of $\s_n$, we begin by recalling the parameterization of these representations.

\subsection{Representations of $\s_n$}

It is a classical result, going back to Schur, that the irreducible representations of the symmetric group over $\C$ are naturally labeled by partitions of $n$. Therefore, we can (and will) identify $\Irr (\s_n)$ with $\mc{P}_n$, the set of all partitions of $n$ and denote by $\lambda$ both a partition of $n$ and the corresponding representation of $\s_n$. For more on the construction of the representations of $\s_n$, see \cite{FultonYOungTableaux}.

\begin{example}
The partition $(n)$ labels the trivial representation and $(1^n)$ labels the sign representation. The reflection representation $\h$ is labeled by $(n-1,1)$. More generally, each of the representations $\bigwedge^i \h$ is an irreducible $\s_n$-module and is labeled by $(n-i,1^i)$; see figure \ref{fig:partw}. 
\begin{figure}\label{fig:partw}
\begin{tikzpicture}

\draw[help lines] (0,0) grid (1,4);
\draw[help lines] (0,0) grid (5,1);

\node at (0.5,0.5) {$1$};
\node at (1.5,0.5) {$2$};
\node at (2.5,0.5) {$\cdots$};
\node at (3.5,0.5) {$\cdots$};
\node at (4.5,0.5) {$n-i$};
\node at (0.5,1.5) {$2$};
\node at (0.5,2.5) {$\vdots$};
\node at (0.5,3.5) {$i$};
\node at (-1.5,2) {$(n-i,1^i) =$};
\end{tikzpicture}
\caption{The partition $(n-i,1^i)$ corresponding to the irreducible $\s_n$-module $\wedge^i \h$.}
\end{figure}
Note that the trivial representation is $\bigwedge^0 \h$ and the sign representation is just $\bigwedge^{n-1} \h$. 
\end{example}

\subsection{Partitions} Associated to partitions is a wealth of beautiful combinatorics. We'll need to borrow a little of this combinatorics. Let $\lambda = (\lambda_1, \ds, \lambda_k)$ be a partition. We visualize $\lambda$ as a certain array of boxes, called a \textit{Young tableau}, as in the example\footnote{The numbers in the boxes are the residues of $\lambda$ modulo $3$, see section \ref{sec:affinefock}.} $\lambda = (4,3,1)$: 
$$
\Yboxdim18pt
\young(1:::,201:,0120)
$$
To be precise, the Young diagram of $\lambda$ is $Y(\lambda) := \{ (i,j) \in \Z^2 \, | \, 1 \le j \le k, \, 1 \le i \le \lambda_j \} \subset \Z^2$.

\begin{example}
There is a natural basis of the irreducible $\s_n$-module labeled by the partition $\lambda$, given by the set of all standard tableau of shape $\lambda$. Here a \textit{standard tableau} is a filling of the Young tableau of $\lambda$ by $\{1, \ds, n\}$ such that the numbers along the row and column, read from left to right, and bottom to top, are increasing. For instance, if $n = 5$ and $\lambda = (3,2)$, then $\dim \lambda = 5$, and has a basis labeled by all standard tableau, 
 $$
\Yboxdim18pt
\young(45:,123)  \quad \young(25:,134) \quad \young(24:,135) \quad \young(35:,124) \quad \young(34:,125)
$$

\end{example}

\subsection{The $\nu$-Schur algebra}

Recall that the first step on the journey is to translate the multiplicity problem for category $\mc{O}$ into the corresponding problem for the $\nu$-Schur algebra. By Weyl's complete reducibility theorem, the category $\mc{C}_n$ of finite-dimensional representations of $\mf{gl}_n$, or equivalently of its enveloping algebra $\mc{U}(\mf{gl}_n)$, is semi-simple. The simple modules in this category are the highest weight modules $L_{\lambda}$, where $\lambda \in \Z^{n}$ such that $\lambda_i - \lambda_{i+1} \ge 0$ for all $1 \le i \le n-1$. The set $\mc{P}(n)$ of all partitions with length at most $n$ can naturally be considered as a subset of this set. Let $V$ denote the vectorial representation of $\mf{gl}_n$. For each $d \ge 1$ there is an action of $\mf{gl}_n$ on $V^{\o d}$. The symmetric group also acts on $V^{\o d}$ on the right by 
$$
(v_1 \o \cdots \o v_d) \cdot \sigma = v_{\sigma^{-1}(1)} \o \cdots \o v_{\sigma^{-1}(d)}, \quad \forall \ \sigma \in \s_d. 
$$
It is known that these two actions commute. Thus, we have homomorphisms $\phi_d :  \mc{U}(\mf{gl}_n) \rightarrow \End_{\C \s_d}(V^{\o d})$ and $\psi_d : \C \s_d \rightarrow \End_{\mc{U}(\mf{gl}_n)}(V^{\o d})^{op}$. Schur-Weyl duality says that 

\begin{prop}
The homomorphisms $\phi_d$ and $\psi_d$ are surjective for all $d \ge 1$. 
\end{prop}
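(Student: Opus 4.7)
The plan is to first prove surjectivity of $\phi_d$ directly and then deduce surjectivity of $\psi_d$ as a formal consequence of the double centralizer theorem. Under the canonical identification $\End_\C(V^{\o d}) \cong \End_\C(V)^{\o d}$, conjugation by $\C\s_d$ corresponds to permutation of the tensor factors, so $\End_{\C\s_d}(V^{\o d})$ is identified with the subspace $\mathrm{Sym}^d \End_\C(V)$ of symmetric tensors. The inclusion $\phi_d(\mc{U}(\mf{gl}_n)) \subseteq \End_{\C\s_d}(V^{\o d})$ is immediate from the commutativity of the two actions, so the substantive work is the reverse inclusion $\mathrm{Sym}^d \End_\C(V) \subseteq \phi_d(\mc{U}(\mf{gl}_n))$.

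For this I would use two inputs. First, the polarization identity: $\mathrm{Sym}^d \End_\C(V)$ is spanned, as a vector space, by the $d$-th tensor powers $\{A^{\o d} : A \in \End_\C V\}$, via the standard formula expressing a symmetric product as a linear combination of pure powers $(\sum_i \lambda_i A_i)^{\o d}$. Second, each such $A^{\o d}$ lies in $\phi_d(\mc{U}(\mf{gl}_n))$. For $A = \exp(X)$ with $X \in \mf{gl}_n$, which holds on an open neighbourhood of $1 \in GL_n$, one has $A^{\o d} = \exp(\phi_d(X))$, and since $V^{\o d}$ is finite-dimensional the exponential series truncates (via Cayley--Hamilton) to a polynomial in $\phi_d(X)$; in particular it lies in $\phi_d(\mc{U}(\mf{gl}_n))$. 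Since $\phi_d(\mc{U}(\mf{gl}_n))$ is a subalgebra and $GL_n$ is connected (hence generated by exponentials near the identity), every $g^{\o d}$ for $g \in GL_n$ belongs to it. Finally, the polynomial map $A \mapsto A^{\o d}$ sends the Zariski-dense open $GL_n \subset \End_\C V$ into the Zariski-closed linear subspace $\phi_d(\mc{U}(\mf{gl}_n))$, so $A^{\o d} \in \phi_d(\mc{U}(\mf{gl}_n))$ for every $A \in \End_\C V$. Combined with polarization this yields surjectivity of $\phi_d$.

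For $\psi_d$, the image $\psi_d(\C\s_d)$ is a semisimple subalgebra of $\End_\C(V^{\o d})$ (as $\C\s_d$ is semisimple), so by the double centralizer theorem it equals its own bicommutant. Its commutant in $\End_\C(V^{\o d})$ is $\End_{\C\s_d}(V^{\o d})$, which by the previous paragraph equals $\phi_d(\mc{U}(\mf{gl}_n))$; taking the commutant once more yields $\End_{\mc{U}(\mf{gl}_n)}(V^{\o d})^{\op}$, so $\psi_d$ is surjective. The main obstacle is the step linking the Lie algebra action to the group action --- essentially the only non-formal point in the proof --- and the Zariski density / exponential trick above is the clean way to handle it; everything else is soft, following from $\mathfrak{gl}_n$-semisimplicity of $V^{\o d}$ and the double centralizer formalism.
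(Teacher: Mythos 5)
The paper itself states this proposition as a classical fact and offers no proof, so there is nothing to compare against in the text; I will instead check your argument on its own terms. It is correct.

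The two pillars of your proof are sound and well chosen. For $\phi_d$: the identification $\End_{\C\s_d}(V^{\o d}) \cong \mathrm{Sym}^d \End_\C(V)$ is immediate from $\End_\C(V^{\o d}) \cong \End_\C(V)^{\o d}$ with $\s_d$ permuting tensor slots; polarization over $\C$ (characteristic zero) reduces the containment to the single family $A^{\o d}$, $A \in \End_\C(V)$; and the exponential/Cayley--Hamilton argument plus Zariski density of $GL_n$ in $\End_\C(V)$ closes the gap. For $\psi_d$: since $\C\s_d$ is semisimple, $V^{\o d}$ is a semisimple $\s_d$-module, so the classical bicommutant (double commutant) theorem applies to the subalgebra $\psi_d(\C\s_d) \subset \End_\C(V^{\o d})$, and combining it with the surjectivity of $\phi_d$ identifies $\psi_d(\C\s_d)$ with $\End_{\mc{U}(\mf{gl}_n)}(V^{\o d})$.

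One remark on the route you took versus the alternatives. Your proof is the "geometric" one, exploiting that $GL_n$ is connected, generated near the identity by exponentials, and Zariski dense in $\End_\C(V)$. A frequently seen alternative is a purely combinatorial proof which exhibits an explicit spanning set of $\End_{\C\s_d}(V^{\o d})$ (indexed, say, by double cosets of parabolics in $\s_d$, or by monomials in divided powers of root vectors) and shows it lies in the image of $\mc{U}(\mf{gl}_n)$; this generalizes more readily to positive characteristic and to the quantum setting, where the exponential argument is unavailable. Yet another common route is to first decompose $V^{\o d}$ as a $(\mf{gl}_n, \s_d)$-bimodule into $\bigoplus_\lambda L_\lambda \o S^\lambda$ and read off both surjectivities simultaneously. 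For the purposes of the present paper --- working over $\C$ --- your version is the cleanest, and I would only add that it is worth flagging that the polarization step uses characteristic zero (or at least characteristic greater than $d$), since the paper later quantizes this statement where the argument does not transport directly.
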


Let $\mathsf{S}(n,d) = \End_{\C \s_d}(V^{\o d})$ be the image of $\phi_d$. It is called the Schur algebra. We denote by $\mc{C}_n(d)$ the full subcategory of $\mc{C}_n$ consisting of all modules whose composition factors are of the form $L_{\lambda}$ for $\lambda \in \mc{P}_d(n)$, where $\mc{P}_d(n)$ is the set of all partitions of $d$ that belong to $\mc{P}(n)$. It is easy to check that $[V^{\o d} : L_{\lambda}] \neq 0$ if and only if $\lambda \in \mc{P}_d(n)$. Moreover, it is known that $\mc{C}_n(d) \simeq \Lmod{\mathsf{S}(n,d)}$. \\

The above construction can be quantized. Let $\nu \in \C^\times$. Then, the \textit{quantized enveloping algebra} $\mc{U}_{\nu}(\mf{gl}_n)$ is a deformation of $\mc{U}(\mf{gl}_n)$. The quantum enveloping algebra $\mc{U}_{\nu}(\mf{gl}_n)$ still acts on $V$. The group algebra $\C \s_n$ also has a natural deformation, the \textit{Hecke algebra} of type $A$, denoted $\mc{H}_{\nu}(d)$. This algebra is described in example \ref{examp:HeckeA}. As one might expect, it is also possible to deform the action of $\C \s_d$ on $V^{\o d}$ to an action of $\mc{H}_{\nu}(d)$ in such a way that this action commutes with the action of $\mc{U}_{\nu}(\mf{gl}_n)$. The quantum analogue of Schur-Weyl duality, see \cite{DPSQuantum}, says

\begin{prop}
We have surjective homomorphisms
$$
\phi_d :  \mc{U}_{\nu}(\mf{gl}_n) \rightarrow \End_{\mc{H}_{\nu}(d)}(V^{\o d}) \quad \textrm{and} \quad \psi_d : \mc{H}_{\nu}(d) \rightarrow \End_{\mc{U}_{\nu}(\mf{gl}_n)}(V^{\o d})^{op}
$$
for all $d \ge 1$. 
\end{prop}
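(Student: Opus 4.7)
The plan is to adapt the classical Schur--Weyl duality argument to the quantum setting in two stages: first establish both surjections for generic values of the deformation parameter $\nu$, and then use flatness to extend to all $\nu \in \C^\times$.

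The first step is to verify that the two actions commute, so that $\phi_d$ and $\psi_d$ are well-defined ring homomorphisms landing in each other's centralizers. The $\mc{U}_\nu(\mf{gl}_n)$-action on $V^{\o d}$ is built from the coproduct, while the $\mc{H}_\nu(d)$-action sends each generator $T_i$ to the $R$-matrix on adjacent tensor factors $i, i+1$ (times the identity elsewhere). Commutativity then reduces, by induction on $d$, to the single statement that the $R$-matrix $R : V \o V \to V \o V$ intertwines $\Delta$ and $\Delta^{\op}$, which is essentially the defining quasi-triangularity property of $\mc{U}_\nu(\mf{gl}_n)$. One also needs to check on generators that the $T_i$'s satisfy the Hecke relations when acting on $V^{\o d}$, which is a direct $2\times 2$ block computation.

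For generic $\nu$ both algebras are semisimple: $\mc{H}_\nu(d) \simeq \C\s_d$ as abstract algebras, and $\mc{U}_\nu(\mf{gl}_n)$ acts semisimply on the tensor subcategory generated by $V$, with simples $L^\nu_\lambda$ deforming $L_\lambda$. Since $V^{\o d}$ is a flat deformation of its classical counterpart, its bimodule structure deforms accordingly, yielding a multiplicity-free decomposition
$$
V^{\o d} = \bigoplus_{\lambda \in \mc{P}_d(n)} L^\nu_\lambda \boxtimes S^\lambda_\nu
$$
indexed by the same set of partitions as in the classical case. The standard double centralizer theorem for semisimple algebras then immediately yields surjectivity of both $\phi_d$ and $\psi_d$.

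To pass from the generic case to arbitrary $\nu \in \C^\times$, I would work over $\C[\nu^{\pm 1}]$, treating $\mc{U}_\nu(\mf{gl}_n)$, $\mc{H}_\nu(d)$ and $V^{\o d}$ as families flat over $\Spec \C[\nu^{\pm 1}]$. Upper semicontinuity of $\dim \End_{\mc{H}_\nu(d)}(V^{\o d})$ combined with its known generic value, together with the fact that the image of $\phi_d$ has constant dimension across specializations (being cut out by a fixed finite set of generators and relations from Lusztig's integral form), forces surjectivity of $\phi_d$ at every $\nu$; the argument for $\psi_d$ is entirely symmetric. The hard part will be the root-of-unity case, where $\mc{H}_\nu(d)$ is no longer semisimple and the clean bimodule decomposition fails: one must argue that specialization really yields surjectivity and not merely inclusion with the correct generic rank. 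This typically requires carefully tracking Lusztig integral bases through specialization, or, alternatively, following Jimbo's more direct verification that the $R$-matrix-generated subalgebra already exhausts the centralizer on the nose.
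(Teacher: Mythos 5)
The paper does not prove this proposition; it simply cites \cite{DPSQuantum}. Your generic-parameter analysis is sound: the $R$-matrix gives the commuting $\mc{H}_\nu(d)$-action, the Hecke relations reduce to a rank-two check, and when $\nu$ is not a root of unity both algebras act semisimply, so the classical double centralizer theorem applied to the multiplicity-free decomposition of $V^{\o d}$ yields both surjections.

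The specialization step is where the argument breaks. Upper semicontinuity of $\dim \End_{\mc{H}_\nu(d)}(V^{\o d})$ only says this dimension can jump \emph{up} at a root of unity, so the generic value is a lower bound there, not an upper bound; meanwhile the dimension of the image of $\phi_d$ is \emph{lower} semicontinuous, since the image is a quotient of the fiber of an integral form and quotient dimensions can drop on specialization (as multiplication by $\nu$ on $\C[\nu]$ already shows). Your claim that the image has constant dimension because it is cut out by a fixed finite set of generators and relations conflates images with vanishing loci and is not true in general. The two dimensions you need to reconcile thus move in opposite directions a priori, and nothing in your sketch forces them to meet. The ingredient that closes the gap, and the one that \cite{DPSQuantum} supply, is that $V^{\o d}$ is a full tilting module for the quantum group for every $\nu \in \C^\times$: the vector representation $V$ is simultaneously a Weyl and a dual Weyl module, tensor products of tilting modules are tilting, and tilting modules have $\Ext^1$-vanishing and endomorphism rings that commute with base change. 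That last property is exactly the constancy statement your semicontinuity argument cannot deliver; once you have it, both surjections follow from the generic case by a dimension count. Your closing sentence gestures at the right literature but does not isolate the tilting property that actually carries the day.
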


The image $ \End_{\mc{H}_{\nu}(d)}(V^{\o d})$ of $\phi_d$ is the \textit{$\nu$-Schur algebra}, denoted $\mathsf{S}_{\nu}(n,d)$. The category $\mc{C}_{n,\nu}$ of finite-dimensional representations of $\mc{U}_{\nu}(\mf{gl}_n)$ is no longer semi-simple, in general. However, the simple modules in this category are still labeled $L_{\lambda}$ for $\lambda \in \Z^{n}$ such that $\lambda_i - \lambda_{i+1} \ge 0$ for all $1 \le i \le n-1$. Moreover, if we let $\mc{C}_{n,\nu}(d)$ denote the full subcategory of $\mc{C}_{n,\nu}$ consisting of all modules whose composition factors are $L_{\lambda}$ for $\lambda \in \mc{P}_d(n)$ then we again have $\mc{C}_{n,\nu} = \Lmod{\mathsf{S}_{\nu}(n,d)}$.  It is known that $\Lmod{\mathsf{S}_{\nu}(n,d)}$ is a highest weight category with standard modules $W_{\lambda}$.  

\subsection{Rouquier's equivalence}

As explained at the start of the lecture, in order to calculate the multiplicities
$$
m_{\lambda,\mu} = [\Delta(\lambda) : L(\mu)]
$$
one has to make a long chain of connections and reformulations of the question, the end answer relies on several remarkable results. The first of these is Rouquier's equivalence, the proof of which relies in a crucial way on the $\KZ$-functor introduced in the next lecture. 

\begin{thm}\label{thm:rouquierequiv}
Let\footnote{Note that Rouquier's rational Cherednik algebra is parameterized by $h = - \mbf{c}$.} $\mbf{c} \in \Q_{\ge 0}$ and set $\nu = \mathrm{exp}(2 \pi \sqrt{-1} \mbf{c})$. Then there is an equivalence of highest weight categories 
$$
\Psi : \mc{O} \stackrel{\sim}{\longrightarrow} \Lmod{\mathsf{S}_{\nu}(n)},
$$
such that $\Psi(\Delta(\lambda)) = W_{\lambda}$ and $\Psi(L(\lambda)) = L_{\lambda}$. 
\end{thm}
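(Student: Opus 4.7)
The plan is to identify both categories as highest weight covers of the same underlying category, then apply Rouquier's uniqueness theorem for such covers. The underlying category in both cases will turn out to be modules over the Hecke algebra $\mathcal{H}_\nu(n)$ of type $A_{n-1}$ with parameter $\nu = \exp(2\pi\sqrt{-1}\mathbf{c})$.

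First I would set up the two functors that exhibit the covering structure. On the rational Cherednik side, I would use the Knizhnik-Zamolodchikov functor $\mathsf{KZ}: \mathcal{O} \to \mathcal{H}_\nu(n)\text{-mod}$ (developed in Lecture 4). Concretely, any $M \in \mathcal{O}$ restricts to a $\mathsf{W}$-equivariant $\mathscr{D}$-module on $\mathfrak{h}_{\mathrm{reg}}$ via the Dunkl embedding, and taking horizontal sections of the associated $\mathsf{KZ}$-connection produces a representation of the braid group that factors through $\mathcal{H}_\nu(n)$. The key structural facts are that $\mathsf{KZ}$ is exact, represented by a projective-injective object $P_{\mathsf{KZ}} \in \mathcal{O}$, and fully faithful on the subcategory of projective objects. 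On the Schur side, the classical construction gives an idempotent $e \in \mathsf{S}_\nu(n,n)$ with $e\mathsf{S}_\nu(n,n)e \simeq \mathcal{H}_\nu(n)$, producing a Schur-type functor $\mathsf{F} = e(-) : \mathsf{S}_\nu(n)\text{-mod} \to \mathcal{H}_\nu(n)\text{-mod}$ that is also a highest weight cover in Rouquier's sense.

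Next I would match the combinatorial data. Both categories have standard objects labelled by $\mathcal{P}_n$, and under $\mathsf{KZ}$ and $\mathsf{F}$ the images of the standard objects $\Delta(\lambda)$ and $W_\lambda$ become the Specht modules $S^\lambda$ of $\mathcal{H}_\nu(n)$. The partial order $\leq_\mathbf{c}$ on $\mathcal{P}_n$ arising from Section 2.9 must be shown to refine (or be compatible with) the dominance order used to make $\mathsf{S}_\nu(n)\text{-mod}$ a highest weight category; this is a direct calculation of the scalars $\mathbf{c}_\lambda$ in terms of the content of $\lambda$, after which both orderings can be replaced by a common refinement without changing the highest weight structure.

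The heart of the argument is to invoke Rouquier's uniqueness theorem: any two $(-1)$-faithful (equivalently, $1$-faithful) highest weight covers of a fixed module category, with compatible labelling posets, are canonically equivalent. Thus I must verify the $1$-faithfulness condition, namely that
\[
\Hom(\Delta(\lambda),\Delta(\mu)) \iso \Hom(S^\lambda, S^\mu), \qquad \Ext^1(\Delta(\lambda),\Delta(\mu)) \hookrightarrow \Ext^1(S^\lambda, S^\mu),
\]
and similarly for $\mathsf{F}$. For the Schur cover this is classical. For $\mathsf{KZ}$ the main input is that $P_{\mathsf{KZ}}$ is a projective generator of a Serre subcategory whose complement has projective dimension at least $2$, which follows from the localization of $\H_\mathbf{c}(\s_n)$ on $\h_{\mathrm{reg}}$ being the smash product $\mathscr{D}(\h_{\mathrm{reg}}) \rtimes \s_n$ together with control of its global dimension. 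This is the hard part: showing $\mathsf{KZ}$ is $1$-faithful for all $\mathbf{c} \in \mathbb{Q}_{\geq 0}$, and in particular at the integer values where the Hecke algebra degenerates, since naive restriction to a generic parameter and deformation arguments must be carried out carefully. Once $1$-faithfulness is established on both sides and the orderings are matched, Rouquier's theorem produces the equivalence $\Psi$ of highest weight categories, automatically identifying standards with standards and hence, by taking heads, simples with simples.
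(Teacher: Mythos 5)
Your proposal is correct and follows exactly the strategy of Rouquier's original proof, which the paper cites: realize both $\mc{O}$ and $\Lmod{\mathsf{S}_\nu(n)}$ as highest weight covers of $\Lmod{\mc{H}_\nu(n)}$ via $\KZ$ and the Schur functor respectively, match standards to Specht modules and compare orderings, then invoke the uniqueness theorem for $1$-faithful highest weight covers. One small slip: $(-1)$-faithfulness and $1$-faithfulness are genuinely different conditions (faithfulness on $\Delta$-filtered objects versus fullness plus $\Ext^1$-injectivity), not equivalent as your parenthetical suggests, but the displayed $\Hom/\Ext^1$ condition you go on to verify is indeed the correct $1$-faithfulness requirement.
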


Thus, to calculate $m_{\lambda,\mu}$ it suffices to try and describe the numbers $[W_{\lambda} : L_{\mu}]$. To do this, we turn now to quantum affine enveloping algebra $\mc{U}_{q}(\widehat{\mf{sl}}_r)$ and the Fock space $\mc{F}_q$. 

\subsection{The quantum affine enveloping algebra}

Let $q$ be an indeterminate and let $I$ be the set $\{ 1, \ds, r \}$. We now we turn our attention to another quantized enveloping algebra, this time of an affine Lie algebra. The \textit{quantum affine enveloping algebra} $\mc{U}_{q}(\widehat{\mf{sl}}_r)$ is the $\Q(q)$-algebra generated by $E_i,F_i,K_i^{\pm 1}$ for $i \in I$ and satisfying the relations
$$
K_i K_i^{-1} = K_i^{-1} K_i = 1, \quad K_i K_j = K_j K_i, \quad \forall \ 1 \le i , j \le r
$$
\begin{equation}\label{eq:aij}
K_i E_j = q^{a_{i,j}} E_j K_i, \quad K_i F_j = q^{- a_{i,j}} F_j K_i, \quad \forall \ 1 \le i , j \le r
\end{equation}
$$
[E_i,F_j] = \delta_{i,j} \frac{K_i - K_i^{-1}}{q - q^{-1}}, \quad \forall \ 1 \le i , j \le r
$$
and the quantum Serre relations
\begin{equation}\label{eq:Es}
E_i^2 E_{i \pm 1} - (q + q^{-1}) E_i E_{i \pm 1} E_i + E_{i \pm 1} E_i^2 = 0, \quad \forall \ 1 \le i \le r
\end{equation}
\begin{equation}\label{eq:Fs}
F_i^2 F_{i \pm 1} - (q + q^{-1}) F_i F_{i \pm 1} F_i + F_{i \pm 1} F_i^2 = 0, \quad \forall \ 1 \le i \le r
\end{equation}
where the indicies in (\ref{eq:Es}) and (\ref{eq:Fs}) are taken modulo $r$ so that $0 = r$ and $r + 1 = 1$. In (\ref{eq:aij}), $a_{i,i} = 2$, $a_{i,i \pm 1} = -1$ and $0$ otherwise. In the case $r = 2$, we take $a_{i,j} = -2$ if $i \neq j$.  

\subsection{The $q$-deformed Fock space}\label{sec:affinefock} Let $\mathcal{F}_{q}$ be the \textit{level one Fock space} for $\mathcal{U}_{q}(\widehat{\mf{sl}}_r)$. It is a $\mathbb{Q}(q)$-vector space with standard basis $\{ | \lambda \rangle \}$, labeled by all partitions $\lambda$. The action of $\mathcal{U}_{q}(\widehat{\mf{sl}}_r)$ on $\mathcal{F}_{q}$ is combinatorially defined. Therefore, to describe it we need a little more of the language of partitions.

Let $\lambda$ be a partition. The \textit{content} of the box $(i,j) \in Y(\lambda)$ is $\cont(i,j) := i - j$. A \textit{removable} box is a box on the boundary of $\lambda$ which can be removed, leaving a partition of $|\lambda | - 1$. An \textit{indent} box is a concave corner on the rim of $\lambda$ where a box can be added, giving a partition of $|\lambda| + 1$. For instance, $\lambda = (4,3,1)$ has three removable boxes (with content $2,-1$ and $-3$), and four indent boxes (with content $3,1,-2$ and $-4$). If $\gamma$ is a box of the Young tableaux corresponding to the partition $\lambda$ then we say that the \textit{residue} of $\gamma$ is $i$, or we say that $\gamma$ is an $i$-box of $\lambda$, if the content of $\gamma$ equals $i$ modulo $r$.  Let $\lambda$ and $\mu$ be two partitions such that $\mu$ is obtained from $\lambda$ by adding a box $\gamma$ with residue $i$; see figure \ref{fig:part}. We define
\begin{align*}
N_i(\lambda) = & | \{ \textrm{indent $i$-boxes of $\lambda$} \} | - | \{ \textrm{removable $i$-boxes of $\lambda$} \} |, \\
N_i^l(\lambda,\mu) = & | \{ \textrm{indent $i$-boxes of $\lambda$ situated to the \textit{left} of $\gamma$ (not counting $\gamma$) } \} | \\
 &  - | \{ \textrm{removable $i$-boxes of $\lambda$ situated to the \textit{left} of $\gamma$} \} |,\\ 
N_i^r(\lambda,\mu) = & | \{ \textrm{indent $i$-boxes of $\lambda$ situated to the \textit{right} of $\gamma$ (not counting $\gamma$) } \} | \\
 &  - | \{ \textrm{removable $i$-boxes of $\lambda$ situated to the \textit{right} of $\gamma$} \} |,
\end{align*}

\begin{figure}\label{fig:part}
\begin{tikzpicture}

\draw [green!10!white,fill=green!10!white] (0,0) -- (0,4) -- (2,4) -- (2,2) -- (5,2) -- (5,0) -- (0,0);

\draw [red!10!white,fill=red!10!white] (2,2) -- (2,3) -- (3,3) -- (3,2) -- (2,2);

\draw[help lines] (0,0) grid (2,4);
\draw[help lines] (2,0) grid (5,2);
\draw[help lines] (2,2) grid (3,3);

\draw [thick,gray,->] (5,4) to [out=-180,in=70] (2.5,2.5);
\node at (5.3,4.1) {$\gamma$};

\node at (1.5,1.5) {$\lambda$};

\node at (-1,2) {$\mu =$};

\draw [thick,] (1,4.5) to [out=-90,in=90] (0.1,4.1);
\draw [thick] (1,4.5) to [out=-90,in=90] (1.9,4.1);

\node at (0,4.7) {Nodes to the left of $\gamma$};

\draw [thick,] (4,2.5) to [out=-90,in=90] (3.1,2.1);
\draw [thick] (4,2.5) to [out=-90,in=90] (4.9,2.1);

\node at (5.5,2.7) {Nodes to the right of $\gamma$};

\end{tikzpicture}
\caption{Nodes to the left and right of $\gamma$. The total partition is $\mu$, whist the green shaded subpartition is $\lambda$.}
\end{figure}
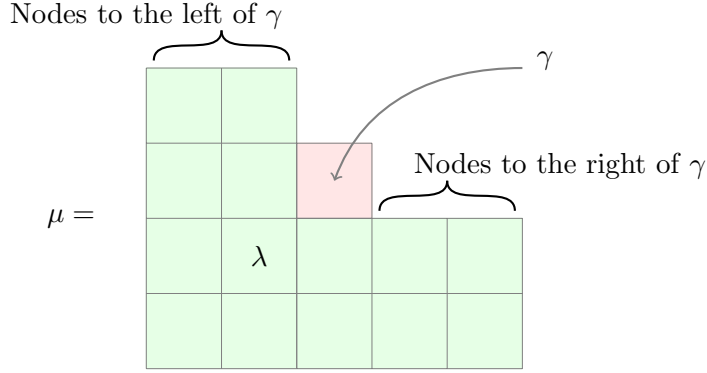

Then,
$$
F_i | \lambda \rangle = \sum_{\mu} q^{N^r_i(\lambda,\mu)} | \mu \rangle, \quad E_i | \mu \rangle = \sum_{\lambda} q^{N^l_i(\lambda,\mu)} | \lambda \rangle,
$$
where, in each case, the sum is over all partitions such that $\mu / \lambda$ is a $i$-node, and
$$
K_i | \lambda \rangle = q^{N_i(\lambda)} | \lambda \rangle.
$$
See \cite[Section 4.2]{LLTFock} for further details. 

\begin{example}
Let $\lambda  = (5,4,1,1)$ and $r = 3$ so that the Young diagram with residues of $\lambda$ is
$$
\Yboxdim18pt
\young(0::::,1::::,2012:,01201)
$$
Then $F_2 | \lambda \rangle = |(6,4,1,1) \rangle  + |(5,4,2,1) \rangle  + q | (5,4,1,1,1) \rangle$, $E_2 | \lambda \rangle = q^2 | (5,3,1,1) \rangle$, and $K_2 | \lambda \rangle = q^2 | (5,3,1,1) \rangle$. 
\end{example}

\begin{exercise}\label{ex:66311}
Let $\lambda  = (6,6,3,1,1)$ and $r = 4$. Calculate the action of $F_2$, $E_4$ and $K_1$ on $| \lambda \rangle$ (hint: draw a picture!).
\end{exercise}

\begin{exercise}
Write a program that calculates the action of the operators $K_i,F_i,E_i$ on $\mc{F}_q$. 
\end{exercise}

In fact, there is an action of a larger algebra, the quantum affine enveloping algebra $\mathcal{U}_{q}(\widehat{\mf{gl}}_r) \supset \mathcal{U}_{q}(\widehat{\mf{sl}}_r)$ on the space $\mc{F}_q$. The key point for us is that $\mc{F}_q$ is an \textit{irreducible} highest weight representation of $\mathcal{U}_{q}(\widehat{\mf{gl}}_r)$, with highest weight $| \emptyset \rangle$. As a $\mathcal{U}_{q}(\widehat{\mf{sl}}_r)$-module, the Fock space is actually a direct sum of infinitely many irreducible highest weight modules, see \cite{qDecomp}. 
 
\subsection{The bar involution} The key to showing that the Fock space $\mc{F}_q$ admits a canonical basis is to construct on it a sesquilinear involution, that is compatible in a natural way the the sesquilinear involution on $\mathcal{U}_{q}(\widehat{\mf{gl}}_r)$. However, in order to be able to do this we must first write the standard basis of $\mc{F}_q$ in terms of infinite $q$-wedges. The original motivation for doing this came from the Boson-Fermion correspondence in mathematical physics. Once we have constructed the involution, the existence of a canonical basis comes from general theory developed by Lusztig. 

The $\Q$-linear involution $q \mapsto \ol{q} := q^{-1}$ of $\Q(q)$ extends to an involution $v \mapsto \ol{v}$ of $\mc{F}_q$. In order to describe this involution, it suffices to say how to calculate $\overline{| \lambda \rangle}$. We begin by noting that a partition can also be describe as an infinite wedge as follows.

\begin{exercise}\label{ex:part}
Let $\mc{J}$ be the set of strictly decreasing sequences $I = (i_1,i_2,\ds )$ such that $i_k = - k + 1$ for $k \gg 0$. Show that there is a natural bijection between $\mc{J}$ and $\mc{P}$, the set of all partitions. This bijection sends $\mc{J}_n = \{ I \in \mc{J} \ | \ \sum_k (i_k + k - 1) = n \}$ to $\mc{P}_n$ the set of all partitions of $n$. 
\end{exercise}

Thus, to a partition $I \in \mc{J}$ we associate the infinite wedge 
$$
u_I = u_{i_1} \wedge u_{i_2} \wedge u_{i_3} \wedge \ds, 
$$
for instance
$$
u_{(3^2,2)} = u_3 \wedge u_2 \wedge u_0 \wedge u_{-3} \wedge u_{-4} \wedge \ds
$$
An infinite wedge is normally ordered if it equals $u_I$ for some $I \in \mc{J}$. Just as for usual wedge products, there is a ``normal ordering rule'' for the $q$-deformed wedge product. However, this normal ordering rule depends in a very nontrivial way on the integer $r$ (and more generally, on the level $l$ of the Fock space). In order to describe the normal ordering rule, it suffices to say how to swap two adjacent $u_i$'s. Let $i < j$ be integers with $j - i \equiv m \ \mathrm{mod} \ r$ for some $0 \le m < r$. If $m = 0$ then 
$$
u_i \wedge u_j = - u_j \wedge u_i,
$$
and otherwise  
\begin{align*}
u_i \wedge u_j = & - q^{-1} u_j \wedge u_i + (q^{-2} - 1) [u_{j - m} \wedge u_{i + m} - q^{-1} u_{j - r} \wedge u_{i + r}\\
 & + q^{-2} u_{j - m - r} \wedge u_{i + m + r} - q^{-3} u_{j - 2r} \wedge u_{i + 2r} + \cdots ]
\end{align*}
where the sum continues only as long as the terms are normally ordered. Let $I \in \mc{J}$ and write $\alpha_{r,k}(I)$ for the number of pairs $(a,b)$ with $1 \le a < b \le k$ and\footnote{There is a typo in definition of $\alpha_{r,k}(I)$ in \cite{LT}} $i_a - i_b \not\equiv 0 \ \textrm{mod} \ r$. 

\begin{prop}
For $k \ge n$, the $q$-wedge 
$$
\ol{u_I} = (-1)^{{k \choose 2 }} q^{\alpha_{r,k}(I)} u_{i_k} \wedge u_{i_{k-1}} \wedge \cdots \wedge u_{i_1} \wedge u_{i_{k+1}} \wedge u_{i_{k+2}} \wedge \cdots 
$$
is independent of $k$. 
\end{prop}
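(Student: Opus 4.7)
The plan is to proceed by induction on $k$, showing that $v_{k+1} = v_k$ for all $k \ge n$, where $v_k$ denotes the right-hand side of the displayed formula. Since the partition corresponding to $I$ has at most $n$ parts and $k \ge n$, we have $i_{k+1} = -k$. Comparing $v_k$ and $v_{k+1}$, the overall sign and $q$-power differ by a factor of $(-1)^k q^N$, where $N = \#\{1 \le j \le k : i_j + k \not\equiv 0 \pmod r\} = \alpha_{r,k+1}(I) - \alpha_{r,k}(I)$, and the element $u_{-k}$ moves from position $k+1$ to position $1$, past the reversed block $u_{i_k} \wedge \cdots \wedge u_{i_1}$. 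The claim thus reduces to the identity
\begin{equation*}
u_{i_k} \wedge \cdots \wedge u_{i_1} \wedge u_{-k} \wedge T = (-1)^k q^N \, u_{-k} \wedge u_{i_k} \wedge \cdots \wedge u_{i_1} \wedge T,
\end{equation*}
where $T = u_{-k-1} \wedge u_{-k-2} \wedge \cdots$ is the vacuum tail.

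I would then execute the transposition of $u_{-k}$ leftward one swap at a time, applying the given straightening rule to each $u_{i_j} \wedge u_{-k}$ for $j = 1, 2, \ldots, k$. Setting $m = (i_j + k) \bmod r$, the swap yields a leading term $-u_{-k} \wedge u_{i_j}$ when $m = 0$ and $-q \, u_{-k} \wedge u_{i_j}$ when $m > 0$, together in the latter case with correction terms
\begin{equation*}
(q^{-1} - q)(-q^{-1})^s \, u_{i_j - m - sr} \wedge u_{-k + m + sr}, \qquad s \ge 0,
\end{equation*}
truncated by the normal-ordering condition $i_j - m - sr > -k + m + sr$. The product of the leading factors across all $k$ swaps yields precisely $(-1)^k q^N$, matching the required sign and $q$-power adjustment between $v_k$ and $v_{k+1}$.

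The main obstacle will be to show that every correction term, once re-embedded in the ambient infinite wedge, vanishes. The key residue observation is that the correction index $B := -k + m + sr$ satisfies $B \equiv i_j \pmod r$, while the partner index $A := i_j - m - sr$ satisfies $A \equiv -k \pmod r$. Both $A$ and $B$ lie strictly between $-k$ and $i_j$, so neither is in the tail $T$; the corrected wedge therefore vanishes if and only if at least one of $A, B$ coincides with some $i_{j'}$ with $j' \in \{1, \ldots, k\} \setminus \{j\}$, by the antisymmetry property $u_a \wedge u_a = 0$ of the $q$-wedge product. The plan is to prove this coincidence always holds when $k \ge n$: using the bijection of Exercise \ref{ex:part}, the set $\{i_{j'}\}_{j' \ge 1}$ is a ``near-saturation'' of $\Z_{\le 0}$ (differing from it only by the $n$ upward shifts corresponding to the boxes of $\lambda$), so every residue class mod $r$ is densely filled in the negative direction. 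A careful case analysis, verifying that the finitely many holes of $\{i_{j'}\}$ cannot simultaneously absorb both $A$ and $B$, completes the vanishing argument. Granting this, the inductive step yields $v_k = v_{k+1}$, and $\overline{u_I}$ is well defined.
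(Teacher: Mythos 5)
Your inductive strategy is sound, and the bookkeeping of the leading coefficient is correct: each of the $k$ transpositions carrying $u_{i_{k+1}} = u_{-k}$ leftward past $u_{i_j}$ contributes $-1$ when $r \mid (i_j + k)$ and $-q$ otherwise, whose product is $(-1)^k q^N$ with $N = \alpha_{r,k+1}(I) - \alpha_{r,k}(I)$ exactly. The genuine gap is in the vanishing of the correction terms, which is the entire content of the proposition. A first, smaller issue: the straightening relation actually generates \emph{two} interleaved families of corrections, not one. Besides the $u_{i_j - m - sr} \wedge u_{-k + m + sr}$ you record, the expansion also contains $u_{i_j - sr} \wedge u_{-k + sr}$ for $s \ge 1$ (these are the $-q^{-1} u_{j-r} \wedge u_{i+r}$, $-q^{-3} u_{j - 2r} \wedge u_{i + 2r}$, \ldots terms), and both families must be disposed of.

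The more serious problem is that the assertion ``the finitely many holes of $\{i_{j'}\}$ cannot simultaneously absorb both $A$ and $B$'' is exactly what needs proving, and the ``near-saturation'' and residue observations do not carry it. The argument that closes the gap is an inequality on the sum $A + B$, which is the same for both families: every correction $u_A \wedge u_B$ satisfies $A + B = i_j + i_{k+1} = i_j - k$. The complement of $\{ i_a : a \ge 1 \}$ in $\Z$ is the strictly increasing set of holes $\{ h_b = b - \lambda'_b : 1 \le b \le \lambda_1 \}$, so if $A$ and $B$ were both holes one would have $A + B \ge h_1 + h_2 = 3 - \lambda'_1 - \lambda'_2$. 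But $i_j - k \le i_1 - n = \lambda_1 - n$, and since $n = \sum_{b=1}^{\lambda_1} \lambda'_b \ge \lambda'_1 + \lambda'_2 + (\lambda_1 - 2)$, one obtains $i_j - k \le 2 - \lambda'_1 - \lambda'_2 < h_1 + h_2$ --- a contradiction. This is precisely where $k \ge n$ enters, and the bound is sharp: for $\lambda = (3,1)$, $r = 4$ and the disallowed $k = 3 < n = 4$, the first swap produces the correction $u_1 \wedge u_{-1}$ with both $1$ and $-1$ holes, and one can check that $v_3 \ne v_4$. Finally, the repeated index in a surviving correction wedge is generally not in adjacent positions, so the appeal to $u_a \wedge u_a = 0$ is not immediate; what is really used is that a semi-infinite $q$-wedge with any repeated entry vanishes in the Fock space, which is true but deserves to be stated.
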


Therefore we can define a semi-linear map, $v \mapsto \ol{v}$ on $\mc{F}_q$ by 
$$
\ol{f(q) u_{I}} = f(q^{-1}) \ol{u_I}.
$$
This is actually an involution on $\mc{F}_q$. For $\mu \vdash n$ define
$$
\ol{| \mu \rangle} = \sum_{\lambda \vdash n} a_{\lambda,\mu}(q) | \lambda \rangle.
$$

\subsection{Canonical basis}

In order to describe the properties of the polynomials $a_{\lambda,\mu}(q)$ and also define the canonical basis, we need a couple more properties of partitions. There is a natural partial ordering on $\mc{P}_n$, the set of all partitions of $n$, which is the \textit{dominance ordering} and is defined by $\lambda \unlhd \mu$ if and only if
$$
\lambda_1 + \cdots + \lambda_k \le \mu_1 + \cdots + \mu_k , \quad \forall \ k.
$$ 
We also require the notion of $r$-rim-hooks and $r$-cores. An \textit{$r$-rim-hook} of $\lambda$ is a \textit{connected} skew partition $\lambda \backslash \mu$ of length $r$ that does not contain the subpartition $(2,2)$ i.e. it is a segment of length $r$ of the edge of $\lambda$. For example, $\{ (2,2),(3,2),(3,1),(4,1) \}$ is a $4$-rim-hook of $(4,3,1)$. The \textit{$r$-core} of $\lambda$ is the partition $\mu \subset \lambda$ obtained by removing, one after another, all possible $r$-rim-hooks of $\lambda$. It is known that the $r$-core is independent of the order in which the hooks are removed. For example, the $3$-core of $(4,3,1)$ is $(2)$. 

\begin{exercise}
Write a program that calculates the $r$-core of a partition. 
\end{exercise}

Then it is known, \cite[Theorem 3.3]{LT}, that the polynomials $a_{\lambda,\mu}(q)$ have the following properties. 

\begin{thm}
Let $\lambda,\mu \vdash n$.
\begin{enumerate}
\item $a_{\lambda,\mu}(q) \in \Z[q,q^{-1}]$.
\item $a_{\lambda,\mu}(q) = 0$ unless $\lambda \lhd \mu$ and $\lambda,\mu$ have the same $r$-core.
\item $a_{\lambda,\lambda}(q) = 1$. 
\item $a_{\lambda,\mu}(q) = a_{\mu',\lambda'}(q)$. 
\end{enumerate}
\end{thm}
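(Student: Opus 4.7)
The plan is to analyze the bar involution directly through its defining formula on semi-infinite $q$-wedges. Computing $\overline{|\mu\rangle}$ amounts to taking the normally ordered wedge $u_I$ with $I \in \mc{J}_n$ corresponding to $\mu$, applying the defining formula with prefactor $(-1)^{{k \choose 2}} q^{\alpha_{r,k}(I)}$ and reversed factors, and then iteratively invoking the straightening rule to re-express the result as a $\Z[q,q^{-1}]$-linear combination of normally ordered wedges $u_J$. All four statements will follow from tracking the combinatorics of this reduction procedure.

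For (1) and (3) I would proceed together. Integrality (1) is immediate: the straightening rule has coefficients in $\Z[q^{-1}]$ and the prefactor lies in $\Z[q]$, so iteration preserves $\Z[q,q^{-1}]$. For (3), I would isolate the contribution in which, at each of the ${k \choose 2}$ transpositions required to undo the reversal $u_{i_k} \wedge \cdots \wedge u_{i_1}$, one takes only the leading term of the straightening: namely $-q^{-1} u_j \wedge u_i$ when $i \not\equiv j \pmod r$, and $-u_j \wedge u_i$ when $i \equiv j \pmod r$. This returns $u_I$ with coefficient $(-1)^{{k \choose 2}} q^{-\alpha_{r,k}(I)}$, which when multiplied by the prefactor yields exactly $u_I$. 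The definition of $\alpha_{r,k}(I)$, counting pairs $(a,b)$ with $i_a \not\equiv i_b \pmod r$, is precisely engineered to force this cancellation.

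For (2), I would examine the shape of the corrective terms. Every term arising in the straightening of $u_i \wedge u_j$ has the form $u_{j-m-sr} \wedge u_{i+m+sr}$ with $s \ge 0$. Under the bijection of Exercise \ref{ex:part} between $\mc{J}$ and partitions, the replacement $(i,j) \mapsto (j-m-sr, i+m+sr)$ corresponds to shifting the associated indices by a multiple of $r$ along the anti-diagonal, which on the level of Young diagrams amounts to removing and then re-adding a ribbon whose length is a multiple of $r$; such moves leave the $r$-core invariant. Simultaneously, transferring weight from index $j$ down to index $i < j$ moves boxes from higher rows to lower rows, strictly decreasing the corresponding partition in dominance order. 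Iterating over all straightenings, every $u_J$ appearing in $\overline{u_I}$ with nonzero coefficient corresponds to a partition $\lambda \lhd \mu$ with the same $r$-core as $\mu$.

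Part (4) is the subtlest. My plan is to exhibit an involution $\tau$ of the $q$-wedge space that implements partition transposition and to verify that it commutes with the bar involution. The natural candidate, motivated by the Boson-Fermion correspondence, is built from $u_i \mapsto u_{1-i}$ combined with reversal of the wedge order; the key computation is stability of the straightening rule under the substitution $(i,j) \mapsto (1-j, 1-i)$, which sends $m \mapsto r - m$ and interchanges the structural constants up to a controllable power of $q$. Once this is established, the uniqueness of the expansion in normally ordered wedges, together with (1)--(3), forces $a_{\lambda,\mu}(q) = a_{\mu',\lambda'}(q)$. The hard part will be (4): rigorously defining $\tau$ on semi-infinite wedges requires a careful renormalization, and checking its compatibility with both the straightening rule and the prefactor $\alpha_{r,k}$ is where the real work lies. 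By contrast, parts (1)--(3) reduce to routine combinatorial bookkeeping once the straightening algorithm is set up.
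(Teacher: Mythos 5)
The paper itself offers no proof of this theorem: it is cited directly from Leclerc--Thibon, \cite[Theorem~3.3]{LT}, so there is no in-text argument to compare against. Your strategy — unpack the bar involution on a normally ordered wedge $u_I$ via the prefactor $(-1)^{\binom{k}{2}}q^{\alpha_{r,k}(I)}$ and the straightening rule, and track coefficients — is the right one and is in the spirit of how these facts are actually established.

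However, there are concrete gaps in your treatment of (2). You assert that every correction term in the straightening of $u_i\wedge u_j$ has the form $u_{j-m-sr}\wedge u_{i+m+sr}$, but the alternating series in the paper's normal-ordering rule also produces the terms $u_{j-sr}\wedge u_{i+sr}$ for $s\ge 1$. More importantly, the $r$-core justification "removing and re-adding a ribbon whose length is a multiple of $r$" is not correct as stated: the shift $m+sr$ with $0<m<r$ is \emph{not} a multiple of $r$, so this is not a sequence of $r$-rim-hook moves. The argument that actually works goes via the abacus: since $j-i\equiv m \pmod r$, one has $j-(m+sr)\equiv i$ and $i+(m+sr)\equiv j \pmod r$, so the two beads exchange runners and the bead count on each runner — hence the $r$-core — is preserved; for the $u_{j-sr}\wedge u_{i+sr}$ terms each bead stays on its own runner. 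Your dominance claim also needs to be made precise in beta-number language, since the shifted pair may require further straightening (and even a sign-vanishing, if indices collide) before it becomes a normally ordered wedge.

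For (3), your leading-term computation correctly produces coefficient $1$ on $u_I$, but you should say explicitly that it is the strict-dominance part of (2) which forecloses any other path contributing to $u_I$; the two parts are not independent. For (4), what you have is a plan, not a proof, and you say so. The involution $u_i\mapsto u_{1-i}$ together with an order-reversal does point toward a transpose-implementing duality, but defining it on semi-infinite wedges and verifying that it intertwines both the straightening constants under $(i,j)\mapsto(1-j,1-i)$ and the prefactor $q^{\alpha_{r,k}}$ is precisely the content that would need to be supplied; none of it is carried out here.
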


\begin{example}
If $r = 3$ then
$$
\ol{|(4,3,1) \rangle} = |(4,3,1) \rangle + (q - q^{-1}) |(3,3,1,1) \rangle + (-1 + q^{-2}) |(2,2,2,2) \rangle + (q^2 - 1) |(2,1,1,1,1,1,1) \rangle.
$$
\end{example}

Leclerc and Thibon showed: 

\begin{thm}[\cite{LT}, Theorem 4.1]
There exist \textit{canonical basis} $\{ \mc{G}^+(\lambda) \}$ and $\{ \mc{G}^-(\lambda) \}$ characterized by 
\begin{enumerate}
\item $\ol{\mc{G}^+(\lambda)} = \mc{G}^+(\lambda)$, $\ol{\mc{G}^-(\lambda)} = \mc{G}^-(\lambda)$.
\item $\mathcal{G}^+(\lambda) \equiv | \lambda \rangle \ \mod \ q \Z[q]$ and $\mathcal{G}^-(\lambda) \equiv | \lambda \rangle \ \mod \ q^{-1} \Z[q^{-1}]$.
\end{enumerate}
\end{thm}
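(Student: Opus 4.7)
The plan is to apply Lusztig's standard canonical-basis lemma, using as input the triangularity of the bar involution established in the preceding theorem: explicitly,
$$\overline{|\mu\rangle} = |\mu\rangle + \sum_{\lambda \lhd \mu} a_{\lambda,\mu}(q)\, |\lambda\rangle,$$
with $a_{\lambda,\mu}(q) \in \Z[q,q^{-1}]$ and the sum finite (restricted to $\lambda$ of the same $r$-core as $\mu$). Fix an $r$-core $\kappa$ and an integer $n$: the bar involution preserves the finite-rank $\Z[q,q^{-1}]$-span $\mc{F}_{n,\kappa}$ of the relevant $|\lambda\rangle$'s, and the construction of $\mc{G}^{\pm}(\mu)$ proceeds by induction on dominance within $\mc{F}_{n,\kappa}$. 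For a minimal $\mu$ the sum is empty, so $\mc{G}^{\pm}(\mu) := |\mu\rangle$ trivially works.

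For the inductive step, assume $\mc{G}^+(\lambda)$ has been built for every $\lambda \lhd \mu$. The inductive upper-unitriangularity of $\{\mc{G}^+(\lambda)\}$ in the basis $\{|\nu\rangle\}$ lets us re-expand the displayed equation uniquely as
$$\overline{|\mu\rangle} = |\mu\rangle + \sum_{\lambda \lhd \mu} b_{\lambda,\mu}(q)\, \mc{G}^+(\lambda), \qquad b_{\lambda,\mu}(q) \in \Z[q,q^{-1}].$$
Applying the bar a second time and using $\overline{\mc{G}^+(\lambda)} = \mc{G}^+(\lambda)$ together with linear independence yields the antisymmetry $b_{\lambda,\mu}(q^{-1}) = -b_{\lambda,\mu}(q)$. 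Seeking $\mc{G}^+(\mu) = |\mu\rangle + \sum_{\lambda \lhd \mu} c_{\lambda,\mu}(q)\, \mc{G}^+(\lambda)$ with $c_{\lambda,\mu}(q) \in q\Z[q]$, bar-invariance reduces to the Cauchy-type system
$$c_{\lambda,\mu}(q) - c_{\lambda,\mu}(q^{-1}) = b_{\lambda,\mu}(q),$$
whose unique solution in $q\Z[q]$ (granted the antisymmetry of $b_{\lambda,\mu}$) is the truncation of $b_{\lambda,\mu}(q)$ to its strictly positive-degree terms. The construction of $\mc{G}^-(\mu)$ is entirely parallel, running the same induction with $q^{-1}\Z[q^{-1}]$ in place of $q\Z[q]$ and taking the strictly negative-degree truncation of $b_{\lambda,\mu}$.

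Uniqueness across the full characterization is a direct consequence: given two solutions $v,v'$, their difference $w$ is bar-invariant and lies in $\bigoplus_{\lambda \lhd \mu} q\Z[q]\,|\lambda\rangle$, and expanding inductively in $\{\mc{G}^+(\lambda)\}$ shows each coefficient lies in $q\Z[q] \cap q^{-1}\Z[q^{-1}] = 0$. The principal obstacle in the plan is obtaining the antisymmetry $b_{\lambda,\mu}(q^{-1}) = -b_{\lambda,\mu}(q)$ cleanly: the identity $\overline{(-)}^2 = \mathrm{id}$ expanded in the $\{|\lambda\rangle\}$-basis produces a quadratic constraint on the $a_{\lambda,\mu}$ that is awkward to exploit directly. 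The trick that makes the induction go through is to always re-express $\overline{|\mu\rangle}$ in the already-constructed bar-invariant basis $\{\mc{G}^+(\lambda)\}_{\lambda \lhd \mu}$: in that basis the involution equation linearizes, the antisymmetry is free, and the Cauchy splitting is a one-line exercise.
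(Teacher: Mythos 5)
Your proof sketch is correct, and it implements exactly the standard Kazhdan--Lusztig/Lusztig-type argument that the paper invokes implicitly: the paper itself does not reproduce a proof but cites \cite{LT} and remarks that the existence of the canonical basis follows from Lusztig's general result \cite[\S 7.10]{CanonicalBasis} for highest weight modules equipped with a compatible bar involution. Your argument unfolds what that citation hides: once one has the finite unitriangularity $\overline{|\mu\rangle} = |\mu\rangle + \sum_{\lambda \lhd \mu} a_{\lambda,\mu}(q)|\lambda\rangle$ with $a_{\lambda,\mu} \in \Z[q,q^{-1}]$ supported on the block (same size, same $r$-core), the inductive re-expansion in the already-built $\{\mc{G}^+(\lambda)\}_{\lambda \lhd \mu}$, the resulting antisymmetry of the $b_{\lambda,\mu}$, and the truncation solving the Cauchy-type equation $c(q) - c(q^{-1}) = b(q)$ in $q\Z[q]$ are exactly how one proves Lusztig's lemma. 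Your observation that the quadratic constraint on the $a$'s coming from $\overline{(\,\cdot\,)}^2 = \mathrm{id}$ is unwieldy, and that the trick is to linearize by working in the bar-fixed basis built so far, is a good articulation of why the argument is organized this way. The uniqueness step is also handled correctly (re-expansion of a bar-invariant element lying in $\bigoplus q\Z[q]|\lambda\rangle$ forces coefficients in $q\Z[q] \cap q^{-1}\Z[q^{-1}] = 0$). In short: the paper outsources this to Lusztig, you wrote it out; the approaches coincide.
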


It seems that the above result has nothing to do with the fact that $\mc{F}_q$ is a $\mathcal{U}_{q}(\widehat{\mf{sl}}_r)$-module. All that is required is that there is some involution defined on the space. However, for an arbitrary involution there is no reason to expect a canonical basis to exist (and, indeed, one can check with small examples that it does not). Of course, the involution used by Leclerc and Thibon is not arbitrary. There is a natural involution on the algebra $\mathcal{U}_{q}(\widehat{\mf{gl}}_r)$. Since $\mc{F}_q$ is irreducible as a $\mathcal{U}_{q}(\widehat{\mf{gl}}_r)$-module, there is a unique involution on $\mc{F}_q$ such that $\ol{F| \emptyset \rangle} = \ol{ F} | \emptyset \rangle$ for all $F \in \mathcal{U}_{q}(\widehat{\mf{gl}}_r)$. It is this involution that Leclerc and Thibon use, though as we have seen they are able to give an explicit definition of this involution. Then, it follows from a general result by Lusztig, \cite[\S 7.10]{CanonicalBasis}, that an irreducible, highest weight module equipped with this involution admits a canonical basis. Set
$$
\mathcal{G}^+(\mu) = \sum_{\lambda} d_{\lambda,\mu}(q) |\lambda \rangle, \qquad \mathcal{G}^-(\lambda) = \sum_\mu e_{\lambda,\mu}(q) | \mu \rangle.
$$
The polynomials $d_{\lambda,\mu}$ and $e_{\lambda,\mu}$ have the following properties: 
\begin{itemize}
\item They are non-zero only if $\lambda$ and $\mu$ have the same $r$-core, $d_{\lambda,\lambda}(q) = e_{\lambda,\lambda}(q) = 1$; 
\item $d_{\lambda,\mu}(q) = 0$ unless $\lambda \le \mu$, and $e_{\lambda,\mu}(q) = 0$ unless $\mu \le \lambda$. 
\end{itemize}

\subsection{GAP}

In order to calculate the polynomial $\mathcal{G}^+(\mu)$ we use the computer package GAP. The file\footnote{Available from \url{http://www.maths.gla.ac.uk/~gbellamy/MSRI.html}.} \verb+Canonical.gap+ contains the functions 
\begin{verbatim}
APolynomial(lambda,mu,r)
\end{verbatim}
and
\begin{verbatim}
DPolynomial(lambda,mu,r)
\end{verbatim}
which, when given a pair of partitions and an integer $r$, returns the polynomials $a_{\lambda,\mu}(q)$ and $d_{\lambda,\mu}(q)$ respectively. For example:

\begin{verbatim}
gap>Read("Canonical.gap");
gap>APolynomial([2,1,1],[3,1],2);
q^2-1
gap>DPolynomial([1,1,1,1,1],[5],2);
q^2
gap>
\end{verbatim}

\begin{exercise}\label{ex:r2}
For $r = 2$, describe $\mc{G}(\lambda)$ for all $\lambda \vdash 4$ and $\lambda \vdash 5$.
\end{exercise}

\begin{exercise}
(Harder) Write a program that calculates the polynomials $e_{\lambda,\mu}(q)$. 
\end{exercise} 

\subsection{} 

Then, assuming that $r > 1$, \cite[Theorem 11]{VaragnoloVasserotDecomposition} says that 
$$
[W_\lambda : L_\mu] = d_{\lambda',\mu'}(1), \qquad [L_\lambda : W_\mu] = e_{\lambda,\mu}(1).
$$

Combining the results of \cite{LT}, \cite{VaragnoloVasserotDecomposition} and \cite{RouquierQSchur}:

\begin{thm}[Leclerc-Thibon, Vasserot-Varagnolo, Rouquier]\label{thm:combineall}
We have  
\beq{eq:decompmatrix}
[\Delta(\lambda) : L(\mu)] = d_{\lambda',\mu'}(1), \quad \textrm{ and } \quad [L(\lambda) : \Delta(\mu)] = e_{\lambda,\mu}(1).
\eeq
\end{thm}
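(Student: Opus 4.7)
The plan is to chain together three independently-established results that have been developed through the lecture. First I would apply Rouquier's equivalence (Theorem \ref{thm:rouquierequiv}): because $\Psi : \mc{O} \iso \Lmod{\mathsf{S}_\nu(n)}$ is an equivalence of highest weight categories sending $\Delta(\lambda) \mapsto W_\lambda$ and $L(\lambda) \mapsto L_\lambda$, and composition multiplicities are preserved under any equivalence of abelian categories, one immediately deduces
$$[\Delta(\lambda) : L(\mu)] = [W_\lambda : L_\mu] \quad \text{and} \quad [L(\lambda) : \Delta(\mu)] = [L_\lambda : W_\mu].$$
This first step is essentially formal, once Rouquier's theorem is granted. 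Note that the equivalence requires $\mbf{c} \in \Q_{\ge 0}$, so $\nu = \exp(2\pi\sqrt{-1}\mbf{c})$ is a root of unity of some order $r$.

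Next, assuming $r > 1$, I would invoke the Vasserot-Varagnolo theorem stated just before the theorem to be proved:
$$[W_\lambda : L_\mu] = d_{\lambda',\mu'}(1), \qquad [L_\lambda : W_\mu] = e_{\lambda,\mu}(1).$$
Combined with the previous display, this yields the two formulas of (\ref{eq:decompmatrix}). The Leclerc-Thibon theorem enters as the structural input required to make sense of the right-hand sides: it guarantees the existence and uniqueness of the canonical bases $\{\mc{G}^\pm(\lambda)\}$ on $\mc{F}_q$ characterized by bar-invariance together with the standard congruence conditions, so that the transition polynomials $d_{\lambda,\mu}(q), e_{\lambda,\mu}(q)$ are well-defined elements of $\Z[q,q^{-1}]$ and the specialization at $q = 1$ produces well-defined non-negative integers.

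The hard part is of course invisible in this three-line chain of reductions; it is entirely absorbed into the two deep cited theorems. Vasserot-Varagnolo's identification rests on a geometric realization of the $\nu$-Schur algebra via equivariant K-theory of pairs of partial flag varieties (equivalently via affine Hecke algebras at roots of unity), together with a Kazhdan-Lusztig-type matching of standard and simple modules with the standard and canonical basis vectors of $\mc{F}_q$. Rouquier's equivalence, meanwhile, is obtained by showing that both $\mc{O}$ and $\Lmod{\mathsf{S}_\nu(n)}$ are highest weight covers of the module category of the cyclotomic Hecke algebra via the $\KZ$ functor developed in the next lecture, and then invoking a uniqueness theorem for such covers. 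In the survey spirit of these notes the sensible ``proof'' is to record precisely the above chain of reductions and refer the reader to \cite{RouquierQSchur}, \cite{VaragnoloVasserotDecomposition}, and \cite{LT} for the three deep ingredients.
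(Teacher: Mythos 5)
Your proposal matches the paper exactly: the theorem is stated as a corollary of chaining Rouquier's equivalence $\Psi : \mc{O} \iso \Lmod{\mathsf{S}_\nu(n)}$ (Theorem \ref{thm:rouquierequiv}) with the Vasserot-Varagnolo identity $[W_\lambda : L_\mu] = d_{\lambda',\mu'}(1)$, $[L_\lambda : W_\mu] = e_{\lambda,\mu}(1)$ quoted just above it, with Leclerc-Thibon supplying the existence of the canonical bases and hence the definition of the polynomials $d_{\lambda,\mu}$ and $e_{\lambda,\mu}$. Your additional commentary on what the three cited results each contribute is accurate and in the spirit of the notes.
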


\subsection{The character of $L(\lambda)$} After our grand tour of combinatorial representation theory, we return to our original problem of calculating the character of the simple $\H_{\mbf{c}}(\s_n)$-modules $L(\lambda)$. Theorem \ref{thm:combineall} gives us a way to do express this character in terms of the numbers $e_{\lambda,\mu}(1)$. For the symmetric group, the Euler element in $\H_{\mbf{c}}(\s_n)$ is\footnote{The Euler element defined here differs from the one in section \ref{sec:euler} by a constant.}  
$$
\eu = \frac{1}{2} \sum_{i = 1}^n x_i y_i + y_i x_i = \sum_{i = 1}^n x_i y_i + \frac{n}{2} - \frac{\mbf{c}}{2} \sum_{1 \le i \neq j \le n} s_{i,j}.
$$
Recall that $\eu$ acts on the space $1 \o \lambda \subset \Delta(\lambda)$ by a scalar, denoted $\mbf{c}_{\lambda}$. Associated to the partition $\lambda = (\lambda_1, \ds, \lambda_k)$ is the \textit{partition statistic}, which is defined to be
$$
n(\lambda) := \sum_{i = 1}^k (i - 1)\lambda_i.
$$ 

\begin{lem}\label{lem:kappacalculation}
For each $\lambda \vdash n$ and $\mbf{c} \in \C$,
\beq{eq:kappavalue}
\mbf{c}_{\lambda} = \frac{n}{2} + \mbf{c} ( n(\lambda) - n(\lambda')).
\eeq
\end{lem}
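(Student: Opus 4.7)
The plan is to evaluate $\eu$ directly on the subspace $1 \otimes \lambda \subset \Delta(\lambda)$. Since $\Delta(\lambda) = \C[\h] \otimes \lambda$ is induced from $\C[\h^*] \rtimes W$-action in which $\h$ acts by zero on $1 \otimes \lambda$, each operator $y_i$ annihilates $1 \otimes \lambda$. Hence the ``quantum'' part $\sum_{i=1}^n x_i y_i$ of the Euler element contributes nothing, and $\eu|_{1 \otimes \lambda}$ reduces to the central element
$$
\frac{n}{2} - \frac{\mbf{c}}{2} \sum_{1 \le i \neq j \le n} s_{i,j} \;\in\; Z(\C \s_n),
$$
acting on the irreducible $\s_n$-module $\lambda$ by a scalar.

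The main (but classical) task is thus to compute the scalar by which $T := \sum_{i \neq j} s_{i,j} = 2 \sum_{i<j} s_{i,j}$ acts on the irreducible $\s_n$-module labeled by $\lambda$. The cleanest route is via the Jucys--Murphy elements $L_k = \sum_{i<k} s_{i,k}$ in $\C \s_n$, for which one has $\sum_{k=1}^n L_k = \sum_{i<j} s_{i,j} = \tfrac{1}{2} T$. I would then invoke the standard fact that, in the Young seminormal basis of $V_\lambda$ indexed by standard tableaux of shape $\lambda$, the element $L_k$ acts on the basis vector $v_T$ by the content $j - i$ of the box of $T$ containing $k$. Summing over $k$, the operator $\sum_k L_k$ therefore acts as the scalar $\sum_{(i,j) \in Y(\lambda)} (j - i)$, which is independent of the chosen basis vector and confirms the centrality of $T$.

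Finally, I would rewrite this content sum in terms of the partition statistic $n(\lambda)$. Splitting $\sum_{(i,j) \in Y(\lambda)} (j - i) = \sum_{(i,j)}(j-1) - \sum_{(i,j)}(i-1)$, the second sum is $\sum_i (i-1)\lambda_i = n(\lambda)$ by definition, while the first equals $\sum_i \binom{\lambda_i}{2} = \sum_j (j-1)\lambda'_j = n(\lambda')$ by swapping the order of summation. Consequently $T$ acts on $\lambda$ by the scalar $2(n(\lambda') - n(\lambda))$, and therefore
$$
\mbf{c}_\lambda \;=\; \frac{n}{2} - \frac{\mbf{c}}{2} \cdot 2\bigl(n(\lambda') - n(\lambda)\bigr) \;=\; \frac{n}{2} + \mbf{c}\bigl(n(\lambda) - n(\lambda')\bigr),
$$
as required.

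The only nonroutine ingredient is the Jucys--Murphy eigenvalue statement; if one wishes to avoid citing it as a black box, the main obstacle is giving a self-contained derivation, which can be done by showing inductively on $k$ that the $L_k$ commute pairwise and act semisimply with spectrum given by contents. Alternatively, one can bypass Jucys--Murphy entirely by using the Frobenius character formula for the class of a transposition, $\chi^\lambda\!\bigl((1,2)\bigr)/\dim V_\lambda = \tfrac{2}{n(n-1)}(n(\lambda')-n(\lambda))$, and noting that the scalar by which a central conjugacy-class sum acts equals the size of the class times $\chi^\lambda/\dim V_\lambda$; this reduces the whole computation to a single character identity.
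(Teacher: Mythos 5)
Your proof is correct and follows essentially the same route as the paper's: both evaluate $\eu$ on $1 \otimes \lambda$, invoke the Jucys--Murphy content-eigenvalue theorem to compute the scalar by which $\sum_{i<j} s_{ij}$ acts on the irreducible $\s_n$-module $\lambda$, and then identify the total content sum with $n(\lambda') - n(\lambda)$. The Frobenius-character shortcut you mention at the end is a legitimate alternative, but it is not the route the paper takes.
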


\begin{proof}
The \textit{Jucys-Murphy} elements in $\C [ \s_n]$ are defined to be $\Theta_i = \sum_{j < i} s_{ij}$, for all $i = 2, \ds, n$ so that
$$
\eu = \sum_{i = 1}^n x_i y_i + \frac{n}{2} - \mbf{c} \sum_{i = 2}^n \Theta_i.
$$
Let $\sigma$ be a standard tableau of shape $\lambda$ and $v_\sigma$ the corresponding vector in $\chi_\lambda$. Then 
$$
\Theta_i \cdot v_\sigma = \mathrm{ct}_\sigma(i) v_\sigma,
$$
where $c_\sigma(i)$ is the column of $\lambda$ containing $i$, $r_\sigma(i)$ is the row of $\lambda$ containing $i$ and $\mathrm{ct}_\sigma(i) := c_{\sigma}(i) - r_\sigma(i)$ is the content of the node containing $i$. Note that $\mathrm{ct}_\sigma(1)= 0$ for all standard tableaux $\sigma$. Therefore 
$$
\eu \cdot v_\sigma = \left(  \frac{n}{2} - \mbf{c} \sum_{i = 2}^n \mathrm{ct}_\sigma(i) \right) v_\sigma,
$$
and hence
$$
\mbf{c}_{\lambda} = \frac{n}{2} - \mbf{c} \sum_{i = 2}^n \mathrm{ct}_\sigma(i) = \frac{n}{2} - \mbf{c} \sum_{i = 1}^n \mathrm{ct}_\sigma(i).
$$
Now $\sum_{i = 1}^n r_\sigma(i) = \sum_{j = 1}^{\ell(\lambda)} (j - 1) \lambda_j = n(\lambda)$ and similarly $\sum_{i = 1}^n c_\sigma(i) = n(\lambda')$. This implies equation (\ref{eq:kappavalue}).
\end{proof}

\begin{prop}
We have 
\begin{equation}\label{eq:char}
\ch(L(\lambda)) = \frac{1}{(1 - t)^n} \cdot \left( \sum_{\mu \le \lambda} e_{\lambda,\mu}(1) \dim (\mu) t^{\mbf{c}_{\mu}} \right).
\end{equation}
\end{prop}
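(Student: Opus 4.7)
The plan is to compute $\ch(L(\lambda))$ by expressing $[L(\lambda)]$ in the Grothendieck group $K_0(\mc{O})$ in terms of the standard classes $[\Delta(\mu)]$, then applying the known character formula for standards. Since standards form a $\Z$-basis of $K_0(\mc{O})$ by Corollary~\ref{ex:Grothbasis}, and since $\ch$ factors through the Grothendieck group (being additive on short exact sequences by Lemma~\ref{lem:directsum}, as each weight space dimension is additive), the identity
$$
\ch(L(\lambda)) = \sum_{\mu \in \Irr(\s_n)} [L(\lambda) : \Delta(\mu)] \cdot \ch(\Delta(\mu))
$$
holds in $\bigoplus_{a \in \C} t^a \Z[[t]]$.

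Next I would substitute the two known ingredients. First, the character of standards, computed just before Exercise~\ref{ex:charpoly}, gives $\ch(\Delta(\mu)) = \dim(\mu) \, t^{\mbf{c}_\mu}/(1-t)^n$. Second, the multiplicity $[L(\lambda) : \Delta(\mu)]$ in the Grothendieck group equals $e_{\lambda,\mu}(1)$ by Theorem~\ref{thm:combineall}. Putting these into the expression above immediately yields
$$
\ch(L(\lambda)) = \frac{1}{(1-t)^n} \sum_{\mu} e_{\lambda,\mu}(1) \dim(\mu) \, t^{\mbf{c}_\mu}.
$$
Finally, the sum can be restricted to $\mu \le \lambda$ using the vanishing property of the canonical basis polynomials recorded just after Theorem~2.18, namely that $e_{\lambda,\mu}(q) = 0$ unless $\mu \le \lambda$, giving exactly formula~(\ref{eq:char}).

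There is no substantial obstacle here; the proof is essentially a bookkeeping exercise that combines Theorem~\ref{thm:combineall} with the standard character formula, once one knows that $\ch$ is well-defined on $K_0(\mc{O})$. The only subtle point worth checking explicitly is that the sum $\sum_\mu [L(\lambda):\Delta(\mu)] \ch(\Delta(\mu))$ converges in $\bigoplus_{a} t^a \Z[[t]]$—but this is automatic because only finitely many $\mu \in \Irr(\s_n)$ contribute (the index set is finite, unlike the situation in Lie category $\mc{O}$), so no convergence issue arises at all. The poset restriction $\mu \le \lambda$ is a cosmetic improvement that makes the answer computable in practice from the GAP routines described earlier.
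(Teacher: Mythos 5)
Your proof is correct and follows the same approach as the paper: expand $[L(\lambda)]$ in terms of standards using Theorem~\ref{thm:combineall}, apply the character formula $\ch(\Delta(\mu)) = \dim(\mu)\,t^{\mbf{c}_\mu}/(1-t)^n$, and restrict to $\mu \le \lambda$ via the vanishing property of $e_{\lambda,\mu}$. The extra observations about additivity of $\ch$ through $K_0(\mc{O})$ and finiteness of $\Irr(\s_n)$ are correct but implicit in the paper's shorter write-up.
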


\begin{proof}
The corollary depends on two key facts about standard modules. Firstly, they form a $\Z$-basis of the Grothendieck ring $K_0(\mc{O})$ (exercise \ref{ex:Grothbasis}) and, secondly, it is easy to calculate the character of $\Delta(\lambda)$. Theorem \ref{thm:combineall} implies that we have 
$$
[L(\lambda)] = \sum_{\mu \le \lambda} e_{\lambda,\mu}(1) [\Delta(\mu)]
$$
in $K_0(\mc{O})$. Now the corollary follows from the fact that $\ch(\Delta(\mu)) = \frac{\dim (\mu) t^{\mbf{c}_{\mu}}}{(1 - t)^n}$.
\end{proof}

Notice that, though equation (\ref{eq:char}) looks very simple, it is extremely difficult to extract meaningful information from it. For instance, one cannot tell whether $L(\lambda)$ is finite dimensional by looking at this character formula. Also, note that the coefficients $e_{\lambda,\mu}(1)$ are often negative so the numerator is a polynomial with some negative coefficients. But expanding the fraction as a power-series around zero gives a series with \textit{only positive} integer coefficients - all negative coefficients magically disappear! 

\begin{example}
Let $n = 4$ and $\mbf{c} = \frac{5}{3}$. In this case, the numbers $e_{\lambda,\mu}(1)$ and $\mbf{c}_{\lambda}$ are: 
\begin{displaymath}
\begin{array}{c|ccccc}
\mu \backslash \lambda & (4) & (3,1) & (2,2) & (2,1,1) & (1,1,1,1)\\
\hline
{(4)} & 1 & 0 & 0 & 0 & 0 \\
{(3,1)} & 0 & 1 & 0 & 0 & 0 \\
{(2,2)} & -1 & 0 & 1 & 0 & 0 \\ 
{(2,1,1)} & 0 & 0 & 0 & 1 & 0 \\  
{(1,1,1,1)} & 1 & 0 & -1 & 0 & 1 \\  
\mbf{c}_{\lambda} & -8 & \frac{-4}{3} & 2 & \frac{16}{3} & 12 
\end{array}
\end{displaymath} 
If we define $\ch_{\lambda}(t) := (1 - t)^4 \cdot \ch(L(\lambda))$, then 
$$
\ch_{(4)}(t) = t^{-8} - 2t^{2} + t^{12}, \quad  \ch_{(3,1)}(t) = 3t^{-\frac{4}{3}}, 
$$
$$
\ch_{(2,2)}(t) = 2t^{2} - t^{12}, \quad \ch_{(2,1,1)}(t) = 3t^{\frac{16}{3}}, \quad \ch_{(1,1,1,1)}(t) = t^{12}.
$$
When $\mbf{c} = \frac{9}{4}$ we get:
\begin{displaymath}
\begin{array}{c|ccccc}
\mu \backslash \lambda & (4) & (3,1) & (2,2) & (2,1,1) & (1,1,1,1) \\
\hline
(4)          & 1  & 0  & 0  & 0 & 0 \\
{(3,1)}      & -1 & 1  & 0  & 0 & 0 \\
{(2,2)}      & 0  & 0  & 1  & 0 & 0 \\ 
{(2,1,1)}    & 1  & -1 & 0  & 1 & 0 \\  
{(1,1,1,1)}  & -1 & 1  & 0 & -1 & 1 \\  
\mbf{c}_{\lambda} & \frac{-23}{2} & \frac{-5}{2} & 2 & \frac{13}{2} & \frac{31}{2}    
\end{array}
\end{displaymath}
and
$$
\ch_{(4)}(t) = t^{-\frac{23}{2}} - 3t^{-\frac{5}{2}} + 3t^{\frac{13}{2}} - t^{\frac{31}{2}}, \quad \ch_{(3,1)}(t) = 3t^{-\frac{5}{2}} - 3t^{\frac{13}{2}} + t^{\frac{31}{2}}, 
$$
$$
\ch_{(2,2)}(t) = 2t^{2}, \quad \ch_{(2,1,1)}(t) = 3t^{\frac{13}{2}} - t^{\frac{31}{2}}, \quad \ch_{(1,1,1,1)}(t) = t^{\frac{31}{2}}.
$$
If $\s_4$ acts on $\C^4$ by permuting the basis elements $\epsilon_1, \ds, \epsilon_4$, then we let $\mathfrak{h} \subset \C^4$ be the three dimensional reflection representation, with basis $\{ \epsilon_1- \epsilon_2, \epsilon_2 - \epsilon_3, \epsilon_3 - \epsilon_4 \}$. One could also consider representations of $\H_{\mbf{c}}(\mathfrak{h},\s_4)$ instead of $\H_{\mbf{c}}(\C^4,\s_4)$. Since $\C^4 = \mathfrak{h} \oplus \C$ as a $\s_4$-module, the defining relations for $\H_{\mbf{c}}(\C^4,\s_4)$ make it clear that we have a decomposition $\H_{\mbf{c}}(\C^4,\s_4) = \H_{\mbf{c}}(\mathfrak{h},\s_4) \o \mc{D}(\C)$, which implies that $\Delta_{\C^4}(\lambda) = \Delta_{\mathfrak{h}}(\lambda) \o \C[x]$ for each $\lambda \vdash 4$. Then,
$$
\ch(L_{\C^4}(\lambda)) = \frac{1}{1 - t} \cdot \ch(L_{\mathfrak{h}}(\lambda)),
$$
and hence $\ch_{\lambda}(t) = (1-t)^3 \cdot \ch(L_{\mathfrak{h}}(\lambda))$. If we consider $\lambda = (4)$ with $\mbf{c} = \frac{9}{4}$ then notice that 
$$
\frac{t^{-\frac{23}{2}} - 3t^{-\frac{5}{2}} + 3t^{\frac{13}{2}} - t^{\frac{31}{2}}}{(1 - t)^3} = t^{-\frac{23}{2}} \left[ \frac{1 - 3t^{9} + 3t^{18} - t^{27}}{(1 - t)^3} \right] = t^{-\frac{23}{2}}(t^{24} + 3t^{23} + 6t^{22} + \cdots + 3t + 1).
$$
This implies that $L((4))$ is finite dimensional. Evaluating the above polynomial shows that it actually has dimension $729$.  
\end{example}

\begin{exercise}\label{ex:chars}
\begin{enumerate}
\item For $c = \frac{7}{2}$, compute the character of all the simple modules of category $\mc{O}$ for $\H_{\mbf{c}}(\s_5)$. 

\item For $c = \frac{11}{3}$, compute the character of all the simple modules of category $\mc{O}$ for $\H_{\mbf{c}}(\s_4)$. 

\item What are the blocks of $\mc{O}$ for $\s_5$ at $c = \frac{7}{2}$? at $c = \frac{103}{3}$ or at $c = \frac{29}{5}$? Hint: It is known that two partitions are in the same block of the $q$-Schur algebra if and only if they have the same $r$-core. 
\end{enumerate}
\end{exercise}

Since the action of $\eu$ on a module $M \in \mc{O}$ commutes with the action of $W$, the multiplicity space of a representation $\lambda \in \Irr (W)$ is a $\eu$-module. Therefore, one can refine the character $\ch$ to 
$$
\ch_W (M) = \sum_{\lambda} \ch(M(\lambda)) [\lambda],
$$
where $M = \bigoplus_{\lambda \in \Irr (W)} M(\lambda) \o \lambda$ as a $W$-module. Given $\lambda,\mu \in \Irr (W)$, we define the \textit{generalized fake polynomial} $f_{\lambda,\mu}(t)$ by 
$$
f_{\lambda,\mu} (t) = \sum_{i \in \Z} [\C[\h]^{co W}_i \o \lambda : \mu] t^i,
$$
a Laurent polynomial. 

\begin{exercise}\label{ex:s4Hard}
What is the $\s_4$-graded character of the simple modules, expressed in terms of generalized fake polynomials, in $\mc{O}$ when $c = \frac{5}{2}$?
\end{exercise}

\subsection{Yvonne's conjecture}\label{sec:Yvonne}

We end the lecture by explaining how to (conjecturally) generalize the picture for $\s_n$ to other complex reflection groups (this section is really for those who have a firm grasp of the theory of rational Cherednik algebras, and can be safely ignored if so desired). In many ways, the most interesting, and hence most intensely studied, class of rational Cherednik algebras are those associated to the complex reflection group  
$$
W = \s_n \wr \Z_l = \s_n \ltimes \Z_l^n,
$$
the wreath product of the symmetric group with the cyclic group of order $l$. Fix $\zeta$ a primitive $l$-th root of unity. The reflection representation for $\s_n \wr \Z_l$ is $\C^n$. If $y_1, \ds, y_n$ is the standard basis of $\C^n$ then 
$$
\Z_l^n = \{ g_i^j \ | \ 1 \le i \le n, \ 0 \le j \le l-1 \}
$$
and 
$$
g_i^j \cdot y_k = \left\{ \begin{array}{ll}
\zeta^j y_k & \textrm{if $k = i$}\\
y_k & \textrm{otherwise} 
\end{array} \right.
$$
The symmetric group acts as $\sigma \cdot y_i = y_{\sigma(i)}$. This implies that 
$$
\sigma \cdot g_i = g_{\sigma(i)} \cdot \sigma. 
$$
The conjugacy classes of reflections in $W$ are 
$$
R = \{ s_{i,j} g_i g_j^{-1}  \ | \ 1 \le i < j \le n \}, \quad S_i = \{ g_j^i \ | \ 1 \le j \le n \}, \ 1 \le i \le l-1.
$$
and we define $\mbf{c}$ by $\mbf{c}(s_{i,j}) = c$, $\mbf{c}(g_j^i) = c_i$ so that the rational Cherednik algebra is parameterized by $c,c_1, \ds, c_{l-1}$. To relate category $\mc{O}$ to a certain Fock space, we introduce new parameters $\mbf{h} = (h,h_0, \ds, h_{l-1})$, where 
$$
c = 2 h, \quad c_i = \sum_{j = 0}^{l-1} \zeta^{-ij} (h_j - h_{j+1}), \quad \forall \ 1 \le i \le l-1.
$$
Note that the above equations do not uniquely specify $h_0, \ds, h_{l-1}$; one can choose $h_0 + \cdots h_{l-1}$ freely. Finally, we define $\mbf{s} = (s_0, \ds, s_{l-1}) \in \Z^l$ and $e \in \N$ by $h = \frac{1}{e}$ and 
$$
h_j = \frac{s_j}{e} - \frac{j}{d}.
$$

The irreducible representations of $\s_n \wr \Z_l$ are labeled by \textit{$l$-multipartitions} of $n$, where $\boldsymbol{\lambda} = (\lambda^{(1)}, \ds, \lambda^{(l)})$ is an $l$-tuple of partitions $\lambda^{(i)}$ such that 
$$
\sum_{i = 1}^l |\lambda^{(i)}| = n.
$$
The set of all $l$-multipartitions is denoted $\mc{P}^l$ and the subset of all $l$-multipartitions of $n$ is denoted $\mc{P}^l_n$. Thus, the simple modules in category $\mc{O}$ are $L(\boldsymbol{\lambda})$, for $\boldsymbol{\lambda} \in \mc{P}^l_n$. It is actually quite easy to construct the simple $W$-modules $\boldsymbol{\lambda}$ once one has constructed all representations of the symmetric group.\\

Uglov define for each $l \ge 1$ and $\mbf{s} \in \Z^l$ a level $l$ Fock space $\mc{F}_q^l[\mbf{s}]$ with multi-charge $\mbf{s}$. This is again a representation of the quantum affine algebra $U_{q}(\widehat{\mf{sl}}_r)$, this time with $\Q(q)$-basis $| \boldsymbol{\lambda} \rangle$ given by $l$-multi-partitions. The action of the operators $F_i,E_i$ and $K_i$ have a similar combinatorial flavour as for $l = 1$. The space $\mc{F}_q^l[\mbf{s}]$ is also equipped with a $\Q$-linear involution. 

\begin{thm}\label{thm:Uglov}
There exists a unique $\Q(q)$-basis $\mc{G}(\boldsymbol{\lambda})$ of $\mc{F}_q^l[\mbf{s}]$ such that
\begin{enumerate}
\item $\overline{\mc{G}(\boldsymbol{\lambda})} = \mc{G}(\boldsymbol{\lambda})$.
\item $\mc{G}(\boldsymbol{\lambda}) - \boldsymbol{\lambda} \in \bigoplus_{\boldsymbol{\mu} \in \mc{P}^l_n} q \Q[q] | \boldsymbol{\mu} \rangle$ if $\boldsymbol{\lambda} \vdash n$. 
\end{enumerate}
\end{thm}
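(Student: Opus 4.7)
The strategy is to mimic the Leclerc--Thibon construction of the canonical basis on $\mc{F}_q$, using Uglov's semi-infinite $q$-wedge realization of $\mc{F}_q^l[\mbf{s}]$ to get explicit control over the bar involution, and then invoking Lusztig's lemma to extract the basis $\mc{G}(\boldsymbol{\lambda})$.

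First, I would realize $\mc{F}_q^l[\mbf{s}]$ inside a level-$l$ semi-infinite $q$-wedge space. In complete analogy with Exercise \ref{ex:part} and the formulas for swapping adjacent $u_i \wedge u_j$, one associates to each $l$-multipartition $\boldsymbol{\lambda}$ an $l$-coloured (or ``$r l$-periodic'') decreasing sequence of integers, which encodes $| \boldsymbol{\lambda} \rangle$ as a normally-ordered wedge; the straightening rules then depend on both $r$ and $l$ together with the multi-charge $\mbf{s}$. The bar involution on the whole wedge space is defined exactly as in the level one case: $\overline{f(q) u_I} := f(q^{-1}) \overline{u_I}$, where $\overline{u_I}$ is obtained by reversing the first $k$ factors up to a prescribed sign and power of $q$, and taking $k$ large enough that the answer stabilises. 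One then checks that this involution preserves $\mc{F}_q^l[\mbf{s}] \subset \mbox{(the wedge space)}$.

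Next, I would introduce Uglov's partial order $\lhd$ on $\mc{P}^l_n$ (a refinement of the coordinate-wise dominance ordering that is compatible with the multi-charge $\mbf{s}$) and prove unitriangularity of the bar involution with respect to it. That is, I would show
\[
\overline{| \boldsymbol{\lambda} \rangle} \;=\; | \boldsymbol{\lambda} \rangle \;+\; \sum_{\boldsymbol{\mu} \lhd \boldsymbol{\lambda}} a_{\boldsymbol{\lambda},\boldsymbol{\mu}}(q)\, | \boldsymbol{\mu} \rangle, \qquad a_{\boldsymbol{\lambda},\boldsymbol{\mu}}(q) \in \Z[q,q^{-1}].
\]
This is the main input, and is proved by unwinding the wedge straightening rules step by step: each application of the swap relation either fixes the leading wedge or replaces it by wedges corresponding to strictly $\lhd$-smaller multipartitions, with coefficients in $\Z[q,q^{-1}]$.

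Once unitriangularity is established, the existence and uniqueness of $\mc{G}(\boldsymbol{\lambda})$ follow from Lusztig's lemma (see \cite[\S 7.10]{CanonicalBasis}): inductively on $\lhd$, one can solve the triangular system of equations forcing both bar-invariance (1) and the congruence condition (2), because at each step the required corrections live in $q\Q[q]$ (or, after symmetrisation, in $q^{-1}\Q[q^{-1}]$). Uniqueness is then automatic from the triangularity: any two solutions differ by a bar-invariant element lying in $\bigoplus_{\boldsymbol{\mu}} q\Q[q] | \boldsymbol{\mu} \rangle$, and no non-zero such element can be bar-invariant.

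The main obstacle is the unitriangularity step. In the level one case this amounts to a clean combinatorial computation (encoded in the $\alpha_{r,k}(I)$ of the excerpt), but for general $(l,\mbf{s})$ the straightening of a level-$l$ $q$-wedge into Uglov's canonical form is considerably more delicate: one must verify that, after repeatedly applying the two-variable swap rule, the wedges that appear really are indexed by $\lhd$-smaller multipartitions and that the accumulated powers of $q$ and signs lie in $\Z[q,q^{-1}]$. The remaining steps (defining $\lhd$, applying Lusztig's lemma) are then formal.
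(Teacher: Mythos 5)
The paper does not actually prove this theorem: it states it and, in the ``Additional remark'' section, simply cites it as Theorem 2.5 of Uglov's paper \cite{Uglov}. Your sketch is a faithful summary of how that cited proof goes: realize $\mc{F}_q^l[\mbf{s}]$ inside a semi-infinite $q$-wedge space generalizing the level-one picture of Exercise \ref{ex:part}, define the bar involution on wedges, prove unitriangularity of the bar map with integral coefficients relative to Uglov's order, and then invoke Lusztig's triangularity lemma for existence, with uniqueness following from the observation that a bar-invariant vector whose coefficients all lie in $q\Q[q]$ must vanish (compare the $\lhd$-maximal coefficient of $v$ and $\bar v$). So your approach is correct and matches the source. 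The one thing worth flagging, which you already acknowledge, is that everything of substance is concentrated in the unitriangularity step: you would still need to write out the level-$l$ straightening rule (which now depends on $r$, $l$, and $\mbf{s}$, and mixes a $\mathcal{U}_q(\widehat{\mf{sl}}_r)$-direction with a $\mathcal{U}_{p}(\widehat{\mf{sl}}_l)$-direction for $p=-q^{-1}$), specify Uglov's partial order precisely, and check termination and integrality of the coefficients $a_{\boldsymbol{\lambda},\boldsymbol{\mu}}(q)$; as presented, these are asserted rather than verified, so the proposal is an accurate outline but not yet a complete proof.
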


Therefore, for $\boldsymbol{\lambda}, \boldsymbol{\mu} \in \mc{P}_n^l$, we define $d_{\boldsymbol{\lambda}, \boldsymbol{\mu}}(q)$ by 
$$
\mc{G}(\boldsymbol{\mu}) = \sum_{\boldsymbol{\lambda} \in \mc{P}^l_n} d_{\boldsymbol{\lambda}, \boldsymbol{\mu}}(q) | \boldsymbol{\lambda} \rangle.
$$
The following conjecture, originally due to Yvonne, but in the generality stated here is due to Rouquier, relates the multiplicities of simple modules inside standard modules for $\H_{\mbf{c}}(\s_n \wr \Z_l)$ to the polynomials $d_{\boldsymbol{\lambda}, \boldsymbol{\mu}}(q)$. 

\begin{conjecture}
For all $\boldsymbol{\lambda}, \boldsymbol{\mu} \in \mc{P}_n^l$ we have 
$$
[\Delta(\boldsymbol{\lambda}) : L(\boldsymbol{\mu})] = d_{\boldsymbol{\lambda}^{\dagger}, \boldsymbol{\mu}^{\dagger}}(1). 
$$
\end{conjecture}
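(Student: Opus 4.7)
The plan is to mimic, in as much generality as possible, the strategy that gave Theorem \ref{thm:combineall} for the symmetric group, where the three ingredients were (i) Rouquier's equivalence between category $\mc{O}$ and modules over the $\nu$-Schur algebra, (ii) Varagnolo-Vasserot's identification of decomposition numbers of the $\nu$-Schur algebra with specializations at $q=1$ of canonical basis coefficients in the level one Fock space, and (iii) Leclerc-Thibon's construction of the canonical basis. For $W = \s_n \wr \Z_l$, each of these needs a higher-level analogue. Note that (iii) is already available: Theorem \ref{thm:Uglov} gives us the canonical basis of the higher level Fock space $\mc{F}_q^l[\mbf{s}]$ and thus the polynomials $d_{\boldsymbol{\lambda},\boldsymbol{\mu}}(q)$. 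The task is therefore to produce the analogues of (i) and (ii), and to check that, after specialization to $q=1$, the combinatorics of the Uglov involution lines up with the duality $\boldsymbol{\lambda} \mapsto \boldsymbol{\lambda}^{\dagger}$ appearing in the conjecture.

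First, I would take the place of the $\nu$-Schur algebra to be a \emph{cyclotomic $q$-Schur algebra} $\mathsf{S}_{\nu,\mathbf{Q}}(n)$ of Dipper-James-Mathas, where the cyclotomic parameters $\mathbf{Q} = (Q_0, \ldots, Q_{l-1})$ and $\nu$ are read off from $\mbf{h}$ via $\nu = \exp(2 \pi \sqrt{-1} h)$ and $Q_j = \exp(2 \pi \sqrt{-1} h_j)$, as suggested by the parameterization in Section \ref{sec:Yvonne}. This is a quasi-hereditary cover of the cyclotomic Hecke algebra $\mc{H}_{\nu,\mathbf{Q}}(n)$ associated to $G(l,1,n) = \s_n \wr \Z_l$. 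The first substantial step is then to prove a Rouquier-style equivalence
\[
\Psi : \mc{O}(\H_{\mbf{c}}(\s_n \wr \Z_l)) \stackrel{\sim}{\longrightarrow} \Lmod{\mathsf{S}_{\nu,\mathbf{Q}}(n)}
\]
of highest weight covers, taking $\Delta(\boldsymbol{\lambda})$ to the Weyl module $W_{\boldsymbol{\lambda}}$ and $L(\boldsymbol{\lambda})$ to $L_{\boldsymbol{\lambda}}$. The route for this is Rouquier's uniqueness theorem for $1$-faithful covers: both sides are covers of $\mc{H}_{\nu,\mathbf{Q}}(n)$-mod via the $\KZ$ functor, they share the same standardly stratified structure and the same labelling of standard modules by $\mc{P}_n^l$, so it suffices to check $1$-faithfulness of the cyclotomic $q$-Schur cover (Dipper-James-Mathas, at appropriate parameters) and of the rational Cherednik category $\mc{O}$ cover (Ginzburg-Guay-Opdam-Rouquier). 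A careful dictionary between the Cherednik parameters $\mbf{c}$ and the cyclotomic parameters $(\nu,\mathbf{Q})$ is required, and we must respect the highest weight orderings under $\Psi$.

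Next, I would install the geometric step that upgrades the computation of $[W_{\boldsymbol{\lambda}} : L_{\boldsymbol{\mu}}]$ to a statement about Uglov's Fock space. For the level one case, Varagnolo-Vasserot realize the decomposition matrix via the equivariant $K$-theory of affine type $A$ quiver varieties. In the higher level setting, the natural replacement is the geometry of quiver varieties attached to the $\widehat{\mf{sl}}_r$-diagram with an $l$-tuple of framings, on which $\mc{U}_q(\widehat{\mf{sl}}_r)$ acts and realizes $\mc{F}_q^l[\mbf{s}]$. The desired identity is
\[
[W_{\boldsymbol{\lambda}} : L_{\boldsymbol{\mu}}] = d_{\boldsymbol{\lambda}^{\dagger}, \boldsymbol{\mu}^{\dagger}}(1),
\]
where the transpose $\boldsymbol{\lambda}^{\dagger}$ encodes the passage between the two natural normalisations of the Fock space (the level-rank swap implicit in Uglov's involution). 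One would establish this by comparing two geometrically defined bases on the equivariant $K$-theory of the relevant quiver variety with the standard and canonical basis on $\mc{F}_q^l[\mbf{s}]$, along the lines of the Vasserot--Schiffmann--Varagnolo programme, and then match it to the decomposition matrix of $\mathsf{S}_{\nu,\mathbf{Q}}(n)$ via cyclotomic $q$-Schur-Weyl duality. Combining this with the equivalence $\Psi$ of the previous step yields the conjectured formula.

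The hard part will be precisely this middle step, the cyclotomic Varagnolo-Vasserot theorem. In the $l=1$ setting it rests on the rich geometry of affine type $A$ and on the fact that $\mc{F}_q$ is irreducible as a $\mc{U}_q(\widehat{\mf{gl}}_r)$-module, which makes the Lusztig canonical basis appear naturally. For $l>1$ the module $\mc{F}_q^l[\mbf{s}]$ is only irreducible after extending to a larger algebra (Uglov's higher level analogue of $\mc{U}_q(\widehat{\mf{gl}}_r)$), the combinatorics of the bar involution depends delicately on the multi-charge $\mbf{s}$, and the quiver varieties involved are less well understood. I would expect the cleanest route is the one subsequently pursued by Rouquier, Shan, Varagnolo and Vasserot, via a categorification of $\mc{F}_q^l[\mbf{s}]$ on $\bigoplus_n \mc{O}(\H_{\mbf{c}}(\s_n \wr \Z_l))$ by Kac-Moody $2$-representation data (the $\mathfrak{a}_\infty$- or $\widehat{\mf{sl}}_r$-action realised by parabolic induction and restriction functors), and then a uniqueness result for such categorifications that forces the class $[L(\boldsymbol{\mu})]$ in the Grothendieck group to coincide with $\mc{G}(\boldsymbol{\mu}^{\dagger})|_{q=1}$. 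The matching of involutions, and of orderings, between the categorical and the combinatorial side is the delicate point on which everything hinges.
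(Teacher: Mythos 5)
The statement you have been asked to ``prove'' is presented in the paper as a \emph{conjecture}, not a theorem: it is Yvonne's conjecture in the refined form due to Rouquier, and the paper offers no proof of it, only the statement and references. There is therefore no internal argument to compare your attempt against. What you have written is not a proof but a research programme, and you should be clear-eyed that this is what it is.

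As a programme it is, however, well-aimed. Your three-step strategy --- (i) a higher-level Rouquier equivalence identifying category $\mc{O}$ for $\H_{\mbf{c}}(\s_n \wr \Z_l)$ with modules over a cyclotomic $q$-Schur algebra via uniqueness of $1$-faithful quasi-hereditary covers of the cyclotomic Hecke algebra, (ii) a cyclotomic analogue of the Varagnolo--Vasserot theorem identifying the decomposition numbers with canonical basis coefficients at $q=1$, and (iii) Uglov's construction of the canonical basis of $\mc{F}_q^l[\mbf{s}]$ --- is precisely the shape of the argument that Rouquier, Shan, Varagnolo and Vasserot later carried out to settle the conjecture, and your closing paragraph correctly anticipates that the route goes through a categorification of $\mc{F}_q^l[\mbf{s}]$ on $\bigoplus_n \mc{O}(\H_{\mbf{c}}(\s_n \wr \Z_l))$ via an $\widehat{\mf{sl}}_r$-action by induction/restriction and a uniqueness theorem for such categorifications. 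You also correctly flag the role of the transpose $\boldsymbol{\lambda} \mapsto \boldsymbol{\lambda}^{\dagger}$ as mediating between the normalisation of Uglov's Fock space and that of the Cherednik side.

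The genuine gap is that none of the three steps is actually carried out; each is a citation to work that (at the time of the paper) did not yet exist, or existed only in level one. Step (ii) in particular is not a routine generalisation: the level-one argument uses the irreducibility of $\mc{F}_q$ over $\mc{U}_q(\widehat{\mf{gl}}_r)$ and the geometry of affine type $A$ quiver varieties, and you note yourself that $\mc{F}_q^l[\mbf{s}]$ is not irreducible over $\mc{U}_q(\widehat{\mf{sl}}_r)$ and that the bar involution depends delicately on $\mbf{s}$. Step (i) also needs care: Rouquier's uniqueness theorem requires a faithfulness hypothesis that must be verified on both covers at the precise parameters dictated by the dictionary $\mbf{c} \leftrightarrow (\nu,\mathbf{Q})$, and the partial order coming from the $\mbf{c}$-function must be matched against the dominance order on $\mc{P}_n^l$ used on the Schur side. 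Finally, the identification of the categorically defined bar involution with Uglov's combinatorial one, and the resulting appearance of $\boldsymbol{\lambda}^{\dagger}$ rather than $\boldsymbol{\lambda}$, is asserted but not argued. These are the places where the real work lives, and as written your proposal points at them without filling them.
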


If $\boldsymbol{\lambda} = (\lambda^{(1)}, \ds, \lambda^{(m)})$ is an $m$-multi-partition then 
$$
\boldsymbol{\lambda}^{\dagger} = ((\lambda^{(m)})', \ds, (\lambda^{(1)})'),
$$
where $(\lambda^{(i)})'$ is the usual transpose of the partition $\lambda^{(i)}$.   

\subsection{Additional remark}

\begin{itemize}
\item Theorem \ref{thm:rouquierequiv} is given in \cite[Theorem 6.11]{RouquierQSchur}. Its proof is based on the uniqueness of quasi-hereditary covers of the Hecke algebra, as shown in \cite{RouquierQSchur}. 

\item Theorem \ref{thm:Uglov}, due to Uglov, is Theorem 2.5 of \cite{Uglov}. 

\item Yvonne's conjecture was originally stated in \cite[Conjecture 2.13]{Yvonne}. The refined version, given in section \ref{sec:Yvonne}, is due to Rouquier, \cite[Section 6.5]{RouquierQSchur}
\end{itemize}

\newpage

\section{The $\KZ$ functor}\label{sec:KZfunctor}

Beilinson and Bernstein, in proving their incredible localization theorem for semi-simple Lie algebras and hence confirming the Kazhdan-Luzstig conjecture, have shown that it is often very fruitfully to try and understand the representation theory of certain algebras by translating representation theoretic problems into topological problems via $\dd$-modules and the Riemann-Hilbert correspondence. This is the motivation behind the Knizhnik-Zamolodchikov ($\KZ$) functor. 

In the case of rational Cherednik algebras, the obvious way to try and relate them to $\dd$-module is to use the Dunkl embedding. Recall from the first lecture that this is the embedding $\H_{\mbf{c}}(W) \hookrightarrow \dd(\h_{\reg}) \rtimes W$, which becomes an isomorphism 
$$
\H_{\mbf{c}}(W)[\delta^{-1}] \stackrel{\sim}{\longrightarrow} \dd(\h_{\reg}) \rtimes W
$$ 
after localization. Given a module $M$ in category $\mc{O}$, it's localization $M[\delta^{-1}]$ becomes a $\dd(\h_{\reg}) \rtimes W$-module. We can understand $\dd(\h_{\reg}) \rtimes W$-modules as $\dd$-modules on $\h_{\reg}$ that are $W$-equivariant. Since $W$ acts freely on $\h_{\reg}$, this is the same as considering a $\dd$-module on $\h_{\reg} / W$. Finally, using the powerful Riemann-Hilbert correspondence we can thing of such a $\dd$-module as a local system on $\h_{\reg} / W$. That is, it become a representation of the fundamental group $\pi_1(\h_{\reg} / W)$ of $\h_{\reg} / W$. This is a purely topological object. The process of passing from a module in category $\mc{O}$ to a representation of $\pi_1(\h_{\reg} / W)$ is exactly what the $\KZ$-functor does. The goal of this lecture is to try and describe in some geometric way the image of category $\mc{O}$ under this procedure. Remarkably, what one gets in the end is a functor 
$$
\KZ : \mc{O} \longrightarrow \Lmod{\mc{H}_{\mbf{q}}(W)},
$$
from category $\mc{O}$ to the category of finitely generated modules over the \textit{Hecke algebra} $\mc{H}_{\mbf{q}}(W)$ associated to $W$.  

In the case of semi-simple Lie algebras, Beilinson and Bernstein localization result gave us a proof of the Kazhdan-Lusztig conjecture. What does the $\KZ$-functor give us for rational Cherednik algebras? It allows us to play off the representation theory of the rational Cherednik algebra against the representation theory of the Hecke algebra. Via the $\KZ$-functor, the Hecke algebra becomes crucial in proving deep results about the existence of finite-dimensional representations of $\H_{\mbf{c}}(W)$, \cite{Finitedimreps} and \cite{DiagonalInvariants} (which in turn confirmed a conjecture of Haiman's in algebraic combinatorics), Rouquier's equivalence Theorem \ref{thm:rouquierequiv}, and on the properties of restriction and induction functors for rational Cherednik algebras \cite{Shan}. Conversely, the rational Cherednik algebra has been used to prove highly non-trivial results about Hecke algebras e.g. section 6 of \cite{GGOR} and the article \cite{GordonGriffChlo}.  

\subsection{Integrable connections}

A $\dd(\h_{\reg})$-module which is finitely generated as a $\C[\h_{\reg}]$-module is called an \textit{integrable connection}. It is a classical result that every integrable connection is actually free as a $\C[\h_{\reg}]$-module, see \cite[Theorem 1.4.10]{HTT}. Therefore, if $N$ is an integrable connection, 
$$
N \simeq \bigoplus_{i = 1}^k \C[\h_{\reg}] u_i
$$
for some $u_i \in N$. If we fix coordinates $x_1, \ds, x_n$ such that $\C[\h] = \C[x_1, \ds, x_n]$ and let $\pa_i = \frac{\pa}{\pa x_i}$, then the action of $\dd(\h_{\reg})$ on $N$ is then completely encoded in the equations
\begin{equation}\label{eq:connection}
\pa_l \cdot u_i = \sum_{j = 1}^k f_{l,i}^j u_j,
\end{equation}
for some polynomials\footnote{The condition $[\pa_l,\pa_m] = 0$ needs to be satisfied, which implies that one cannot choose arbitrary polynomials $f_{i,i}^j$.} $f_{l,i}^k \in \C[\h_{\reg}]$. The integer $k$ is called the \textit{rank} of $N$. 

A natural approach to studying integrable connections is to look at their space of solutions. Since very few differential equations have polynomial solutions, this approach only makes sense in the analytic topology. So we'll write $\h_{\reg}^{an}$ for the same space, but now equipped with the analytic topology and $\C[\h^{an}_{\reg}]$  denotes the ring of \textit{holomorphic} functions on $\h_{\reg}^{an}$. Since $\dd(\h_{\reg}^{an}) = \C[\h^{an}_{\reg}] \o_{\C[\h_{\reg}]} \dd(\h_{\reg})$, we have a natural functor 
$$
\Lmod{\dd(\h_{\reg}) \rtimes W} \rightarrow \Lmod{\dd(\h_{\reg}^{an}) \rtimes W}, \quad M \mapsto M^{an} := \C[\h^{an}_{\reg}] \o_{\C[\h_{\reg}]} M.
$$
Since $\C[\h^{an}_{\reg}]$ is faithfully flat, this is exact and conservative i.e. $M^{an} = 0$ implies that $M = 0$. On any simply connected open subset $U$ of $\h_{\reg}^{an}$, the vector space 
$$
\Hom_{\dd(\h_{\reg}^{an})} (N^{an}, \C[U])
$$
is $k$-dimensional because it is the space of solutions of a $k \times k$ matrix of first order linear differential equations. These spaces of local solutions glue together to give a rank $k$ \textit{local system} $\Sol(N)$ on $\h_{\reg}$ i.e. a locally constant sheaf of $\C$-vector spaces such that each fiber has dimension $k$, see figure \ref{fig:bluecircles} for an illustration. The reader should have in mind the idea of analytic continuation of a locally defined holomorphic to a multivalued function from complex analysis, of which the notion of local system is a generalization.

\begin{exercise}
Let $\h_{\reg}= \Cs$. For each $\alpha \in \C$, write $M_{\alpha}$ for the $\dd(\Cs)$-module $\dd(\Cs) / \dd(\Cs) (x \pa - \alpha)$. Show that $M_{\alpha} \simeq M_{\alpha + 1}$ for all $\alpha$. Describe a multivalued holomorphic function which is a section of the one-dimensional local system $\Sol(M_{\alpha})$ (notice that in general, the function is genuinely multivalued and hence not well-defined on the whole of $\Cs$. This corresponds to the fact that the local system $\Sol(M_{\alpha})$ has \textit{no} global sections in general). Finally, the local system $\Sol(M_{\alpha})$ defines a one-dimensional representation of the fundamental group $\pi_1(\Cs) = \Z$. What is this representation?
\end{exercise}  

\subsection{Regular singularties} Assume now that $\dim \h = 1$, so that $\h = \C$, $\h_{\reg}= \Cs$  and $\C[\h_{\reg}] = \C[x,x^{-1}]$. Then, we say that $N$ is a \textit{regular connection} (or has \textit{regular singularties}) if, with respect to some $\C[\h_{\reg}^{an}]$-basis of $N^{an}$, equation (\ref{eq:connection}) becomes 
\begin{equation}\label{eq:connection2}
\pa \cdot u_i = \sum_{j = 1}^k \frac{a_{i,j}}{x} u_j, \quad a_{i,j} \in \C.
\end{equation}
When $\dim \h > 1$, we say that $N$ has regular singularities if the restriction $N |_C$ of $N$ to any smooth curve $C \subset \h_{\reg}$ has, after a suitable change of basis, the form (\ref{eq:connection2}). The category of integrable connections with regular singularities on $\h_{\reg}$ is denoted $\mathrm{Conn}^{\reg}(\h_{\reg})$. Similarly, let's write $\Loc (\h_{\reg})$ for the category of finite-dimensional local systems on $\h_{\reg}^{an}$. There are two natural functors $\mathrm{Conn}^{\reg}(\h_{\reg})$ to local systems on $\h_{\reg}^{an}$. Firstly, there is a \textit{solutions functor}
$$
N \mapsto \Sol(N) := \mc{H}om_{\dd_{\h_{\reg}^{an}}}(N^{an},\mc{O}_{\h_{\reg}}^{an}), 
$$
and secondly the \textit{deRham functor}
$$
N \mapsto \mathsf{DR}(N) := \mc{H}om_{\dd_{\h_{\reg}^{an}}}(\mc{O}_{\h_{\reg}}^{an},N^{an}).
$$
There are natural duality (i.e. they are \textit{contravariant} equivalences whose square is the identity) functors $\mathbb{D} : \mathrm{Conn}^{\reg}(\h_{\reg}) \stackrel{\sim}{\longrightarrow} \mathrm{Conn}^{\reg}(\h_{\reg})$ and $\mathbb{D}' : \Loc (\h_{\reg}) \stackrel{\sim}{\longrightarrow} \Loc (\h_{\reg})$, on integrable connection and local systems respectively. These duality functors intertwine the solutions and de-Rham functors:
\begin{equation}\label{eq:dual}
\mathbb{D}' \circ \mathsf{DR} = \mathsf{DR} \circ \mathbb{D}= \Sol, \quad \textrm{and hence} \quad \mathbb{D}' \circ \Sol = \Sol \circ \mathbb{D} = \mathsf{DR};
\end{equation}
 see \cite[Chapter 7]{HTT} for details. Sticking with conventions, we'll work with the de-Rham functor. Notics that there is a natural identification 
$$
\mathsf{DR}(N) = \{ n \in N^{an} \ | \ \pa_i \cdot n = 0 \ \forall \ i \} =: N^{\nabla},
$$
where $N^{\nabla}$ is usually referred to as the subsheaf of \textit{horizontal sections} of $N^{an}$. The fundemental group $\pi_1(\h_{\reg})$ of $\h_{\reg}$ is the group of loops in $\h_{\reg}$ from a fixed point $x_0$, up to homotopy. Following a local system $\mathsf{L}$ long a given loop $\gamma$ based at $x_0$ defines an invertible endomorphism $\gamma_* : \mathsf{L}_{x_0} \rightarrow \mathsf{L}_{x_0}$ of the stalk of $\mathsf{L}$ at $x_0$. The map $\gamma_*$ only depends on the homotopy class of $\gamma$ i.e. only on the object defined by $\gamma$ in $\pi_1(\h_{\reg})$. In this way, the stalk $\mathsf{L}_{x_0}$ becomes a representation of $\pi_1(\h_{\reg})$. The following is a well-know result  in algebraic topology: 

\begin{prop}\label{prop:fundementalequiv}
Assume that $\h_{\reg}$ is connected. The functor $\Loc(\h_{\reg}) \rightarrow \Lmod{\pi_1(\h_{\reg})}$, $\mathsf{L} \mapsto \mathsf{L}_{x_0}$ is an equivalence. 
\end{prop}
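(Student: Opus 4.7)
The plan is to construct an explicit quasi-inverse sending a representation $\rho\colon \pi_1(\h_{\reg},x_0) \to GL(V)$ to an associated local system. First I would make the monodromy action precise. Given a path $\gamma\colon [0,1] \to \h_{\reg}$ and a local system $\mathsf{L}$, cover the image of $\gamma$ by finitely many opens $U_1,\ldots,U_k$ on which $\mathsf{L}$ is constant (possible by the Lebesgue number lemma applied to a trivializing cover); the successive identifications through the $U_i$ assemble into a canonical isomorphism $\gamma_*\colon \mathsf{L}_{\gamma(0)} \stackrel{\sim}{\to} \mathsf{L}_{\gamma(1)}$. A subdivision argument on $[0,1]^2$ adapted to a homotopy between two paths shows that $\gamma_*$ depends only on the homotopy class of $\gamma$ relative to its endpoints, and that $(\gamma_2\gamma_1)_*=(\gamma_2)_*(\gamma_1)_*$ whenever the composition is defined. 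Specializing to loops at $x_0$ yields the $\pi_1(\h_{\reg},x_0)$-action on $\mathsf{L}_{x_0}$; naturality in $\mathsf{L}$ is immediate.

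For the quasi-inverse, $\h_{\reg}$ is an open subset of a finite-dimensional complex vector space, hence locally path-connected and semi-locally simply connected, so it admits a universal cover $p\colon \widetilde{\h_{\reg}} \to \h_{\reg}$ on which $\pi_1(\h_{\reg},x_0)$ acts freely by deck transformations. Given a representation $\rho$ on $V$, form the associated bundle $E_\rho := \widetilde{\h_{\reg}} \times_{\pi_1(\h_{\reg},x_0)} V$, where $\gamma\cdot(\tilde{x},v) = (\gamma\cdot\tilde{x},\rho(\gamma)v)$; this is a covering of $\h_{\reg}$ with fibre $V$. Let $\mathsf{L}_\rho$ be its sheaf of continuous sections: locally on evenly-covered opens in $\h_{\reg}$ it restricts to the constant sheaf $\underline{V}$, so it is a rank $\dim V$ local system, and unwinding the definitions identifies $(\mathsf{L}_\rho)_{x_0}$ with $V$ as a $\pi_1(\h_{\reg},x_0)$-module.

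To finish, the two functors must be shown to be mutually quasi-inverse. One direction is the stalk computation just made. For the other, given $\mathsf{L}$ with monodromy representation $\rho_\mathsf{L}$ on $\mathsf{L}_{x_0}$, the pullback $p^*\mathsf{L}$ is a local system on the simply connected space $\widetilde{\h_{\reg}}$ and is therefore constant with value $\mathsf{L}_{x_0}$. The canonical $\pi_1$-equivariant identification $p^*\mathsf{L} \simeq p^*\mathsf{L}_{\rho_\mathsf{L}}$ then descends to an isomorphism $\mathsf{L}_{\rho_\mathsf{L}} \stackrel{\sim}{\to} \mathsf{L}$ on $\h_{\reg}$ by étale descent along the Galois cover $p$. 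The main technical obstacle is the bookkeeping required for parallel transport — checking independence of the chain of opens, homotopy invariance, and compatibility with the associated-bundle construction — but once the path-lifting lemma and the universal cover formalism are in hand each of these verifications is essentially formal, and the equivalence follows.
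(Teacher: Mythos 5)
Your argument is correct and is the standard universal-cover proof of the monodromy equivalence. The paper does not actually prove this proposition: it defers to Deligne, I, Corollaire~1.4 of \cite{DelLMS}, so there is no in-paper argument to compare against, but the proof you give --- parallel transport via chains of trivializing opens, homotopy invariance by subdivision of the square, the associated covering $\widetilde{\h_{\reg}} \times_{\pi_1(\h_{\reg},x_0)} V$ as quasi-inverse, and descent of the $\pi_1$-equivariant isomorphism $p^*\mathsf{L} \simeq p^*\mathsf{L}_{\rho_\mathsf{L}}$ along the Galois cover $p$ --- is exactly the one Deligne (or any algebraic topology text) gives. Two small points of hygiene, neither of which affects correctness: the fibre $V$ of the associated bundle must be given the discrete topology so that $E_\rho \to \h_{\reg}$ is genuinely a covering and its sheaf of continuous sections is a locally constant sheaf of $\C$-vector spaces of the right rank, and the phrase ``\'etale descent'' is heavier machinery than you need --- what you are using is simply that a $\pi_1$-equivariant isomorphism of constant sheaves on the universal cover descends to the base, which is elementary once the covering formalism is in place.
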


See I, Corollaire 1.4 of \cite{DelLMS} for a proof of this equivalence. 

\begin{figure}
\begin{tikzpicture}
\draw [->] (-3,-2) -- (-3,2);
\draw [->] (-5,0) -- (-1,0);
\draw [<-,thick,gray] (-2,0) to [out=-90,in=0] (-3,-1) to [out=180,in=-90] (-4,0) to [out=90,in=180] (-3,1) to [out=0,in=90] (-2,0);
\draw [fill=white] (-3,0) circle [radius=0.1]; 
\node at (-4,1) {$T$};

\draw [->] (3,-2) -- (3,2);
\draw [->] (1,0) -- (5,0);
\draw[fill,blue] (4,1) circle [radius=0.025];
\node [right] at (4,1) {$p_1$};
\draw[fill,blue] (2.6,-0.5) circle [radius=0.025];
\node [left] at (2.6,-0.5) {$p_2$};
\draw[fill,blue] (2,0.65) circle [radius=0.025];
\node [right] at (2,0.65) {$p_3$};
\draw[fill] (4.5,1.7) circle [radius=0.05];
\draw [<-,thick,gray,rounded corners] (4.5,1.7) .. controls (4,1.9) .. (3.3,1.6) .. controls (3.3,-0.5) .. (2.6,-2) .. controls (1.2,-0.5) .. (2.6,0.7) .. controls (3,-0.5) .. (2.6,-1) .. controls (2,-0.5) .. (2.6,0) ..  controls (2.9,0.7) .. (2.5,0.8) .. controls (2.3,0.9) .. (2,1) .. controls (1.4,0.7) .. (1.3,1) .. controls (1,0.3) .. (1.4,0) .. controls (3,0.3) .. (4.6,-0.2) .. controls (4.8,1) and (4.6,1.9) .. (4.5,1.7);
\node at (5,0.5) {$\gamma$};
\end{tikzpicture}
\caption{The generator $T$ of the braid group $\pi_1(\Cs)$, and non-trivial element $\gamma$ in $\pi_1(\C \backslash \{ p_1,p_2,p_2 \}, \bullet )$.}
\end{figure}

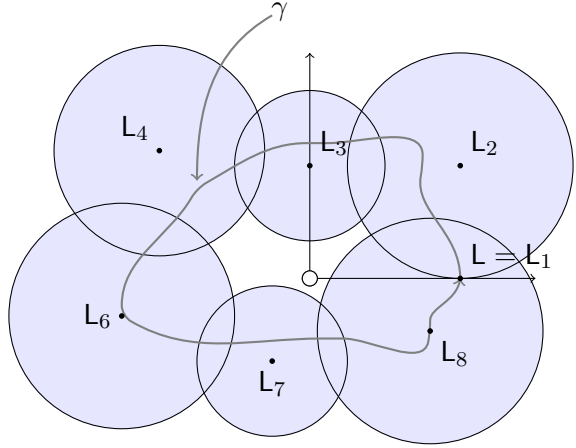
\begin{figure}\label{fig:bluecircles}
\begin{tikzpicture}
\draw [blue!10!white,fill=blue!10!white] (2,1.5) circle [radius=1.5]; 
\draw [blue!10!white,fill=blue!10!white] (0,1.5) circle [radius=1]; 
\draw [blue!10!white,fill=blue!10!white] (-2,1.7) circle [radius=1.4]; 
\draw [blue!10!white,fill=blue!10!white] (-2.5,-0.5) circle [radius=1.5]; 
\draw [blue!10!white,fill=blue!10!white] (-0.5,-1.1) circle [radius=1]; 
\draw [blue!10!white,fill=blue!10!white] (1.6,-0.7) circle [radius=1.5];

\draw [thin] (2,1.5) circle [radius=1.5]; 
\draw [thin] (0,1.5) circle [radius=1]; 
\draw [thin] (-2,1.7) circle [radius=1.4]; 
\draw [thin] (-2.5,-0.5) circle [radius=1.5]; 
\draw [thin] (-0.5,-1.1) circle [radius=1]; 
\draw [thin] (1.6,-0.7) circle [radius=1.5];
\draw [->] (0,0) -- (3,0);
\draw [->] (0,0) -- (0,3);

\draw[fill=white] (0,0) circle [radius=0.1];

\draw [->,thick,gray,rounded corners] (2,0) to [out=90,in=-100] (1.5,1.5) to [out=90,in=0] (0.1,1.8) to [out=180,in=30] (-1.5,1.2) to [out=-120,in=90] (-2.5,-0.5) to [out=-30,in=-180] (0.3,-0.8) to [out=0,in=-100] (1.6,-0.7) to [out=90,in=-100] (2,0);

\node [above right] at (2,0) {$\mathsf{L} = \mathsf{L}_1$};
\draw[fill] (2,0) circle [radius=0.03];
\node [above right] at (2,1.5) {$\mathsf{L}_2$};
\draw[fill] (2,1.5) circle [radius=0.03];
\node [above right] at (0,1.5) {$\mathsf{L}_3$};
\draw[fill] (0,1.5) circle [radius=0.03];
\node [above left] at (-2,1.7) {$\mathsf{L}_4$};
\draw[fill] (-2,1.7) circle [radius=0.03];
\node [left] at (-2.5,-0.5) {$\mathsf{L}_6$};
\draw[fill] (-2.5,-0.5) circle [radius=0.03];
\node [below] at (-0.5,-1.1) {$\mathsf{L}_7$};
\draw[fill] (-0.5,-1.1) circle [radius=0.03];
\node [below right] at (1.6,-0.7) {$\mathsf{L}_8$};
\draw[fill] (1.6,-0.7) circle [radius=0.03];

\draw[fill=white] (0,0) circle [radius=0.1];

\draw [thick,gray,->] (-0.5,3.5) to [out=-150,in=90] (-1.5,1.3);
\node at (-0.4,3.55) {$\gamma$};

\end{tikzpicture}
\caption{The locally constant sheaf (local system) $\mathsf{L}$ on $\Cs$. On each open set, the local system $\mathsf{L}_i$ is constant, and on overlaps we have $\mathsf{L}_i \simeq \mathsf{L}_{i+1}$. Composing these isomorphisms defines an automorphism $\mathsf{L} \stackrel{\sim}{\rightarrow} \mathsf{L}$, which is the action of the path $\gamma$ on the stalk of $\mathsf{L}$ at $1$.}
\end{figure}

\subsection{} Via the equivalence of Proposition \ref{prop:fundementalequiv}, we may (and will) think of $\mathsf{DR}$ and $\Sol$ as functors from $\mathrm{Conn}^{\reg}(\h_{\reg})$ to $\Lmod{\pi_1(\h_{\reg})}$. Deligne's version of the Riemann-Hilbert correspondence, \cite{DelLMS}, says that:

\begin{thm}
The de-Rham and solutions functors 
$$
\mathsf{DR}, \Sol : \mathrm{Conn}^{\reg}(\h_{\reg}) \rightarrow \Lmod{\pi_1(\h_{\reg})}
$$
are equivalences. 
\end{thm}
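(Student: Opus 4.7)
The plan is to construct a quasi-inverse explicitly. First, by the identities (\ref{eq:dual}), the functors $\mathsf{DR}$ and $\Sol$ are intertwined by the dualities $\mathbb{D}$ and $\mathbb{D}'$, so it suffices to prove that $\mathsf{DR}$ is an equivalence. Moreover, by Proposition \ref{prop:fundementalequiv}, the functor $\mathsf{L} \mapsto \mathsf{L}_{x_0}$ is already an equivalence between $\Loc(\h_{\reg})$ and $\Lmod{\pi_1(\h_{\reg})}$, so I will prove the refined statement that $\mathsf{DR} : \mathrm{Conn}^{\reg}(\h_{\reg}) \to \Loc(\h_{\reg})$ is an equivalence, where the target is interpreted as the category of local systems via $N \mapsto N^{\nabla}$.

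To construct the quasi-inverse $\Phi$, given a local system $\mathsf{L}$ on $\h_{\reg}^{an}$, I would form the analytic integrable connection $V^{an} := \mc{O}_{\h_{\reg}^{an}} \otimes_{\C} \mathsf{L}$, with connection $\nabla(f \otimes s) = df \otimes s$ for every local flat section $s$ of $\mathsf{L}$. By construction $(V^{an})^{\nabla} = \mathsf{L}$, so this gives a one-sided inverse at the analytic level. The genuine obstacle, and the heart of the proof, is to produce an \emph{algebraic} integrable connection with \emph{regular} singularities from $V^{an}$. The strategy is: choose a smooth compactification $\overline{X}$ of $\h_{\reg}$ such that $D = \overline{X} \setminus \h_{\reg}$ is a normal crossing divisor (available by Hironaka's resolution of singularities since $\h_{\reg}$ is the complement of the hyperplane arrangement $\{\delta = 0\}$), and apply Deligne's canonical extension to produce a locally free $\mc{O}_{\overline{X}^{an}}$-module $\overline{V}^{an}$ extending $V^{an}$ and equipped with a logarithmic connection whose residues along each component of $D$ have eigenvalues in a fixed fundamental domain for $\exp\colon \C \to \C^\times$ (e.g.\ with real part in $[0,1)$).

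Once $\overline{V}^{an}$ is constructed, GAGA (applied to the smooth projective, or at least proper, $\overline{X}$) identifies it with an algebraic locally free sheaf $\overline{V}$ on $\overline{X}$ together with a logarithmic algebraic connection; restricting to $\h_{\reg}$ yields an object $\Phi(\mathsf{L}) \in \mathrm{Conn}^{\reg}(\h_{\reg})$. The regularity along any smooth curve in $\h_{\reg}$ approaching $D$ follows from the logarithmic form of the connection near $D$. The identity $\mathsf{DR} \circ \Phi \simeq \id$ is essentially tautological from the construction, so essential surjectivity and faithfulness of $\mathsf{DR}$ are immediate.

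The remaining point is that $\Phi \circ \mathsf{DR} \simeq \id$, equivalently that $\mathsf{DR}$ is fully faithful. Given $N \in \mathrm{Conn}^{\reg}(\h_{\reg})$, there is a natural comparison map $\mc{O}_{\h_{\reg}^{an}} \otimes_{\C} N^{\nabla} \to N^{an}$; showing it is an isomorphism is local and reduces, by restricting to generic smooth curves through any point of $D$, to the one-variable case. The one-variable statement is the classical theorem that a regular singular ODE $\partial u = (A/x) u$ on a punctured disk has, after a suitable gauge transformation, a basis of solutions of the form $x^A$, which exhibits the solution sheaf as a full-rank local system; this is precisely where the regularity hypothesis is used. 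The hardest step in the whole argument is the construction of Deligne's canonical extension with the correct eigenvalue normalization, since it requires a careful local analysis of logarithmic connections and their gauge equivalence classes; once this is in hand, everything else assembles formally.
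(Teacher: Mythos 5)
The paper offers no proof of this theorem; it simply cites Deligne's monograph \cite{DelLMS} and, after the statement, includes an informal paragraph explaining why regularity is what singles out a canonical connection among all those with the same local system. So you are not being asked to match a proof in the text --- you have reconstructed, from scratch, a sketch of Deligne's original argument. Your overall strategy (canonical logarithmic extension across a normal-crossing compactification, GAGA on the proper $\overline{X}$, then restriction) is exactly right, and the reduction via $\mathbb{D}$, $\mathbb{D}'$ and Proposition \ref{prop:fundementalequiv} is a clean way to organize it.

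Two points in the last paragraph deserve tightening. First, from $\mathsf{DR}\circ\Phi\simeq\id$ alone you get essential surjectivity of $\mathsf{DR}$ (and fullness, and faithfulness of $\Phi$), but \emph{not} faithfulness of $\mathsf{DR}$; that only comes once you establish $\Phi\circ\mathsf{DR}\simeq\id$, so the phrase ``essential surjectivity and faithfulness \ldots are immediate'' overstates what the first step gives. Second, and more substantively, the comparison map $\mc{O}_{\h_{\reg}^{an}}\otimes_{\C}N^{\nabla}\to N^{an}$ being an isomorphism is a purely local analytic statement on $\h_{\reg}^{an}$ (Frobenius/Cauchy--Kowalevski for integrable analytic connections) and requires neither regularity nor any passage to curves approaching $D$; this is the \emph{easy} analytic Riemann--Hilbert correspondence. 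Where regularity genuinely enters full faithfulness is in promoting \emph{analytic} morphisms $N_1^{an}\to N_2^{an}$ (produced from maps of local systems) to \emph{algebraic} ones: since $\h_{\reg}$ is not proper, GAGA cannot be applied directly, so one extends both canonical logarithmic extensions across $D$, invokes GAGA on the proper $\overline{X}$, and then restricts. The restriction to generic curves through $D$ and the one-variable lemma about $\partial u=(A/x)u$ are indeed used, but their role is to verify the existence and uniqueness of the canonical extension with normalized residues (and hence the compatibility $\Phi\circ\mathsf{DR}\simeq\id$ on the nose), not to prove the local comparison on $\h_{\reg}^{an}$. Rewiring that paragraph along these lines would bring your sketch into precise alignment with Deligne's argument in \cite{DelLMS}.
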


Why is the notion of regular connection crucial in Deligne's version of the Riemann-Hilbert correspondence? A complete answer to this question is beyond the authors understanding. But one important point is that there are, in general, many non-isomorphic integrable connections that will give rise to the same local system. So what Deligne showed is that, for a given local system $L$, the notion of regular connection gives one a canonical representative in the set of all connections whose solutions are $L$. 

\subsection{Equivariance}   

Recall that the group $W$ acts freely on the open set $\h_{\reg}$. This implies that the quotient map $\pi : \h_{\reg} \rightarrow \h_{\reg} /W$ is a finite covering map. In particular, its differential $d_x \pi : T_x \h_{\reg} \rightarrow T_{\pi(x)} (\h_{\reg} / W)$ is an isomorphism for all $x \in \h_{\reg}$. 

\begin{prop}\label{prop:diffiso}
There is a natural isomorphism $\dd(\h_{\reg})^W \simeq \dd(\h_{\reg} / W)$.
\end{prop}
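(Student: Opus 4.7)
The plan is to build a natural restriction map $\rho \colon \dd(\h_{\reg})^W \to \dd(\h_{\reg}/W)$ and show it is an isomorphism by comparing associated graded algebras. The essential geometric input is that $W$ acts freely on $\h_{\reg}$, so the quotient morphism $\pi \colon \h_{\reg} \to \h_{\reg}/W$ is a finite \'etale Galois covering; in particular $\h_{\reg}/W$ is smooth and $\C[\h_{\reg}/W] = \C[\h_{\reg}]^W$.

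First I would construct $\rho$. If $D \in \dd(\h_{\reg})^W$ and $f \in \C[\h_{\reg}]^W$, then for every $w \in W$ we have $w \cdot (Df) = (w D w^{-1})(w \cdot f) = Df$, so $D$ preserves $\C[\h_{\reg}]^W$. An induction on the order of $D$, using the standard inductive characterization of differential operators via commutators with multiplication operators, then shows that the restriction $\rho(D) := D|_{\C[\h_{\reg}]^W}$ is a differential operator on $\C[\h_{\reg}]^W = \C[\h_{\reg}/W]$. This yields a filtered algebra homomorphism with respect to the order filtrations on both sides.

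To prove $\rho$ is bijective I would pass to associated graded algebras. Write $X = \h_{\reg}$ and $Y = \h_{\reg}/W$; both are smooth affine varieties, so $\gr \dd(X) \simeq \mathrm{Sym}_{\C[X]} \Der(\C[X])$ and similarly for $Y$. Since $|W|$ is invertible in $\C$, taking $W$-invariants is exact, hence $\gr(\dd(X)^W) = (\gr \dd(X))^W$. \'Etaleness of $\pi$ gives a canonical $W$-equivariant isomorphism
\[
\C[X] \otimes_{\C[Y]} \Der(\C[Y]) \;\xrightarrow{\ \sim\ }\; \Der(\C[X]),
\]
with $W$ acting trivially on the second tensor factor. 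Taking the symmetric algebra over $\C[X]$ and then $W$-invariants, and using that $\C[X]$ is the regular $\C[Y][W]$-module (so that $(\C[X] \otimes_{\C[Y]} M)^W = M$ for any $\C[Y]$-module $M$), produces
\[
\bigl(\mathrm{Sym}_{\C[X]} \Der(\C[X])\bigr)^W \;\simeq\; \mathrm{Sym}_{\C[Y]} \Der(\C[Y]).
\]
A routine check shows this identification agrees with $\gr \rho$. Because both filtrations are exhaustive and separated, $\rho$ is itself an isomorphism.

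The main obstacle is the \'etale-base-change step for derivations and its compatibility with taking symmetric algebras and $W$-invariants, i.e.\ correctly identifying the $W$-module structure on the relevant tensor products so that the invariants collapse as advertised. Everything else is a comparison of associated graded rings for the order filtration, and the freeness of the $W$-action on $\h_{\reg}$ ensures that no ramification complicates the passage between operators upstairs and downstairs.
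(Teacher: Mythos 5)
Your proof is correct but follows a genuinely different route from the paper. The paper constructs the same restriction map $\rho$, then proves injectivity from the (previously established) simplicity of $\dd(\h_{\reg})^W$, and proves surjectivity by a hands-on argument: write $\C[\h_{\reg}]^W = \C[u_1,\dots,u_n][\delta^{-r}]$ via Chevalley--Shephard--Todd, observe that it suffices to hit the $\pa/\pa u_i$, and produce explicit preimages $v_i = \sum_j f_{i,j}\,\pa/\pa x_j$ by inverting the Jacobian matrix $(\pa u_i/\pa x_j)$, which is invertible over $\C[\h_{\reg}]$ precisely because $\pi$ is \'etale there; a separate direct check then verifies $W$-invariance of the $v_i$. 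Your argument instead passes to the order filtration, identifies $\gr\dd(\h_{\reg}) \simeq \mathrm{Sym}_{\C[X]}\Der(\C[X])$ and $\gr\dd(\h_{\reg}/W) \simeq \mathrm{Sym}_{\C[Y]}\Der(\C[Y])$, invokes \'etale base change $\Der(\C[X]) \simeq \C[X]\otimes_{\C[Y]}\Der(\C[Y])$ with trivial $W$-action on the second factor, and collapses the $W$-invariants with the averaging idempotent, using that $(\C[X]\otimes_{\C[Y]} M)^W = \C[X]^W\otimes_{\C[Y]} M = M$ since $|W|$ is invertible. Both are valid: the paper's proof gives explicit generators and leans on the specific structure theory of complex reflection groups (polynomial invariants, simplicity of invariant differential operators), whereas yours is more uniform --- it works verbatim for any finite \'etale Galois covering of smooth affine varieties in characteristic zero --- at the price of quoting the base-change lemma for derivations and the compatibility of $\gr$ with $(-)^W$. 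One small point worth tightening in a final write-up is the claim that the displayed isomorphism of symmetric algebras "agrees with $\gr\rho$": this should be checked on degree-one pieces, where both maps send the symbol of a $W$-invariant vector field to its restriction to $\C[Y]$, after which the agreement in all degrees follows because both are algebra maps.
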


\begin{proof}
Since $\dd(\h_{\reg})$ acts on $\C[\h_{\reg}]$, the algebra $\dd(\h_{\reg})^W$ acts on $\C[\h_{\reg}]^W = \C[\h_{\reg} / W]$. One can check from the definition of differential operators that this defines a map $\dd(\h_{\reg})^W \rightarrow \dd(\h_{\reg} /W)$. Since $\dd(\h_{\reg})^W$ is a simple ring, this map must be injective. Therefore it suffices to show that it is surjective. 

Let $x_1, \ds, x_n$ be a basis of $\h^*$ so that $\C[\h] = \C[x_1, \ds, x_n]$. By the Chevalley-Shephard-Todd Theorem, Theorem \ref{thm:CSTthm}, $\C[\h]^W$ is also a polynomial ring with homogeneous, algebraically independent generators $u_1, \ds, u_n$ say. Moreover, $\C[\h_{\reg}]^W = \C[u_1, \ds, u_n][\delta^{-r}]$ for some $r > 0$. Thus, 
$$
\dd(\h_{\reg} / W) = \C \left\langle u_1, \ds, u_n, \frac{\pa}{\pa u_1}, \ds, \frac{\pa}{\pa u_n} \right\rangle \left[\delta^{-r} \right]
$$
and to show surjectivity it suffices to show that each $ \frac{\pa}{\pa u_i}$ belongs to $\dd(\h_{\reg})^W$. That is, we need to find some $f_{i,j} \in \C[\h_{\reg}]$ such that $v_i := \sum_{j = 1}^n f_{i,j} \frac{\pa}{\pa x_j} \in \dd(\h_{\reg})^W$ and $v_i(u_k) = \delta_{i,k}$. Let $\Delta(\mbf{u})$ be the $n$ by $n$ matrix with $(i,j)$th entry $\frac{\pa u_i}{\pa x_j}$. Then, since $v_i(u_k) = \sum_{j = 1}^n f_{i,j} \frac{\pa u_k}{\pa x_j}$, we really need to find a matrix $F = (f_{i,j})$ such that $F \cdot \Delta(\mbf{u})$ is the $n \times n$ identity matrix. In other words, $F$ must be the inverse to $\Delta(\mbf{u})$. Clearly, $F$ exists if and only if $\det \Delta(\mbf{u})$ is invertible in $\C[\h_{\reg}]$. But, for each $t \in \h_{\reg}$,  $\det \Delta(\mbf{u}) |_{x = t}$ is just the determinant of the differential $d_t \pi : T_t \h_{\reg} \rightarrow T_{\pi(t)} (\h_{\reg} / W)$. As noted above, the linear map $d_t \pi$ is always an isomorphism. Thus, since $\C[\h_{\reg}]$ is a domain, the Nullstellensatz implies that $\det \Delta(\mbf{u})$ is invertible in $\C[\h_{\reg}]$. 

The final thing to check is that if $F = \Delta(\mbf{u})^{-1}$ then each $v_i$ is $W$-invariant. Let $w \in W$ and consider the derivation $v_i' := w(v_i) - v_i$. It is clear from the definition of $v_i$ that $v_i'$ acts as zero on all $u_j$ and hence on the whole of $\C[\h_{\reg}]^W$. Geometrically $v_i'$ is a vector field on $\h_{\reg}$ and defines a linear functional on $\mf{m} / \mf{m}^2$, for each maximal ideal $\mf{m}$ of $\C[\h_{\reg}]$. If $v_i' \neq 0$ then there is some point $x \in \h_{\reg}$, with corresponding maximal ideal $\mf{m}$, such that $v_i'$ is non-zero on $\mf{m} / \mf{m}^2$. Let $\mf{n} = \mf{m} \cap \C[\h_{\reg}]^W$ be the maximal ideal defining $\pi(x)$. Then $d_{x} \pi (v_i')$ is the linear functional on $\mf{n} / \mf{n}^2$ given by the composite 
$$
\mf{n} / \mf{n}^2 \rightarrow \mf{m} / \mf{m}^2 \stackrel{v_i'}{\longrightarrow} \C.
$$
Since $d_x \pi$ is an isomorphism, this functional is non-zero. But this contradicts the fact that $v_i'$ acts trivially on $\C[\h_{\reg}]^W$. Thus, $w(v_i) = v_i$.
\end{proof}

\begin{remark}
The polynomial $\det \Delta(\mbf{u})$ plays an important role in the theory of complex reflection groups, and is closely related to our  $\delta$; see \cite{SteinbergInvariants}. 
\end{remark}

A corollary of the above proposition is:

\begin{cor}\label{cor:equivWdecsend}
The functor $M \mapsto M^{W}$ defines an equivalence between the category of $W$-equivariant $\dd$-modules on $\h^{\reg}$ (i.e. the category of $\dd(\h_{\reg}) \rtimes W$-modules) and the category of $\dd(\h_{\reg}/W)$-modules. 
\end{cor}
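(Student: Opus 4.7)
Set $A = \dd(\h_{\reg}) \rtimes W$ and $B = \dd(\h_{\reg}/W)$, and let $\mbf{e} = \tfrac{1}{|W|}\sum_{w \in W} w \in \C W \subset A$. A standard averaging identification gives $M^W = \mbf{e} M$ functorially in $M \in \Lmod{A}$, so the invariants functor is precisely the Morita-type functor $\mbf{e}(-)$. Combining Proposition~\ref{prop:diffiso} with the elementary identity $\mbf{e}(R \rtimes W)\mbf{e} \cong R^W$ (valid for any ring $R$ carrying a $W$-action), we obtain $\mbf{e} A \mbf{e} \cong \dd(\h_{\reg})^W = B$. The task therefore becomes showing that $\mbf{e}(-) \colon \Lmod{A} \to \Lmod{\mbf{e} A \mbf{e}}$ is an equivalence of categories.

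\textbf{Reduction to an ideal calculation.} The argument of Corollary~\ref{cor:Morita}, which only uses the existence of the idempotent $\mbf{e}$, applies verbatim to $A$: the functor $\mbf{e}(-)$ is an equivalence if and only if the two-sided ideal $A \mbf{e} A$ equals $A$. I shall in fact prove the stronger statement that $(\C[\h_{\reg}] \rtimes W)\, \mbf{e}\, (\C[\h_{\reg}] \rtimes W) = \C[\h_{\reg}] \rtimes W$. A direct computation yields
$$
f\, \mbf{e}\, g \;=\; \tfrac{1}{|W|} \sum_{w \in W} f\cdot w(g)\; w, \qquad f,g \in \C[\h_{\reg}],
$$
so the problem reduces to producing elements $f_1,\ldots,f_N,\, g_1,\ldots,g_N \in \C[\h_{\reg}]$ satisfying the ``Galois system''
$$
\sum_i f_i\, g_i = |W|, \qquad \sum_i f_i\cdot w(g_i) = 0 \ \text{ for every } w \ne 1;
$$
given this, $\sum_i f_i \mbf{e} g_i = 1$, and hence $A \mbf{e} A = A$.

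\textbf{The main obstacle: the Galois system.} The existence of such a system is precisely where the freeness of the $W$-action on $\h_{\reg}$ is used, and is the crux of the argument. Geometrically, what is needed is a regular function $h \in \C[\h_{\reg}] \otimes \C[\h_{\reg}]$ whose restriction to the fibre product $\h_{\reg} \times_{\h_{\reg}/W} \h_{\reg}$ equals $|W|$ on the diagonal component and vanishes on every other component. Because $W$ acts freely, the map $W \times \h_{\reg} \to \h_{\reg} \times_{\h_{\reg}/W} \h_{\reg}$, $(w,x) \mapsto (x, w\cdot x)$, is an isomorphism, so the fibre product is a disjoint union of $|W|$ copies of $\h_{\reg}$, on which the prescribed locally constant function manifestly exists. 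The fibre product is closed in $\h_{\reg} \times \h_{\reg}$ (as the base change of the diagonal immersion $\h_{\reg}/W \hookrightarrow \h_{\reg}/W \times \h_{\reg}/W$), so the surjection of coordinate rings $\C[\h_{\reg}] \otimes \C[\h_{\reg}] \twoheadrightarrow \C[\h_{\reg} \times_{\h_{\reg}/W} \h_{\reg}]$ allows one to lift this function to an expression $\sum_i f_i \otimes g_i$ of the required form. Once the Galois system is in hand, everything else is routine Morita-theoretic bookkeeping.
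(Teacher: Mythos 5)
Your proof is correct, and it follows the same Morita-theoretic framework as the paper's: identify $\dd(\h_{\reg}/W)$ with $\mbf{e} A \mbf{e}$ for $A = \dd(\h_{\reg}) \rtimes W$ via Proposition~\ref{prop:diffiso}, then verify that the idempotent $\mbf{e}$ is full. Where you diverge from the paper is in \emph{how} fullness is established. The paper verifies the module-theoretic criterion --- $\mbf{e} M = 0$ implies $M = 0$ --- by regarding $M$ as a $\C[\h_{\reg}] \rtimes W$-module and appealing (via a forward reference to the proof of Theorem~\ref{thm:genericisregularrep}) to the fact that over a free orbit the fibre of any nonzero module carries a copy of the regular representation, and so has nonzero invariants. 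You instead establish the ideal-theoretic criterion $A\mbf{e}A = A$ directly, by producing a Galois system $f_i, g_i \in \C[\h_{\reg}]$ with $\sum_i f_i g_i = |W|$ and $\sum_i f_i \cdot w(g_i) = 0$ for $w \neq 1$, whose existence you extract from the splitting of $\h_{\reg} \times_{\h_{\reg}/W} \h_{\reg}$ into $|W|$ disjoint graphs when the action is free (the lifting step uses that this fibre product is a closed affine subvariety of $\h_{\reg} \times \h_{\reg}$, which is fine since $\h_{\reg}$ is affine). Both arguments bottom out at exactly the same geometric input --- freeness of the action --- but yours is more self-contained (no forward citation), makes the mechanism fully explicit, and lands on the concrete identity $\sum_i f_i \mbf{e} g_i = 1$, from which $A\mbf{e}A = A$ follows at once since $A\mbf{e}A$ is a two-sided ideal of $A$. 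The paper's route has the mild virtue of reusing an argument the author needs in Lecture~5 anyway.
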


\begin{proof}
Let $\mbf{e}$ be the trivial idempotent in $\C W$. We can identify $\dd(\h_{\reg})^W$ with $\mbf{e}(\dd(\h_{\reg}) \rtimes W) \mbf{e}$. Hence by Proposition \ref{prop:diffiso},  $\dd(\h_{\reg} / W) \simeq \mbf{e}(\dd(\h_{\reg}) \rtimes W) \mbf{e}$. Then the proof of the corollary is identical to the proof of Corollary \ref{cor:Morita}. We just need to show that if $M^W = \mbf{e} M$ is zero then $M$ is zero. But, since $W$ acts freely on $\h_{\reg}$, it is already clear for $\C[\h_{\reg}] \rtimes W$-modules that   $M^W = 0$ implies $M =0$ (see for instance the proof of Theorem \ref{thm:genericisregularrep}).  
\end{proof}

One often says that the $\dd(\h_{\reg}) \rtimes W$ $M$ \textit{descends} to the $\dd$-module $M^W$ on $\h_{\reg} / W$ (the terminology coming from descent theory in algebraic geometry). 

We are finally in a position to define the $\KZ$-functor. The the functor is a composition of four(!) functors, so we'll go through it one step at a time. Recall that we're starting with a module $M$ in category $\mc{O}$ and we eventually want a representation of the fundamental group $\pi_1(\h_{\reg} / W)$. First off, we localize and use the Dunkl embedding: $M \mapsto M[\delta^{-1}]$. This gives us a $\dd(\h_{\reg})\rtimes W$-module $M[\delta^{-1}]$. By Corollary \ref{cor:equivWdecsend}, $(M[\delta^{-1}])^W$ is a $\dd$-module on $\h_{\reg} / W$. Before going further we need to know what sort of $\dd$-module we've ended up with. It's shown in \cite{GGOR} that in fact $(M[\delta^{-1}])^W$ is an integrable connection on $\h_{\reg} / W$ \textit{with regular singularities}. Thus, we can apply Deligne's equivalence. The deRham functor $\mathsf{DR}(M[\delta^{-1}]^W)$ applied to $(M[\delta^{-1}])^W$ gives us a representation of $\pi_1(\h_{\reg} / W)$. Thus, we may define the $\KZ$-functor $\KZ : \mc{O} \rightarrow \Lmod{\pi_1(\h_{\reg} / W)}$ by 
$$
\KZ(M) := \mathsf{DR}(M[\delta^{-1}]^W) = (((M[\delta^{-1}])^W)^{an})^{\nabla}.
$$
The following diagram should help the reader unpack the definition of the $\KZ$-functor. 
\beq{eq:KZdiagram}
\xymatrix{
\mc{O} \ar[rr]^-{( - )[\delta^{-1}]} \ar@{.>}@/_3pc/[ddrrr]_{\KZ} & &  \Lmod{\dd(\h_{\reg}) \rtimes W} \ar[d]^{\wr}_{( - )^W} & \\
 & & \Lmod{\dd(\h_{\reg} / W)} \ar[r]^{( - )^{an}} & \Lmod{\dd(\h_{\reg}^{an} / W)} \ar[d]^{( - )^{\nabla}} \\
 & & & \Lmod{\pi_1(\h_{\reg} / W)}. 
}
\eeq

\begin{exercise}\label{ex:invdiff}
By considering the case of $\Z_2$, show that the natural map $\dd(\mf{h})^W \rightarrow \dd(\mf{h} / W)$ is not an isomorphism. Which of injectivity or surjectivity fails? Hint: for complete rigor, consider the associated graded map $\gr \dd(\mf{h})^W \rightarrow \gr \dd(\mf{h} / W)$.  
\end{exercise}

\subsection{A change of parameters}

In order to relate, in the next subsection, the rational Cherednik algebra $\H_{\mbf{c}}(W)$, via the $\KZ$-functor, with the cyclotomic Hecke algebra $\mc{H}_{\mbf{q}}(W)$, we need to change the way we parameterize $\H_{\mbf{c}}(W)$ (this is just some technical annoyance and you can skip this section unless you plan some hard core calculations using the $\KZ$-functor). Each complex reflection $s \in \mc{S}$ defines a reflecting hyperplane $H = \ker \alpha_s \subset \h$. Let $\mc{A}$ denote the set of all hyperplanes arrising this way. For a given $H \in \mc{A}$, the subgroup $W_H = \{ w \in W \ | \ w (H) \subset H \}$ of $W$ is cyclic. Let $W^*_H = W_H \backslash \{ 1 \}$. Then 
$$
\mc{S} = \bigcup_{H \in \mc{A}} W_H^*.
$$
We may, without loss of generality, assume that 
$$
\alpha_{H} := \alpha_{s} = \alpha_{s'}, \quad  \alpha_H^{\vee} := \alpha_s^{\vee} = \alpha_{s'}^{\vee}, \quad \forall \ s,s' \in W_H^*. 
$$
Then the original relations (\ref{eq:rel}) in the definition of the rational Cherednik algebra become
\begin{equation}\label{eq:rel2}
[y,x] = (y,x) - \sum_{H \in \mc{A}} (y,\alpha_H)(\alpha_H^\vee,x) \left( \sum_{s \in W_H^*} \mathbf{c}(s) s \right), \quad \forall \ x \in \h^*, y \in \h.
\end{equation}
Let $n_H = |W_H|$ and $e_{H,i} = \frac{1}{n_H} \sum_{w \in W_H} (\det w)^i w$ for $0 \le i \le n_H -1$ be the primitive idempotents in $\C W_H$. Define $k_{H,i} \in \C$ for $H \in \mc{A}$ and $0 \le i \le n_H -1$ by 
$$
\sum_{s \in W_H^*} \mathbf{c}(s) s = n_H \sum_{i = 0}^{n_H - 1} (k_{H,i+1} - k_{H,i}) e_{H,i},
$$
and $k_{H,0} = k_{H,n_H} = 0$. Note that this forces $k_{H,i} = k_{w(H),i}$ for all $w \in W$ and $H \in \mc{A}$. Thus, the parameters $k_{H,i}$ are $W$-invariant. This implies that 
$$
\mathbf{c}(s) = \sum_{i = 0}^{n_H - 1} (\det s)^i (k_{H,i+1} - k_{H,i}),
$$
and the relation (\ref{eq:rel2}) becomes
\begin{equation}\label{eq:rel3}
[y,x] = (y,x) - \sum_{H \in \mc{A}} (y,\alpha_H)(\alpha_H^\vee,x) n_H \sum_{i = 0}^{n_H - 1} (k_{H,i+1} - k_{H,i}) e_{H,i}, \quad \forall \ x \in \h^*, y \in \h.
\end{equation}
Therefore $\H_{\mbf{c}}(W) = \H_{\mbf{k}}(W)$, where 
$$
\mbf{k} = \{ k_{H,i} \ | \ H \in \mc{A}, \ 0 \le i \le n_H -1, k_{H,0} = k_{H,n_H} = 0 \textrm{ and } k_{H,i} = k_{w(H),i} \ \forall w,H,i \}.
$$ 

\subsection{The cyclotomic Hecke algebra}

The braid group $\pi_1(\h_{\reg} /W)$ has generators $\{ T_s \ | \ s \in \mc{S} \}$, where $T_s$ is an $s$-generator of the monodromy around $H$; see \cite[Section 4.C]{BMR} for the precise definition. The $T_s$ satisfy certain ``braid relations''. Fix $\mbf{q} = \{ q_{H,i} \in \Cs \ | \ H \in \mc{A}, \ 0 \le i \le n_H -1 \}$. The \textit{cyclotomic Hecke algebra} $\mc{H}_{\mbf{q}}(W)$ is the quotient of the group algebra $\C \pi_1(\h_{\reg} /W)$ by the two-sided ideal generated by 
$$
\prod_{i = 0}^{n_H - 1} (T_s - q_{H,i}), \quad \forall \ s \in \mc{S},
$$
where $H$ is the hyperplane defined by $s$. 

\begin{example}\label{examp:HeckeA} 
In type $A$ the Hecke algebra is the algebra generated by $T_1,\ds, T_{n-1}$, satisfying the braid relations
$$
T_i T_j = T_j T_i, \quad \forall \ |i - j| > 1,
$$
$$
T_{i} T_{i+1} T_i = T_{i+1} T_i T_{i+1} \quad \forall \ 1 \le i \le n-2
$$
and the additional relation
$$
(T_i - \mbf{q})(T_i + 1) = 0 \quad \forall \ 1 \le i \le n-1.
$$
\end{example}

For each $H \in \mc{A}$, fix a generator $s_H$ of $W_H$. Given a parameter $\mbf{k}$ for the rational Cherednik algebra, define $\mbf{q}$ by 
$$
q_{H,i} = (\det s_H)^{-i} \exp (2 \pi \sqrt{-1} k_{H,i}).
$$ 
Based on \cite[Theorem 4.12]{BMR}, the following key result was proved in \cite[Theorem 5.13]{GGOR}. 

\begin{thm}\label{thm:factor}
The $\KZ$-functor factors through $\Lmod{\mc{H}_{\mbf{q}}(W)}$. 
\end{thm}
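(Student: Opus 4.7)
The plan is to show that for every $M\in\mc{O}$ and every $s\in\mc{S}$ with reflecting hyperplane $H$, the monodromy automorphism $T_s$ acting on the local system $\KZ(M)$ satisfies the cyclotomic Hecke relation $\prod_{i=0}^{n_H-1}(T_s-q_{H,i})=0$. Since $\KZ(M)$ is a finite-rank local system and the Hecke relations are polynomial identities on the generators of $\pi_1(\h_{\reg}/W)$, this will show that $\KZ(M)$ factors through a representation of $\mc{H}_{\mbf{q}}(W)$, naturally in $M$.

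First I would reduce to a local, rank-one computation. The monodromy $T_s$ is by construction the local monodromy of the connection $\mc{M}:=(M[\delta^{-1}])^W$ along a small loop around the image of $H$ in $\h_{\reg}/W$, based at a chosen generic point of $H$. Near such a generic point $p\in H$, the group $W$ acts with stabilizer $W_H$ (cyclic of order $n_H$) and all other reflecting hyperplanes are far away, so in an analytic neighborhood the quotient $\h/W$ looks like $(\Sigma\oplus H)/W_H$ where $\Sigma\subset\h$ is a $W_H$-stable line transverse to $H$. Restricting $\mc{M}$ to a smooth transverse disk $D\subset\Sigma/W_H$ through $p$ produces a regular integrable connection on a punctured disk, and the relation we must verify is a property of this one-variable connection. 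Thus it is enough to analyze $\KZ$ in the rank-one case where $W=W_H$ is cyclic acting on $\h=\C$.

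Next I would make the rank-one computation explicit. For $W_H=\langle s_H\rangle$ acting on $\C$ via $s_H\cdot x=\zeta x$ (with $\zeta$ a primitive $n_H$-th root of unity), the Dunkl operator for $y\in\h$ is
\[
D_y = \pa_y - \sum_{i=1}^{n_H-1}\frac{2\mbf{c}(s_H^i)}{1-\zeta^{i}}\,\frac{y}{x}(1-s_H^i),
\]
and $\Delta(\lambda)[\delta^{-1}]=\C[x,x^{-1}]\o\lambda$ for each character $\lambda$ of $W_H$. After passing to $W_H$-invariants, the module $\mc{M}$ splits according to the $W_H$-isotypic decomposition of $\Delta(\lambda)$ and becomes, on the punctured line $\h_{\reg}/W_H$, a direct sum of rank-one connections of the form $u^{n_H}\pa_u = \alpha_{\lambda,i}$, where $u$ is the coordinate $x^{n_H}$ on $\h/W_H$. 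A direct substitution, using the re-parameterization
\[
\mbf{c}(s) = \sum_{i=0}^{n_H-1}(\det s)^i(k_{H,i+1}-k_{H,i}),
\]
shows that the residues $\alpha_{\lambda,i}$ are (up to integers) precisely $k_{H,i}-i/n_H$, one for each $i$. The monodromy of such a rank-one regular connection around $0$ is $\exp(2\pi\sqrt{-1}\,\alpha_{\lambda,i})$, and matching with $q_{H,i}=(\det s_H)^{-i}\exp(2\pi\sqrt{-1}\,k_{H,i})$ shows that the eigenvalues of $T_s$ on $\KZ(\Delta(\lambda))$ lie in the set $\{q_{H,0},\ldots,q_{H,n_H-1}\}$. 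Hence $\prod_{i=0}^{n_H-1}(T_s-q_{H,i})$ annihilates $\KZ(\Delta(\lambda))$.

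Finally, I would extend the relation from standard modules to all of $\mc{O}$. By Theorem \ref{thm:hwc}, every object of $\mc{O}$ has a finite composition series, and in fact the standard modules form a $\Z$-basis of $K_0(\mc{O})$ (Corollary \ref{ex:Grothbasis}). Since $\KZ$ is exact (the Dunkl localization is flat, $W$-invariants is exact by Corollary \ref{cor:equivWdecsend}, analytification is exact, and $\mathsf{DR}$ is an equivalence on regular connections), any $M\in\mc{O}$ fits into exact sequences whose terms are extensions of standard modules. The Hecke relations, being polynomial identities that hold on each $\KZ(\Delta(\lambda))$, are stable under subquotients and extensions, so they hold on all of $\KZ(M)$. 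The main obstacle is the residue calculation in the rank-one step: one must carefully track how the Dunkl operator descends to the quotient line and verify that the eigenvalues of the residue exactly produce the parameters $q_{H,i}$ prescribed by the change-of-variables formula; everything else is routine reduction.
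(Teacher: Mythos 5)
The general architecture — reduce to local monodromy around a generic point of each hyperplane $H$, and compute the rank-one (cyclic) case via the residue of the Dunkl connection — is the right one and is indeed the approach of Brou\'e--Malle--Rouquier \cite{BMR} and GGOR \cite{GGOR}. The residue computation in the second paragraph matches the rank-one example worked out in the text and correctly identifies the eigenvalues $k_{H,i} - i/n_H$ and their exponentials $q_{H,i}$.

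The gap is in the final paragraph. The assertion that the Hecke relations ``are stable under subquotients and extensions'' is false for extensions: if $0 \to A \to B \to C \to 0$ is a short exact sequence of $\pi_1$-modules and $f(T_s)$ annihilates $A$ and $C$, one can only conclude $f(T_s)^2 B = 0$, not $f(T_s)B = 0$. Concretely, if $T_s$ acts by the same scalar $q_{H,i_0}$ on both $A$ and $C$ and $B$ is a non-split extension, then $T_s|_B$ is a size-two Jordan block with eigenvalue $q_{H,i_0}$, and $\prod_i(T_s - q_{H,i})|_B \ne 0$ unless $q_{H,i_0}$ occurs with multiplicity at least two in the Hecke polynomial. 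The text's own $\Z_2$ computation of $\KZ(P(\rho_1))$ exhibits exactly such a Jordan block; there the Hecke relation is saved only because $\mbf{q}=1$ forces $(T-1)^2=0$, a coincidence of parameters the argument cannot appeal to in general. The GGOR proof therefore does not pass from standard objects to all of $\mc{O}$ by such a d\'evissage. Instead one analyzes the transverse restriction of $(M[\delta^{-1}])^W$ near a generic point of $H$ directly for an \emph{arbitrary} $M \in \mc{O}$: one completes $M$ along $H$, identifies the resulting object as a module over (a completion of) the rank-one Cherednik algebra $\H_{\mbf{c}}(W_H)$ tensored with functions along $H$, and uses the locally finite action of the Euler element of $\H_{\mbf{c}}(W_H)$ to control not just the residue eigenvalues but the full minimal polynomial of the residue. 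That structural input — which is really the content of \cite[Theorem 5.13]{GGOR}, building on \cite[Theorem 4.12]{BMR} — is what your write-up replaces with the unsupported extension claim, and it is the part that genuinely needs proving.
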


Since each of the functors appearing in diagram \ref{eq:KZdiagram} is exact, the $\KZ$-functor is an exact functor. Therefore, by Watt's Theorem, \cite[Theorem 5.50]{Rotman}, there exists a projective module $P_{\KZ} \in \mc{O}$ such that 
\beq{eq:KZrep}
\KZ( - ) \simeq \Hom_{\H_{\mbf{c}}(W)}(P_{\KZ}, - ).
\eeq
Then, Theorem \ref{thm:factor} implies that $P_{\KZ}$ is a $(\H_{\mbf{c}}(W),\mc{H}_{\mbf{q}}(W))$-bimodule and the action of $\mc{H}_{\mbf{q}}(W)$ on the right of $P_{\KZ}$ defines an algebra morphism 
$$
\phi : \mc{H}_{\mbf{q}}(W) \longrightarrow \End_{\H_{\mbf{c}}(W)}(P_{\KZ})^{op}.
$$

\begin{lem}\label{lem:PKZdecomp}
We have a decomposition  
$$
P_{\KZ} = \bigoplus_{\lambda \in \Irr (W)} (\dim \KZ(L(\lambda))) P(\lambda).
$$
\end{lem}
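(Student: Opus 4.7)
The plan is to exploit the representability $\KZ(-) \simeq \Hom_{\H_{\mbf{c}}(W)}(P_{\KZ}, -)$ from \eqref{eq:KZrep} together with the structure of projectives in the finite-length, Krull-Schmidt category $\mc{O}$.

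First I would recall that since $\mc{O}$ is equivalent to $\Lmod{A}$ for a finite-dimensional $\C$-algebra $A$ (as noted after Theorem \ref{thm:projectives}), it is a Krull-Schmidt category in which every projective object decomposes uniquely, up to reordering, into a direct sum of indecomposable projectives. The indecomposable projectives are precisely the projective covers $P(\lambda)$ for $\lambda \in \Irr(W)$, by Theorem \ref{thm:projectives}. Hence we may write
$$
P_{\KZ} \;\simeq\; \bigoplus_{\lambda \in \Irr(W)} n_\lambda\, P(\lambda)
$$
for uniquely determined non-negative integers $n_\lambda$, and our task is simply to identify each $n_\lambda$.

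To do this I would compute $\dim \KZ(L(\lambda))$ in two ways. On the one hand it equals $\dim \Hom_{\H_{\mbf{c}}(W)}(P_{\KZ}, L(\lambda))$ by \eqref{eq:KZrep}. On the other hand, applying the decomposition and additivity of $\Hom$ in the first slot gives
$$
\dim \Hom_{\H_{\mbf{c}}(W)}(P_{\KZ}, L(\lambda)) \;=\; \sum_{\mu \in \Irr(W)} n_\mu \dim \Hom_{\H_{\mbf{c}}(W)}(P(\mu), L(\lambda)).
$$
The key computation is then that $\dim \Hom_{\H_{\mbf{c}}(W)}(P(\mu), L(\lambda)) = \delta_{\mu, \lambda}$. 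This follows because any morphism $P(\mu) \to L(\lambda)$ factors through the head of $P(\mu)$, which is $L(\mu)$ by definition of the projective cover; since $L(\mu)$ and $L(\lambda)$ are simple and non-isomorphic when $\mu \neq \lambda$, the Hom space vanishes in that case, while for $\mu = \lambda$ it is one-dimensional by Schur's lemma together with surjectivity of the cover map.

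Combining these two identities yields $n_\lambda = \dim \KZ(L(\lambda))$ for every $\lambda \in \Irr(W)$, which is exactly the claimed formula. There is no real obstacle here; the argument is a clean application of Krull-Schmidt and projective-cover formalism, and the only thing to be careful about is justifying that $P_{\KZ}$ is genuinely a projective object of $\mc{O}$ (which is used to invoke uniqueness of the direct-sum decomposition) — but this is built into the statement of Watt's theorem used to produce $P_{\KZ}$ in \eqref{eq:KZrep}.
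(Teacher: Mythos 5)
Your proof is correct and follows essentially the same route as the paper's: both write $P_{\KZ}$ as a sum of projective covers and identify the multiplicities via $\dim\Hom_{\H_{\mbf{c}}(W)}(P(\mu),L(\lambda)) = \delta_{\mu,\lambda}$ together with the representability $\KZ(-)\simeq\Hom_{\H_{\mbf{c}}(W)}(P_{\KZ},-)$. You are somewhat more explicit about invoking Krull--Schmidt to justify the decomposition, but the substance is identical.
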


\begin{proof}
By definition of projective cover, we have $\dim \Hom_{\H_{\mbf{c}}(W)}(P(\lambda), L(\mu)) = \delta_{\lambda,\mu}$. Therefore, if $P_{\KZ} = \bigoplus_{\lambda \in \Irr (W)} P(\lambda)^{\oplus n_{\lambda}}$, then $n_{\lambda} = \dim \Hom_{\H_{\mbf{c}}(W)}(P_{\KZ},L(\lambda))$. But, by definition, 
$$
\Hom_{\H_{\mbf{c}}(W)}(P_{\KZ},L(\lambda)) = \KZ(L(\lambda)).
$$
\end{proof}

\begin{lem}\label{lem:adjunction}
Let $\mc{A}$ be an abelian, Artinian category and $\mc{A}'$ a full subcategory, closed under quotients. Let $F : \mc{A}' \rightarrow \mc{A}$ be the inclusion functor. Define ${}^{\perp} F : \mc{A} \rightarrow \mc{A}'$ by setting ${}^{\perp} F (M)$ to be the largest quotient of $M$ contained in $\mc{A}'$. Then ${}^{\perp} F$ is left adjoint to $F$ and the adjunction $\eta : \mathrm{id}_{\mc{A}} \rightarrow F \circ ({}^{\perp} F)$ is surjective. 
\end{lem}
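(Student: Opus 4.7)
The plan is to construct $^{\perp}F$ by a universal property argument, deduce the adjunction formally, and observe that surjectivity of the unit is built into the construction.

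First I would verify that $^{\perp}F(M)$ is well-defined. Consider the collection $\mathcal{S}_M = \{N \subset M \mid M/N \in \mathcal{A}'\}$ of subobjects of $M$, ordered by inclusion. This is nonempty since $M \in \mathcal{S}_M$ (the zero object is a quotient of any member of $\mathcal{A}'$). Because $\mathcal{A}$ is Artinian, $\mathcal{S}_M$ has a minimal element $N_0$. To see that $M/N_0$ is independent of the choice, observe that if $N_1, N_2 \in \mathcal{S}_M$ then $M/(N_1 \cap N_2)$ embeds into $M/N_1 \oplus M/N_2$, and in the applications $\mathcal{A}'$ is closed under subobjects of its members so this quotient again lies in $\mathcal{A}'$. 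Thus $N_1 \cap N_2 \in \mathcal{S}_M$, which forces the minimal element to be unique. Set $^{\perp}F(M) := M/N_0$ with canonical projection $\eta_M : M \twoheadrightarrow M/N_0$.

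Next I would establish the key universal property: for any $X \in \mathcal{A}'$ and any morphism $g : M \to F(X)$ in $\mathcal{A}$, there is a unique factorization through $\eta_M$. Indeed, $\operatorname{im}(g)$ is simultaneously a quotient of $M$ and a subobject of $X \in \mathcal{A}'$, hence lies in $\mathcal{A}'$, so $\ker g \in \mathcal{S}_M$ and therefore $\ker g \supseteq N_0$. Uniqueness of the factorization follows because $\eta_M$ is an epimorphism. Functoriality of $^{\perp}F$ is now automatic: given $f : M \to M'$, apply the universal property to the composite $\eta_{M'} \circ f : M \to {}^{\perp}F(M')$ to obtain a unique morphism $^{\perp}F(f) : {}^{\perp}F(M) \to {}^{\perp}F(M')$, and uniqueness gives $^{\perp}F(g \circ f) = {}^{\perp}F(g) \circ {}^{\perp}F(f)$ together with $^{\perp}F(\mathrm{id}) = \mathrm{id}$.

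The adjunction is then precisely a restatement of the universal property: the map
\[
\operatorname{Hom}_{\mathcal{A}'}({}^{\perp}F(M), N) \longrightarrow \operatorname{Hom}_{\mathcal{A}}(M, F(N)), \qquad \varphi \mapsto F(\varphi) \circ \eta_M,
\]
is a bijection natural in $M$ and $N$, with inverse provided by the unique factorization constructed above. Finally, surjectivity of $\eta_M$ is immediate from its definition as a canonical quotient map.

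The main obstacle is the well-definedness of $^{\perp}F$ on objects, where both the Artinian hypothesis and the closure properties of $\mathcal{A}'$ are genuinely used; once this is in place, functoriality, the adjunction, and surjectivity of $\eta$ all follow formally from the universal property.
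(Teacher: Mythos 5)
Your construction follows the paper's argument in all essentials: you show that $\mathcal{S}_M = \{N \subseteq M : M/N \in \mathcal{A}'\}$ is nonempty and closed under finite intersections, extract a minimal element using the Artinian hypothesis, and conclude it is the unique minimum; the adjunction bijection and the surjectivity of $\eta$ then fall out of the resulting universal property. You do two things better than the paper: your uniqueness argument is clean (a minimal element plus intersection-closure is automatically a minimum, whereas the paper runs a somewhat muddled descending-chain argument), and you actually verify the adjunction via the universal property, which the paper waves off with ``it is clear.''

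The point you should press harder on is the one you yourself flagged: closure under quotients alone does not give the intersection-closure step. Your embedding $M/(N_1 \cap N_2) \hookrightarrow M/N_1 \oplus M/N_2$ needs $\mathcal{A}'$ closed under subobjects \emph{and} under finite direct sums; equivalently, one can use the short exact sequence $0 \to (N_1+N_2)/N_2 \to M/(N_1 \cap N_2) \to M/N_1 \to 0$ together with the inclusion $(N_1+N_2)/N_2 \hookrightarrow M/N_2$, which needs closure under subobjects and extensions. Either way the real hypothesis is that $\mathcal{A}'$ is a Serre subcategory. The same gap recurs silently at your universal-property step, where concluding $\operatorname{im}(g) \in \mathcal{A}'$ from $\operatorname{im}(g) \hookrightarrow X \in \mathcal{A}'$ again uses closure under subobjects. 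The lemma as literally stated is in fact false: over the local algebra $k\langle x,y\rangle/(x^2,y^2,xy,yx)$, take $\mathcal{A}'$ to be the full subcategory of quotients of the indecomposable projective $P$; this is closed under quotients, but $M = k \oplus k$ has no largest quotient lying in $\mathcal{A}'$, since the one-dimensional subobjects of $M$ give an entire $\mathbb{P}^1$ of incomparable minimal elements of $\mathcal{S}_M$. The paper asserts the intersection-closure without comment, so you correctly identified a gap in the source; but rather than deferring to ``the applications,'' the right move is to strengthen the hypotheses in the statement and then verify them where the lemma is invoked in the proof of the double centralizer theorem.
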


\begin{proof}
We begin by showing that ${}^{\perp} F$ is well-defined. We need to show that, for each $M \in \mc{A}$, there is a unique maximal quotient $N$ of $M$ contained in $\mc{A}'$. Let 
$$
K = \{ N' \subseteq M \ | \ M/ N \in \mc{A}' \}.
$$
Note that if $N_1'$ and $N_2'$ belong to $K$ then $N_1' \cap N_2'$ belongs to $K$. Therefore, if $N_1' \in K$ is not contained in all other $N' \in K$, we choose $N' \in K$ such that $N_1' \not\subset N'$ and set $N_2' = N' \cap N_1' \subsetneq N_1$. Continuing this way we construct a descending chain of submodules $N_1' \supsetneq N_2' \supsetneq \cdots$ of $M$. Since $\mc{A}$ is assumed to be Artinian, this chain must eventually stop. Hence, there is a unqiue minimal element under inclusion in $K$. It is clear that ${}^{\perp} F$ is left adjoint to $F$ and the adjunction $\eta$ just sends $M$ to the maximal quotient of $M$ in $\mc{A}'$, hence is surjective. 
\end{proof}

\begin{thm}[Double centralizer theorem]\label{thm:double2}
We have an isomorphism 
$$
\phi : \mc{H}_{\mbf{q}}(W) \stackrel{\sim}{\longrightarrow} \End_{\H_{\mbf{c}}(W)}(P_{\KZ})^{op}.
$$
\end{thm}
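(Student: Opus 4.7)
The plan is to reduce the claim to a property of the $\KZ$ functor itself via Yoneda, and then prove injectivity and surjectivity of $\phi$ separately. First, by \eqref{eq:KZrep} and Yoneda's lemma, $\End_{\H_{\mbf{c}}(W)}(P_{\KZ})^{op}$ is canonically identified with the ring of natural endotransformations of the functor $\KZ : \mc{O} \to \mathrm{Vect}(\C)$. By Theorem \ref{thm:factor}, right multiplication by $h \in \mc{H}_{\mbf{q}}(W)$ on each $\KZ(M)$ defines a natural endotransformation, and this construction is exactly $\phi$. So it suffices to show every such endotransformation comes from a unique element of $\mc{H}_{\mbf{q}}(W)$.

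For injectivity, an element $h \in \ker \phi$ acts as zero on $\KZ(M)$ for every $M \in \mc{O}$, and in particular on $\KZ(P_{\KZ})$. By Lemma \ref{lem:PKZdecomp}, the summands of $\KZ(P_{\KZ})$ are precisely the $\KZ(P(\lambda))$ with $\dim \KZ(L(\lambda)) > 0$, so to conclude $h = 0$ it suffices to show that $\KZ(P_{\KZ})$ is a faithful $\mc{H}_{\mbf{q}}(W)$-module; equivalently, that $\{\KZ(L(\lambda)) : \KZ(L(\lambda)) \neq 0\}$ is a complete set of non-isomorphic simple $\mc{H}_{\mbf{q}}(W)$-modules. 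For generic $\mbf{c}$ in the sense of Exercise \ref{ex:Oss2}, category $\mc{O}$ is semisimple, $\mc{H}_{\mbf{q}}(W)$ is semisimple of dimension $|W|$ (by BMR freeness and standard Hecke theory), and $P_{\KZ}$ is a projective generator, so $\KZ$ becomes a Morita equivalence sending the $L(\lambda)$ bijectively to the simple Hecke modules. The required statement at arbitrary $\mbf{c}$ then follows from a deformation-theoretic argument using flatness of $\mc{H}_{\mbf{q}}(W)$ over its parameter space and upper-semicontinuity of composition multiplicities.

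For surjectivity, we use a dimension count. Writing $n_\lambda := \dim \KZ(L(\lambda))$, Lemma \ref{lem:PKZdecomp} gives $\dim \End_{\H_{\mbf{c}}(W)}(P_{\KZ}) = \sum_{\lambda,\mu} n_\lambda n_\mu [P(\mu):L(\lambda)]$, and combining BGG reciprocity (Corollary \ref{cor:BGG}) with the exactness of $\KZ$ yields after a short calculation
$$\dim_{\C} \End_{\H_{\mbf{c}}(W)}(P_{\KZ}) \;=\; \sum_{\nu \in \Irr(W)} \bigl(\dim \KZ(\Delta(\nu))\bigr)^2.$$
Since $\Delta(\nu) \simeq \C[\h] \otimes \nu$ as a $\C[\h]$-module, the locally free $\dd(\h_{\reg}/W)$-module $(\Delta(\nu)[\delta^{-1}])^W$ has generic rank $\dim \nu$ (using the trivialization $\C[\h_{\reg}] \simeq \C[\h_{\reg}/W] \otimes \C[W]$ and Peter--Weyl), so $\dim \KZ(\Delta(\nu)) = \dim \nu$. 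The sum therefore equals $\sum_{\nu}(\dim \nu)^2 = |W| = \dim_{\C} \mc{H}_{\mbf{q}}(W)$, the last equality by BMR freeness. Combined with injectivity, this forces $\phi$ to be an isomorphism. The main obstacle is the injectivity step, specifically the specialization argument establishing that every simple $\mc{H}_{\mbf{q}}(W)$-module lies in the image of $\KZ$ at non-generic $\mbf{c}$; this requires careful control over how simples of $\mc{O}$ and of $\mc{H}_{\mbf{q}}(W)$ deform together, and is where Rouquier's theory of quasi-hereditary covers of the Hecke algebra enters decisively.
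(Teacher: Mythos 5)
Your dimension count for $\dim_\C \End_{\H_{\mbf{c}}(W)}(P_{\KZ}) = |W|$ is exactly the computation in the paper, and the observation that $\dim \KZ(\Delta(\nu)) = \dim \nu$ is likewise the same (the paper phrases it as: $\Delta(\nu)[\delta^{-1}]$ is an integrable connection of rank $\dim\nu$). Where you diverge from the paper, and where the argument breaks down, is in the direction you try to establish ``for free'' so that the dimension count can finish the job.

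The paper establishes \emph{surjectivity} of $\phi$ first, by a purely categorical argument: the essential image $\mc{A}$ of $\KZ$ inside $\Lmod{\mc{H}_{\mbf{q}}(W)}$ is a full subcategory closed under quotients (this is checked directly from the definition of $\KZ$ via the localization $(-)[\delta^{-1}]$), and Lemma \ref{lem:adjunction} then produces a left adjoint to the inclusion whose unit is surjective, which yields surjectivity of $\phi$ onto $\KZ(P_{\KZ}) = \End_{\H_{\mbf{c}}(W)}(P_{\KZ})^{op}$. Nothing about genericity of parameters, no deformation, no specialization. After that, the equality of dimensions forces injectivity.

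You instead try to establish \emph{injectivity} first. There are two problems. First, a minor one: ``$\KZ(P_{\KZ})$ faithful'' is not equivalent to ``every simple $\mc{H}_{\mbf{q}}(W)$-module occurs among the $\KZ(L(\lambda))$.'' The latter is necessary for faithfulness but not sufficient: a nilpotent element of the radical can annihilate a module even if all simples occur as composition factors (e.g.\ $\C[\epsilon]/\epsilon^2$ acting on $\C^n$ with $\epsilon = 0$). So even granting your claim about the simples, injectivity would not follow by this route. Second, and more seriously, the specialization argument from generic $\mbf{c}$ (where everything is semisimple) to arbitrary $\mbf{c}$ is not filled in, and the tools you invoke to fill it would be circular: Rouquier's theory of quasi-hereditary covers of Hecke algebras, as well as the statement that $\KZ$ hits every simple Hecke module (essential surjectivity of $\KZ : \mc{O}/\mc{O}_{\tor} \to \Lmod{\mc{H}_{\mbf{q}}(W)}$), are consequences proved \emph{after} this double centralizer theorem, using it as an input. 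Flatness of $\mc{H}_{\mbf{q}}(W)$ over its parameter space and upper-semicontinuity of decomposition numbers, on their own, do not control whether a given simple Hecke module at a special parameter lifts to something in the image of $\KZ$; that is precisely the delicate content one needs. The paper's use of the quotient-category lemma is designed exactly to sidestep this.

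So: the dimension count is correct and matches the paper; the decision to route through injectivity creates a genuine gap that the paper avoids by establishing surjectivity abstractly via Lemma \ref{lem:adjunction}.
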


\begin{proof}
Let $\mc{A}$ denote the image of the $\KZ$ functor in $\Lmod{\mc{H}_{\mbf{q}}(W)}$. It is a full subcategory of $\Lmod{\mc{H}_{\mbf{q}}(W)}$. It is also closed under quotients. To see this, notice that it suffices to show that the image of $\mc{O}$ under the localization functor $( - )[\delta^{-1}]$ is closed under quotients. If $N$ is a non-zero quotient of $M[\delta^{-1}]$ for some $M \in \mc{O}$, then it is easy to check that the pre-image $N'$ of $M$ under $M \rightarrow M[\delta^{-1}] \twoheadrightarrow N$ is non-zero and generates $N$. The claim follows. Then, Lemma \ref{lem:adjunction} implies that $\phi$ is surjective. Hence to show that it is an isomorphism, it suffices to calculate the dimension of $\End_{\H_{\mbf{c}}(W)}(P_{\KZ})$. By Lemma \ref{lem:PKZdecomp}, 
$$
P_{\KZ} = \bigoplus_{\lambda \in \Irr (W)} \dim \KZ(L(\lambda)) P(\lambda).
$$
For any module $M \in \mc{O}$, we have $\dim \Hom_{\H_{\mbf{c}}(W)}(P(\lambda),M) = [M : L(\lambda)]$, the multiplicity of $L(\lambda)$ in a composition series for $M$. This can be proved by induction on the length of $M$, using the fact that $\dim \Hom_{\H_{\mbf{c}}(W)}(P(\lambda),L(\mu)) = \delta_{\lambda,\mu}$. Hence, using BGG reciprocity, we have 
\begin{align*}
\dim \End_{\H_{\mbf{c}}(W)}(P_{\KZ}) & = \bigoplus_{\lambda,\mu} \dim \KZ(L(\lambda)) \dim \KZ(L(\mu)) \Hom_{\H_{\mbf{c}}(W)}(P(\lambda),P(\mu)) \\
 & = \bigoplus_{\lambda,\mu} \dim \KZ(L(\lambda)) \dim \KZ(L(\mu)) [P(\mu):L(\lambda)] \\
 & = \bigoplus_{\lambda,\mu,\nu} \dim \KZ(L(\lambda)) \dim \KZ(L(\mu)) [P(\mu) : \Delta(\nu)] [\Delta(\nu) : L(\lambda)] \\
 & = \bigoplus_{\lambda,\mu,\nu} \dim \KZ(L(\lambda)) \dim \KZ(L(\mu)) [\Delta(\nu) : L(\mu)] [\Delta(\nu) : L(\lambda)] \\
 & = \bigoplus_{\nu} (\dim \KZ(\Delta(\nu)))^2\\
\end{align*}
Since $\Delta(\nu)$ is a free $\C[\h]$-module of rank $\dim (\nu)$, its localization $\Delta(\nu)[\delta^{-1}]$ is an integrable connection of rank $\dim(\nu)$. Hence, $\dim \KZ(\Delta(\nu)) = \dim (\nu)$ and thus $\dim \End_{\H_{\mbf{c}}(W)}(P_{\KZ}) = |W|$. 
\end{proof}

Let $\mc{O}_{\tor}$ be the Serre subcategory of $\mc{O}$ consisting of all modules that are torsion with respect to the Ore set $\{ \delta^N \}_{N \in \mathbb{N}}$. The torsion submodule $M_{\tor}$ of $M \in \mc{O}$ is the set $\{ m \in M \ | \  \exists \ N \gg 0 \ \textrm{s.t.} \ \delta^N \cdot m = 0 \}$. Then, $M$ is torsion if $M_{\tor} = M$. 

\begin{cor}
The $\KZ$-functor is a quotient functor with kernel $\mc{O}_{\tor}$ i.e.
$$
\KZ : \mc{O} / \mc{O}_{\tor} \stackrel{\sim}{\longrightarrow} \Lmod{\mc{H}_{\mbf{q}}(W)}.
$$
\end{cor}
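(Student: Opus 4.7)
The plan is to break the statement into two independent assertions: first, that $\ker(\KZ) = \mc{O}_{\tor}$; and second, that the induced functor $\overline{\KZ}: \mc{O}/\mc{O}_{\tor} \to \Lmod{\mc{H}_{\mbf{q}}(W)}$ is an equivalence of categories. Together these give the desired quotient description.

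The first claim is essentially a diagram chase through \eqref{eq:KZdiagram}. The inclusion $\mc{O}_{\tor} \subseteq \ker(\KZ)$ is immediate since $\KZ$ factors through localisation at $\delta$. For the reverse, I would chase the diagram backwards: if $\KZ(M) = 0$, then the vanishing of the associated local system, combined with the Riemann--Hilbert correspondence (an equivalence on regular integrable connections), forces $(M[\delta^{-1}])^W = 0$; Corollary \ref{cor:equivWdecsend} then yields $M[\delta^{-1}] = 0$, so $M \in \mc{O}_{\tor}$.

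For the second claim, I would use the representability $\KZ(-) \simeq \Hom_{\H_{\mbf{c}}(W)}(P_{\KZ}, -)$ from \eqref{eq:KZrep}. Choose a projective generator $P$ of $\mc{O}$ (which exists by Theorem \ref{thm:projectives}) and put $A := \End_{\H_{\mbf{c}}(W)}(P)^{op}$, a finite-dimensional algebra with $\mc{O} \simeq \Lmod{A}$. By Lemma \ref{lem:PKZdecomp}, $P_{\KZ}$ is a summand of $P^{\oplus n}$ for some $n$, so under this equivalence it corresponds to $Ae$ for some idempotent $e \in A$. The double centralizer theorem \ref{thm:double2} identifies $eAe \simeq \mc{H}_{\mbf{q}}(W)$, and $\KZ$ is intertwined with the functor $e \cdot (-): \Lmod{A} \to \Lmod{eAe}$. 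The problem then reduces to the standard algebraic fact that, for an idempotent $e$ in a finite-dimensional algebra $A$, the functor $e \cdot (-)$ is a Serre quotient functor with kernel $\{M : eM = 0\}$: one produces the right adjoint $\Hom_{eAe}(eA, -)$ and checks that its counit is an isomorphism (as it reduces to the identity $\Hom_{eAe}(eAe, V) = V$), which makes the right adjoint fully faithful and so identifies $e \cdot (-)$ with the localisation.

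I expect the main obstacle to be the kernel identification, not because any one step is hard, but because it implicitly uses a nontrivial input from \cite{GGOR}: the fact that $(M[\delta^{-1}])^W$ actually belongs to $\mathrm{Conn}^{\reg}(\h_{\reg}/W)$ for every $M \in \mc{O}$. Without this regularity, Riemann--Hilbert cannot be invoked to pass from vanishing of the local system to vanishing of the connection. The second half of the argument, by contrast, is essentially formal once the existence of a projective generator and the double centralizer theorem are in hand.
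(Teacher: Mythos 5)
Your proof is correct, and it is worth noting that the second half is essentially the same computation as the paper's, but packaged more cleanly. The paper first identifies $\ker(\KZ) = \mc{O}_{\tor}$ via the same diagram chase (all arrows in \eqref{eq:KZdiagram} are equivalences except localisation), then claims it ``just'' remains to check essential surjectivity, producing the explicit preimage $M = \Hom_{\mc{H}_{\mbf{q}}(W)}(\Hom_{\H_{\mbf{c}}(W)}(P_{\KZ},\H_{\mbf{c}}(W)),N)$ and verifying $\KZ(M) \simeq N$. Under your dictionary $\mc{O}\simeq\Lmod{A}$, $P_{\KZ}\leftrightarrow Ae$, this $M$ \emph{is} $\Hom_{eAe}(eA,N)$, i.e.\ the right adjoint, and the verification $\KZ(M)\simeq N$ \emph{is} the statement that the counit is an isomorphism. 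So the same right adjoint appears in both proofs; you simply recognise it as such and invoke the resulting Serre-quotient structure (exact functor with fully faithful right adjoint), which is more robust than the paper's argument: strictly speaking, exactness plus kernel identification plus essential surjectivity do not by themselves force the induced functor $\mc{O}/\mc{O}_{\tor}\to\Lmod{\mc{H}_{\mbf{q}}(W)}$ to be full, whereas the fully faithful right adjoint gives this for free. You have made explicit a step the paper leaves tacit.

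One minor technical caveat worth tightening: from Lemma \ref{lem:PKZdecomp} one gets that $P_{\KZ}$ is a direct summand of $P^{\oplus n}$, hence corresponds under $\mc{O}\simeq\Lmod{A}$ to a finitely generated projective $Q$, which is a summand of $A^{\oplus n}$ but need not be of the form $Ae$ with $e\in A$ (the multiplicities $\dim\KZ(L(\lambda))$ may exceed one). This is harmless: either replace $P$ by the generator $P\oplus P_{\KZ}$ so that $P_{\KZ}$ genuinely becomes a summand of the new generator and thus corresponds to $A'e$, or pass to $M_n(A)$ by Morita, or just apply the general Serre-quotient statement for $\Hom_A(Q,-)$ with $Q$ a finitely generated projective. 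You are also right to flag that the kernel argument silently relies on the GGOR regularity theorem ($(M[\delta^{-1}])^W$ lies in $\mathrm{Conn}^{\reg}(\h_{\reg}/W)$), without which Riemann--Hilbert cannot be run in reverse; the paper likewise cites this upstream without re-emphasising it in the corollary's proof.
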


\begin{proof}
Notice that, of all the functors in diagram \ref{eq:KZdiagram}, only the first, $M \mapsto M[\delta^{-1}]$ is not an equivalence. We have $M[\delta^{-1}] = 0$ if and only if $M$ is torsion. Therefore, $\KZ(M) = 0$ if and only if $M \in \mc{O}_{\tor}$. Thus, we just need to show that $\KZ$ is essentially surjective; that is, for each $N \in \Lmod{\mc{H}_{\mbf{q}}(W)}$ there exists some $M \in \mc{O}$ such that $\KZ(M) \simeq N$. We fix $N$ to be some finite dimensional $\mc{H}_{\mbf{q}}(W)$-module. Recall that $P_{KZ} \in \mc{O}$ is a $(\H_{\mbf{c}}(W),\mc{H}_{\mbf{q}}(W))$-bimodule. Therefore, $\Hom_{\H_{\mbf{c}}(W)}(P_{KZ},\H_{\mbf{c}}(W))$ is a $(\mc{H}_{\mbf{q}}(W),\H_{\mbf{c}}(W))$-bimodule and 
$$
M = \Hom_{\mc{H}_{\mbf{q}}(W)}(\Hom_{\H_{\mbf{c}}(W)}(P_{KZ},\H_{\mbf{c}}(W)),N)
$$
is a module in category $\mc{O}$. Applying (\ref{eq:KZrep}) and the double centralizer theorem, Theorem \ref{thm:double2}, we have 
\begin{align*}
\KZ(M) & = \Hom_{\H_{\mbf{c}}(W)}(P_{KZ},M) \\
 & = \Hom_{\H_{\mbf{c}}(W)}(P_{KZ},\Hom_{\mc{H}_{\mbf{q}}(W)}(\Hom_{\H_{\mbf{c}}(W)}(P_{KZ},\H_{\mbf{c}}(W)),N)) \\
 & \simeq \Hom_{\mc{H}_{\mbf{q}}(W)}(\Hom_{\H_{\mbf{c}}(W)}(P_{KZ},\H_{\mbf{c}}(W)) \o_{\H_{\mbf{c}}(W)} P_{KZ},N) \\
 & \simeq \Hom_{\mc{H}_{\mbf{q}}(W)}(\End_{\H_{\mbf{c}}(W)}(P_{KZ}), N) \\
 & \simeq \Hom_{\mc{H}_{\mbf{q}}(W)}(\mc{H}_{\mbf{q}}(W),N) = N,
\end{align*}
where we have used \cite[Proposition 4.4 (b)]{ARS} in the third line. 
\end{proof}

\subsection{Example}

Let's take $W = \Z_n$. In this case the Hecke algebra $\mc{H}_{\mbf{q}}(\Z_n)$ is generated by a single element $T := T_1$ and satisfies the defining relation
$$
\prod_i (T - q_i^{m_i}) = 0.
$$
Unlike examples of higher rank, the algebra $\mc{H}_{\mbf{q}}(\Z_n)$ is commutative. Let $\zeta \in \Cs$ be defined by $s (x) = \zeta x$. We fix $\alpha_s = \sqrt{2} x$ and $\alpha_s^{\vee} = \sqrt{2} y$, which implies that $\lambda_s = \zeta$. Let $\Delta(i) = \C[x] \o e_i$ be the standard module associated to the simple $\Z_n$-module $e_i$, where $s \cdot e_i = \zeta^i e_i$. The module $\Delta(i)$ is free as a $\C[x]$-module and the action of $y$ is uniquely defined by $y \cdot (1 \o e_i) = 0$. Since 
$$
y = \pa_x - \sum_{i = 1}^{m-1} \frac{2 \mbf{c}_i}{1 - \zeta^i} \frac{1}{x} (1 -s^i) 
$$
under the Dunkl embedding, we have $\Delta(i)[\delta^{-1}] = \C[x,x^{-1}] \o e_i$ with connection defined by 
$$
\pa_x \cdot e_i = \frac{a_i}{x} e_i
$$
where
$$
a_i := 2 \sum_{j = 1}^{m-1} \frac{\mbf{c}_j (1 - \zeta^{ij}}{1 - \zeta^j}.
$$
It is clear that this connection is regular. Then, the horizontal sections sheaf of $\Delta(i)[\delta^{-1}]$ (i.e. $\mathsf{DR}(\Delta(i)[\delta^{-1}])$) on $\h_{\reg}^{an}$ is dual to the sheaf of multivalued solutions $\C \cdot x^{a_i} = \Sol(\Delta(i)[\delta^{-1}])$ of the differential equation $x \pa_x - a_i = 0$. But this is not what we want. We first want to descend the $\dd(\Cs) \rtimes \Z_n$-module to the $\dd(\h_{\reg})^{\Z_n} = \dd(\h_{\reg} / \Z_n)$-module $(\Delta(i)[\delta^{-1}])^{\Z_n}$. Then $\KZ(\Delta(i))$ is defined to be the horizontal sections of $(\Delta(i)[\delta^{-1}])^{\Z_n}$. Let $z = x^n$ so that $\C[\h_{\reg} / \Z_n] = \C[z,z^{-1}]$. Then, an easy calculation shows that $\pa_z = \frac{1}{n x^{n-1}} \pa_x$ (check this!). Since
$$
(\Delta(i)[\delta^{-1}])^{\Z_n} = \C[z,z^{-1}] \cdot (x^{n-i} \o e_i) =: \C[z,z^{-1}] \cdot u_i,
$$
we see that 
$$
\pa_z \cdot (x^{n-i} \o e_i) = \frac{1}{n x^{n-1}} \pa_x \cdot (x^{n-i} \o e_i) = \frac{n - i + a_i}{n z} u_i.
$$
Hence, by equation (\ref{eq:dual}) $\KZ(\Delta(i))$ is the duality functor $\mathbb{D}'$ applied to the local system of solutions $\C \cdot z^{b_i} = \Sol((\Delta(i)[\delta^{-1}])^{\Z_n})$, where 
$$
b_i = \frac{n - i + a_i}{n}.
$$
At this level, the duality functor $\mathbb{D}'$ simply sends the local system $\C \cdot z^{b_i}$ to the local system $\C \cdot z^{-b_i}$. The generator $T$ of $\pi_1(\h_{\reg} / \Z_n)$ is represented by the loop $t \mapsto \exp(2 \pi \sqrt{-1} t)$. Therefore 
$$
T \cdot z^{-b_i} = \exp(-2 \pi \sqrt{-1} b_i) z^{-b_i}.
$$
It turns out that, in the rank one case, $L(i)[\delta^{-1}] = 0$ if $L(i) \neq \Delta(i)$. Thus,
$$
\KZ(L(i)) = \left\{ \begin{array}{ll}
\KZ(\Delta(i)) & \textrm{if $L(i) = \Delta(i)$} \\
0 & \textrm{otherwise}.
\end{array} \right.
$$

\subsection{Application}

As an application of the Double centralizer theorem, Theorem \ref{thm:double2}, we mention the following very useful result due to Vale, \cite{ValeThesis}.

\begin{thm}
The following are equivalent: 
\begin{enumerate}
\item $\H_{\mbf{k}}(W)$ is a simple ring. 
\item Category $\mc{O}$ is semi-simple.
\item The cyclotomic Hecke algebra $\mc{H}_{\mbf{q}}(W)$ is semi-simple.
\end{enumerate}
\end{thm}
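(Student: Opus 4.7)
I will prove the cycle $(2)\Rightarrow(1)$, $(2)\Leftrightarrow(3)$, and $(1)\Rightarrow(2)$, the last being the main obstacle. The recurring auxiliary property throughout is $\mc{O}_\tor=0$, because this is precisely the condition under which the $\KZ$-functor becomes an outright equivalence $\mc{O}\simeq\Lmod{\mc{H}_{\mbf{q}}(W)}$ (by the corollary following Theorem \ref{thm:double2}).

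For $(2)\Rightarrow(1)$, assume $\mc{O}$ is semi-simple. Then each $L(\lambda)=\Delta(\lambda)\cong\C[\h]\otimes\lambda$ is free over $\C[\h]$, so $L(\lambda)[\delta^{-1}]$ is a nonzero module over $\dd(\h_{\reg})\rtimes W$. The latter ring is simple (as established on route to the proposition on simplicity of $\dd(\h)^W$ in Section \ref{sec:one}), so $L(\lambda)[\delta^{-1}]$ is faithful; since $\H_{\mbf{k}}(W)$ is a Noetherian domain embedding into its localization, this gives $\ann_{\H_{\mbf{k}}(W)}L(\lambda)=0$ for every $\lambda$. The generalized Duflo theorem (Theorem \ref{thm:GDT}) then identifies every primitive ideal with some $\ann L(\lambda)=0$, and in a Noetherian ring every proper two-sided ideal lies in a maximal (hence primitive) ideal; therefore $\H_{\mbf{k}}(W)$ has no proper nonzero two-sided ideal.

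For $(2)\Leftrightarrow(3)$, I use the double centralizer theorem (Theorem \ref{thm:double2}). If $\mc{O}$ is semi-simple, then $P(\lambda)=L(\lambda)=\Delta(\lambda)$ is $\C[\h]$-free of rank $\dim\lambda$, so the integrable connection $(\Delta(\lambda)[\delta^{-1}])^W$ has rank $\dim\lambda$ and $\dim\KZ(L(\lambda))=\dim\lambda$. Lemma \ref{lem:PKZdecomp} gives $P_{\KZ}=\bigoplus_\lambda\dim(\lambda)\cdot L(\lambda)$, and Schur's lemma together with Theorem \ref{thm:double2} yields $\mc{H}_{\mbf{q}}(W)\cong\prod_\lambda M_{\dim\lambda}(\C)$, which is semi-simple. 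Conversely, if $\mc{H}_{\mbf{q}}(W)$ is semi-simple, then -- since it has dimension $|W|$ and is a flat deformation of $\C W$, so by Tits' deformation theorem has exactly $|\Irr W|$ simples -- the quotient equivalence $\mc{O}/\mc{O}_\tor\simeq\Lmod{\mc{H}_{\mbf{q}}(W)}$ together with the fact that $\mc{O}$ itself has only $|\Irr W|$ simples forces $\mc{O}_\tor=0$; thus $\KZ:\mc{O}\stackrel{\sim}{\to}\Lmod{\mc{H}_{\mbf{q}}(W)}$ is a full equivalence and semi-simplicity transfers.

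The implication $(1)\Rightarrow(2)$ is the main obstacle. The easy half is that $\H_{\mbf{k}}(W)$ simple forces $\mc{O}_\tor=0$: any nonzero $M\in\mc{O}_\tor$ is finitely generated over $\C[\h]$, so some $\delta^N$ lies in $\ann M$, contradicting simplicity. Thus $\KZ$ becomes an equivalence $\mc{O}\simeq\Lmod{\mc{H}_{\mbf{q}}(W)}$. The difficult half is to then extract semi-simplicity of $\mc{O}$ from the simplicity of $\H_{\mbf{k}}(W)$ alone: the BGG/Wedderburn identities $\dim\lambda=\sum_\mu[\Delta(\lambda):L(\mu)]\dim\KZ(L(\mu))$ and $\sum_\lambda(\dim\KZ L(\lambda))^2\le|W|$ yield only the self-consistent equivalence $(2)\Leftrightarrow(3)$ and do not pin down either property from $(1)$. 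To close the gap I would argue by contradiction: a non-semi-simple $\mc{O}$ would give a singular vector $v\in\Delta(\lambda)=\C[\h]\otimes\lambda$ generating a proper submodule whose head is some $L(\mu)$ with $\mu<_{\mbf{c}}\lambda$; a careful analysis of $v$ via the Dunkl embedding should show that the $\H_{\mbf{k}}(W)$-submodule generated by $v$ has support contained in the zero locus of a power of the discriminant $\delta$, producing a nonzero object of $\mc{O}_\tor$ and contradicting (1). Carrying out this structural analysis uniformly in $\lambda$, $\mu$, and the singular vector $v$ is the technical crux of Vale's original proof.
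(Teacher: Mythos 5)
The paper cites this theorem to \cite{ValeThesis} without reproducing the proof, so there is no in-text argument to compare against; here is an assessment of your proposal on its own merits. Your cycle $(2)\Rightarrow(1)$, $(2)\Leftrightarrow(3)$ is essentially correct, with one small repair: in $(2)\Rightarrow(1)$, $\H_{\mbf{k}}(W)$ is prime but not a domain, so the right phrasing is that $\ann_{\H_{\mbf{k}}(W)}L(\lambda)$ is a two-sided ideal whose image in $\H_{\mbf{k}}(W)[\delta^{-r}]\simeq\dd(\h_{\reg})\rtimes W$ vanishes, and primeness together with the regularity of $\delta^r$ then force $\ann_{\H_{\mbf{k}}(W)}L(\lambda)=0$; the appeal to the generalized Duflo theorem then goes through as you wrote it.

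The genuine gap is $(1)\Rightarrow(2)$, and the contradiction you sketch for it would not materialize. The $\H_{\mbf{k}}(W)$-submodule $N\subseteq\Delta(\lambda)=\C[\h]\otimes\lambda$ generated by a singular vector is a nonzero $\C[\h]$-submodule of a free $\C[\h]$-module, hence torsion-free over $\C[\h]$; in particular $N$ has full support on $\h$ and is never an object of $\mc{O}_{\tor}$. Nothing forces its simple head $L(\mu)$ to be torsion either. So no analysis of the singular vector via the Dunkl embedding can hand you the nonzero object of $\mc{O}_{\tor}$ you need, and this approach cannot close the implication.

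The route that does close it uses pieces you already have on the table. As you correctly observed, $(1)$ forces $\mc{O}_{\tor}=0$, so $\KZ:\mc{O}\rightarrow\Lmod{\mc{H}_{\mbf{q}}(W)}$ becomes an equivalence. Then $\mc{H}_{\mbf{q}}(W)$ inherits the finite global dimension of $\mc{O}$ established in lecture two. But the cyclotomic Hecke algebra is a symmetric (hence self-injective) finite-dimensional algebra, and a self-injective algebra of finite global dimension is necessarily semi-simple: the last term $P_n$ of a minimal projective resolution of any simple is injective, hence a direct summand of $P_{n-1}$ sitting inside $\mathrm{rad}\,P_{n-1}$, hence zero. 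This gives $(1)\Rightarrow(3)$, and your argument for $(3)\Rightarrow(2)$ then completes the cycle. The symmetric-algebra property of $\mc{H}_{\mbf{q}}(W)$ is the only new ingredient, so it is the one hypothesis you should flag explicitly; it is known for the groups $G(l,p,n)$ and, case by case, for the exceptional complex reflection groups.
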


\subsection{The $\KZ$ functor for $\Z_2$}

In this section, which is designed to be an exercise for the reader, we'll try to describe what the $\KZ$ functor does to modules in category $\mc{O}$ when $W = \Z_2$, our favourite example. The Hecke algebra $\mc{H}_{\mbf{q}}(\Z_2)$ is the algebra generated by $T := T_1$ and satisfying the relation $(T - 1)(T - \mbf{q}) = 0$. The defining relation for $\H_{\mbf{c}}(\Z_2)$ is
$$
[y,x] = 1 - 2 \mbf{c}s,
$$ 
see example \ref{example:s2}. We have $\mbf{q} = \det (s) \exp (2 \pi \sqrt{-1} \mbf{c}) = - \exp (2 \pi \sqrt{-1} \mbf{c})$. 

\begin{exercise}\label{ex:lambda2}
Describe $\KZ(\Delta(\lambda))$ as a $\mc{H}_{\mbf{q}}(\Z_2)$-module. 
\end{exercise}

Next, we will calculate $\KZ(P(\rho_1))$, assuming that $\mbf{c} = \frac{1}{2} + m$ for some $m \in \Z_{\ge 0}$.  For this calculation, we will use the explicit description of $P(\rho_1)$ given in section \ref{sec:quiver}. Recall that $P(\rho_1)$ is the $\H_{\mbf{c}}(\Z_2)$-module $\C[x] \o \rho_1 \oplus \C[x] \o \rho_0$ with 
$$
x \cdot (1 \o \rho_1) = x \o \rho_1 + 1 \o \rho_0, \quad x \cdot (1 \o \rho_0) = x \o \rho_0,
$$
$$
y \cdot (1 \o \rho_1) = x^{2m} \o \rho_0, \quad y \cdot (1 \o \rho_0) = 0.
$$
This implies that 
$$
\frac{1}{x} \cdot (1 \o \rho_1) = \frac{1}{x} \o \rho_1 - \frac{1}{x^{2}} \o \rho_0.
$$
If we write $P(\rho_1)[\delta^{-1}] = \C[x^{\pm 1}] \cdot a_1 \oplus \C[x^{\pm 1}] \cdot a_0$, where $a_1 = 1 \o \rho_1$ and $a_0 = 1 \o \rho_0$, then
$$
\pa_x \cdot a_1 = (y + \frac{c}{x} (1 - s)) \cdot a_1 = y \cdot a_1 + \frac{2c}{x} \cdot a_1. 
$$
Now,  
$$
y \cdot a_1 = x^{2m} \o \rho_0 = x^{2m} \cdot a_0,
$$
hence $\pa_x \cdot a_1 = \frac{2c}{x} \cdot a_1 + x^{2m} \cdot a_0$. Also, $\pa_x \cdot a_0 = 0$. A free $\C[z^{\pm 1}]$-basis of $P(\rho_0)[\delta^{-1}]^{\Z_2}$ is given by $u_1 = x \cdot a_1$ and $u_0 = a_0$. Therefore
$$
\pa_z \cdot u_1 = \frac{1 + 2c}{2 z} u_1 + \frac{1}{2} z^m u_0 = \frac{m + 1}{z} u_1 + \frac{1}{2} z^m u_0
$$
and $\pa_z \cdot u_0 = 0$, where we used the fact that $\mbf{c} = \frac{1}{2} + m$. Hence $\KZ(P(\rho_1))$ is given by the connection
$$
\pa_z + \left( \begin{array}{cc}
\frac{m+1}{z} & 0\\
\frac{1}{2} z^m & 0 
\end{array} \right) .
$$
Two linearly independent solutions of this equation are
$$
g_1 (z) = \left( \begin{array}{c}
z^{-(m+1)}\\
\frac{1}{2} \mathrm{ln} (z)   
\end{array} \right), \quad g_2 (z) = \left( \begin{array}{c}
0\\
1  
\end{array} \right).
$$
If, in a small, simply connected neighborhood of $1$, we choose the branch of $\mathrm{ln} (z)$ such that $\mathrm{ln} (1) = 0$, then $\gamma(0) = 0$ and $\gamma(1) = 2 \pi \sqrt{-1}$, where 
$$
\gamma : [0,1] \rightarrow \C, \quad \gamma(t) = \mathrm{ln}( \exp(2 \pi \sqrt{-1} t)).
$$
Therefore $\KZ(P(\rho_1))$ is the two-dimensional representation of $\mc{H}_{\mbf{q}}(\Z_2)$ given by 
$$
T \mapsto \left( \begin{array}{cc}
1 & 0\\
2 \pi \sqrt{-1} & 1 
\end{array} \right) .
$$
This is isomorphic to the left regular representation of $\mc{H}_{\mbf{q}}(\Z_2)$.

\begin{exercise}\label{ex:PKZ}
For all $\mbf{c}$, describe $P_{\KZ}$. 
\end{exercise}
 
\subsection{Additional remark}

\begin{itemize}
\item Most of the results in this lecture first appeared in \cite{GGOR} and our exposition is based mainly on this paper. 
\item Further details on the $\KZ$-functor are also contained in \cite{RouquierSurvey}. 
\end{itemize}

\newpage

\section{Symplectic reflection algebras at $t = 0$}\label{sec:five}

Recall from the first lecture that we used the Satake isomorphism to show that 
\begin{itemize}
\item The algebra $Z(\H_{0,\mbf{c}}(G))$ is isomorphic to $\mbf{e} \H_{0,\mbf{c}}(G) \mbf{e}$ and $\H_{0,\mbf{c}}(G)$ is a finite $Z(\H_{0,\mbf{c}}(G))$-module. 
\item The centre of $\H_{1,\mbf{c}}(G)$ equals $\C$. 
\end{itemize}

In this lecture we'll consider symplectic reflection algebras ``at $t = 0$'' and, in particular, the geometry of $\ZH_{\mbf{c}}(G) := Z(\H_{0,\mbf{c}}(G))$. 

\begin{defn}
The \textit{generalized Calogero-Moser space} $X_{\mbf{c}}(G)$ is defined to be the affine variety $\Spec \ZH_{\mbf{c}}(G)$. 
\end{defn}

The (classical) Calogero-Moser space was introduced by Kazhdan, Kostant and Sternberg \cite{KKS} and studied further by Wilson in the wonderful paper \cite{Wilson}. Calogero \cite{Calogero} studied the integrable system describing the motion of $n$ massless particles on the real line with a repulsive force between each pair of particles, proportional to the square of the distance between them. In \cite{KKS}, Kazhdan, Kostant and Sternberg give a description of the corresponding  phase space in terms of Hamiltonian reduction. By considering the real line as being the imaginary axis sitting in the complex plane, Wilson interprets the Calogero-Moser phase space as an affine variety
\begin{equation}\label{eq:ClassicalCalogeroMoser}
\mc{C}_n = \{ (X,Y;u,v) \in \textrm{Mat}_n(\C) \times \textrm{Mat}_n(\C) \times \C^n \times (\C^n)^* \, | \, [X,Y] + I_n = v \cdot u \, \} // GL_n(\C).
\end{equation}
He showed, \cite[\S 1]{Wilson}, that $\mc{C}_n$ is a smooth, irreducible, symplectic affine variety. For further reading see \cite{EtingofCalogeroMoser}. The relation to rational Cherednik algebras comes from an isomorphism by Etingof and Ginzburg between the affine variety $X_{\mbf{c} = 1}(\s_n) = \Spec \, \ZH_{\mbf{c} = 1}(S_n)$, and the Calogero-Moser space $\mc{C}_n$:
$$
\psi_n \, : \, X_{\mbf{c} = 1}(\s_n) \stackrel{\sim}{\longrightarrow} \mc{C}_n.
$$
It is an isomorphism of affine symplectic varieties and implies that $X_{\mbf{c}}(\s_n)$ is smooth when $\mbf{c} \neq 0$.\\

The filtration on $\H_{0,\mbf{c}}(G)$ induces, by restriction, a filtration on $\ZH_{\mbf{c}}(G)$. Since the associated graded of $\ZH_{\mbf{c}}(G)$ is $\C[V]^G$, $X_{\mbf{c}}(G)$ is reduced and irreducible. 

\begin{example}
When $G = \Z_2$ acts on $\C^2$, the centre of $\H_{\mbf{c}}(\Z_2)$ is generated by $A := x^2, B := xy - \mbf{c} s$ and $C = y^2$. Thus,
$$
X_{\mbf{c}}(\Z_2) \simeq \frac{\C[A,B,C]}{(AC - (B + \mbf{c})(B - \mbf{c}))}
$$
is the affine cone over $\mathbb{P}^1 \subset \mathbb{P}^2$ when $\mbf{c} = 0$, but is a smooth affine surface for all $\mbf{c} \neq 0$, see figure \ref{fig:resdef}.  
\end{example}

\subsection{Representation Theory}

Much of the geometry of the generalized Calogero-Moser space is encoded in the representation theory of the corresponding symplectic reflection algebra (a consequence of the double centralizer property!). In particular, a closed point of $X_{\mbf{c}}$ is singular if and only if there is a ``small'' simple module supported at that point - this statement is made precise in Proposition \ref{prop:singulariffsmall} below. 

The fact that $\H_{0,\mbf{c}}(G)$ is a finite module over its centre implies that it is an example of a P.I. (\textit{polynomial identity}) ring. This is a very important class of rings in classical ring theory and can be thought of as rings that are "close to being commutative". We won't recall the definition of a P.I. ring here, but refer the read to Appendix I.13 of the excellent book \cite{BrownGoodearlbook}. 

\begin{lem}\label{lem:dimapprox}
There exists some $N > 0$ such that $\dim L \le N$ for \textit{all} simple $\H_{0,\mbf{c}}(G)$-modules $L$. 
\end{lem}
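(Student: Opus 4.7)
The strategy is to use the hypothesis that $\H_{0,\mbf{c}}(G)$ is a finitely generated module over its (affine commutative) centre $\ZH_{\mbf{c}}(G)$ to reduce simple modules to modules over a finite-dimensional quotient whose dimension can be bounded uniformly.

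First I would fix a finite generating set $h_1,\ldots,h_k$ of $\H_{0,\mbf{c}}(G)$ as a module over $Z := \ZH_{\mbf{c}}(G)$; such a set exists by Theorem \ref{thm:centrespherical} combined with the Satake isomorphism. Then for any maximal ideal $\mf{m} \subset Z$ the quotient $\H_{0,\mbf{c}}(G)/\mf{m}\H_{0,\mbf{c}}(G)$ is a finite-dimensional $\C$-algebra spanned by the images of $h_1,\ldots,h_k$, so its dimension is at most $k$.

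Next I would take an arbitrary simple $\H_{0,\mbf{c}}(G)$-module $L$ and argue that the action of $Z$ on $L$ factors through a character. Pick any nonzero $v \in L$; simplicity gives $L = \H_{0,\mbf{c}}(G)\cdot v$, and since $h_1,\ldots,h_k$ generate $\H_{0,\mbf{c}}(G)$ over $Z$, we obtain a surjection $Z^k \twoheadrightarrow L$ of $Z$-modules. In particular $L$ is countably generated as a $\C$-vector space (as $Z$ itself is affine). Dixmier's version of Schur's lemma then shows that $\End_{\H_{0,\mbf{c}}(G)}(L) = \C$, because $\C$ is an algebraically closed uncountable field and the endomorphism ring is a division algebra of countable $\C$-dimension. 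Consequently $Z$ acts on $L$ through a character $\chi : Z \to \C$ whose kernel $\mf{m}$ is a maximal ideal.

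Finally, because $Z$ is central in $\H_{0,\mbf{c}}(G)$, the two-sided ideal $\mf{m}\H_{0,\mbf{c}}(G) = \H_{0,\mbf{c}}(G)\mf{m}$ annihilates $L$, so $L$ is a module over $\H_{0,\mbf{c}}(G)/\mf{m}\H_{0,\mbf{c}}(G)$. Combining with the uniform bound of the first step yields $\dim_\C L \le \dim_\C \H_{0,\mbf{c}}(G)/\mf{m}\H_{0,\mbf{c}}(G) \le k$, independent of $L$. The only subtle point in the argument is the appeal to Dixmier's lemma to conclude that $Z$ acts by scalars; everything else is essentially formal once the finite generation of $\H_{0,\mbf{c}}(G)$ over $Z$ is in hand.
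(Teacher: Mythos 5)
Your proof is correct, but it takes a genuinely different and more elementary route than the paper's. The paper invokes the theory of P.I.\ rings: since $\H_{0,\mbf{c}}(G)$ is a prime Noetherian ring module-finite over its affine centre, it is a prime P.I.\ ring with a well-defined P.I.\ degree, and Kaplansky's Theorem then bounds the dimension of every simple module by that degree. You instead use nothing beyond finite generation over the centre $Z$ and Dixmier's refinement of Schur's lemma (which is applicable precisely because $L$ inherits countable $\C$-dimension from being a quotient of $Z^k$, and $\C$ is uncountable and algebraically closed). Both arguments are valid. What the paper's heavier machinery buys is a structurally meaningful bound: the P.I.\ degree is exactly the generic dimension of a simple module (identified with $|G|$ in Theorem \ref{thm:genericisregularrep}), and the description $\H_{0,\mbf{c}}(G)/\ann L \simeq \mathrm{Mat}_m(\C)$ feeds directly into the Azumaya locus discussion that follows. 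Your bound $k$ (the number of $Z$-module generators) is cruder and not intrinsic, but it is perfectly sufficient for the statement at hand, and your argument is self-contained, avoiding any appeal to P.I.\ theory. One minor remark on the Dixmier step: the inference that $\End_{\H_{0,\mbf{c}}(G)}(L)$ has countable $\C$-dimension should go through the injection $\End(L) \hookrightarrow L$, $\phi \mapsto \phi(v)$ for a fixed nonzero $v$, rather than through $\End_\C(L)$ (which is uncountable-dimensional even when $L$ is countable-dimensional); you do not spell this out, but it is the standard reduction and your conclusion is correct.
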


\begin{proof}
It is a consequence of Kaplansky's Theorem, \cite[Theorem 13.3.8]{MR}, that every simple $\H_{0,\mbf{c}}(G)$-module is a finite dimensional vector space over $\C$. More precisely, to every prime P.I. ring is associated its P.I. degree. Then, Kaplansky's Theorem implies that if $L$ is a simple $\H_{0,\mbf{c}}(G)$-module then $\dim L \le \ \textrm{P.I. degree} \, (\H_{0,\mbf{c}}(G))$ and $\H_{0,\mbf{c}}(G) / \ann_{\H_{0,\mbf{c}}(G)} \, L \simeq \textrm{Mat}_m(\C)$.  
\end{proof}

Schur's lemma says that the elements of the centre $\ZH_{\mbf{c}}(G)$ of $\H_{0,\mbf{c}}(G)$ act as scalars on any simple $\H_{0,\mbf{c}}$-module $L$. Therefore, the simple module $L$ defines a character $\chi_L \, : \, \ZH_{\mbf{c}}(G) \rightarrow \C$ and the kernel of $\chi_L$ is a maximal ideal in $\ZH_{\mbf{c}}(G)$. Thus, the character $\chi_L$ corresponds to a closed point in $X_{\mbf{c}}(G)$. Without loss of generality, we will refer to this point as $\chi_L$ and denote by $\ZH_{\mbf{c}}(G)_{\chi_L}$ the localization of $\ZH_{\mbf{c}}(G)$ at the maximal ideal $\Ker \, \chi_L$. We denote by $\H_{0,\mbf{c}}(G)_{\chi}$ the central localization $\H_{0,\mbf{c}}(G) \otimes_{\ZH_{\mbf{c}}(G)} \ZH_{\mbf{c}}(G)_{\chi}$. The Azumaya locus of $\H_{0,\mbf{c}}(G)$ over $\ZH_{\mbf{c}}(G)$ is defined to be 
$$
\mc{A}_{\mbf{c}} := \{ \chi \in X_{\mbf{c}}(W) \, | \, \H_{0,\mbf{c}}(W)_{\chi} \textrm{ is Azumaya over } Z_{\mbf{c}}(W)_{\chi} \}.
$$
As shown in \cite[Theorem III.1.7]{BrownGoodearlbook}, $\mc{A}_{\mbf{c}}$ is a non-empty, open subset of $X_{\mbf{c}}(W)$. 

\begin{remark}\label{rem:ArtinProcesi}
If you are not familiar with the (slightly technical) definition of Azumaya locus, as given in \cite[Section 3]{BrownGoodearl}, then it suffices to note that it is a consequence of the Artin-Procesi Theorem \cite[Theorem 13.7.14]{MR} that the following are equivalent:
\begin{enumerate}
\item $\chi \in \mc{A}_{\mbf{c}}$;
\item $\dim L = \ \textrm{P.I. degree} \, (\H_{0,\mbf{c}}(G))$ for all simple modules $L$ such that $\chi_L = \chi$;
\item there exists a unique simple module $L$ such that $\chi_L = \chi$. 
\end{enumerate}
\end{remark}

In fact, one can say a great deal more about these simple modules of maximal dimension. The following result strengthens Lemma \ref{lem:dimapprox}.

\begin{thm}\label{thm:genericisregularrep}
Let $L$ be a simple $\H_{0,\mbf{c}}(G)$-module. Then $\dim L \le |G|$ and $\dim L = |G|$ implies that $L \simeq \C G$ as a $G$-module. 
\end{thm}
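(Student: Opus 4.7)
The plan is to first bound $\dim L$ via a PI-degree computation, and then, when equality holds, to exploit the Azumaya property at $\chi_L$ by testing $L$ against every central idempotent of $\C G$ to recover the regular-representation decomposition.

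For the upper bound: by the PBW theorem, $\gr \H_{0,\mbf{c}}(G) = \C[V] \rtimes G$, and Etingof--Ginzburg show $\gr \ZH_{\mbf{c}}(G) = \C[V]^G$. Since $G \subset Sp(V)$ acts faithfully, $\C(V)/\C(V)^G$ is a Galois extension with group $G$, so the normal basis theorem identifies $\C(V)$ with the regular representation of $G$ over $\C(V)^G$. This yields $\C(V) \rtimes G \cong M_{|G|}(\C(V)^G)$, and hence the PI-degree of $\H_{0,\mbf{c}}(G)$ equals $|G|$. Kaplansky's theorem (cf.\ the proof of Lemma~\ref{lem:dimapprox}) then gives $\dim L \le |G|$ for every simple $\H_{0,\mbf{c}}(G)$-module $L$.

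Now suppose $\dim L = |G|$. By Remark~\ref{rem:ArtinProcesi}, $\chi := \chi_L$ lies in the Azumaya locus $\mc{A}_\mbf{c}$, so the fiber $\H_{0,\mbf{c}}(G)/\mf{m}_\chi \H_{0,\mbf{c}}(G)$ is isomorphic as an algebra to $M_{|G|}(\C) = \End_\C(L)$. For each $\lambda \in \Irr(G)$ of dimension $d_\lambda$ and character $\chi_\lambda$, introduce the central idempotent
$$e_\lambda = \frac{d_\lambda}{|G|} \sum_{g \in G} \overline{\chi_\lambda(g)}\, g \in \C G \subset \H_{0,\mbf{c}}(G),$$
so that $e_\lambda L$ is the $\lambda$-isotypic component of $L$, of dimension $m_\lambda d_\lambda$, where $m_\lambda$ denotes the multiplicity of the irreducible $\lambda$ in $L|_G$; the goal is to show $m_\lambda = d_\lambda$ for every $\lambda$.

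The main computation is the generic rank of the corner ring $e_\lambda \H_{0,\mbf{c}}(G) e_\lambda$ over $\ZH_{\mbf{c}}(G)$. Passing to the associated graded reduces this to the generic rank of $e_\lambda (\C[V] \rtimes G) e_\lambda$ over $\C[V]^G$, which by the identification $\C(V) \rtimes G \cong M_{|G|}(\C(V)^G)$ equals the $\C(V)^G$-dimension of $e_\lambda M_{|G|}(\C(V)^G) e_\lambda$. Since $e_\lambda$ projects onto the $\lambda$-isotypic component of the regular representation $\C G$ (of dimension $d_\lambda^2$), this corner is $M_{d_\lambda^2}(\C(V)^G)$, so the generic rank is $d_\lambda^4$. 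Finally, because $\chi \in \mc{A}_\mbf{c}$, the algebra $\H_{0,\mbf{c}}(G)$ is Azumaya, hence locally free, over $\ZH_{\mbf{c}}(G)$ near $\chi$, so the direct summand $e_\lambda \H_{0,\mbf{c}}(G) e_\lambda$ is also locally free, of rank $d_\lambda^4$, and its fiber at $\chi$ has dimension $d_\lambda^4$. But this fiber is naturally identified with $e_\lambda (\H_{0,\mbf{c}}(G)/\mf{m}_\chi\H_{0,\mbf{c}}(G)) e_\lambda = \End_\C(e_\lambda L)$, of dimension $(m_\lambda d_\lambda)^2$. Comparing gives $m_\lambda = d_\lambda$ for every $\lambda \in \Irr(G)$, so $L \cong \C G$ as a $G$-module. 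The main technical subtleties lie in justifying that (i) the generic rank of $e_\lambda \H_{0,\mbf{c}}(G) e_\lambda$ over $\ZH_{\mbf{c}}(G)$ equals that of its associated graded over $\C[V]^G$, a standard filtered-module fact, and (ii) the Azumaya property at $\chi$ indeed implies the fiber dimension equals the generic rank.
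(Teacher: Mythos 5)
Your proof is correct, and it takes a genuinely different route from the paper's. The paper treats only the rational Cherednik case: it uses the Dunkl embedding to identify $\H_{0,\mbf{c}}(W)[\delta^{-r}]$ with $\C[\h_{\reg} \times \h^*] \rtimes W$, then over each point of a dense open set explicitly constructs an induced module isomorphic to $\C W$ (exploiting that $W$ acts freely on $\h_{\reg}$, so the fiber of $\h_{\reg}\times\h^*\to (\h_{\reg}\times\h^*)/W$ is a single free orbit), and completes the argument with a short stalk-counting bound $\dim L \ge |W|$ for any simple localized module. Your argument dispenses with the Dunkl embedding entirely: the PI-degree bound follows from the Galois-theoretic identification $\C(V) \rtimes G \cong M_{|G|}(\C(V)^G)$ via the normal basis theorem, and the $G$-module structure of a maximal-dimensional $L$ is recovered by comparing the generic rank of each corner ring $e_\lambda \H_{0,\mbf{c}}(G) e_\lambda$ over $\ZH_{\mbf{c}}(G)$ (computed on the associated graded, where it is $d_\lambda^4$) with the fiber dimension $\dim_\C \End_\C(e_\lambda L) = (m_\lambda d_\lambda)^2$ at the Azumaya point $\chi_L$. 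This buys generality — your argument applies to arbitrary symplectic reflection groups, not only the rational Cherednik ones — at the cost of leaning harder on the Azumaya and prime-PI machinery. The technical points you flag do hold: the generic rank of a finitely generated filtered module over a filtered commutative domain equals that of its associated graded (one inequality is upper semicontinuity of fiber rank via the Rees construction, the other comes from lifting a maximal set of homogeneous $\gr$-linearly independent elements to linearly independent elements of the module itself), and local freeness of an Azumaya algebra over its centre passes to the corner rings cut out by central idempotents.
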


\begin{proof}
We will prove the theorem when $\H_{0,\mbf{c}}(G)$ is a rational Cherednik algebra, by using the Dunkl embedding. The proof for arbitrary symplectic reflection algebras is much harder. 

By the theory of prime P.I. rings and their Azumaya loci, as described above, it suffices to show that there is some dense open subset $U$ of $X_{\mbf{c}}(W)$ such that $L \simeq \C G$ for all simple modules $L$ supported on $U$. Recall that the Dunkl embedding at $t = 0$ gives us an identification $\H_{0,\mbf{c}}(G)[\delta^{-r}] \simeq \C[\h_{\reg} \times \h^*] \rtimes G$, where $r > 0$ such that $\delta^r \in \C[\h]^G$. Then it suffices to show that every simple $ \C[\h_{\reg} \times \h^*] \rtimes G$-module is isomorphic to $\C G$ as a $G$-module. The centre of $ \C[\h_{\reg} \times \h^*] \rtimes G$ (which is just the centre of $\H_{0,\mbf{c}}(G)$ localized at $\delta^{r}$) equals $\C[\h_{\reg} \times \h^*]^G$. For each maximal ideal $\mf{m} \lhd \C[\h_{\reg} \times \h^*]^G$, we will construct a module $L(\mf{m})$ such that $\mf{m} \cdot L(\mf{m}) = 0$ and $L(\mf{m}) \simeq \C G$ as $G$-modules. Then, we'll argue that $\dim L \ge |G|$ for all $\C[\h_{\reg} \times \h^*] \rtimes G$-modules. 

So fix a maximal ideal $\mf{m}$ in $\C[\h_{\reg} \times \h^*]^G$ and let $\mf{n} \lhd \C[\h_{\reg} \times \h^*]$ be a maximal ideal such that $\mf{n} \cap \C[\h_{\reg} \times \h^*]^G = \mf{m}$ (geometrically, we have a finite map $\rho : \h_{\reg} \times \h^* / G \twoheadrightarrow (\h_{\reg} \times \h^*)/G$ and we're choosing some point in the pre-image of $\mf{m}$). If $\C_{\mf{n}}$ is the one dimensional $\C[\h_{\reg} \times \h^*]$-module on which $\mf{n}$ acts trivially, then define
$$
L(\mf{m}) = (\C[\h_{\reg} \times \h^*] \rtimes G) \o_{\C[\h_{\reg} \times \h^*]} \C_{\mf{n}}.
$$
The fact that $\rho^{-1}(\mf{m})$ consists of a single free $G$-orbit implies that $L(\mf{m})$  has the desired properties. 

Now let $L$ be any finite dimensional $\C[\h_{\reg} \times \h^*] \rtimes G$-module. Then $L = \bigoplus_{p \in S} L_p$, as a $\C[\h_{\reg}]$-module. Here $S \subset \h_{\reg}$ is some finite set and $L_{p} = \{ l \in L \ | \ \mf{n}_p^N \cdot l = 0 \textrm{ some $N \gg 0$ } \}$. Now $S$ is $G$-stable and multiplication by $g \in G$ defines an isomorphism of vector spaces $L_p \rightarrow L_{g(p)}$. Since $G \cdot p$ is a free $G$-orbit in $S$, this implies that $\dim L \ge |G | \dim L_p$ as required.   
\end{proof}

\begin{thm}\label{prop:singulariffsmall}
Let $L$ be a simple $\H_{\mbf{c}}(G)$-module then $\dim L = |G|$ if and only if $\chi_L$ is a nonsingular point of $X_{\mbf{c}}(G)$.
\end{thm}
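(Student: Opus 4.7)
The plan is to reinterpret the statement as an equality between the Azumaya locus $\mc{A}_{\mbf{c}}$ of $\H_{0,\mbf{c}}(G)$ over $\ZH_{\mbf{c}}(G)$ and the smooth locus of $X_{\mbf{c}}(G)$, and then prove each inclusion using standard P.I./Azumaya machinery together with the finite global dimension of $\H_{0,\mbf{c}}(G)$.

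First, I would pin down the P.I. degree of $\H_{0,\mbf{c}}(G)$. Theorem \ref{thm:genericisregularrep} tells us $\dim L \le |G|$ for every simple $L$, and its proof (via the Dunkl embedding for the rational Cherednik case, and the symplectic reflection case handled separately) exhibits a dense open subset of $X_{\mbf{c}}(G)$ over which simples have dimension exactly $|G|$ and are free of rank one over the group algebra. Combined with Kaplansky's theorem and the characterization of the P.I. degree as $\max \dim L$, this gives $\mathrm{P.I.deg}(\H_{0,\mbf{c}}(G)) = |G|$. By the Artin--Procesi description recalled in Remark \ref{rem:ArtinProcesi}, we then have $\dim L = |G|$ if and only if $\chi_L \in \mc{A}_{\mbf{c}}$. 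Thus the theorem becomes the assertion $\mc{A}_{\mbf{c}} = X_{\mbf{c}}(G)_{\mathrm{sm}}$.

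For the inclusion $\mc{A}_{\mbf{c}} \subseteq X_{\mbf{c}}(G)_{\mathrm{sm}}$, I would argue as follows. By Corollary \ref{cor:finiteglobal} (extended to $t = 0$ in exactly the same way, since $\gr_{\mc{F}} \H_{0,\mbf{c}}(G) = \C[V] \rtimes G$ has global dimension $\dim V$), the algebra $\H_{0,\mbf{c}}(G)$ has finite global dimension. If $\chi \in \mc{A}_{\mbf{c}}$, then after localization at $\chi$ the algebra $\H_{0,\mbf{c}}(G)_\chi$ is Azumaya over $\ZH_{\mbf{c}}(G)_\chi$, hence Morita equivalent to its centre. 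Morita equivalence preserves global dimension, so $\ZH_{\mbf{c}}(G)_\chi$ is a commutative Noetherian local ring of finite global dimension; the Auslander--Buchsbaum--Serre theorem then says $\ZH_{\mbf{c}}(G)_\chi$ is regular, i.e.\ $\chi$ is a smooth point of $X_{\mbf{c}}(G)$.

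For the reverse inclusion $X_{\mbf{c}}(G)_{\mathrm{sm}} \subseteq \mc{A}_{\mbf{c}}$, which I expect to be the main obstacle, I would use the theorem of Brown--Goodearl that identifies the Azumaya locus with the smooth locus for a prime, affine, Noetherian P.I. algebra that is a finite module over a central regular-in-codimension-one Cohen--Macaulay subring of finite global dimension after localization. Concretely, one shows that the complement $X_{\mbf{c}}(G) \setminus \mc{A}_{\mbf{c}}$ has codimension at least two (using the Poisson structure on $\ZH_{\mbf{c}}(G)$ mentioned in the paper and the fact that, on the smooth part, $X_{\mbf{c}}(G)$ is symplectic so the singular locus has even, hence codimension $\ge 2$), and that $\H_{0,\mbf{c}}(G)$ is a maximal Cohen--Macaulay module over $\ZH_{\mbf{c}}(G)$. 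Then, at any smooth point $\chi$, a standard argument using reflexivity of $\H_{0,\mbf{c}}(G)$ (recall the Etingof--Ginzburg sheaf $\H_{0,\mbf{c}}(G) \mbf{e}$ is reflexive by Theorem \ref{thm:endomorphismring}) and Auslander regularity of the local ring forces $\H_{0,\mbf{c}}(G)_\chi$ to be a matrix algebra over $\ZH_{\mbf{c}}(G)_\chi$, hence Azumaya. The technical heart is verifying the hypotheses of Brown--Goodearl (or equivalently controlling the codimension of the non-Azumaya locus using the symplectic structure); this is exactly where the Poisson geometry of $X_{\mbf{c}}(G)$, developed later in the lecture, enters essentially.
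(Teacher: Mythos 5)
Your overall strategy is the same as the paper's: reduce the statement to the equality $\mc{A}_{\mbf{c}} = X_{\mbf{c}}(G)_{\mathrm{sm}}$ via the Artin--Procesi characterization and the fact that generic simples have dimension $|G|$; get $\mc{A}_{\mbf{c}} \subseteq X_{\mbf{c}}(G)_{\mathrm{sm}}$ from finite global dimension; and get the reverse inclusion from the Brown--Goodearl theorem, whose hypotheses (Auslander--regular, Cohen--Macaulay, complement of $\mc{A}_{\mbf{c}}$ of codimension $\ge 2$) are checked using the filtration and the Poisson geometry of $X_{\mbf{c}}(G)$. Your treatment of the P.I.\ degree and the forward inclusion is fine (and more explicit than the paper's).

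There is, however, a genuine gap in your argument for the codimension estimate, which you yourself flag as the technical heart. You argue that ``on the smooth part $X_{\mbf{c}}(G)$ is symplectic so the singular locus has even codimension, hence $\ge 2$,'' and then seem to treat this as giving $\operatorname{codim}\bigl(X_{\mbf{c}}(G) \setminus \mc{A}_{\mbf{c}}\bigr) \ge 2$. But at this stage you only know $\mc{A}_{\mbf{c}} \subseteq X_{\mbf{c}}(G)_{\mathrm{sm}}$, so the complement of the Azumaya locus a priori contains both the singular locus \emph{and} possibly some smooth points; bounding the singular locus alone does not control it. The paper's fix, which is what you are missing, is Theorem~\ref{thm:constant} (the representation theory of $\H_{0,\mbf{c}}(G)$ is constant along symplectic leaves): it shows that the non-Azumaya locus is a union of symplectic leaves, hence a Poisson subvariety, hence of even dimension. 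Being a proper closed subset of an irreducible $2n$-dimensional variety that is a union of even-dimensional strata, it must then have codimension $\ge 2$. Without Theorem~\ref{thm:constant} (or some substitute linking the dimension of simples to the Poisson stratification, rather than just to the singular locus), the codimension bound on the non-Azumaya locus does not follow. You should also state the Auslander--regular and Cohen--Macaulay hypotheses of Brown--Goodearl for $\H_{0,\mbf{c}}(G)$ itself (deduced, as in the paper, from the corresponding properties of the associated graded ring $\C[V]\rtimes G$), rather than via reflexivity of the Etingof--Ginzburg bimodule, which is not quite the same input.
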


\begin{proof}[Outline of proof]
By Theorem \ref{thm:genericisregularrep}, the dimension of a generic simple module is $|W|$. Since the Azumaya locus $\mc{A}_{\mbf{c}}$ is dense in $X_{\mbf{c}}$, it follows that $\textrm{P.I. degree} \, (\H_{0,\mbf{c}}(G)) = |G|$. The proposition will then follow from the equality $\mc{A}_{\mbf{c}} = X_{\mbf{c}}(G)_{sm}$, where $X_{\mbf{c}}(G)_{sm}$ is the smooth locus of $X_{\mbf{c}}(G)$. As noted in Corollary \ref{cor:finiteglobal}, $\H_{0,\mbf{c}}(G)$ has finite global dimension. It is known, \cite[Lemma III.1.8]{BrownGoodearlbook}, that this implies that $\mc{A}_{\mbf{c}} \subseteq (X_{\mbf{c}})_{sm}$. The opposite inclusion is an application of a result by Brown and Goodearl, \cite[Theorem 3.8]{BrownGoodearl}. Their theorem says that $(X_{\mbf{c}})_{sm} \subseteq \mc{A}_{\mbf{c}}$ (in fact that we have equality) if $\H_{0,\mbf{c}}(G)$ has particularly nice homological properties - it must be Auslander-regular and Cohen-Macaulay, and the complement of $\mc{A}_{\mbf{c}}$ has codimension at least two in $X_{\mbf{c}}$. The fact that $\H_{0,\mbf{c}}(G)$ is Auslander-regular and Cohen-Macaulay can be deduced from the fact that its associated graded, the skew group ring, has these properties (the results that are required to show this are listed in the proof of \cite[Theorem 4.4]{BrownSurvey}). The fact that the complement of $\mc{A}_{\mbf{c}}$ has co-dimension at least two in $X_{\mbf{c}}$ is harder to show. It follows from the fact that $X_{\mbf{c}}$ is a symplectic variety, Theorem \ref{thm:symplvar}, and that the ``representation theory of $\H_{0,\mbf{c}}$ is constant along orbits'', Theorem \ref{thm:constant}.   
\end{proof}

Theorem \ref{prop:singulariffsmall} implies that to answer the question 

\begin{question}
Is the generalized Calogero-Moser space smooth?
\end{question}

it suffices to compute the dimension of simple $\H_{\mbf{c}}(G)$-modules. Unfortunately, this turns out to be rather difficult to do. 

\subsection{Poisson algebras}\label{sec:Poisson}

The extra parameter $t$ in $\H_{t,\mbf{c}}(G)$ gives us a canonical quantization of the space $X_{\mbf{c}}(G)$. As a consequence, this implies that $X_{\mbf{c}}(G)$ is a Poisson variety. Recall:

\begin{defn}
A \textit{Poisson algebra} $(A,\{ - , - \})$ is a commutative algebra with a bracket $\{ - , - \} : A \o A \rightarrow A$ such that 
\begin{enumerate}
\item The pair $(A,\{ - , - \})$ is a Lie algebra. 
\item $\{ a, - \} : A \rightarrow A$ is a derivation for all $a \in A$ i.e. 
$$
\{ a, bc \} = \{ a, b \} c + b \{ a, c \}, \quad \forall a,b,c \in A.
$$
\end{enumerate}
\end{defn}

An ideal $I$ in the Poisson algebra $A$ is called \textit{Poisson} if $\{ I , A \} \subseteq I$. 

\begin{exercise}
Let $I$ be a Poisson ideal in $A$. Show that $A /I$ natural inherits a Poisson bracket from $A$, making it a Poisson algebra. 
\end{exercise} 

Hayashi's construction, \cite{Hayashi}: We may think of $t$ as a variable so that $\H_{0,\mbf{c}}(G) = \H_{t,\mbf{c}}(G) / t \cdot \H_{t,\mbf{c}}(G)$. For $z_1, z_2 \in \ZH_{\mbf{c}}(G)$ define 
$$
\{ z_1, z_2 \} = \left( \frac{1}{t} [\hat{z}_1, \hat{z}_2] \right) \mod \ t \H_{t,\mbf{c}}(G), 
$$
where $\hat{z}_1, \hat{z}_2$ are arbitrary lifts of $z_1, z_2$ in $\H_{t,\mbf{c}}(G)$. 

\begin{prop}\label{prop:bracket}
Since $\H_{0,\mbf{c}}(W)$ is flat over $\C[t]$, $\{ - , - \}$ is a well-defined Poisson bracket on $\ZH_{\mbf{c}}(G)$. 
\end{prop}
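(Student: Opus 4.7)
The plan is to verify in sequence that the bracket $\{-,-\}$ is well-defined, lands in $\ZH_{\mbf{c}}(G)$, and satisfies the three axioms of a Poisson bracket, using flatness of $\H_{t,\mbf{c}}(G)$ over $\C[t]$ at every step to pass between $\H_{t,\mbf{c}}(G)$ and its reduction mod $t$.

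First I would establish well-definedness. Given $z_1,z_2 \in \ZH_{\mbf{c}}(G)$ and arbitrary lifts $\hat z_1,\hat z_2 \in \H_{t,\mbf{c}}(G)$, their reductions mod $t$ lie in the centre, so $[\hat z_1,\hat z_2]$ maps to $0$ in $\H_{t,\mbf{c}}(G)/t\H_{t,\mbf{c}}(G) = \H_{0,\mbf{c}}(G)$. Flatness over $\C[t]$ means $t$ is a non-zero-divisor in $\H_{t,\mbf{c}}(G)$, so there is a unique element $\frac{1}{t}[\hat z_1,\hat z_2] \in \H_{t,\mbf{c}}(G)$, and its image in $\H_{0,\mbf{c}}(G)$ is defined. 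Independence of the lifts follows because if $\hat z_1' = \hat z_1 + t w$, then $[\hat z_1',\hat z_2] - [\hat z_1,\hat z_2] = t[w,\hat z_2]$, and $[w,\hat z_2] \in t\H_{t,\mbf{c}}(G)$ since $\hat z_2$ is central mod $t$; dividing by $t$ and reducing mod $t$ kills this correction.

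Next I would check that $\{z_1,z_2\}$ is itself central. For any lift $\hat a$ of an arbitrary $a \in \H_{0,\mbf{c}}(G)$, the Jacobi identity in $\H_{t,\mbf{c}}(G)$ gives
\[
[\hat a,[\hat z_1,\hat z_2]] = [[\hat a,\hat z_1],\hat z_2] + [\hat z_1,[\hat a,\hat z_2]].
\]
Both terms on the right lie in $t^2\H_{t,\mbf{c}}(G)$ because $[\hat a,\hat z_i] \in t\H_{t,\mbf{c}}(G)$ for $i=1,2$. Dividing by $t$ and reducing mod $t$ shows that $\{z_1,z_2\}$ commutes with (the image of) $\hat a$ in $\H_{0,\mbf{c}}(G)$, hence lies in $\ZH_{\mbf{c}}(G)$.

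Finally I would derive the Poisson axioms. Antisymmetry is immediate from antisymmetry of the commutator. The Leibniz rule
\[
\{z_1,z_2 z_3\} = \{z_1,z_2\} z_3 + z_2 \{z_1,z_3\}
\]
follows by applying $\frac{1}{t}(\,\cdot\,) \bmod t$ to the identity $[\hat z_1,\hat z_2 \hat z_3] = [\hat z_1,\hat z_2]\hat z_3 + \hat z_2[\hat z_1,\hat z_3]$ in $\H_{t,\mbf{c}}(G)$. For the Jacobi identity, I would start from the Jacobi identity $[\hat z_1,[\hat z_2,\hat z_3]] + [\hat z_2,[\hat z_3,\hat z_1]] + [\hat z_3,[\hat z_1,\hat z_2]] = 0$; each inner commutator lies in $t\H_{t,\mbf{c}}(G)$, and each outer commutator then lies in $t^2\H_{t,\mbf{c}}(G)$, so dividing the whole expression by $t^2$ and reducing mod $t$ yields the classical Jacobi identity $\{z_1,\{z_2,z_3\}\} + \{z_2,\{z_3,z_1\}\} + \{z_3,\{z_1,z_2\}\} = 0$.

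The only subtle point is the repeated use of flatness to guarantee that divisions by $t$ and $t^2$ give unique well-defined elements and that ``$[\cdot,\cdot] \in t^k\H_{t,\mbf{c}}(G)$'' is the correct obstruction to measure; everything else is essentially formal from the Jacobi and Leibniz identities in $\H_{t,\mbf{c}}(G)$ combined with centrality mod $t$. This is the standard semiclassical limit construction, and the hard input is precisely the flatness of $\H_{t,\mbf{c}}(G)$ over $\C[t]$ guaranteed by the PBW theorem.
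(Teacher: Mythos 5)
Your proposal is correct and follows essentially the same line of reasoning as the paper's proof: flatness (i.e.\ $t$ being a non-zero-divisor) gives the unique division by $t$, the Jacobi identity in $\H_{t,\mbf{c}}(G)$ shows the result lands back in the centre, and the Lie algebra and Leibniz axioms are inherited from the commutator bracket. The paper dispatches the last step in a single sentence by citing these formal properties of the commutator, whereas you verify them by explicitly dividing the Jacobi and Leibniz identities in $\H_{t,\mbf{c}}(G)$ by the appropriate power of $t$ and reducing; this is a correct and slightly more careful rendering of the same argument.
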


\begin{proof}
Write $\rho : \H_{t,\mbf{c}}(G) \rightarrow \H_{0,\mbf{c}}(G)$ for the quotient map. Let us first check that the binary operation is well-defined. Let $\hat{z}_1,\hat{z}_2$ be arbitrary lifts of $z_1,z_2 \in \ZH_{\mbf{c}}(G)$. Then $\rho([\hat{z}_1,\hat{z}_2]) = [\rho(\hat{z}_1),\rho(\hat{z}_2)] = 0$. Therefore, there exists some $\hat{z}_3 \in \H_{t,\mbf{c}}(G)$ such that $[\hat{z}_1,\hat{z}_2] = t \cdot \hat{z}_3$. Since $t$ is a non-zero divisor, $\hat{z}_3$ is uniquely define. The claim is that $\rho(\hat{z}_3) \in \ZH_{\mbf{c}}(G)$: let $h \in \H_{0,\mbf{c}}(G)$ and $\hat{h}$ an arbitrary lift of $h$ in $\H_{t,\mbf{c}}(G)$. Then 
$$
[h,\rho(\hat{z}_3)] = \rho([\hat{h},\hat{z}_3]) = \rho \left( \left[ \hat{h},\frac{1}{t} [\hat{z}_1,\hat{z}_2]\right] \right)
$$
$$
= - \rho \left(\frac{1}{t} [\hat{z}_2,[\hat{h},\hat{z}_1]]\right) - \rho \left(\frac{1}{t}[\hat{z}_2,[\hat{h},\hat{z}_1]]\right).
$$
Since $\hat{z}_1$ and $\hat{z}_1$ are lifts of central elements, the expressions $[\hat{z}_2,[\hat{h},\hat{z}_1]]$ and $[\hat{z}_2,[\hat{h},\hat{z}_1]]$ are in $t^2 \H_{t,\mbf{c}}(G)$. Hence $[h,\rho(\hat{z}_3)] = 0$. Therefore, the expression $\{ z_1, z_2 \}$ is well-defined. The fact that $\rho([t \cdot \hat{z}_1, \hat{z}_2] / t) = [\rho(\hat{z}_1),\rho(\hat{z}_2)] = 0$ implies that the bracket is independent of choice of lifts. The fact that the bracket makes $\ZH_{\mbf{c}}(G)$ into a Lie algebra and satisfies the derivation property is a consequence of the fact that the commutator bracket of an algebra also has these properties.
\end{proof}

\begin{remark}
The same construction makes $e \H_{0,\mbf{c}}(G) e$ into a Poisson algebra such that the Satake isomorphism is an isomorphism of Poisson algebras.
\end{remark}

\subsection{Symplectic leaves}

In the algebraic world there are several different definitions of symplectic leaves, which can be shown to agree in ``good'' cases. We will define two of them here. First, assume that $X_{\mbf{c}}(G)$ is smooth. Then $X_{\mbf{c}}(G)$ may be considered as a complex analytic manifold equipped with the analytic topology. In this case, the \textit{symplectic leaf} through $\mf{m} \in X_{\mbf{c}}(G)$ is the maximal connected analytic submanifold $\mc{L}(\mf{m})$ of $X_{\mbf{c}}(G)$ which contains $\mf{m}$ and on which $\{ - , - \}$ is non-degenerate. An equivalent definition is to say that $\mc{L}(\mf{m})$ is the set of all points that can be reached from $\mf{m}$ by traveling along integral curves corresponding to the Hamiltonian vector fields $\{ z, - \}$ for $z \in \ZH_{\mbf{c}}(G)$. 

Let us explain in more detail what is meant by this. Let $v$ be a vector field on a complex analytic manifold $X$ i.e. $v$ is a holomorphic map $X \rightarrow T X$ such that $v(x) \in T_x X$ for all $x \in X$ ($v$ is assigning, in a continuous manner, a tangent vector to each point of $x$). An integral curve for $v$ through $x$ is a holomorphic function $\Phi_{x,v} : B_{\epsilon}(0) \rightarrow X$, where $B_{\epsilon}(0)$ is a closed ball of radius $\epsilon$ around $0$ in $\C$, such that $(d_0 \Phi_{x,v})(1) = v(x)$ i.e. the derivative of $\Phi_{x,v}$ at $0$ maps the basis element $1$ of $T_0 \C = \C$ to the tangent vector field $v(x)$. The existence and uniqueness of holomorphic solutions to ordinary differential equations implies that $\Phi_{x,v}$ exists, and is unique, for each choice of $v$ and $x$. 

Now assume that $v = \{ a , - \}$ is a Hamiltonian vector field, and fix $x \in X$. Then, the image of $\Phi_{x,\{a, - \} }$ is, by definition, contained in the symplectic leaf $\mc{L}_x$ through $x$. Picking another point $y \in \Phi_{x,\{a, - \} }(B_{\epsilon}(0))$ and another Hamiltonian vector field $\{ b, - \}$, we again calculate the integral curve $\Phi_{y,\{ b , - \} }$ and its image is again, by definition, contained in $\mc{L}_x$. Continuing in this way for as long as possible, $\mc{L}_x$ is the set of all points one can reach from $x$ by ``flowing along Hamiltonian vector fields''. 

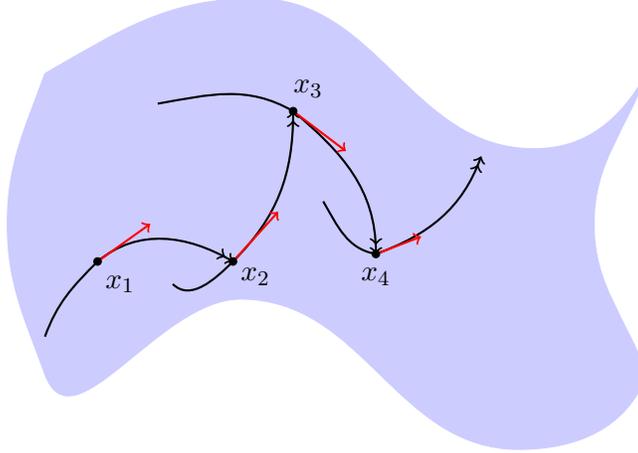
\begin{figure}\label{fig:intcurves}
\begin{tikzpicture}
\draw [blue!20!white, fill=blue!20!white] (-4,2) to [out=30,in=180] (-1.2,3) to [out=0,in=180] (2.5,1) to [out=0,in=-120] (4,2) to [out=-110,in=90] (3.3,0) to [out=-90,in=110] (4,-2) to [out=-110,in=0] (2.3,-3) to [out=180,in=0] (-1.4,-1) to [out=180,in=-70] (-4,-2) to [out=110,in=-90] (-4.5,0) to [out=90,in=-110] (-4,2);

\draw [thick,->>] (-4,-1.5) to [out=70,in=-135] (-3.3,-0.5) to [out=40,in=150] (-1.5,-0.5);
\draw [thick,red,->] (-3.3,-0.5) -- (-2.6,0);
\draw [fill] (-3.3,-0.5) circle [radius=0.05];
\node at (-3,-0.8) {$x_1$};

\draw [thick,->>] (-2.3,-0.8) to [out=-45,in=-135] (-1.5,-0.5) to [out=45,in=-90] (-0.7,1.5);
\draw [thick,red,->] (-1.5,-0.5) -- (-0.9,0.16);
\draw [fill] (-1.5,-0.5) circle [radius=0.05];
\node at (-1.2,-0.7) {$x_2$};

\draw [thick,->>] (-2.5,1.6) to [out=10,in=150] (-0.7,1.5) to [out=-40,in=90] (0.4,-0.4);
\draw [thick,red,->] (-0.7,1.5) -- (0,0.97);
\draw [fill] (-0.7,1.5) circle [radius=0.05];
\node at (-0.5,1.8) {$x_3$};

\draw [thick,->>] (-0.3,0.3) to [out=-60,in=170] (0.4,-0.4) to [out=20,in=-110] (1.8,0.9);
\draw [thick,red,->] (0.4,-0.4) -- (1,-0.17);
\draw [fill] (0.4,-0.4) circle [radius=0.05];
\node at (0.4,-0.7) {$x_4$};

\end{tikzpicture}
\caption{Flowing along integral curves in a symplectic leaf. The red vector at $x_i$ is a Hamiltonian vector field and the curve through $x_i$ is the corresponding integral curve.}
\end{figure}

In particular, this defines a stratification of $X_{\mbf{c}}(G)$. If, on the other hand, $X_{\mbf{c}}(G)$ is not smooth, then we first stratify the smooth locus of $X_{\mbf{c}}(G)$. The singular locus $X_{\mbf{c}}(G)_{\mathrm{sing}}$ of $X_{\mbf{c}}(G)$ is a Poisson subvariety. Therefore, the smooth locus of $X_{\mbf{c}}(G)_{\mathrm{sing}}$ is again a Poisson manifold and has a stratification by symplectic leaves. We can continue by considering the ``the singular locus of the singular locus'' of $X_{\mbf{c}}(G)$ and repeating the argument... This way we get a stratification of the whole of $X_{\mbf{c}}(G)$ by symplectic leaves.

\subsection{Symplectic cores} Let $\mf{p}$ be a prime ideal in $\ZH_{\mbf{c}}(G)$. Then there is a (necessarily unique) largest Poisson ideal $\mc{P}(\mf{p})$ contained in $\mf{p}$. Define an equivalence relation $\sim$ on $X_{\mbf{c}}(G)$ by saying 
$$
\mf{p} \sim \mf{q} \Leftrightarrow \mc{P}(\mf{p}) = \mc{P}(\mf{q}).
$$
The \textit{symplectic cores} of $X_{\mbf{c}}(G)$ are the equivalence classes defined by $\sim$. We write
$$
\mc{C}(\mf{p}) = \{ \mf{q} \in X_{\mbf{c}}(G) \ | \ \mc{P}(\mf{p}) = \mc{P}(\mf{q}) \}.
$$
Then, each symplectic core $\mc{C}(\mf{p})$ is a locally closed subvariety of $X_{\mbf{c}}(G)$ and $\overline{\mc{C}(\mf{p})} = V(\mc{P}(\mf{p}))$. The set of all symplectic cores is a partition of $X_{\mbf{c}}(G)$ into locally closed subvarieties. As one can see from the examples below, a Poisson variety $X$ will typically have an infinite number of symplectic leaves and an infinite number of sympectic cores. 

\begin{defn}
We say that the Poisson bracket on $X$ is \textit{algebraic} if $X$ has only finitely many symplectic leaves.
\end{defn}

Proposition 3.7 of \cite{PoissonOrders} says:

\begin{prop}
If the Poisson bracket on $X$ is algebraic then the symplectic leaves are locally closed algebraic sets and that the stratification by symplectic leaves equals the stratification by symplectic cores. 
\end{prop}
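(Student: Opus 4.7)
The plan is to show that for every $x \in X$, the Zariski closure $\overline{\mc{L}_x}$ of the symplectic leaf through $x$ coincides with the zero locus $V(\mc{P}(\mf{p}_x))$ of the largest Poisson ideal contained in the maximal ideal $\mf{p}_x$. Granting this identification, both conclusions of the proposition drop out combinatorially from the finiteness hypothesis. So the heart of the argument is to prove that $\overline{\mc{L}_x}$ is the smallest Poisson subvariety of $X$ containing $x$.

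First I would verify that $\overline{\mc{L}_x}$ is itself a Poisson subvariety. Given $a \in \mc{O}(X)$ and $f \in I(\overline{\mc{L}_x})$, pick $y \in \mc{L}_x$ and the integral curve $\Phi_{y,\xi_a}$ of the Hamiltonian vector field $\xi_a = \{a,-\}$. By the very definition of a leaf, the image of $\Phi_{y,\xi_a}$ lies in $\mc{L}_x$, so $f \circ \Phi_{y,\xi_a} \equiv 0$; differentiating at $0$ gives $(\xi_a \cdot f)(y) = \{a,f\}(y) = 0$. Hence $\{a,f\}$ vanishes on $\mc{L}_x$, and by Zariski density on $\overline{\mc{L}_x}$, so $I(\overline{\mc{L}_x})$ is a Poisson ideal, and $\mc{P}(\mf{p}_x) \supseteq I(\overline{\mc{L}_x})$. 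Conversely, let $Y$ be any Poisson subvariety containing $x$. Then each $\xi_a$ preserves $I(Y)$ and so descends to a derivation of $\mc{O}(Y)$; its integral curve through $x$ remains in $Y$, and iterating over all Hamiltonians and intermediate basepoints yields $\mc{L}_x \subseteq Y$, whence $\overline{\mc{L}_x} \subseteq Y$. Applied to $Y = V(\mc{P}(\mf{p}_x))$ this gives the reverse containment. (When $x$ lies in a singular stratum, one first passes to the iterated singular locus on whose smooth part the leaf through $x$ is defined; the argument is then identical.)

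Under the finiteness hypothesis there are only finitely many distinct closures $Y_i = \overline{\mc{L}_i}$, forming a finite poset of Poisson subvarieties of $X$ under inclusion. Since $X = \bigsqcup_i \mc{L}_i$ and any leaf $\mc{L}_j$ with $Y_j \subsetneq Y_i$ is contained in $Y_i \setminus \mc{L}_i$, one obtains
\[
\mc{L}_i \;=\; Y_i \setminus \bigcup_{Y_j \subsetneq Y_i} Y_j,
\]
exhibiting each leaf as a locally closed algebraic subset. For any $x \in \mc{L}_i$, the second paragraph gives $\overline{\mc{C}(\mf{p}_x)} = V(\mc{P}(\mf{p}_x)) = Y_i$, and the cores sitting strictly inside $Y_i$ are precisely those of points in the leaves $\mc{L}_j$ with $Y_j \subsetneq Y_i$; removing them from $Y_i$ recovers the same locally closed set, so $\mc{C}(\mf{p}_x) = \mc{L}_i$, proving that the two stratifications coincide.

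The main obstacle is the second paragraph, which requires moving between the analytic definition of a symplectic leaf (orbits of Hamiltonian flow lines) and the purely algebraic notion of a Poisson ideal. Both directions hinge on compatibility of algebraic vector fields with Zariski closure: Zariski density of a leaf in its closure promotes an analytic vanishing statement to an algebraic one, and conversely the algebraic tangency of $\xi_a$ to any Poisson subvariety $Y$ forces the analytic flow to stay in $Y$ by uniqueness of integral curves. The finiteness hypothesis then makes the bookkeeping that yields the two conclusions purely formal.
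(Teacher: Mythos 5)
The paper itself offers no proof of this statement; it simply cites Proposition 3.7 of Brown--Gordon \cite{PoissonOrders}, so I am evaluating your argument on its own terms. Your overall strategy --- identifying $\overline{\mc{L}_x}$ with $V(\mc{P}(\mf{p}_x))$ by proving it is the smallest Poisson subvariety through $x$, and then deducing both conclusions by finite combinatorics --- is the right one, and the two-sided argument in your second paragraph (that $I(\overline{\mc{L}_x})$ is a Poisson ideal, and that any Poisson subvariety $Y \ni x$ absorbs the leaf $\mc{L}_x$ because Hamiltonian vector fields tangent to $Y$ have flows preserving $Y$) is sound.

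The gap is in the bookkeeping step. The displayed equality
\[
\mc{L}_i \;=\; Y_i \setminus \bigcup_{Y_j \subsetneq Y_i} Y_j
\]
requires that there be \emph{exactly one} leaf whose Zariski closure is $Y_i$. What your reasoning actually shows is only that the right-hand side is the union of all leaves $\mc{L}_k$ with $\overline{\mc{L}_k}=Y_i$, and the finite-poset structure does nothing to rule out that there are several such leaves: two disjoint analytic leaves can a priori both be Zariski-dense in the same irreducible $Y_i$. This is not a peripheral omission --- it is the heart of the proposition. Indeed, by your own identification the symplectic core $\mc{C}(\mf{p}_x)$ is precisely the set of $y$ with $V(\mc{P}(\mf{p}_y))=Y_i$, i.e.\ the union of \emph{all} leaves dense in $Y_i$; so if there were more than one, the core would be strictly larger than a leaf and the proposition would be false. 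Establishing ``one dense leaf per irreducible Poisson closure'' is therefore exactly where algebraicity must be used substantively, beyond merely providing a finite poset. A standard way to close the gap: on the smooth locus of $Y_i$, the rank of the Poisson bivector is Zariski-upper-semicontinuous, so its maximum value $2m$ is attained on a dense Zariski-open $U\subseteq Y_i$. If $2m<\dim Y_i$, then $U$ would be a connected manifold regularly foliated by leaves of positive codimension, hence would meet uncountably many leaves, contradicting algebraicity. Thus $2m=\dim Y_i$, each leaf meeting $U$ is open in $U$, and connectedness of the irreducible variety $U$ forces $U$ to lie in a single leaf; this is the unique leaf dense in $Y_i$, and every other leaf in $Y_i$ lies in $Y_i\setminus U$ with strictly smaller closure. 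With that lemma in hand your remaining bookkeeping goes through.
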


Thats it, $\mc{L}(\mf{m}) = \mc{C}(\mf{m})$ for all maximal ideals $\mf{m} \in X_{\mbf{c}}(G)$.

\begin{example}
We consider the Poisson bracket on $\C^2 = \Spec \C[x,y]$ given by $\{ x,y \} = y$, and try to describe the symplectic leaves in $\C^2$. From the definition of a Poisson algebra, it follows that each function $f \in \C[x,y]$ defines a vector field $\{ f, - \}$ on $\C^2$. For the generators $x,y$, these vector fields are $\{ x,  - \} = y \pa_y$, $\{ y , - \} = - y \pa_x$ respectively. In order to calculate the symplectic leaves we need to calculate the integral curve through a point $(p,q) \in \C^2$ for each of these vector fields. Then the leaf through $(p,q)$ will be the submanifold traced out by all these curves. We begin with $y \pa_y$. The corresponding integral curve is $a = (a_1(t),a_2(t)) : B_{\epsilon}(0) \rightarrow \C^2$ such that $a(0) = (p,q)$ and 
$$
a'(t) = (y \pa_y)_{a(t)}, \quad \forall t \in B_{\epsilon}(0).
$$
Thus, $a_1'(t) = 0$ and $a_2'(t) = a_2(t)$ which means $a = (p,q e^{t})$. Similarly, if $b = (b_1,b_2)$ is the integral curve through $(p,q)$ for $-y \pa_x$ then $b = (- q t + p, q)$. Therefore, there are only two symplectic leaves, $\{0 \}$ and $\C^2 \backslash \{ 0 \}$.
\end{example}

\begin{example}
For each finite dimensional Lie algebra $\g$, there is a natural Poisson bracket on $\C[\g^*] = \mathrm{Sym} (\g)$, uniquely defined by $\{ X , Y \} = [X,Y]$ for all $X,Y \in \g$. Recall that $\mf{sl}_2 = \C \{ E,F,H \}$ with $[E,F] = H$, $[H,E] = 2 E$ and $[H,F] = - 2 F$. As in the previous example, we will calculate the symplectic leaves of $\mf{sl}_2^*$. The Hamiltonian vector fields of the generators $E,F,H$ of the polynomial ring $\C[\mf{sl}_2]$ are $X_E = H \pa_F - 2 E \pa_H$, $X_F = - H \pa_E + 2 F \pa_H$ and $X_H = 2 E \pa_E - 2 F \pa_F$. Let $a = (a_E(t),a_F(t),a_H(t))$ be an integral curve through $(p,q,r)$ for a vector field $X$.
\begin{itemize}
\item For $X_E$, $a(t) = (p,-pt^2 + rt + q, - 2pt + r)$. 
\item For $X_F$, $a(t) = (-q t^2 - rt + p,q, 2qt + r)$. 
\item For $X_H$, $a(t) = (p \exp (2 t),q \exp (- 2t), r)$. 
\end{itemize}
Thus, for all $(p,q,r) \neq (0,0,0)$, $X_E, X_F,X_F$ span a two-dimensional subspace of $T_{(p,q,r)} \mf{sl}_2^*$. Also, one can check that the expression $a_E(t) a_F(t) + \frac{1}{4} a_H(t)^2 = pq + \frac{1}{4} r^2$ is independent of $t$ for each of the three integral curves above e.g. for $X_F$ we have
$$
(-qt^2 - rt + p) q + \frac{1}{4}(2 q t + r)^2 = pq + \frac{1}{4} r^2.
$$
Therefore, for $s \neq 0$, $V(EF + \frac{1}{4} H^2 = s)$ is a smooth, two-dimensional Poisson subvariety on which the symplectic form is everywhere non-degenerate. This implies that it is a symplectic leaf.

The nullcone $\mc{N}$ is defined to be $V(EF + \frac{1}{4} H^2)$ i.e. we take $s = 0$. If we consider $U = \mathcal{N} \backslash \{ (0,0,0) \}$, then this is also smooth and the symplectic form is everywhere non-degenerate. Thus, it is certainly contained in a symplectic leaf of $\mathcal{N}$. However, the whole of $\mathcal{N}$ cannot be a leaf because it is singular. Therefore, the symplectic leaves of $\mathcal{N}$ are $U$ and $\{ (0,0,0) \}$.  
\end{example}

In the case of symplectic reflection algebras, we have:

\begin{thm}\label{thm:symplvar}
The symplectic leaves of the Poisson variety $X_{\mbf{c}}(W)$ are precisely the symplectic cores of $X_{\mbf{c}}(W)$. In particular, they are finite in number, hence the bracket $\{ - , - \}$ is algebraic.
\end{thm}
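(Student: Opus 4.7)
The strategy is to reduce the theorem to a single claim: that $X_{\mbf{c}}(W)$ has only finitely many symplectic leaves. Once that claim is established, the proposition quoted just above (that algebraic Poisson brackets have leaves agreeing with cores) immediately yields both parts of the statement simultaneously. So I would devote all the real work to bounding the number of leaves.

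The natural setting is the theory of Poisson orders of Brown--Gordon. By Theorem \ref{thm:centrespherical}(1) and the Satake isomorphism (Theorem \ref{thm:Satake}), $\H_{0,\mbf{c}}(W)$ is a finite module over its Poisson centre $\ZH_{\mbf{c}}(W)$; moreover Hayashi's construction (Proposition \ref{prop:bracket}) extends from the centre to all of $\H_{0,\mbf{c}}(W)$, since the formula $\{z,h\} := \frac{1}{t}[\hat z, \hat h] \bmod t$ still makes sense for $z \in \ZH_{\mbf{c}}(W)$ and arbitrary $h$, and the same flatness/PBW verification used in Proposition \ref{prop:bracket} shows that it defines a derivation of $\H_{0,\mbf{c}}(W)$ lifting the Hamiltonian vector field of $z$. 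Hence $\H_{0,\mbf{c}}(W)$ is a Poisson $\ZH_{\mbf{c}}(W)$-order. In this framework, a theorem of Brown and Gordon identifies the symplectic cores of $X_{\mbf{c}}(W)$ with the strata of the \emph{representation-type stratification}, i.e.\ the partition of $X_{\mbf{c}}(W)$ by isomorphism classes of simple $\H_{0,\mbf{c}}(W)$-modules supported at a given point. It therefore suffices to prove that this representation-type stratification is finite, and for this I would combine two ingredients. First, Lemma \ref{lem:dimapprox} gives a uniform upper bound on $\dim L$ over all simples $L$, which caps the P.I. degree of every semisimple quotient that can appear; coupled with Remark \ref{rem:ArtinProcesi} this gives, on each stratum, a matrix algebra quotient of bounded size. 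Secondly, the natural $\N$-filtration on $\H_{0,\mbf{c}}(W)$ induces a $\C^\times$-action on $X_{\mbf{c}}(W)$ with unique fixed point (coming from the augmentation ideal of $\C[V]^G = \gr \ZH_{\mbf{c}}(W)$), every orbit closure containing this fixed point, and under which the Hayashi bracket is homogeneous of weight $-2$; this forces each stratum to be locally closed and $\C^\times$-stable. Combining bounded P.I. degree with Noetherian induction on the complement of the open Azumaya locus (which by Proposition \ref{prop:singulariffsmall} coincides with the smooth locus of $X_{\mbf{c}}(W)$), one obtains only finitely many such strata, and hence only finitely many symplectic cores.

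The genuine obstacle is the Brown--Gordon identification of symplectic cores with representation-type strata. Proving it requires constructing, for any two points of a common symplectic core, an algebra isomorphism of $\H_{0,\mbf{c}}(W)$ carrying simples supported at one to simples supported at the other; this in turn rests on integrating the algebraic Hamiltonian derivations $\{z,-\}$ on $\H_{0,\mbf{c}}(W)$ to honest algebra automorphisms, a delicate point over $\C$ and the main technical content of \cite{PoissonOrders}. Once that structural input is in hand, the finiteness argument sketched above and the appeal to the preceding proposition close the proof cleanly.
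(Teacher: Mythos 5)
The paper does not actually prove this theorem: it states it and, in the Additional Remarks, defers entirely to \cite[Theorem 7.8]{PoissonOrders}. Your proposal is therefore an attempted reconstruction of the Brown--Gordon argument. It correctly identifies the Poisson-order framework, the relevance of the $\C^\times$-action and the homogeneity (weight $-2$) of the bracket, and the representation-theoretic/PI-degree inputs, but there are two real problems with the logical structure.

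First, there is a circularity. You invoke Proposition \ref{prop:singulariffsmall} to identify the Azumaya locus $\mc{A}_{\mbf{c}}$ with the smooth locus of $X_{\mbf{c}}(W)$ before running a Noetherian induction on its complement. But the paper's outline of the proof of Proposition \ref{prop:singulariffsmall} uses both Theorem \ref{thm:symplvar} (the statement you are proving) and Theorem \ref{thm:constant} to establish that the non-Azumaya locus has codimension at least two, which is needed for the Brown--Goodearl criterion. You therefore cannot appeal to that proposition here. What you are free to use without circularity is only that $\mc{A}_{\mbf{c}}$ is non-empty and open, which comes from \cite[Theorem III.1.7]{BrownGoodearlbook}; whether the induction still goes through with only that input would need to be checked.

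Second, even granting the rest of the sketch, you establish finiteness of symplectic \emph{cores}, not symplectic \emph{leaves}, and you conclude without bridging the two. Since Hamiltonian flows preserve Poisson ideals, each leaf is contained in a single core, but a priori a single core could decompose into infinitely many leaves; so ``finitely many cores'' does not by itself give ``finitely many leaves.'' The proposition quoted just before the theorem (Proposition 3.7 of \cite{PoissonOrders}) cannot be used to close this gap, because it takes algebraicity — i.e.\ leaf-finiteness — as a \emph{hypothesis}. What is missing is the substantive geometric input: one must show that each symplectic core is a smooth, locally closed subvariety on which the induced Poisson bracket is nondegenerate, so that each of its (finitely many Zariski-, hence analytically-, connected) components is a single leaf. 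This is the genuine content of the Brown--Gordon proof, and your sketch does not supply it. Relatedly, the result you describe as ``the Brown--Gordon identification of symplectic cores with representation-type strata,'' which you say requires integrating the Hamiltonian derivations $\{z,-\}$ to algebra automorphisms, is in fact Theorem \ref{thm:constant}, and that theorem concerns \emph{leaves}, not cores. Cores are defined purely algebraically from Poisson ideals, so a claim about cores requiring flow integration should have raised a flag; conflating the two statements is part of what obscures the gap above.
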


Remarkably, the representation theory of symplectic reflection algebras is ``constant along symplectic leaves'', in the following precise sense. For each $\chi \in X_{\mbf{c}}(G)$, let $\H_{\chi,\mbf{c}}(G)$ be the finite-dimensional quotient $\H_{0,\mbf{c}}(G) / \mf{m}_{\chi} \H_{0,\mbf{c}}(G)$, where $\mf{m}_{\chi}$ is the kernel of $\chi$. If $\chi \in \mc{A}_{\mbf{c}} = X_{\mbf{c}}(G)_{\sm}$ then   
$$
\H_{\chi,\mbf{c}}(G) \simeq \mathrm{Mat}_{|G|}(\C), \quad \dim \H_{\chi,\mbf{c}}(G) = |G|^2.
$$
This is not true if $\chi \in X_{\mbf{c}}(G)_{\mathrm{sing}}$. 

\begin{thm}\label{thm:constant}
Let $\chi_1, \chi_2$ be two points in $\mc{L}$, a symplectic leaf of $X_{\mbf{c}}(G)$. Then,  
$$
\H_{\chi_1,\mbf{c}}(G) \simeq \H_{\chi_2,\mbf{c}}(G). 
$$
\end{thm}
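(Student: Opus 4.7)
The plan is to upgrade the Hayashi construction (Proposition \ref{prop:bracket}) to produce, for each $z \in \ZH_{\mbf{c}}(G)$, a $\C$-linear derivation $D_z : \H_{0,\mbf{c}}(G) \to \H_{0,\mbf{c}}(G)$ extending the Hamiltonian $\{z,-\}$ on the centre, and then to integrate these derivations analytically so that the flow of $\{z,-\}$ on $X_{\mbf{c}}(G)$ lifts to a flow by $\C$-algebra automorphisms of an analytic localisation of $\H_{0,\mbf{c}}(G)$. Taking fibres at $\chi_1$ and at its image under the flow will produce the required isomorphism.

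First I would prove the Poisson-order assertion: for $z \in \ZH_{\mbf{c}}(G)$ and any $h \in \H_{0,\mbf{c}}(G)$, pick lifts $\hat z,\hat h \in \H_{\mbf{t},\mbf{c}}(G)$ and set $D_z(h) := \tfrac{1}{\mbf{t}}[\hat z,\hat h] \!\mod \mbf{t}$. Flatness of $\H_{\mbf{t},\mbf{c}}(G)$ over $\C[\mbf{t}]$, together with the computation of Proposition \ref{prop:bracket} (keeping track only of the centrality of $z$, not of $h$), shows that $D_z$ is a well-defined derivation of $\H_{0,\mbf{c}}(G)$ whose restriction to $\ZH_{\mbf{c}}(G)$ is $\{z,-\}$. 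This gives $\H_{0,\mbf{c}}(G)$ the structure of what Brown-Gordon call a Poisson $\ZH_{\mbf{c}}(G)$-order.

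Second, I would use Theorem \ref{thm:symplvar} to reduce to the case where $\chi_2$ lies on the integral curve of a single Hamiltonian vector field through $\chi_1$. By that theorem the leaf $\mc{L}$ equals the symplectic core through $\chi_1$, so any two of its points can be connected by a finite chain of segments of integral curves of Hamiltonian vector fields $\{z_i,-\}$ for $z_i \in \ZH_{\mbf{c}}(G)$; chaining isomorphisms, one is reduced to the following: given $z \in \ZH_{\mbf{c}}(G)$ and an integral curve $\gamma : [0,T] \to X_{\mbf{c}}(G)^{\mathrm{an}}$ of $\{z,-\}$ with $\gamma(0) = \chi_1$, $\gamma(T) = \chi_2$, exhibit an isomorphism $\H_{\chi_1,\mbf{c}}(G) \simeq \H_{\chi_2,\mbf{c}}(G)$.

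Third, I would integrate $D_z$ on a tubular neighbourhood of $\gamma$. Shrinking to an open $U \subset X_{\mbf{c}}(G)^{\mathrm{an}}$ on which the flow $\phi_t$ of $\{z,-\}$ is defined for $t$ in an open set containing $[0,T]$, form the coherent analytic sheaf $\mc{H} := \O^{\mathrm{an}}_U \otimes_{\ZH_{\mbf{c}}(G)} \H_{0,\mbf{c}}(G)$; the derivation $D_z$ extends $\C$-linearly to a derivation $\tilde D$ of $\mc{H}$ covering $\{z,-\}$. Concretely, picking finitely many $\ZH_{\mbf{c}}(G)$-module generators $h_1,\dots,h_N$ of $\H_{0,\mbf{c}}(G)$ and writing $\tilde D(h_i) = \sum_j a_{ij} h_j$ with $a_{ij}$ analytic on $U$, the fundamental theorem for linear ODEs produces analytic sections $f_i(t)$ with $\partial_t f_i(t) = \tilde D(f_i)(t)$ and $f_i(0)=h_i$. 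These assemble into a $\C$-algebra map $\Phi_t : \H_{0,\mbf{c}}(G)_{\chi_1}^{\mathrm{an}} \to \H_{0,\mbf{c}}(G)_{\phi_t(\chi_1)}^{\mathrm{an}}$ covering $\phi_t^*$. Reducing modulo the maximal ideals at $t = T$ gives the desired isomorphism.

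The main obstacle will be making this exponentiation rigorous: one must check that $\tilde D$ preserves the sheaf structure, that the induced ``parallel transport'' really is a ring homomorphism rather than merely a module isomorphism (this follows because $D_z$ is a derivation, so $\exp(t\tilde D)(ab)=\exp(t\tilde D)(a)\exp(t\tilde D)(b)$ holds formally and then propagates to the analytic solution by uniqueness of ODE solutions), and that the analytic construction descends to the algebraic fibres $\H_{\chi_i,\mbf{c}}(G) = \H_{0,\mbf{c}}(G)/\mf{m}_{\chi_i}\H_{0,\mbf{c}}(G)$, which holds because each such fibre is already finite-dimensional, hence unchanged by analytic completion of its central parameter.
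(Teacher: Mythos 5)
The paper does not include a proof of Theorem~\ref{thm:constant}; it cites \cite[Theorem~4.2]{PoissonOrders}, and your proposal faithfully reconstructs Brown and Gordon's argument. The steps you lay out—upgrading the Hayashi bracket to a Poisson-order structure $D\colon \ZH_{\mbf{c}}(G)\to \Der_\C(\H_{0,\mbf{c}}(G))$ lifting $\{z,-\}$, reducing by the integral-curve description of a leaf to a single Hamiltonian flow line, and analytically integrating the derivation on the coherent sheaf $\O^{\mathrm{an}}\o_{\ZH_{\mbf{c}}(G)}\H_{0,\mbf{c}}(G)$ to trivialize the fibre algebras along the flow—are exactly theirs, and the technical caveats you flag (dependence of $D_z$ on the chosen lift $\hat z$ only up to inner derivations, multiplicativity of the parallel transport via uniqueness of ODE solutions, and identifying analytic with algebraic fibres) are precisely the points their proof addresses.
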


\begin{example}
Let's consider again our favorite example $W = \Z_2$. When $\mbf{c}$ is non-zero, one can check, as in the right hand side of figure \ref{fig:resdef}, that $X_{\mbf{c}}(\Z_2)$ is smooth. Therefore, it has only one symplectic leaf i.e. it is a symplectic manifold. Over each closed point of $X_{\mbf{c}}(\Z_2)$ there is exactly one simple $\H_{0,\mbf{c}}(\Z_2)$-module, which is isomorphic to $\C \Z_2$ as a $\Z_2$-module. If, on the other hand, $\mbf{c} = 0$ so that $\H_{0,0}(\Z_2) = \C[x,y] \rtimes \Z_2$, then there is one singular point and hence two symplectic leaves - the singular point and its compliment. On each closed point of the smooth locus, there is exactly one simple $\C[x,y] \rtimes \Z_2$-module, which is again isomorphic to $\C \Z_2$ as a $\Z_2$-module. However, above the singular point there are two simple, one-dimensional, modules, isomorphic to $\C e_0$ and $\C e_1$ as $\C \Z_2$-modules. See figure \ref{fig:sing}.      
\end{example}

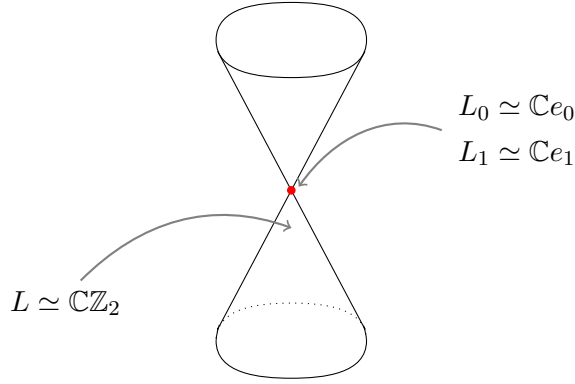
\begin{figure}\label{fig:sing}
\begin{tikzpicture}
\draw[dotted] (1,-2) to [out=90,in=0] (0,-1.5) to [out=180,in=90] (-1,-2);
\draw (-1,-2) to [out=-90,in=180] (0,-2.5) to [out=0,in=-90] (1,-2);
\draw (1,2) to [out=90,in=0] (0,2.5) to [out=180,in=90] (-1,2) to [out=-90,in=180] (0,1.5) to [out=0,in=-90] (1,2);
\draw (0.98,-1.85) -- (-0.98,1.85);
\draw (-0.98,-1.85) -- (0.98,1.85);
\draw (1,-2) to [out=90,in=110] (0.98,-1.85);
\draw (-0.98,-1.85) to [out=70,in=90] (-1,-2);
\node at (3,0.5) {$L_1 \simeq \C e_1$};
\node at (3,1.1) {$L_0 \simeq \C e_0$};

\draw [thick,gray,->] (2,0.8) to [out=160,in=55] (0.1,0.05);
\draw [thick,gray,->] (-2.8,-1.2) to [out=45,in=160] (0,-0.5);

\node at (-3,-1.5) {$L \simeq \C \Z_2$};

\draw [fill,red] (0,0) circle [radius=0.05]; 
\end{tikzpicture}
\caption{The leaves and simple modules of $\H_{0,0}(\Z_2)$.}
\end{figure}

\subsection{Restricted rational Cherednik algebras}\label{sec:restrictedRCA}

In order to be able to say more about the simple modules for $\H_{0,\mbf{c}}(G)$, e.g. to describe their possible dimensions, we restrict ourselves to considering rational Cherednik algebras. Therefore, in this subsection, we let $W$ be a complex reflection group and $\H_{0,\mbf{c}}(W)$ the associated rational Cherednik algebra, as defined in lecture one. In the case of Coxeter groups the following was proved in \cite[Proposition 4.15]{EG}, and the general case is due to \cite[Proposition 3.6]{Baby}. 

\begin{prop}
Let $\H_{0,\mbf{c}}(W)$ be a rational Cherednik algebra associated to the complex reflection group $W$.
\begin{enumerate}
\item The subalgebra $\C[\h]^W \otimes \C[\h^*]^W$ of $\H_{0,\mbf{c}}(W)$ is contained in $\ZH_{\mbf{c}}(W)$.
\item The centre $\ZH_{\mbf{c}}(W)$ of $\H_{0,\mbf{c}}(W)$ is a free $\C[\h]^W \otimes \C[\h^*]^W$-module of rank $|W|$.
\end{enumerate}
\end{prop}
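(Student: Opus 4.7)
The statement separates into (1) an inclusion and (2) a freeness result; I would tackle them in that order.

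For (1), the first factor $\C[\h]^W$ is handled exactly as in Exercise~\ref{ex:ex1.6}(2): the Dunkl embedding $\phi : \H_{0,\mbf{c}}(W) \hookrightarrow \C[\h_{\reg}\times \h^*]\rtimes W$ carries $\C[\h]^W$ into the central subring $\C[\h_{\reg}\times \h^*]^W$ of the target, and injectivity of $\phi$ lifts centrality back to $\H_{0,\mbf{c}}(W)$. For the second factor $\C[\h^*]^W$ the situation is genuinely asymmetric --- outside the real Coxeter case there is no Fourier-type automorphism swapping $\h$ and $\h^*$ (compare Exercise~\ref{ex:ex1.6}(1)) --- so I would compute $[p,x]$ directly for $p\in\C[\h^*]^W$ and $x\in\h^*$. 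Iterating $[y,x] = -\sum_{s}\mbf{c}(s)(y,\alpha_s)(\alpha_s^\vee,x)\,s$ through a PBW monomial in $\C[\h^*]$, and moving each resulting group element to the right using $s\cdot f = s(f)\cdot s$, leads to the formula
\[
[p,x] \;=\; -\sum_{s\in\mc{S}}\mbf{c}(s)\,(\alpha_s^\vee,x)\,\Delta_s(p)\,s ,
\]
in which $\Delta_s:\C[\h^*]\to\C[\h^*]$ is the twisted derivation determined by $\Delta_s(y) = (y,\alpha_s)$ for $y\in\h$ and $\Delta_s(fg) = \Delta_s(f)\,s(g) + f\,\Delta_s(g)$. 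For $y\in\h^s$ one has $(y,\alpha_s)=0$ (since $\alpha_s^\vee$ spans the nontrivial eigenline of $s$ on $\h$), so $\Delta_s$ vanishes on $\h^s$; a short geometric-sum calculation gives $\Delta_s((\alpha_s^\vee)^{n_s})=0$, where $n_s$ is the order of $s$; and on $\C[\h^*]^{\langle s\rangle}$, which is a polynomial ring on $\h^s$ and $(\alpha_s^\vee)^{n_s}$, the twisted Leibniz rule collapses to the ordinary Leibniz rule. Hence $\Delta_s$ kills $\C[\h^*]^{\langle s\rangle}\supset\C[\h^*]^W$, and therefore $[p,x]=0$ for all $p\in\C[\h^*]^W$ and $x\in\h^*$.

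For (2), granted (1), $\C[\h]^W\otimes\C[\h^*]^W$ is a polynomial subring of $\ZH_{\mbf{c}}(W)$ of Krull dimension $2\dim\h$. I would pass to the associated graded with respect to the standard filtration $\mc{F}$ on $\H_{0,\mbf{c}}(W)$. The Satake isomorphism $\ZH_{\mbf{c}}(W)\cong\mbf{e}\H_{0,\mbf{c}}(W)\mbf{e}$ combined with the PBW theorem identifies
\[
\gr_{\mc{F}}\ZH_{\mbf{c}}(W) \;\cong\; \mbf{e}\bigl(\C[\h\oplus\h^*]\rtimes W\bigr)\mbf{e} \;=\; \C[\h\oplus\h^*]^W .
\]
By Chevalley--Shephard--Todd, $\C[\h]$ and $\C[\h^*]$ are each free of rank $|W|$ over $\C[\h]^W$ and $\C[\h^*]^W$ respectively, with coinvariant algebras $R_\h$ and $R_{\h^*}$ each isomorphic to the regular representation $\C W$. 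Thus $\C[\h\oplus\h^*]$ is free of rank $|W|^2$ over $\C[\h]^W\otimes\C[\h^*]^W$, generated by the diagonal $W$-module $R_\h\otimes R_{\h^*}\cong \C W\otimes\C W$; taking $W$-invariants exhibits $\C[\h\oplus\h^*]^W$ as a graded direct summand (via the Reynolds operator), hence graded-projective, hence free over the polynomial base by graded Nakayama, of rank $\dim(\C W\otimes\C W)^W = |W|$.

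Finally I would lift freeness from the associated graded back to $\ZH_{\mbf{c}}(W)$ itself: pick homogeneous representatives $z_1,\ldots,z_{|W|}\in\ZH_{\mbf{c}}(W)$ of a basis of $\gr_{\mc{F}}\ZH_{\mbf{c}}(W)$ over $\C[\h]^W\otimes\C[\h^*]^W$. A short induction on filtration degree shows the $z_i$ generate $\ZH_{\mbf{c}}(W)$ as a $\C[\h]^W\otimes\C[\h^*]^W$-module, while any nontrivial linear relation among them would descend to a nontrivial relation among the symbols $\sigma(z_i)$, contradicting freeness in the associated graded. The real obstacle in the whole argument is step (1) for $\C[\h^*]^W$: the twisted-derivation bookkeeping is unavoidable in the non-real setting, and the passage from vanishing on linear invariants to vanishing on the full ring $\C[\h^*]^{\langle s\rangle}$ is precisely the place where one uses that $s$ is a pseudo-reflection (of finite order $n_s$) rather than an arbitrary element of $W$.
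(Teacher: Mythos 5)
Your argument is correct, and since the paper only cites \cite[Proposition 4.15]{EG} and \cite[Proposition 3.6]{Baby} rather than proving this statement, your write-up supplies what the text leaves to the reader. The two pieces of (1) are handled as they must be: the Dunkl embedding at $t=0$ shows $\C[\h]^W$ is central, and the twisted-derivation computation for $\C[\h^*]^W$ is unavoidable for non-real $W$ (the Fourier-type automorphism of Exercise~\ref{ex:ex1.6}(1) requires a real form). Your commutator formula, the vanishing of $\Delta_s$ on $\h^s$, and the geometric-sum identity $\Delta_s\bigl((\alpha_s^\vee)^{n_s}\bigr)=2\frac{\zeta_s^{n_s}-1}{\zeta_s-1}(\alpha_s^\vee)^{n_s-1}=0$ all check out. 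One point deserves a remark: the twisted rule $\Delta_s(fg)=\Delta_s(f)\,s(g)+f\,\Delta_s(g)$ is not manifestly symmetric in $f,g$, so one should note that $\Delta_s$ is well-defined on the commutative ring $\C[\h^*]$; this follows from $y-s(y)$ being proportional to $(y,\alpha_s)\,\alpha_s^\vee$ for all $y\in\h$, or simply from the fact that $\Delta_s(p)$ is \emph{extracted from} the well-defined element $[p,x]\in\H_{0,\mbf{c}}(W)$.

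For (2), the logic is right but the step ``Satake $+$ PBW identifies $\gr_{\mc{F}}\ZH_{\mbf{c}}(W)\cong\C[\h\oplus\h^*]^W$'' conceals the only genuine subtlety. A filtered ring isomorphism induces an isomorphism on associated graded only when it is \emph{strict}, and this must be checked for the Satake map $z\mapsto z\mbf{e}$. It is true: if $z\in Z$ has $z\mbf{e}\in\mc{F}_n(\mbf{e}\H\mbf{e})$ but $z\notin\mc{F}_n\H$, then $\sigma(z)$ is a nonzero element of $\C[\h\oplus\h^*]^W$ with $\sigma(z)\mbf{e}=0$ in $\gr\H=\C[\h\oplus\h^*]\rtimes W$, which is impossible since the $1_W$-component of $\sigma(z)\mbf{e}$ is $\tfrac{1}{|W|}\sigma(z)$. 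In general one only has $\gr Z\hookrightarrow Z(\gr\H)$, so this short argument is what earns the equality --- the paper also asserts $\gr\ZH_{\mbf{c}}=\C[V]^G$ without proof in Section~\ref{sec:five}, so you are in good company, but you should either include it or cite it. Finally, your ``summand $\Rightarrow$ graded projective $\Rightarrow$ free'' detour can be shortened: choosing a $W$-stable space of coinvariant representatives gives a $W$-equivariant decomposition $\C[\h]\cong\C[\h]^W\otimes\C[\h]^{coW}$, hence $\C[\h\oplus\h^*]^W\cong(\C[\h]^W\otimes\C[\h^*]^W)\otimes\bigl(\C[\h]^{coW}\otimes\C[\h^*]^{coW}\bigr)^W$ is visibly free of rank $\dim(\C W\otimes\C W)^W=|W|$. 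The lift from $\gr$ to $\ZH_{\mbf{c}}(W)$ by comparing leading symbols is standard and correct as you state it.
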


The inclusion of algebras $A := \C[\mf{h}]^W \otimes \C[\mf{h}^*]^W \hookrightarrow \ZH_{\mathbf{c}}(W)$ allows us to define the \textit{restricted rational Cherednik algebra} $\overline{\H}_{\mbf{c}}(W)$ as
\begin{displaymath}
\overline{\H}_{\mbf{c}}(W) = \frac{\H_{\mathbf{c}}(W)}{A_+ \cdot H_{\mathbf{c}}(W)},
\end{displaymath}
where $A_+$ denotes the ideal in $A$ of elements with zero constant term. This algebra was originally introduced, and extensively studied, in the paper \cite{Baby}. The PBW theorem implies that 
$$
\overline{\H}_{\mbf{c}}(W) \cong \C [\h]^{co W} \otimes \C W \otimes \C [\h^*]^{coW}
$$
as vector spaces. Here 
$$
\C [\h]^{co W} = \C [\h] / \langle \C [\h]^W_+ \rangle
$$
is the \textit{coinvariant algebra}. Since $W$ is a complex reflection group, $\C [\h]^{co W}$ has dimension $|W|$ and is isomorphic to the regular representation as a $W$-module. Thus, $\dim \overline{\H}_{\mbf{c}}(W) = |W|^3$. Denote by $\Irr (W)$ a set of complete, non-isomorphic simple $W$-modules.

\begin{defn}
Let $\lambda \in \Irr (W)$. The \textit{baby Verma module} of $\overline{\H}_{\mbf{c}}(W)$, associated to $\lambda$, is 
$$
\overline{\Delta}(\lambda) := \overline{\H}_{\mbf{c}}(W) \otimes_{\C [\h^*]^{co W} \rtimes W} \lambda,
$$
where $\C [\h^*]^{co W}_+$ acts on $\lambda$ as zero. 
\end{defn}

The rational Cherednik algebra is $\Z$-graded (no such grading exists for general symplectic reflection algebras). The grading is defined by $\deg(x) = 1$, $\deg(y) = -1$ and $\deg(w) = 0$ for $x \in \h^*$, $y \in \h$ and $w \in W$. At $t = 1$, this is just the natural grading coming from the Euler operator. Since the restricted rational Cherednik algebra is a quotient of $\H_{\mbf{c}}(W)$ by an ideal generated by homogeneous elements, it is also a graded algebra. This means that the representation theory of $\overline{\H}_{\mbf{c}}(W)$ has a rich combinatorial structure and one can use some of the combinatorics to better describe the modules $L(\lambda)$. In particular, since $\C [\h^*]^{co W} \rtimes W$ is a graded subalgebra of $\overline{\H}_{\mbf{c}}(W)$, the baby Verma module $\overline{\Delta}(\lambda)$ is a graded $\overline{\H}_{\mbf{c}}(W)$-module, where $1 \o \lambda$ sits in degree zero. By studying quotients of baby Verma modules, it is possible to completely classify the simple $ \overline{\H}_{\mbf{c}}(W)$-module. 

\begin{prop}\label{prop:simplebaby}
Let $\lambda, \mu \in \Irr (W)$.
\begin{enumerate}
\item The baby Verma module $\overline{\Delta}(\lambda)$ has a simple head, $L(\lambda)$. Hence $\overline{\Delta}(\lambda)$ is indecomposable.
\item $L(\lambda)$ is isomorphic to $L(\mu)$ if and only if $\lambda \simeq \mu$.  
\item The set $\{  L(\lambda) \, | \, \lambda \in \Irr (W) \}$ is a complete set of pairwise non-isomorphic simple $\overline{\H}_{\mbf{c}}(W)$-modules.
\end{enumerate}
\end{prop}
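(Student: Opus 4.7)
The plan mirrors the category $\mc{O}$ arguments of Lecture 2, exploiting the $\Z$-grading on $\overline{\H}_{\mbf{c}}(W)$ (with $\deg(x)=1$, $\deg(y)=-1$, $\deg(w)=0$) and the decomposition $\overline{\Delta}(\lambda) \simeq \C[\h]^{coW} \otimes \lambda$ as a graded module supported in non-negative degrees with $\overline{\Delta}(\lambda)_0 = 1 \otimes \lambda \simeq \lambda$ as a $W$-module.

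For part (1), observe that for any submodule $N \subseteq \overline{\Delta}(\lambda)$, the intersection $N \cap (1 \otimes \lambda)$ is a $W$-subrepresentation of the irreducible $1 \otimes \lambda$, so is either zero or all of $1 \otimes \lambda$; in the latter case $N$ contains the generators of $\overline{\Delta}(\lambda)$ and hence equals it. Restricted to graded submodules $N$ this condition reads $N_0 = 0$, which is closed under arbitrary sums, so the sum $M(\lambda)$ of all graded proper submodules is itself the unique maximal graded proper submodule, yielding a graded simple quotient $L(\lambda) := \overline{\Delta}(\lambda) / M(\lambda)$ with $L(\lambda)_0 \simeq \lambda$. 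Indecomposability of $\overline{\Delta}(\lambda)$ then follows because any direct-sum decomposition would split the simple head.

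Part (3) is then straightforward: since $\overline{\H}_{\mbf{c}}(W)$ is finite dimensional every simple $L$ is finite dimensional. The augmentation ideal $\C[\h^*]^{coW}_+$ of the finite-dimensional local subalgebra $\C[\h^*]^{coW}$ is nilpotent and hence acts nilpotently on $L$, so the $W$-stable subspace $L^{\h} := \{v \in L \mid y \cdot v = 0 \ \forall y \in \h\}$ is non-zero; picking an irreducible $W$-subrepresentation $\lambda \subseteq L^{\h}$, the adjunction between induction and restriction along $\C[\h^*]^{coW} \rtimes W \hookrightarrow \overline{\H}_{\mbf{c}}(W)$ supplies a non-zero homomorphism $\overline{\Delta}(\lambda) \to L$, necessarily surjective by simplicity of $L$, and hence $L \simeq L(\lambda)$.

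Part (2) will be the main obstacle, and the strategy is to compute $L(\lambda)^{\h}$ and show it is isomorphic to $\lambda$ as a $W$-module, thereby recovering $\lambda$ intrinsically from $L(\lambda)$. The inclusion $\lambda \hookrightarrow L(\lambda)^{\h}$ is immediate since $\h$ kills $1 \otimes \lambda$ by construction. Conversely, $L(\lambda)^{\h}$ is a graded subspace (being cut out by operators of pure graded degree $-1$), so any irreducible $W$-subrepresentation $V \subseteq L(\lambda)^{\h}$ sits in a single graded degree $d$ and yields, by adjunction, a degree-$d$ map $\overline{\Delta}(V) \to L(\lambda)$; if $d > 0$ its image is a graded submodule of the graded simple $L(\lambda)$ missing the non-zero degree-zero piece, forcing the image to be zero and hence $V = 0$. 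So $L(\lambda)^{\h}$ is concentrated in degree zero, where it is a non-zero $W$-submodule of the irreducible $L(\lambda)_0 \simeq \lambda$, giving $L(\lambda)^{\h} \simeq \lambda$. Since this invariant depends only on the $\overline{\H}_{\mbf{c}}(W)$-module structure of $L(\lambda)$, any isomorphism $L(\lambda) \simeq L(\mu)$ yields $\lambda \simeq \mu$, proving (2). This simultaneously completes (1): any non-zero submodule of $L(\lambda)$ contains a non-zero singular vector (by nilpotence of $\C[\h^*]^{coW}_+$), the $W$-subrepresentation it generates inside $L(\lambda)^{\h} \simeq \lambda$ is all of $\lambda$ by irreducibility, and $\lambda$ in turn generates $L(\lambda)$, establishing ungraded simplicity.
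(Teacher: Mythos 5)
Your construction of $L(\lambda)$ as the quotient by the unique maximal \emph{graded} submodule $M(\lambda)$, the identification $L(\lambda)^{\h} = L(\lambda)_0 \simeq \lambda$, and the singular-vector argument for ungraded simplicity and for part (2) are all correct, and this reshuffling is a pleasant alternative to the paper's route for those particular points. However, there is a genuine gap in part (3), and the same gap undermines the indecomposability claim in part (1). At the step ``necessarily surjective by simplicity of $L$, and hence $L \simeq L(\lambda)$'' you need to know that $L(\lambda)$ is the \emph{unique} simple quotient of $\overline{\Delta}(\lambda)$, equivalently that $M(\lambda)$ equals the Jacobson radical of $\overline{\Delta}(\lambda)$ and not merely the unique maximal graded submodule. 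The kernel of your surjection $\overline{\Delta}(\lambda) \twoheadrightarrow L$ is maximal but has no reason to be graded, so nothing you have proved places it inside $M(\lambda)$. Likewise, ``any direct-sum decomposition would split the simple head'' invokes the standard fact that a module with simple head (in the sense $M/\mathrm{rad}\,M$ simple) is indecomposable, which again requires $\mathrm{rad}\,\overline{\Delta}(\lambda) = M(\lambda)$; a non-graded direct summand need not interact well with your graded maximal submodule.

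The paper closes this gap with Claim~\ref{claim:grrad}: for a finite-dimensional $\Z$-graded algebra $R$, the radical $\mathrm{rad}\,R$ is a homogeneous ideal. Consequently $\mathrm{rad}\,\overline{\Delta}(\lambda) = (\mathrm{rad}\,\overline{\H}_{\mbf{c}}(W))\cdot \overline{\Delta}(\lambda)$ is a graded proper submodule, and since $\overline{\Delta}(\lambda)/\mathrm{rad}$ is graded semi-simple and $\overline{\Delta}(\lambda)$ has a unique maximal graded submodule, the head must be simple and $\mathrm{rad}\,\overline{\Delta}(\lambda) = M(\lambda)$. The same lemma underlies the ``well-known'' fact, invoked in the paper's part (3), that simple modules over a finite-dimensional graded algebra are gradable; once $L$ is graded, the kernel of $\overline{\Delta}(\lambda) \to L$ is graded, hence lies inside $M(\lambda)$, giving $L \simeq L(\lambda)$. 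You should add a version of this homogeneity lemma (or of the gradability of simple modules) to make parts (1) and (3) complete; your singular-vector computation of $L(\lambda)^{\h}$ does not by itself rule out a non-graded maximal submodule distinct from $M(\lambda)$.
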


\begin{proof}
We first recall some elementary facts from the representation theory of finite dimensional algebras. The \textit{radical} $\rad R$ of a ring $R$ is the intersection of all maximal left ideals. When $R$ is a finite dimensional algebra, $\rad R$ is also the union of all nilpotent ideals in $R$ and is itself a nilpotent ideal; see \cite[Proposition 3.1]{ARS}.

\begin{claim}\label{claim:grrad}
If $R$ is $\Z$-graded, finite dimensional, then $\rad R$ is a homogeneous ideal. 
\end{claim}

\begin{proof}
By a homogeneous ideal we mean that if $a \in I$ and $a = \sum_{i \in \Z} a_i$ is the decomposition of $a$ into homogeneous pieces, then every $a_i$ belongs to $I$. If $I$ is an ideal, let $\mathrm{hom}(I)$ denote the ideal in $R$ generated by all homogeneous parts of all elements in $I$ i.e. $\mathrm{hom}(I)$ is the smallest ideal in $R$ containing $I$. It suffices to show that if $I,J$ are ideals in $R$ such that $I J = 0$, then $\mathrm{hom}(I) \mathrm{hom}(J) = 0$. In fact, we just need to show that $\mathrm{hom}(I) J = 0$. Let the length of $a \in I$ be the number of integers $n$ such that $a_n \neq 0$. Then, an easy induction on length shows that $a_n J = 0$ for all $n$. Since $I$ is finite dimensional, this implies that $\mathrm{hom}(I) J = 0$ as required. 
\end{proof}

It is a classical (but difficult) result by G. Bergman that the hypothesis that $R$ is finite dimensional in the above claim is not necessary.  

If $M$ is a finitely generated $\overline{\H}_{\mbf{c}}(W)$-module, then the radical of $M$ is defined to be the intersection of all maximal submodules of $M$. It is known, e.g. \cite[Section 1.3]{ARS}, that the radical of $M$ equals $(\rad \overline{\H}_{\mbf{c}}(W)) M$ and that the quotient $M / \rad M$ (which is defined to be the \textit{head} of $M$) is semi-simple. Therefore, Claim \ref{claim:grrad} implies that if $M$ is graded, then its radical is a graded submodule and equals the intersection of all maximal graded submodules of $M$. In particular, if $M$ contains a unique maximal graded submodule, then the head of $M$ is a simple, graded modules. 

Thus, we need to show that $\overline{\Delta}(\lambda)$ has a unique maximal graded submodule. The way we have graded $\overline{\Delta}(\lambda)$, we have $\overline{\Delta}(\lambda)_n = 0$ for $n < 0$ and $\overline{\Delta}(\lambda)_0 = 1 \o \lambda$. Let $M$ be the sum of all graded submodules $M'$ of $\overline{\Delta}(\lambda)$ such that $M_0' = 0$. Clearly, $M \subset \rad \overline{\Delta}(\lambda)$. If this is a proper inclusion then $(\rad \overline{\Delta}(\lambda))_0 \neq 0$. But $\lambda$ is an irreducible $W$-module, hence $(\rad \overline{\Delta}(\lambda))_0 = \lambda$. Since $\overline{\Delta}(\lambda)$ is generated by $\lambda$ this implies that $\rad \overline{\Delta}(\lambda) = \overline{\Delta}(\lambda)$ - a contradiction. Thus, $M = \rad \overline{\Delta}(\lambda)$. By the same argument, it is clear that $\overline{\Delta}(\lambda) / M$ is simple. This proves part (1).

Part (2): For each $\overline{\H}_{\mbf{c}}(W)$-module $L$, we define $\mathrm{Sing}(L) = \{ l \in L \ | \ \mf{h} \cdot l = 0 \}$. Notice that  $\mathrm{Sing}(L)$ is a (graded if $L$ is graded) $W$-submodule and if $\lambda \subset \mathrm{Sing}(L)$ then there exists a unique morphism $\overline{\Delta}(\lambda) \rightarrow L$ extending the inclusion map $\lambda \hookrightarrow \mathrm{Sing}(L)$. Since $L(\lambda)$ is simple, $\mathrm{Sing}(L(\lambda)) = L(\lambda)_0 = \lambda$. Hence $L(\lambda) \simeq L(\mu)$ implies $\lambda \simeq \mu$. 

Part (3): It suffices to show that if $L$ is a simple $\overline{\H}_{\mbf{c}}(W)$-module, then $L \simeq L(\lambda)$ for some $\lambda \in \Irr (W)$. It is well-known that the simple modules of a graded finite dimensional algebra can be equipped with a (non-unique) grading. So we may assume that $L$ is graded. Since $L$ is finite dimensional and $\mf{h} \cdot L_n \subset L_{n-1}$, $\mathrm{Sing}(L) \neq 0$. Choose some $\lambda \subset \mathrm{Sing}(L)$. Then there is a non-zero map $\overline{\Delta}(\lambda) \rightarrow L$. Hence $L \simeq L(\lambda)$.
\end{proof}   

\subsection{The Calogero-Moser partition}\label{sec:defnCalogeroMoser}

Since the algebra $\overline{\H}_{\mbf{c}}(W)$ is finite dimensional, it will decompose into a direct sum of \textit{blocks}:
$$
\overline{\H}_{\mbf{c}}(W) = \bigoplus_{i = 1}^k B_i,
$$
where $B_i$ is a block if it is indecomposable as an algebra. If $b_i$ is the identity element of $B_i$ then the identity element $1$ of $\overline{\H}_{\mbf{c}}(W)$ is the sum $1 = b_1 + \ds + b_k$ of the $b_i$. For each simple $\overline{\H}_{\mbf{c}}(W)$-module $L$, there exists a unique $i$ such that $b_i \cdot L \neq 0$. In this case we say that $L$ \textit{belongs to the block} $B_i$. By Proposition \ref{prop:simplebaby}, we can (and will) identify $\textrm{Irr} \,  \overline{\H}_{\mbf{c}}(W)$ with $\textrm{Irr} \, (W)$. We define the \textit{Calogero-Moser partition} of $\textrm{Irr} \, (W)$ to be the set of equivalence classes of $\textrm{Irr} \, (W)$  under the equivalence relation $\lambda \sim \mu$ if and only if $L(\lambda)$ and $L(\mu)$ belong to the same block.\\

To aid intuition it is a good idea to have a geometric intepretation of the Calogero-Moser partition. The image of the natural map $\ZH_{\mbf{c}} / A_+ \cdot \ZH_{\mbf{c}} \rightarrow \overline{\H}_{\mbf{c}}(W)$ is clearly contained in the centre of $\overline{\H}_{\mbf{c}}(W)$. In general it does not equal the centre of $\overline{\H}_{\mbf{c}}(W)$ (though one can use the Satake isomorphism to show that it is injective). However, it is a consequence of a theorem by M\"uller, see \cite[Corollary 2.7]{Ramifications}, that the primitive central idempotents of $\overline{\H}_{\mbf{c}}(W)$ (the $b_i$'s) are precisely the images of the primitive idempotents of $\ZH_{\mbf{c}} / A_+ \cdot \ZH_{\mbf{c}}$. Geometrically, this can be interpreted as follows. The inclusion $A \hookrightarrow \ZH_{\mbf{c}}(W)$ defines a finite, surjective morphism 
$$
\Upsilon \ : \ X_{\mbf{c}}(G) \longrightarrow \h / W \times \h^* /W
$$
where $\h / W \times \h^* /W = \Spec A$. M\"uller's theorem is saying that the natural map $\LW \rightarrow \Upsilon^{-1}(0), \, \lambda \mapsto \textrm{Supp} \, (L(\lambda)) = \chi_{L(\lambda)}$, factors through the Calogero-Moser partition (here $\Upsilon^{-1}(0)$ is considered as the set theoretic pull-back):
\begin{displaymath}
\vcenter{
\xymatrix{
\LW \ar@{->}[dr] \ar@{->>}[d] &  \\
CM(W) \ar@{.>}[r]^\sim & \Upsilon^{-1}(0) }
}
\end{displaymath}

Using this fact, one can show that the geometry of $X_{\mbf{c}}(W)$ is related to Calogero-Moser partitions in the following way. 

\begin{thm}\label{thm:Mo}
The following are equivalent:
\begin{itemize}
\item The generalized Calogero-Moser space $X_{\mbf{c}}(W)$ is smooth. 
\item The Calogero-Moser partition of $\Irr (W')$ is trivial for all parabolic subgroup $W'$ of $W$.
\end{itemize} 
\end{thm}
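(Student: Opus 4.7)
The plan is to interpret smoothness of $X_{\mbf{c}}(W)$ representation-theoretically using Proposition \ref{prop:singulariffsmall}, and then exploit the local structure of $X_{\mbf{c}}(W)$ near each closed point to reduce to the corresponding statement for parabolic subgroups of $W$.

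First I would handle the ``central fibre'' case $W' = W$. Combining Proposition \ref{prop:singulariffsmall}, Theorem \ref{thm:genericisregularrep} and the Artin-Procesi criterion in Remark \ref{rem:ArtinProcesi}, a point $\chi \in X_{\mbf{c}}(W)$ is smooth if and only if there is a unique simple $\H_{0,\mbf{c}}(W)$-module supported at $\chi$ (which then automatically has dimension $|W|$). Restricting to $\chi \in \Upsilon^{-1}(0)$ and applying M\"uller's theorem as quoted in Section \ref{sec:defnCalogeroMoser}, these simples are precisely the simples of $\overline{\H}_{\mbf{c}}(W)$ lying in the block corresponding to $\chi$. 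Hence $X_{\mbf{c}}(W)$ is smooth along $\Upsilon^{-1}(0)$ if and only if every block of $\overline{\H}_{\mbf{c}}(W)$ contains exactly one simple module, i.e.\ if and only if the Calogero-Moser partition of $\Irr(W)$ is trivial.

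To extend this equivalence to all of $X_{\mbf{c}}(W)$, I would invoke a completion theorem in the spirit of Bezrukavnikov-Etingof, adapted to the $t=0$ setting: given any point $z = (\bar a,\bar b) \in \h/W \times \h^*/W$ with $W' \le W$ the (necessarily parabolic, by Steinberg's theorem) stabiliser of a pair of lifts $(a,b) \in \h \oplus \h^*$, the completion of $\H_{0,\mbf{c}}(W)$ along $\Upsilon^{-1}(z)$ is, up to a matrix algebra and a completion of a Weyl algebra, isomorphic as a Poisson algebra to the completion of $\H_{0,\mbf{c}|_{W'}}(W')$ along $\Upsilon'^{-1}(0)$. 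It follows that $X_{\mbf{c}}(W)$ is smooth along $\Upsilon^{-1}(z)$ if and only if $X_{\mbf{c}|_{W'}}(W')$ is smooth along $\Upsilon'^{-1}(0)$, which by the previous paragraph (applied to $W'$ in place of $W$) is equivalent to the Calogero-Moser partition of $\Irr(W')$ being trivial. Since every point of $X_{\mbf{c}}(W)$ lies over some $z$ whose stabiliser is a parabolic, and every conjugacy class of parabolics arises in this way, the theorem follows.

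The main obstacle is the completion theorem used in the last step. Its proof requires constructing an explicit idempotent that realises $\H_{0,\mbf{c}|_{W'}}(W')$ inside a completion of $\H_{0,\mbf{c}}(W)$ and verifying that this identification intertwines the centres and the Poisson brackets of the two algebras; a flatness argument is then needed to pass from formal neighbourhoods back to an open statement about smoothness. Without this geometric input one is reduced to the $W' = W$ case and loses all control over the singularities of $X_{\mbf{c}}(W)$ lying off $\Upsilon^{-1}(0)$.
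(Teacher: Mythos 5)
Your outline reproduces the structure of the argument in the reference the paper points to, namely Bellamy--Martino \cite{MoPositiveChar}, whose proof the text says carries over verbatim to $t=0$ in characteristic zero. Step one (smooth locus $=$ Azumaya locus via Theorem \ref{prop:singulariffsmall}, then M\"uller's theorem to identify smooth points of $\Upsilon^{-1}(0)$ with singleton blocks of $\overline{\H}_{\mbf{c}}(W)$, so smoothness along the central fibre is equivalent to triviality of $CM(W)$) is exactly the $W'=W$ case, and is already essentially Lemma \ref{lem:singblock}. Step two, a Bezrukavnikov--Etingof-type completion theorem at a point $(a,b)\in\h\oplus\h^*$ with stabiliser $W'$, is indeed the key geometric input used in \cite{MoPositiveChar} to propagate the block-theoretic criterion to all of $X_{\mbf{c}}(W)$; combined with Steinberg's theorem to see that the stabiliser of $(a,b)$ is a parabolic of $W$ in the paper's sense, and conversely that every parabolic arises as $\mathrm{Stab}_W(v,0)$, this gives the full equivalence. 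So you have correctly identified both the architecture of the proof and its one substantial outstanding ingredient. Two small imprecisions worth correcting: at $t=0$ the ``Weyl algebra'' factor in the completion isomorphism is just a completed polynomial ring on a symplectic vector space, not a Weyl algebra; and since $\H_{0,\mbf{c}}(W)$ is noncommutative, the completion statement should be phrased as a Morita equivalence of the algebras and an isomorphism of Poisson algebras between their \emph{centres} (equivalently spherical subalgebras), rather than saying the algebras themselves are Poisson-isomorphic. Also, the ``flatness argument to pass from formal neighbourhoods to an open statement'' you anticipate is actually unnecessary: regularity of a Noetherian local ring is detected on its completion, so smoothness at the relevant point is read off directly from the completed centres.
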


Here $W'$ is a \textit{parabolic} subgroup of $W$ if there exists some $v \in \h$ such that $W' = \mathrm{Stab}_W(v)$. It is a remarkable theorem by Steinberg \cite[Theorem 1.5]{SteinbergDifferential} that all parabolic subgroups of $W$ are again complex reflection groups. 

\subsection{Graded characters} 

In this section we assume that $W$ is a complex reflection group. In some cases it is possible to use the $\Z$-grading on $\overline{\H}_{\mbf{c}}(W)$ to calculate the graded character of the simple modules $L(\lambda)$. 

\begin{lem}\label{lem:singblock}
The element $\lambda \in \Irr (W)$ defines a block $\{ \lambda \}$ of the restricted rational Cherednik algebra $\overline{\H}_{\mbf{c}}(W)$ if and only if $\dim L(\lambda) = |W|$ if and only if $L(\lambda) \simeq \C W$ as a $W$-module. 
\end{lem}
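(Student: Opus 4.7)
The plan is to verify the three conditions are equivalent by establishing (b) $\Leftrightarrow$ (c) and (a) $\Leftrightarrow$ (b) separately.

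For (b) $\Leftrightarrow$ (c), the direction (c) $\Rightarrow$ (b) is immediate since $\dim \C W = |W|$. The reverse implication is precisely the second assertion of Theorem \ref{thm:genericisregularrep} applied to $L(\lambda)$, viewed as a simple $\H_{0,\mbf{c}}(W)$-module via the surjection $\H_{0,\mbf{c}}(W) \twoheadrightarrow \overline{\H}_{\mbf{c}}(W)$.

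For (a) $\Leftrightarrow$ (b), the key input is M\"uller's theorem, as recalled after Theorem \ref{thm:Mo}: the Calogero--Moser partition is in bijection with the set-theoretic fibre $\Upsilon^{-1}(0) \subset X_{\mbf{c}}(W)$, with $L(\lambda)$ lying in the block corresponding to the point $\chi_{L(\lambda)}$. Consequently $\{\lambda\}$ is a singleton block exactly when $L(\lambda)$ is the unique simple $\overline{\H}_{\mbf{c}}(W)$-module of central character $\chi_{L(\lambda)}$; but because any simple $\H_{0,\mbf{c}}(W)$-module with central character lying over $0 \in \h/W \times \h^*/W$ is automatically annihilated by $A_+$, this is equivalent to $L(\lambda)$ being the unique simple $\H_{0,\mbf{c}}(W)$-module with central character $\chi_{L(\lambda)}$. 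By the Artin--Procesi characterization in Remark \ref{rem:ArtinProcesi} this uniqueness is equivalent to $\chi_{L(\lambda)} \in \mc{A}_{\mbf{c}}$, and combining Theorem \ref{prop:singulariffsmall} with the identification $\mc{A}_{\mbf{c}} = X_{\mbf{c}}(W)_{\sm}$ established in its proof, this in turn is equivalent to $\dim L(\lambda) = |W|$.

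The only real subtlety is the translation between modules over $\overline{\H}_{\mbf{c}}(W)$ and over $\H_{0,\mbf{c}}(W)$, so that notions of central character and Azumaya locus pass correctly from the ambient algebra to its finite-dimensional quotient. Once this is kept in mind, the proof reduces to a short chain of citations of earlier results, with no further analytic or geometric input required.
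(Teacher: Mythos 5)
Your proof is correct and follows essentially the same route as the paper's: both reduce the block-theoretic statement to a geometric one via M\"uller's theorem and the Artin--Procesi characterization of the Azumaya locus, invoke the equality $\mc{A}_{\mbf{c}} = X_{\mbf{c}}(W)_{\sm}$, and then conclude with Theorems \ref{prop:singulariffsmall} and \ref{thm:genericisregularrep}. You spell out the passage between simple modules over $\overline{\H}_{\mbf{c}}(W)$ and over $\H_{0,\mbf{c}}(W)$ a bit more explicitly than the paper does, which is the right thing to check, but the underlying argument is the same.
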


\begin{proof}
Recall that M\"uller's Theorem from section \ref{sec:defnCalogeroMoser} says that the primitive central idempotents of $\overline{\H}_{\mbf{c}}(W)$ (the $b_i$'s) are the images of the primitive idempotents of $\ZH_{\mbf{c}} / A_+ \cdot \ZH_{\mbf{c}}$ under the natural map $\ZH_{\mbf{c}} / A_+ \cdot \ZH_{\mbf{c}} \rightarrow \overline{\H}_{\mbf{c}}(W)$. The primitive idempotents of $R := \ZH_{\mbf{c}} / A_+ \cdot \ZH_{\mbf{c}}$ are in bijection with the maximal ideals in this ring: given a maximal ideal $\mf{m}$ in $R$ there is a unique primitive idempotent $b$ whose image in $R/\mf{m}$ is non-zero. Under this correspondence, the simple module belonging to the block of $\overline{\H}_{\mbf{c}}(W)$ corresponding to $b$ are precisely those simple modules supported at $\mf{m}$. Thus, the block $b$ has just one simple module if and only if there is a unique simple module of $\H_{\mbf{c}}(W)$ supported at $\mf{m}$. But, by the Artin-Procesi Theorem, remark \ref{rem:ArtinProcesi} (3), there is a unique simple module supported at $\mf{m}$ if and only if $\mf{m}$ is in the Azumaya locus of $X_{\mbf{c}}(W)$. As noted in the proof of Corollary \ref{prop:singulariffsmall}, the smooth locus of $X_{\mbf{c}}(W)$ equals the Azumaya locus. Thus, to summarize, the module $L(\lambda)$ is on its own in a block if and only if its support is contained in the smooth locus. Then, the conclusions of the lemma follow from Theorem \ref{thm:genericisregularrep} and Corollary \ref{prop:singulariffsmall}.  
\end{proof}

When $\lambda$ satisfies the conditions of Lemma \ref{lem:singblock}, we say that $\lambda$ is in a block on its own. Recall that $\C[\h]^{co W}$, the coinvariant ring of $W$, is defined to be the quotient $\C[\h]/ \langle \C[\h]^W_+ \rangle$. It is a graded $W$-module. Therefore, we can define the \textit{fake degree} of $\lambda \in \Irr (W)$ to be the polynomial
$$
f_{\lambda}(t) = \sum_{i \in Z} [\C[\h]^{co W}_i : \lambda] t^i
$$
where $\C[\h]^{co W}_i$ is the part of $\C[\h]^{co W}$ of degree $i$ and $[\C[\h]^{co W}_i : \lambda]$ is the multiplicity of $\lambda$ in $\C[\h]^{co W}_i$. For each $\lambda \in \Irr (W)$, we define $b_{\lambda}$ to be the degree of smallest monomial appearing in $f_{\lambda}(t)$ e.g. if $f_{\lambda}(t) = 2 t^4 - t^6 + $ higher terms, then $b_{\lambda} = 4$. Given a finite dimensional, graded vector space $M$, the Poincar\'e polynomial of $M$ is defined to be 
$$
P(M,t) = \sum_{i \in \Z} \dim M_i t^i.
$$

\begin{lem}\label{lem:trivlem}
Assume that $\lambda$ is in a block on its own. Then the Poincar\'e polynomial of $L(\lambda)$ is given by 
$$
P(L(\lambda),t) = \frac{(\dim \lambda) t^{b_{\lambda^*}} P(\C[\h]^{co W},t)}{f_{\lambda^*}(t)}.
$$
\end{lem}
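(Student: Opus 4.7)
The plan is to extract $P(L(\lambda), t)$ by computing the Poincar\'e polynomial of the baby Verma $\overline{\Delta}(\lambda)$ in two different ways and dividing. By Lemma \ref{lem:singblock}, the hypothesis on $\lambda$ means that $L(\lambda) \simeq \C W$ as ungraded $W$-modules. In particular $\dim L(\lambda) = |W|$, and the block $B_\lambda$ of $\overline{\H}_{\mbf{c}}(W)$ in which $L(\lambda)$ lies is a simple matrix algebra $M_{|W|}(\C)$ with $L(\lambda)$ as its unique simple module. Because $\overline{\Delta}(\lambda)$ is indecomposable with head $L(\lambda)$ by Proposition \ref{prop:simplebaby}(1), it also sits in $B_\lambda$.

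The first key step is to upgrade the ungraded isomorphism $\overline{\Delta}(\lambda) \simeq L(\lambda)^{\oplus \dim \lambda}$ to a graded decomposition $\overline{\Delta}(\lambda) \simeq L(\lambda) \otimes_{\C} V$ for a graded multiplicity space $V$ of total dimension $\dim \lambda$, with Poincar\'e polynomial $p_V(t)$. This relies on the grading being compatible with the block decomposition, for instance via the observation that the central idempotent $b_\lambda$ is homogeneous of degree zero (since the grading is inner, coming from commutation with the Euler element). Comparing degree-$0$ parts then gives $\lambda = \overline{\Delta}(\lambda)_0 = L(\lambda)_0 \otimes V_0 = \lambda \otimes V_0$, so $V_0 \simeq \C$ and $p_V(0) = 1$.

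Next I would identify $p_V(t)$ by computing the graded multiplicity of the trivial $W$-representation in $\overline{\Delta}(\lambda)$ in two distinct ways. Since $L(\lambda) \simeq \C W$, the trivial $W$-representation appears in $L(\lambda)$ with multiplicity one, and as a graded subspace it must be concentrated in a single degree $d$; thus in $L(\lambda) \otimes V$ the graded trivial multiplicity is exactly $t^d \cdot p_V(t)$. On the other hand, from $\overline{\Delta}(\lambda) = \C[\h]^{co W} \otimes \lambda$ as a graded $W$-module, adjunction gives
$$\dim_t \overline{\Delta}(\lambda)^W = \dim_t \Hom_W(\lambda^*, \C[\h]^{co W}) = f_{\lambda^*}(t).$$
Equating yields $t^d p_V(t) = f_{\lambda^*}(t)$, and since $p_V(0) = 1$ we must have $d = b_{\lambda^*}$ (the lowest power of $t$ appearing in $f_{\lambda^*}(t)$), hence $p_V(t) = f_{\lambda^*}(t)/t^{b_{\lambda^*}}$.

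Combining $P(\overline{\Delta}(\lambda), t) = P(L(\lambda), t) \cdot p_V(t)$ with the direct computation $P(\overline{\Delta}(\lambda), t) = (\dim \lambda) \cdot P(\C[\h]^{co W}, t)$, coming from $\overline{\Delta}(\lambda) = \C[\h]^{co W} \otimes \lambda$ as graded vector spaces, produces the desired formula. The main obstacle is the graded refinement of the block decomposition in the first step; the rest is a clean character-theoretic calculation via Frobenius reciprocity and the definitions of $b_{\lambda^*}$ and $f_{\lambda^*}$.
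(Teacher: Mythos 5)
Your core computation --- comparing the graded multiplicity of the trivial $W$-representation in $\overline{\Delta}(\lambda)\simeq\C[\h]^{co W}\otimes\lambda$ against its computation from the composition factors of $\overline{\Delta}(\lambda)$, then fixing the shift via $b_{\lambda^*}$ --- is exactly the paper's argument. The gap is in the justification. The claimed graded isomorphism $\overline{\Delta}(\lambda)\simeq L(\lambda)\otimes V$ (equivalently $\overline{\Delta}(\lambda)\simeq L(\lambda)^{\oplus\dim\lambda}$ after forgetting the grading) is false whenever $\dim\lambda>1$: Proposition~\ref{prop:simplebaby}(1) says $\overline{\Delta}(\lambda)$ is indecomposable, so it cannot split as a direct sum. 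Relatedly, the block $B_\lambda$ is \emph{not} the matrix algebra $M_{|W|}(\C)$. Its centre contains the Artinian local ring $R_\lambda=(\ZH_{\mbf{c}}/A_+\cdot\ZH_{\mbf{c}})_{\chi_L}$, whose maximal ideal is nontrivially nilpotent when $\dim\lambda>1$; a dimension count gives $B_\lambda\cong M_{|W|}(R_\lambda)$, which is not semisimple. Having $b_\lambda$ homogeneous of degree zero places $\overline{\Delta}(\lambda)$ in $B_\lambda$ as a graded module, but it does not make $B_\lambda$ semisimple or force $\overline{\Delta}(\lambda)$ to decompose.

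The fix is small and recovers the paper's proof. Replace the graded direct sum by the identity $[\overline{\Delta}(\lambda)]=\sum_{k=1}^{\dim\lambda}[L(\lambda)][i_k]$ in the graded Grothendieck group, which holds because every composition factor of the indecomposable graded module $\overline{\Delta}(\lambda)$ lies in $B_\lambda$, hence is a graded shift of $L(\lambda)$. Poincar\'e polynomials and graded $W$-characters are additive over short exact sequences, so your identification of the graded trivial-isotypic piece with $f_{\lambda^*}(t)$, the count of $\dim\lambda$ composition factors, and the final division all go through unchanged. The one point that needs separate care once you drop the direct sum is the normalisation $i_1=0$: your degree-zero comparison $\overline{\Delta}(\lambda)_0=\lambda$ no longer by itself rules out negative shifts; the paper handles this with a reference to \cite[Lemma 4.4]{Baby}, though it can also be extracted from $\overline{\Delta}(\lambda)_{<0}=0$ together with $L(\lambda)$ being the degree-zero head of $\overline{\Delta}(\lambda)$.
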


\begin{proof}
By Proposition \ref{prop:simplebaby}, the baby Verma module $\overline{\Delta}(\lambda)$ is indecomposable. This implies that all its composition factors belong to the same block. We know that $L(\lambda)$ is one of these composition factors. But, by assumption, $L(\lambda)$ is on its own in a block. Therefore every composition factor is isomorphic to $L(\lambda)$. So let's try and calculate the graded multiplicities of $L(\lambda)$ in $\overline{\Delta}(\lambda)$. In the graded Grothendieck group of $\overline{\H}_{\mbf{c}}(W)$, we must have 
\beq{eq:deltaL}
[\overline{\Delta}(\lambda)] = [L(\lambda)][i_1] + \cdots + [L(\lambda)][i_{\ell}]
\eeq
where $[L(\lambda)][k]$ denotes the class of $L(\lambda)$, shifted in degree by $k$. By Lemma \ref{lem:singblock}, $\dim L(\lambda) = |W|$ and it is easy to see that $\dim \overline{\Delta}(\lambda) = |W| \dim \lambda$. Thus, $\ell = \dim \lambda$. Since $L(\lambda)$ is a graded quotient of $\overline{\Delta} (\lambda)$ we may also assume that $i_1 = 0$. Recall that $\C[\h]^{co W}$ is isomorphic to the regular representation as a $W$-module. Therefore, the fact that $[\mu \o \lambda : \mathrm{triv}] \neq 0$ if and only if $\mu \simeq \lambda^*$ (in which case it is one) implies that the multiplicity space of the trivial representation in $\overline{\Delta} (\lambda)$ is $(\dim \lambda)$-dimensional. The Poincar\'e polynomial of this multiplicity space is precisely the fake polynomial $f_{\lambda^*}(t)$. On the other hand, the trivial representation only occurs once in $L(\lambda)$ since it is isomorphic to the regular representation. Therefore, comparing graded multiplicities of the trivial representation on both sides of equation (\ref{eq:deltaL}) implies that, up to a shift, $t^{i_1} + \cdots + t^{i_{\ell}} = f_{\lambda^*}(t)$. What is the shift? Well, the lowest degree\footnote{It is not complete obvious that $0$ is the lowest degree in $t^{i_1} + \cdots + t^{i_{\ell}}$, see \cite[Lemma 4.4]{Baby}.} occurring in $t^{i_1} + \cdots + t^{i_{\ell}}$ is $i_1 =0$. But the lowest degree in $f_{\lambda}(t)$ is $b_{\lambda^*}$. Thus, $t^{i_1} + \cdots + t^{i_{\ell}} = t^{-b_{\lambda^*}}f_{\lambda^*}(t)$. This implies that  
$$
P(L(\lambda),t) = \frac{t^{b_{\lambda^*}} P(\overline{\Delta}(\lambda),t)}{f_{\lambda^*}(t)}.
$$
Clearly, $P(\overline{\Delta}(\lambda),t) = (\dim \lambda) P(\C[\h]^{co W},t)$. 
\end{proof}

The module $L(\lambda)$ is finite dimensional. Therefore $P(L(\lambda),t)$ is a Laurent polynomial. However, one can often find representations $\lambda$ for which $f_{\lambda^*}(t)$ does not divide $P(\C[\h]^{co W},t)$. In such cases the above calculation show that $\lambda$ is \textit{never} in a block on its own. 

\begin{exercise}\label{ex:G2}
The character table of the Weyl group $G_2$ is given by 
\begin{displaymath}
\begin{array}{c|ccccccc}
\textrm{Class} & 1 & 2 & 3 & 4 & 5 & 6\\
\textrm{Size} & 1 & 1 & 3 & 3 & 2 & 2 \\
\textrm{Order} & 1 & 2 & 2 & 2 & 3 & 6\\
\hline
T & 1 & 1 & 1 & 1 & 1 & 1 \\
S & 1 & 1 & -1 & -1 & 1 & 1 \\
V_1 & 1 & -1 & 1 & -1 & 1 & -1 \\ 
V_2 & 1 & -1 & -1 & 1 & 1 & -1 \\  
\mathfrak{h}_1 & 2 & 2 & 0 & 0 & -1 & -1 \\  
\mathfrak{h}_2 & 2 & 2 & 0 & 0 & -1 & 1  
\end{array}
\end{displaymath}
The fake polynomials are 
$$
f_{T}(t) = 1, \quad f_S(t) = t^6, \quad f_{V_1}(t) = t^3, \quad f_{V_2}(t) = t^3, \quad f_{\mathfrak{h}_1}(t) = t^2 + t^4, \quad f_{\mathfrak{h}_2}(t) = t + t^5.
$$
Is $X_{\mbf{c}}(G_2)$ ever smooth?
\end{exercise}

\begin{exercise}\label{ex:LMS}
(Harder) For this exercise you'll need to have GAP 3, together with the package ``CHEVIE'' installed. Using the code\footnote{Available from \url{http://www.maths.gla.ac.uk/~gbellamy/MSRI.html}.} \verb+fake.gap+, show that there is (at most one) exceptional complex reflection group $W$ for which the space $X_{\mbf{c}}(W)$ can ever hope to be smooth. Which exceptional group is this? For help with this exercise, read \cite{Singular}. 
\end{exercise}

\subsection{Symplectic resolutions}

Now we return to the original question posed at the start of lecture one: How singular is the space $V/G$? The usual way of answering this question is to look at resolutions of singularities of $V/G$. 

\begin{defn}
A \textit{(projective) resolution of singularities} is a birational morphism $\pi : Y \rightarrow V/G$ from a smooth variety $Y$, projective over $V/G$, such that the restriction of $\pi$ to $\pi^{-1}((V/G)_{\textrm{sm}})$ is an isomorphism. 
\end{defn}

If $V_{\reg}$ is the open subset of $V$ on which $G$ acts freely, then $V_{\reg} / G \subset V/G$ is the smooth locus and it inherits a symplectic structure from $V$ i.e. $V_{\reg} / G$ is a symplectic manifold. 

\begin{defn}
A projective resolution of singularities $\pi : Y \rightarrow V/G$ is said to be \textit{symplectic} if $Y$ is a symplectic manifold and the restriction of $\pi$ to $\pi^{-1}((V/G)_{\textrm{sm}})$ is an isomorphism of symplectic manifolds. 
\end{defn}

The existence of a symplectic resolution for $V/G$ is a very strong condition and implies that the map $\pi$ has some very good properties e.g. $\pi$ is \textit{semi-small}. Therefore, as one might expect, symplectic resolutions exist only for very special groups. 

\begin{thm}\label{thm:sympresdef}
Let $(V,\omega,G)$ be an irreducible symplectic reflection group. 
\begin{itemize}
\item The quotient singularity $V/G$ admits a symplectic resolution if and only it admits a smooth Poisson deformation. 
\item The quotient singularity $V/G$ admits a smooth Poisson deformation if and only if $X_{\mbf{c}}(G)$ is smooth for generic parameters $\mbf{c}$.
\end{itemize}
\end{thm}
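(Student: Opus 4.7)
The plan is to establish the two equivalences independently, since the first relates the geometry of resolutions to general Poisson deformation theory, while the second ties that deformation theory specifically to the Calogero--Moser family already constructed in the lecture.

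For the second equivalence, the "if" direction is essentially free from what has been built above. The PBW theorem, Theorem \ref{eq:PBW}, shows that $\H_{0,\mbf{c}}(G)$ is flat over $\C[\mc{S}]^G$, so passing to the Poisson center via the Satake isomorphism, Theorem \ref{thm:Satake}, yields a flat family of Poisson algebras $\ZH_{\mbf{c}}(G)$ over $\Spec \C[\mc{S}]^G$ whose specialization at $\mbf{c}=0$ is $\C[V]^G$. Geometrically, $\{X_{\mbf{c}}(G)\}_{\mbf{c}}$ is a Poisson deformation of $V/G$, and if the generic fiber is smooth this is the required smooth Poisson deformation. For the converse I would invoke Namikawa's theory: the universal Poisson deformation of a symplectic singularity exists and has base a smooth affine space, and by a theorem of Ginzburg--Kaledin--Namikawa the classifying map $\Spec \C[\mc{S}]^G \to B_{\mathrm{univ}}$ from Calogero--Moser parameters to the base of the universal Poisson deformation is a finite, surjective map. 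Consequently any smooth Poisson deformation is, after base change, isomorphic to a fiber $X_{\mbf{c}}(G)$; since smoothness is an open condition and is nonempty, it must hold for generic $\mbf{c}$.

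For the first equivalence one direction is relatively direct. If $\pi : Y \to V/G$ is a symplectic resolution, then $Y$ is itself a smooth symplectic variety; Namikawa's results provide a universal Poisson deformation $\mc{Y} \to B$ with $B$ smooth. Because $\pi$ is crepant, it deforms in families: there is a simultaneous Poisson deformation $\mc{X} \to B$ of $V/G$ and a proper birational morphism $\mc{Y} \to \mc{X}$ over $B$. Over a generic $b \in B$ the fiber $\mc{X}_b$ is already smooth (the exceptional locus of $\pi$ gets contracted away), giving the desired smooth Poisson deformation of $V/G$. The converse -- that a smooth Poisson deformation forces the existence of a symplectic resolution -- is the genuinely hard implication. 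The strategy is to degenerate: pick a one-parameter family $\mc{X} \to \Spec \C[[t]]$ inside the given smooth deformation whose special fiber is $V/G$, then apply Namikawa's $\mathbb{Q}$-factorial terminalization to produce a crepant birational $\mc{Y} \to \mc{X}$. Restricting to $t = 0$ yields a projective, birational, crepant morphism to $V/G$ which is automatically symplectic, and the fact that its source is smooth follows because a generic fiber of $\mc{Y} \to \Spec \C[[t]]$ is smooth (being birational to a smooth variety) and $\mc{Y}$ is itself a symplectic variety with terminal singularities; in the $V/G$ setting, terminal symplectic singularities are known to be smooth.

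The main obstacle is precisely this last implication. Nothing in the structure of rational Cherednik algebras forces a smooth deformation to admit a resolution; the result relies on deep birational geometry of symplectic varieties (Namikawa's existence theorem for $\mathbb{Q}$-factorial terminalizations, together with the rigidity of terminal symplectic quotient singularities). Once this input is accepted, all remaining implications -- the construction of the Calogero--Moser family, the flatness via PBW, and the identification with the universal Poisson deformation -- sit squarely within the framework already developed in these lectures.
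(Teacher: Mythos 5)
The paper gives no proof of this theorem: the only indication of an argument is the remark, in the ``Additional remarks'' at the end of the lecture, that the result ``follows from the results in [GK] and [Namikawa].'' Your sketch is therefore considerably more detailed than the paper's own treatment, and it invokes exactly the same two sources. The overall structure---using PBW flatness and the Satake isomorphism to exhibit the Calogero--Moser family $\{X_{\mbf{c}}(G)\}$ as a flat Poisson deformation of $V/G$, relating this family (up to finite base change) to the universal Poisson deformation via Ginzburg--Kaledin and Namikawa, and invoking Namikawa's theory of $\Q$-factorial terminalizations for the hard converse in the first bullet---accurately reflects how the theorem is assembled in the literature and is consistent with the paper's attribution.

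Two places where the invocation is a bit loose. First, the assertion that the classifying map $\Spec\C[\mc{S}]^G \to B_{\mathrm{univ}}$ is finite and surjective onto the base of Namikawa's universal Poisson deformation was only made fully precise in work postdating these lectures; the Ginzburg--Kaledin argument itself is more direct, comparing their twistor deformation of the resolution to the Calogero--Moser family without explicitly routing through a universal deformation base. Second, ``terminal symplectic quotient singularities are known to be smooth'' is not quite the fact in play: what Namikawa actually uses is that the Poisson deformation functors of $X$ and of any $\Q$-factorial terminalization $Y$ of $X$ are related by a finite cover of their bases, so that $X$ admits a smooth Poisson deformation iff $Y$ is smooth, and since $Y\to X$ is crepant, projective and birational this is equivalent to $X$ admitting a projective symplectic resolution. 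These are adjustments of attribution and phrasing rather than gaps that would sink the argument; the substance of your outline is correct.
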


The irreducible symplectic reflection group have been classified by Cohen, \cite{CohenQuaternionic}. Using the above theorem, work of several people (Verbitsky \cite{Verbitsky}, Ginzburg-Kaledin \cite{GK}, Gordon \cite{Baby}, Bellamy \cite{Singular}, Bellamy-Schedler \cite{smoothsra}) means that the classification of quotient singularities admitting symplectic resolutions is (almost) complete.

\begin{example}
Let $G \subset SL_2(\C)$ be a finite group. Since $\dim \C^2 / G = 2$, there is a \textit{minimal resolution} $\widetilde{\C^2} / G$ of $\C^2 / G$ through which all other resolutions factor. This resolution can be explicitly constructed as a series of blowups. Moreover, $\widetilde{\C^2} / G$ is a symplectic manifold and hence provides a symplectic resolution of $\C^2 / G$. The corresponding generalized Calogero-Moser space $X_{\mbf{c}}(G)$ is smooth for generic parameters $\mbf{c}$. The corresponding symplectic reflection algebras are closely related to deformed preprojective algebras, \cite{CrawleyBoeveyHolland}. 
\end{example}

\begin{figure}
\includegraphics[scale = 0.2]{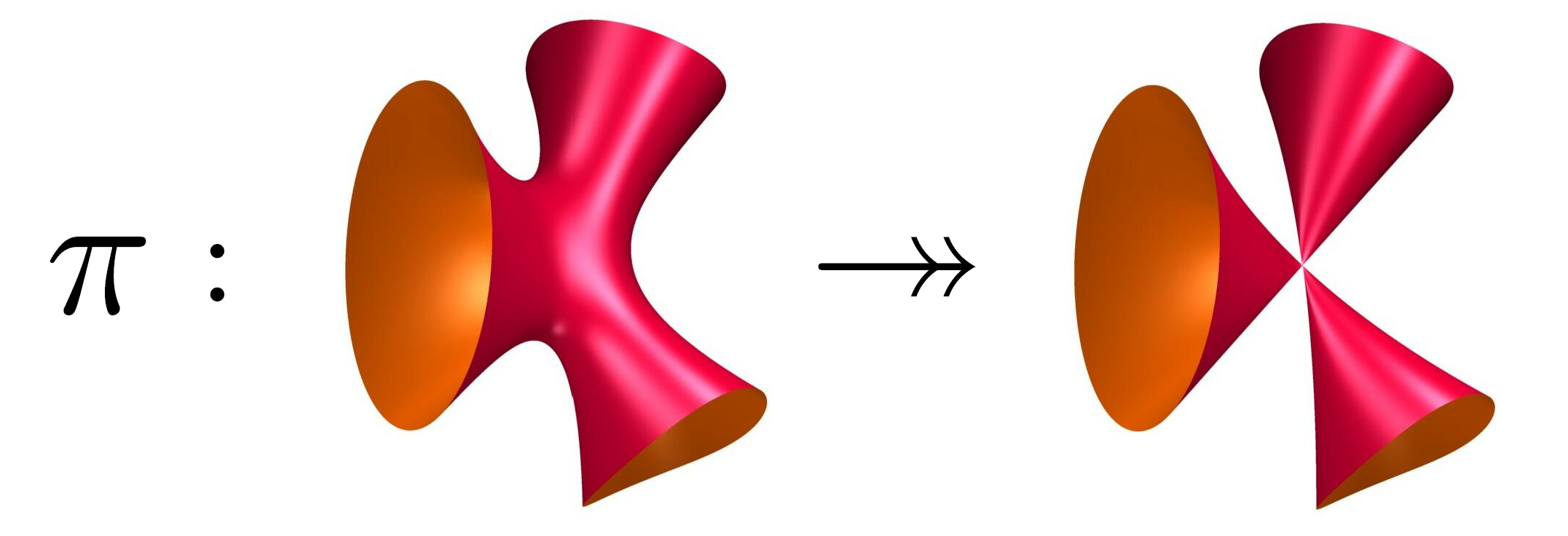}
\caption{A representation of the resolution of the $D_4$ Kleinian singularity.}
\end{figure}



\subsection{Additional remark}

\begin{itemize}
\item Theorem \ref{thm:genericisregularrep} is proven in \cite[Theorem 1.7]{EG}.

\item The fact that $X_{\mbf{c}}(W)$ has finitely many symplectic leaves, Theorem \ref{thm:symplvar}, is \cite[Theorem 7.8]{PoissonOrders}. 

\item The beautiful result that the representation theory of symplectic reflection algebras is constant along leaves, Theorem \ref{thm:constant}, is due to Brown and Gordon, \cite[Theorem 4.2]{PoissonOrders}.

\item Proposition \ref{prop:simplebaby} is Proposition 4.3 of \cite{Baby}. It is based on the general results of \cite{HN}, applied to the restricted rational Cherednik algebra.

\item The Calogero-Moser partition was first defined in \cite{GordonMartinoCM}.

\item Theorem \ref{thm:Mo} is stated for rational Cherednik algebras at $t = 1$ in positive characteristic in \cite[Theorem 1.3]{MoPositiveChar}. However, the proof given there applies word for word to rational Cherednik algebras at $t = 0$ in characteristic zero. 

\item Theorem \ref{thm:sympresdef} follows from the results in \cite{GK} and \cite{Namikawa}.

\end{itemize}

\newpage

\section{Solutions to exercises}

We include (partial) solutions to some of the exercises scattered throughout the course notes. 

\subsection{Lecture \ref{sec:one}}

\begin{proof}[Solution to exercise \ref{ex:centre}]
Let $z = \sum_{g \in G} f_g \cdot g \in Z(\C[V] \rtimes G)$. Choose some $g \neq 1$. Since $G \subset GL(V)$, there exists $h \in \C[V]$ such that $g(h) \neq h$. Then 
$$
[h,z] = \sum_{g \in G} f_g(h - g(h)) \cdot g = 0
$$
implies that $f_g = 0$ for all $g \neq 1$. Therefore $Z(\C[V] \rtimes G) \subset \C[V]$. But it is clear that $Z(\C[V] \rtimes G) \cap \C[V] \subseteq \C[V]^G$. On the other hand one can easily see that $\C[V]^G \subset Z(\C[V] \rtimes G)$.
\end{proof}

\begin{proof}[Solution to exercise \ref{ex:PBW}]
Taking away the relation $[y,x] = 1$ from $(y - s) x = xy + s$ gives $sx = -s$ and hence $x = -1$. Then $[y,x] = 1$ implies that $1 = 0$.  
\end{proof}

\begin{proof}[Solution to exercise \ref{ex:ex1.3}]
At $t = 0$, $\dd_t(\h_{\reg}) \rtimes W = \C[\h_{\reg} \times \h^*] \rtimes W$. Therefore the image of $\mbf{e} \H_{0,\mbf{c}}(W) \mbf{e}$ is contained in $\mbf{e} (\C[\h_{\reg} \times \h^*] \rtimes W) \mbf{e}\simeq \C[\h_{\reg} \times \h^*]^W$. This is a commutative ring. 
\end{proof}

\begin{proof}[Solution to exercise \ref{ex:ex1.6}]
Part (1): Since the isomorphism $\h^* \stackrel{\sim}{\rightarrow} \h$, $x \mapsto \tilde{x} = (x, - )$ is $W$-equivariant, the only thing to check is that the commutation relation 
$$
[y,x] = t x(y) - \sum_{s \in \mathcal{S}} \mathbf{c}(s) \alpha_s(y) x(\alpha_s^\vee) s, \quad \forall \ y \in \h, \ x \in \h^*
$$
still holds after applying $\widetilde{(-)}$ everywhere. This follows from two trivial observations. First, 
$$
x(y) = \tilde{y}(\tilde{x}) = (x,\tilde{y}) = (y, \tilde{x}),
$$
and secondly, possibly after some rescaling, we have $(\alpha_s,\alpha_s) = (\alpha_s^{\vee},\alpha_s^{\vee}) = 2$ and $\tilde{\alpha}_s = \alpha_s^{\vee}$, $\tilde{\alpha}_s^{\vee} = \alpha_s$. For part (2), just follow the hint.  
\end{proof}

\subsection{Lecture \ref{sec:two}}

\begin{proof}[Solution to exercise \ref{ex:example1}]
Part (1): Choose some $0 \neq x \in \h^* \subset \C[\h]$ and take $M = \H_{\mbf{c}}(W) / \H_{\mbf{c}}(W) \cdot (\eu - x)$. This module cannot be a direct sum of its generalized eigenspaces.

Part (2): Let $L$ be a finite-dimensional $\H_{\mbf{c}}(W)$-module. Then, it is a direct sum of its generalized $\eu$-eigenspaces because $\eu \in \End_{\C}(L)$ and a finite dimensional vector space decomposes as a direct sum of generalized eigenspaces under the action of any linear operator. If $l \in L_a$ for some $a \in \C$, then the relation $[\eu,y] = -y$ implies that $y \cdot l \in L_{a-1}$. Hence $y_1 \cdots y_k \cdot l \in L_{a - k}$. But $L$ is finite dimensional which implies that $L_{a - k} = 0$ for $k \gg 0$. Hence $\h$ acts locally nilpotently on $L$.  
\end{proof}

\begin{proof}[Solution to exercise \ref{ex:charpoly}]
Since $M$ is finitely generate as a $\C[\h]$-module, we may choose a finite dimensional, $\eu$ and $\h$-stable subspace $M_0$ of $M$ such that $M_0$ generates $M$ as a $\C[\h]$-module. Therefore, there is a surjective map of $\C[\h]$-modules $\C[\h] \o_{\C} M_0 \rightarrow M$, $a \o m \mapsto am$. This can be made into a morphism of $\C[\eu]$-modules by defining $\eu \cdot (a \o m) = [\eu,a] \o m + a \o \eu \cdot m$. If $f_0(t) \in \Z[x^a \ | \ a \in \C]$ is the character of $M_0$ then the character of $\C[\h] \o M_0$ is $\frac{1}{(1 - t)^n} f_0(t) \in \bigoplus_{a \in \C} t^a \Z[[t]]$ and hence $\ch (M) \in \bigoplus_{a \in \C} t^a \Z[[t]]$ too. 
\end{proof}

\begin{proof}[Solution to exercise \ref{ex:decompO}]
Let $M \in \mc{O}$. We can decompose $M$ as a $\C[\eu]$-module as 
$$
M = \bigoplus_{\bar{a} \in \C / \Z} M^{\bar{a}}
$$
where $M^{\bar{a}} = \bigoplus_{b \in \bar{a}} M_b$. It suffices to show that each $M^{\bar{a}}$ is a $\H_{\mbf{c}}(W)$-submodule of $M$. But if $x \in \h^*$ and $m \in M_b$ then $x \cdot m \in M_{b+1}$ and $b \in \bar{a}$ iff $b + 1 \in \bar{a}$. A similar argument applies to $y \in \h^*$ and $w \in W$. Thus $M^{\bar{a}}$ is a $\H_{\mbf{c}}(W)$-submodule of $M$. 
\end{proof}

\begin{proof}[Solution to exercise \ref{ex:Oss2}]
Exercise \ref{ex:decompO} implies that 
$$
\mc{O} = \bigoplus_{\lambda \in \Irr (W)} \mc{O}^{\bar{\mbf{c}}_{\lambda}}
$$
with $\Delta(\lambda) \in \mc{O}^{\bar{\mbf{c}}_{\lambda}}$. Then exercise $3.13$ implies that $\Delta(\lambda) = L(\lambda)$ and hence category $\mc{O}$ is semi-simple. 
\end{proof}

\begin{proof}[Solution to exercise \ref{ex:O21}]
(1) If $\mbf{c} \notin \frac{1}{2} + \Z$ then category $\mc{O}$ is semi-simple. The commutation relation of $x$ and $y$ implies that 
$$
[y,x^n] = \left\{ \begin{array}{ll}
n x^{n-1} - 2 \mbf{c} x^{n-1} s & \textrm{ $n$ odd }\\
n x^{n-1} & \textrm{ $n$ even } 
\end{array} \right. 
$$
This implies that 
$$
L(\rho_0) = \frac{\C[x] \o \rho_0}{x^{2 m + 1} \C[x] \o \rho_0}, \quad L(\rho_1) = \Delta(\rho_1)
$$
when $\mbf{c} = \frac{1}{2} + m$ for some $m \in \Z_{\ge 0}$. Similarly,  
$$
L(\rho_0) = \Delta(\rho_0), \quad L(\rho_1) = \frac{\C[x] \o \rho_1}{x^{2 m + 1} \C[x] \o \rho_1}
$$
when $\mbf{c} = \frac{-1}{2} - m$ for some $m \in \Z_{\ge 0}$.

For part (2), we have $\eu = xy - \mbf{c} s$. Therefore $\mbf{c}_0 = - \mbf{c}$ and $\mbf{c}_1 = \mbf{c}$. Thus, $\rho_0 \le_{\mbf{c}} \rho_1$ if and only if $2 \mbf{c} \in \Z_{\ge 0}$. Similarly, $\rho_1 \le_{\mbf{c}} \rho_0$ if and only if $2 \mbf{c} \in \Z_{\le 0}$. For all other $\mbf{c}$, $\rho_0$ and $\rho_1$ are incomparable.
\end{proof}

\begin{proof}[Solution to exercise \ref{ex:sl2}]
Both these steps are direct calculations. You should get $\alpha = \frac{\mbf{c}}{2}(2 - \mbf{c})$. 
\end{proof}

\begin{proof}[Solution to exercise \ref{ex:GDT}]
Part (1): By the proof of Corollary $1.17$, we know that $\mbf{c}$ is aspherical if and only if 
$$
I := \H_{\mbf{c}}(W) \cdot \mbf{e} \cdot \H_{\mbf{c}}(W)
$$
is a proper two-sided ideal of $\H_{\mbf{c}}(W)$. If this is the case then there exists some primitive ideal $J$ such that $I \subset J$. Hence Ginzburg's result implies that there is a simple module $L(\lambda)$ in category $\mc{O}$ such that $I \cdot L(\lambda) = 0$. But clearly this happens if and only if $\mbf{e} \cdot L(\lambda) = 0$. 

Part (2): The only aspherical value for $\Z_2$ is $\mbf{c} = -\frac{1}{2}$. 
\end{proof}  

\subsection{Lecture \ref{sec:three}}

\begin{proof}[Solution to exercise \ref{ex:66311}]
The Young diagram with residues is 
$$
\Yboxdim18pt
\young(0:::::,1:::::,230:::,301230,012301)
$$
Then $F_2 | \lambda \rangle = q | (6,6,3,2,1) \rangle$, $K_1 | \lambda \rangle = | \lambda \rangle$ and 
$$
E_4 | \lambda \rangle = q^{-2} | (6,5,3,1,1) \rangle  + q^{-1} | (6,6,2,1,1) \rangle  + | (6,6,3,1) \rangle.
$$
\end{proof}

\begin{proof}[Solution to exercise \ref{ex:part}]
By adding an infinite number of zeros to the end of $\lambda \in \mc{P}$, we may consider it as an infinite sequence $(\lambda_1,\lambda_2, \ds)$ with $\lambda_i \ge \lambda_{i+1}$ and $\lambda_N = 0$ for all $i$ and all $N \gg 0$. Define $I(\lambda)$ by $i_k = \lambda_k + 1 - k$. It is clear that this rule defines a bijection with the required property. 
\end{proof}

\begin{proof}[Solution to exercise \ref{ex:r2}]
We have 
$$
\mc{G}([4]) = [4] + q [3,1] + +q [2,1,1] + q^2 [1,1,1,1], \quad \mc{G}([3,1]) = [3,1] + q [2,2] + q^2 [2,1,1],
$$
$$
\mc{G}([2,2]) = [2,2] + q [2,1,1], \quad \mc{G}([2,1,1]) = [2,1,1] + q [1,1,1,1], \quad \mc{G}([1,1,1,1]) = [1,1,1,1].
$$
and
$$
\mc{G}([5]) = [5] + q [3,1,1] + q^2 [1,1,1,1,1], \quad \mc{G}([4,1]) = [4,1] + q [2,1,1,1],
$$
$$
\mc{G}([3,2]) = [3,2] + q [3,1,1] + q^2 [2,2,1], \quad \mc{G}([2,2,1]) = [2,2,1],
$$
$$
\mc{G}([3,1,1]) = [3,1,1] + q [2,2,1] + q [1,1,1,1,1], \quad \quad \mc{G}([2,1,1,1]) = [2,1,1,1], 
$$
$$
\mc{G}([1,1,1,1,1]) = [1,1,1,1,1].
$$
\end{proof}

\begin{proof}[Solution to exercise \ref{ex:chars}]
\begin{enumerate}
\item In this case, the numbers $e_{\lambda,\mu}(1)$, $\mbf{c}_{\lambda}$ and $\dim \lambda$ are: 
\begin{displaymath}
\begin{array}{c|ccccccc}
\mu \backslash \lambda & (5) & (4,1) & (3,2) & (3,1,1) & (2,2,1) & (2,1,1,1) & (1,1,1,1,1) \\
\hline
{(5)} & 1 & 0 & 0 & 0 & 0 & 0 & 0 \\
{(4,1)} & 0 & 1 & 0 & 0 & 0 &  0 & 0 \\
{(3,2)} & 0 & 0 & 1 & 0 & 0 & 0 & 0 \\
{(3,1,1)} & -1 & 0 & -1 & 1& 0 & 0 & 0 \\ 
{(2,2,1)} & 1 & 0 & 0 & -1 & 1 & 0 & 0 \\ 
{(2,1,1,1)} & 0 & -1 & 0 & 0 & 0 & 1 & 0 \\ 
{(1,1,1,1,1)} & 0 & 0 & 1 & -1 & 0 & 0 & 1 \\  
\mbf{c}_{\lambda} & -\frac{65}{2} & -15 & \frac{-9}{2} & 0 & \frac{19}{2} & 20 & \frac{75}{2}  \\
\dim \lambda & 1 & 4 & 5 & 6 & 5 & 4 & 1 
\end{array}
\end{displaymath} 
If we define $\ch_{\lambda}(t) := (1 - t)^5 \cdot \ch(L(\lambda))$, then 
$$
\ch_{(5)}(t) = t^{-\frac{65}{2}} - 6t + 5 t^{\frac{19}{2}}, \quad  \ch_{(4,1)}(t) = 4t^{-15} - 4 t^{20}, 
$$
$$
\ch_{(3,2)}(t) = 5t^{-\frac{9}{2}} - 6t + t^{\frac{75}{2}}, \quad \ch_{(3,1,1)}(t) = 6t - 5 t^{\frac{19}{2}} - t^{\frac{75}{2}},
$$ 
$$
\ch_{(2,2,1)}(t) = 5 t^{\frac{19}{2}}, \quad \ch_{(2,1,1,1)}(t) = 4t^{20}, \quad \ch_{(1,1,1,1,1)}(t) = t^{\frac{75}{2}}.
$$

\item -

\item It suffices to calculate the $2$ (resp. $3$ and $5$) cores of the partitions of $5$. For $r = 2$ we get 
$$
\{ [5], [3,2], [2,2,1],[1,1,1,1,1] \}, \quad \{ [4,1],[2,1,1,1] \},
$$
where the $2$-cores are $[1]$ and $[2,1]$ respectively. For $r = 3$ we get 
$$
\{ [5], [2,2,1],[2,1,1,1] \}, \quad \{ [4,1],[3,2],[1,1,1,1,1] \},
$$
where the $3$-cores are $[2]$ and $[1,1]$ respectively. For $r = 5$ we get 
$$
\{ [5], [4,1], [2,1,1,1],[1,1,1,1,1] \}, \quad \{ [3,2] \}, \quad \{ [2,2,1] \},
$$
where the $5$-cores are $\emptyset$, $[3,2]$ and $[2,2,1]$ respectively (for an explaination of what is going on in this final example read \cite{niteezdimreps}). 
\end{enumerate}
\end{proof}

\begin{proof}[Solution to exercise \ref{ex:s4Hard}]
The argument is essentially identical to that for $\ch(L(\lambda))$. We use the polynomials $e_{\lambda,\mu}(q)$ to express the character $\ch_W(L(\lambda))$ in terms of the character of the standard modules $\Delta(\mu)$. Then, we just need to calculate $\ch_W(\Delta(\lambda))$. Note that 1) $\C[\h] \o \lambda$ is a graded $W$-module 2) the $\eu$-character of $\Delta(\lambda)$ is simply the graded character of $\C[\h] \o \lambda$ multiplied by $t^{\mbf{c}_{\lambda}}$. Hence, it suffices to describe $\C[\h] \o \lambda$ is a graded $W$-module as a graded $W$-module. This factors as 
$$
\C[\h] \o \lambda = \C[\h]^W \o \C[\h]^{co W} \o \lambda
$$
and hence
$$
\ch_W(\C[\h] \o \lambda) = \frac{1}{\prod_{i = 1}^n (1 - t^i)} \ch_W(\C[\h]^{co W} \o \lambda).
$$
We have
$$
\ch_W(\C[\h]^{co W} \o \lambda) = \sum_{\mu \in \Irr(W)} \left( \sum_{i \in \Z} [\C[\h]^{co W}_i \o \lambda : \mu] t^i \right) [\mu] = \sum_{\mu \in \Irr (W)} f_{\lambda,\mu}(t) [\mu].
$$
In fact, for $W = \s_4$, one can explicitly calculate the generalized fake polynomials. 
\begin{displaymath}
\begin{array}{c|ccc}
\mu \backslash \lambda & (4) & (3,1) & (2,2) \\
\hline
{(4)} & 1 & t + t^2 + t^3 & t^2 + t^4 \\
{(3,1)} & t + t^2 + t^3 & 1 + t + 2 t^2 + 2 t^3 + 2 t^4 + t^5 & t + t^2 + 2 t^3 + t^4 + t^5 \\
{(2,2)} & t^2 + t^4 & t + t^2 + 2 t^3 + t^4 + t^5 & t + 2t^3 + t^5  \\
{(2,1,1)} & t^3 + t^4 + t^5 & t + 2 t^2 + 2 t^3 + 2 t^4 + t^5 + t^6 & 1 + 2t^2 + t^3 + t^4 + t^5 \\ 
{(1,1,1,1,1)} & t^6 & t^3 + t^4 + t^5 & t + t^3 
\end{array}
\end{displaymath} 

\begin{displaymath}
\begin{array}{c|cc}
\mu \backslash \lambda & (2,1,1) & (1,1,1,1) \\
\hline
{(4)} & t^3 + t^4 + t^5 & t^6 \\
{(3,1)} & t + 2t^2 + 3 t^3 + 2 t^4 + t^6 & t^3 + t^4 + t^5 \\
{(2,2)} & t + t^2 + 2 t^3 + t^4 + t^5 & t^2 + t^4 \\
{(2,1,1)} & 1 + t + 2 t^2 + 2 t^3 + 2 t^4 + t^5 & t + t^2 + t^3 \\ 
{(1,1,1,1,1)} & t + t^2 + t^3 & 1   
\end{array}
\end{displaymath}

\end{proof}

\subsection{Lecture \ref{sec:KZfunctor}}

\begin{proof}[Solution to exercise \ref{ex:invdiff}]
Let $z = x^2$ so that $\C[x]^{\Z_2} = \C[x^2] = \C[z]$. The ring $\dd(\mf{h})^{W}$ is generated by $x^2, x \pa_x$ and $\pa_x^2$ and $\dd(\mf{h} / W) = \C \langle z, \pa_z \rangle$. Since 
$$
x \pa_x (z^n) = x \pa_x (x^{2n}) =2n x^{2n} = 2 n z^n
$$
and
$$
\pa_x^{2} (z^n) = 2n(2n-1) z^{n-1}
$$
we see that $\phi :  \dd(\mf{h})^{W} \rightarrow \dd(\mf{h} / W)$ sends $x \pa_x$ to $2 z \pa_z$ and $\pa_x^2$ is sent to $\pa_z ( 4z \pa_z - 2)$. Then $\pa_z$ is not in the image of $\phi$ so it is not surjective. A rigorous way to show this is as follows: the morphism $\phi$ is filtered. Therefore, it induces a morphism on associated graded, this is the map
$$
\C[A,B,C] / (AC - B^2) \rightarrow \C[D,E],
$$
$$
A \mapsto D, B \mapsto 2 DE, C \mapsto 4DE^2.
$$
This is a proper embedding. 
\end{proof}

\begin{proof}[Solution to exercise \ref{ex:lambda2}]
Under the Dunkl embedding, 
$$
y = \pa_x - \frac{\mbf{c}}{x} (1 - s),
$$ 
which implies that $\pa_x \cdot \rho_0 = 0$ and $\pa_x \cdot \rho_1 = \frac{2\mbf{c}}{x} e_1$. So the $\Z_2$-equivariant local systems on $\C^{\times}$ corresponding to $\Delta(\rho_0)[\delta^{-1}]$ and $\Delta(\rho_1)[\delta^{-1}]$ are given by the differential equations $\pa_x = 0$ and $\pa_x - \frac{2 \mbf{c}}{x} = 0$ respectively. Now we need to construct the corresponding local systems on $\C^{\times} / \Z_2$. If $z := x^2$ then $\pa_z = \frac{1}{2x} \pa_x$ and $\Delta(\rho_0)[\delta^{-1}]^{\Z_2}$, resp. $\Delta(\rho_1)[\delta^{-1}]^{\Z_2}$ has basis $a_0 = 1 \o \rho_0$, resp. $a_1 = x \o \rho_1$, as a free $\C[z^{\pm 1}]$-module. We see that 
$$
\pa_z \cdot a_0 = 0, \quad \pa_z \cdot a_1 = \frac{1 + 2 \mbf{c}}{2z} a_1.
$$
Since the solutions of these equations are $1$ and $z^{\frac{1 + 2 \mbf{c}}{2}}$, the monodromy of these equations is given by $t \mapsto 1$ and $t \mapsto - \exp (2 \pi \sqrt{-1} \mbf{c} t)$ respectively. Therefore $\KZ(\Delta(\rho_0))$ is the one dimensional representation $\C b_0$ of $\H_{\mbf{c}}(\Z_2)$ defined by $T \cdot b_0 = b_0$ and $\KZ(\Delta(\rho_1))$ is the one dimensional representation $\C b_1$ of $\H_{\mbf{c}}(\Z_2)$ defined by $T \cdot b_1 = - \exp (2 \pi \sqrt{-1} \mbf{c}) b_1$ respectively. 
\end{proof}

\begin{proof}[Solution to exercise \ref{ex:PKZ}]
If $\mbf{c} = \frac{1}{2} + m$ for some $m \in \Z_{\ge 0}$ then $P_{KZ} = P(\rho_1)$. If $\mbf{c} = -\frac{1}{2} - m$ for some $m \in \Z_{\ge 0}$ then $P_{KZ} = P(\rho_0)$. Otherwise, $P_{KZ} = P(\rho_0) \oplus P(\rho_1)$, 
\end{proof}

\subsection{Lecture \ref{sec:five}}

\begin{proof}[Solution to exercise \ref{ex:G2}]
The space $X_{\mbf{c}}(G_2)$ is never smooth. The Poincar\'e polynomial of $\C[\mathfrak{h}_1]^{\mathrm{co} G_2}$ is 
$$
(1 - t^2)(1 - t^6) / (1 - t)^2 = 1 + 2 t + 2 t^2 + \cdots + 2 t^5 + t^6.
$$
The polynomials $t^2 + t^4$ and $t + t^5$ do not divide this polynomial in $\Z[t,t^{-1}]$. Therefore 
$$
\dim L(\mathfrak{h}_1), \dim L(\mathfrak{h}_2) < 12
$$
for any parameter $\mbf{c}$, which implies the claim.  
\end{proof}

\newpage

\def\cprime{$'$} \def\cprime{$'$} \def\cprime{$'$} \def\cprime{$'$}
  \def\cprime{$'$} \def\cprime{$'$} \def\cprime{$'$} \def\cprime{$'$}
  \def\cprime{$'$} \def\cprime{$'$} \def\cprime{$'$} \def\cprime{$'$}
  \def\cprime{$'$}


\begin{thebibliography}{10}

\bibitem{ArikiSurvey}
S.~Ariki.
\newblock Finite dimensional {H}ecke algebras.
\newblock In {\em Trends in representation theory of algebras and related
  topics}, EMS Ser. Congr. Rep., pages 1--48. Eur. Math. Soc., Z\"urich, 2008.

\bibitem{ARS}
M.~Auslander, I.~Reiten, and Smalo~S. O.
\newblock {\em Representation {T}heory of {A}rtin {A}lgebras}, volume~36 of
  {\em Cambridge Studies in Advanced Mathematics}.
\newblock Cambridge University Press, Cambridge, 1997.
\newblock Corrected reprint of the 1995 original.

\bibitem{Singular}
G.~Bellamy.
\newblock On singular {C}alogero-{M}oser spaces.
\newblock {\em Bull. Lond. Math. Soc.}, 41(2):315--326, 2009.

\bibitem{MoPositiveChar}
G.~Bellamy and M~Martino.
\newblock On the smoothness of centres of rational {C}herednik algebras in
  positive characteristic.
\newblock {\em arXiv}, 1112.3835v1, 2011.

\bibitem{smoothsra}
G.~Bellamy and T.~Schedler.
\newblock A new linear quotient of {${\bf C}^4$} admitting a symplectic
  resolution.
\newblock {\em Math. Z.}, 273(3-4):753--769, 2013.

\bibitem{Finitedimreps}
Y.~Berest, P.~Etingof, and V.~Ginzburg.
\newblock Finite-dimensional representations of rational {C}herednik algebras.
\newblock {\em Int. Math. Res. Not.}, (19):1053--1088, 2003.

\bibitem{PBW}
A.~Braverman and D.~Gaitsgory.
\newblock Poincar\'e-{B}irkhoff-{W}itt theorem for quadratic algebras of
  {K}oszul type.
\newblock {\em J. Algebra}, 181(2):315--328, 1996.

\bibitem{BMR}
M.~Brou{\'e}, G.~Malle, and R.~Rouquier.
\newblock Complex reflection groups, braid groups, {H}ecke algebras.
\newblock {\em J. Reine Angew. Math.}, 500:127--190, 1998.

\bibitem{BrownSurvey}
K.~A. Brown.
\newblock Symplectic reflection algebras.
\newblock In {\em Proceedings of the All Ireland Algebra Days, 2001 (Belfast)},
  number~50, pages 27--49, 2003.

\bibitem{BrownGoodearl}
K.~A. Brown and K.~R. Goodearl.
\newblock Homological aspects of {N}oetherian {PI} {H}opf algebras of
  irreducible modules and maximal dimension.
\newblock {\em J. Algebra}, 198(1):240--265, 1997.

\bibitem{BrownGoodearlbook}
K.~A. Brown and K.~R. Goodearl.
\newblock {\em {L}ectures on {A}lgebraic {Q}uantum {G}roups}.
\newblock Advanced Courses in Mathematics. CRM Barcelona. Birkh\"auser Verlag,
  Basel, 2002.

\bibitem{PoissonOrders}
K.~A. Brown and I.~Gordon.
\newblock Poisson orders, symplectic reflection algebras and representation
  theory.
\newblock {\em J. Reine Angew. Math.}, 559:193--216, 2003.

\bibitem{Ramifications}
K.~A. Brown and I.~G. Gordon.
\newblock The ramification of centres: {L}ie algebras in positive
  characteristic and quantised enveloping algebras.
\newblock {\em Math. Z.}, 238(4):733--779, 2001.

\bibitem{Calogero}
F.~Calogero.
\newblock Solution of the one-dimensional {$N$}-body problems with quadratic
  and/or inversely quadratic pair potentials.
\newblock {\em J. Mathematical Phys.}, 12:419--436, 1971.

\bibitem{Chevalley}
C.~Chevalley.
\newblock Invariants of finite groups generated by reflections.
\newblock {\em Amer. J. Math.}, 77:778--782, 1955.

\bibitem{GordonGriffChlo}
M.~Chlouveraki, I.~Gordon, and S.~Griffeth.
\newblock Cell modules and canonical basic sets for {H}ecke algebras from
  {C}herednik algebras.
\newblock In {\em New trends in noncommutative algebra}, volume 562 of {\em
  Contemp. Math.}, pages 77--89. Amer. Math. Soc., Providence, RI, 2012.

\bibitem{CPS}
E.~Cline, B.~Parshall, and L.~Scott.
\newblock Finite-dimensional algebras and highest weight categories.
\newblock {\em J. Reine Angew. Math.}, 391:85--99, 1988.

\bibitem{CohenQuaternionic}
A.~M. Cohen.
\newblock Finite quaternionic reflection groups.
\newblock {\em J. Algebra}, 64(2):293--324, 1980.

\bibitem{CrawleyBoeveyHolland}
W.~Crawley-Boevey and M.~P. Holland.
\newblock Noncommutative deformations of {K}leinian singularities.
\newblock {\em Duke Math. J.}, 92(3):605--635, 1998.

\bibitem{DelLMS}
P.~Deligne.
\newblock {\em \'{E}quations diff\'erentielles \`a points singuliers
  r\'eguliers}.
\newblock Lecture Notes in Mathematics, Vol. 163. Springer-Verlag, Berlin,
  1970.

\bibitem{Drinfeld}
V.~G. Drinfel{\cprime}d.
\newblock Degenerate affine {H}ecke algebras and {Y}angians.
\newblock {\em Funktsional. Anal. i Prilozhen.}, 20(1):69--70, 1986.

\bibitem{DPSQuantum}
J.~Du, B.~Parshall, and L.~Scott.
\newblock Quantum {W}eyl reciprocity and tilting modules.
\newblock {\em Comm. Math. Phys.}, 195(2):321--352, 1998.

\bibitem{DunklOpdam}
C.~F. Dunkl and E.~M. Opdam.
\newblock Dunkl operators for complex reflection groups.
\newblock {\em Proc. London Math. Soc. (3)}, 86(1):70--108, 2003.

\bibitem{EtingofCalogeroMoser}
P.~Etingof.
\newblock {\em Calogero-{M}oser {S}ystems and {R}epresentation {T}heory}.
\newblock Zurich Lectures in Advanced Mathematics. European Mathematical
  Society (EMS), Z\"urich, 2007.

\bibitem{EtingofCherednikNotes}
P.~Etingof.
\newblock Lecture notes on {C}herednik algebras.
\newblock {\em Lecture notes, available at
  {http://www-math.mit.edu/~etingof/18.735notes.pdf}}, 2009.

\bibitem{EG}
P.~Etingof and V.~Ginzburg.
\newblock Symplectic reflection algebras, {C}alogero-{M}oser space, and
  deformed {H}arish-{C}handra homomorphism.
\newblock {\em Invent. Math.}, 147(2):243--348, 2002.

\bibitem{FultonYOungTableaux}
W.~Fulton.
\newblock {\em Young tableaux}, volume~35 of {\em London Mathematical Society
  Student Texts}.
\newblock Cambridge University Press, Cambridge, 1997.
\newblock With applications to representation theory and geometry.

\bibitem{Primitive}
V.~Ginzburg.
\newblock On primitive ideals.
\newblock {\em Selecta Math. (N.S.)}, 9(3):379--407, 2003.

\bibitem{GGOR}
V.~Ginzburg, N.~Guay, E.~Opdam, and R.~Rouquier.
\newblock On the category $\mathcal{{O}}$ for rational {C}herednik algebras.
\newblock {\em Invent. Math.}, 154(3):617--651, 2003.

\bibitem{GK}
V.~Ginzburg and D.~Kaledin.
\newblock Poisson deformations of symplectic quotient singularities.
\newblock {\em Adv. Math.}, 186(1):1--57, 2004.

\bibitem{Baby}
I.~G. Gordon.
\newblock Baby {V}erma modules for rational {C}herednik algebras.
\newblock {\em Bull. London Math. Soc.}, 35(3):321--336, 2003.

\bibitem{DiagonalInvariants}
I.~G. Gordon.
\newblock On the quotient ring by diagonal invariants.
\newblock {\em Invent. Math.}, 153(3):503--518, 2003.

\bibitem{IainSurvey}
I.~G. Gordon.
\newblock Symplectic reflection algebras.
\newblock In {\em Trends in representation theory of algebras and related
  topics}, EMS Ser. Congr. Rep., pages 285--347. Eur. Math. Soc., Z\"urich,
  2008.

\bibitem{IainIMC}
I.~G. Gordon.
\newblock Rational {C}herednik algebras.
\newblock In {\em Proceedings of the {I}nternational {C}ongress of
  {M}athematicians. {V}olume {III}}, pages 1209--1225, New Delhi, 2010.
  Hindustan Book Agency.

\bibitem{GordonMartinoCM}
I.~G. Gordon and M.~Martino.
\newblock Calogero-{M}oser space, restricted rational {C}herednik algebras and
  two-sided cells.
\newblock {\em Math. Res. Lett.}, 16(2):255--262, 2009.

\bibitem{GuayO}
N.~Guay.
\newblock Projective modules in the category {$\mathscr{O}$} for the
  {C}herednik algebra.
\newblock {\em J. Pure Appl. Algebra}, 182(2-3):209--221, 2003.

\bibitem{Hayashi}
T.~Hayashi.
\newblock Sugawara operators and {K}ac-{K}azhdan conjecture.
\newblock {\em Invent. Math.}, 94(1):13--52, 1988.

\bibitem{HN}
R.~R. Holmes and D.~K. Nakano.
\newblock Brauer-type reciprocity for a class of graded associative algebras.
\newblock {\em J. Algebra}, 144(1):117--126, 1991.

\bibitem{HTT}
R.~Hotta, K.~Takeuchi, and T.~Tanisaki.
\newblock {\em {$D$}-modules, Perverse Sheaves, and Representation Theory},
  volume 236 of {\em Progress in Mathematics}.
\newblock Birkh\"auser Boston Inc., Boston, MA, 2008.
\newblock Translated from the 1995 Japanese edition by Takeuchi.

\bibitem{qDecomp}
M.~Kashiwara, T.~Miwa, and E.~Stern.
\newblock Decomposition of {$q$}-deformed {F}ock spaces.
\newblock {\em Selecta Math. (N.S.)}, 1(4):787--805, 1995.

\bibitem{KKS}
D.~Kazhdan, B.~Kostant, and S.~Sternberg.
\newblock Hamiltonian group actions and dynamical systems of {C}alogero type.
\newblock {\em Comm. Pure Appl. Math.}, 31(4):481--507, 1978.

\bibitem{LLTFock}
A.~Lascoux, B.~Leclerc, and J.~Thibon.
\newblock Hecke algebras at roots of unity and crystal bases of quantum affine
  algebras.
\newblock {\em Comm. Math. Phys.}, 181(1):205--263, 1996.

\bibitem{LT}
B.~Leclerc and J.-Y. Thibon.
\newblock Canonical bases of {$q$}-deformed {F}ock spaces.
\newblock {\em Internat. Math. Res. Notices}, (9):447--456, 1996.

\bibitem{CanonicalBasis}
G.~Lusztig.
\newblock Canonical bases arising from quantized enveloping algebras.
\newblock {\em J. Amer. Math. Soc.}, 3(2):447--498, 1990.

\bibitem{MR}
J.~C. McConnell and J.~C. Robson.
\newblock {\em Noncommutative {N}oetherian {R}ings}, volume~30 of {\em Graduate
  Studies in Mathematics}.
\newblock American Mathematical Society, Providence, RI, revised edition, 2001.
\newblock With the cooperation of L. W. Small.

\bibitem{Namikawa}
Y.~Namikawa.
\newblock Poisson deformations of affine symplectic varieties.
\newblock {\em Duke Math. J.}, 156(1):51--85, 2011.

\bibitem{Rotman}
J.~J. Rotman.
\newblock {\em An {I}ntroduction to {H}omological {A}lgebra}.
\newblock Universitext. Springer, New York, second edition, 2009.

\bibitem{RouquierSurvey}
R.~Rouquier.
\newblock Representations of rational {C}herednik algebras.
\newblock In {\em Infinite-dimensional aspects of representation theory and
  applications}, volume 392 of {\em Contemp. Math.}, pages 103--131. Amer.
  Math. Soc., Providence, RI, 2005.

\bibitem{RouquierQSchur}
R.~Rouquier.
\newblock {$q$}-{S}chur algebras and complex reflection groups.
\newblock {\em Mosc. Math. J.}, 8(1):119--158, 184, 2008.

\bibitem{Schedlerlecture}
T.~Schedler.
\newblock Deformations of algebras in noncommutative geometry.
\newblock {\em arXiv}, 1212.0914, 2012.

\bibitem{Shan}
P.~Shan.
\newblock Crystals of {F}ock spaces and cyclotomic rational double affine
  {H}ecke algebras.
\newblock {\em Ann. Sci. \'Ec. Norm. Sup\'er. (4)}, 44(1):147--182, 2011.

\bibitem{ST}
G.~C. Shephard and J.~A. Todd.
\newblock Finite unitary reflection groups.
\newblock {\em Canadian J. Math.}, 6:274--304, 1954.

\bibitem{SteinbergInvariants}
R.~Steinberg.
\newblock Invariants of finite reflection groups.
\newblock {\em Canad. J. Math.}, 12:616--618, 1960.

\bibitem{SteinbergDifferential}
R.~Steinberg.
\newblock Differential equations invariant under finite reflection groups.
\newblock {\em Trans. Amer. Math. Soc.}, 112:392--400, 1964.

\bibitem{StroppelQuivers}
C.~Stroppel.
\newblock Category {$\mathscr{O}$}: quivers and endomorphism rings of
  projectives.
\newblock {\em Represent. Theory}, 7:322--345 (electronic), 2003.

\bibitem{Uglov}
D.~Uglov.
\newblock Canonical bases of higher-level {$q$}-deformed {F}ock spaces and
  {K}azhdan-{L}usztig polynomials.
\newblock In {\em Physical combinatorics ({K}yoto, 1999)}, volume 191 of {\em
  Progr. Math.}, pages 249--299. Birkh\"auser Boston, Boston, MA, 2000.

\bibitem{ValeThesis}
R.~Vale.
\newblock On category {$\mathcal{O}$} for the rational {C}herednik algebra of
  the complex reflection group {$(\mathbb{Z} / \ell \mathbb{Z}) \wr S_n$}.
\newblock 2006.

\bibitem{VaragnoloVasserotDecomposition}
M.~Varagnolo and E.~Vasserot.
\newblock On the decomposition matrices of the quantized {S}chur algebra.
\newblock {\em Duke Math. J.}, 100(2):267--297, 1999.

\bibitem{Verbitsky}
M.~Verbitsky.
\newblock Holomorphic symplectic geometry and orbifold singularities.
\newblock {\em Asian J. Math.}, 4(3):553--563, 2000.

\bibitem{Weibel}
C.~A. Weibel.
\newblock {\em An introduction to homological algebra}, volume~38 of {\em
  Cambridge Studies in Advanced Mathematics}.
\newblock Cambridge University Press, Cambridge, 1994.

\bibitem{Wemysslectures}
M.~Wemyss.
\newblock Lectures on noncommutative resolutions.

\bibitem{Wilcox}
S.~Wilcox.
\newblock Supports of representations of the rational {C}herednik algebra of
  type {A}.
\newblock 2010.

\bibitem{Wilson}
G.~Wilson.
\newblock Collisions of {C}alogero-{M}oser particles and an adelic
  {G}rassmannian.
\newblock {\em Invent. Math.}, 133(1):1--41, 1998.
\newblock With an appendix by I. G. Macdonald.

\bibitem{Yvonne}
X.~Yvonne.
\newblock A conjecture for {$q$}-decomposition matrices of cyclotomic
  {$v$}-{S}chur algebras.
\newblock {\em J. Algebra}, 304(1):419--456, 2006.

\end{thebibliography}

\end{document}